\tikzset{
  shift left/.style ={commutative diagrams/shift left={#1}},
  shift right/.style={commutative diagrams/shift right={#1}}
}
\theoremstyle{plain}
\newtheorem{theorem}{Theorem}[section]
\newtheorem{lemma}[theorem]{Lemma}
\newtheorem{proposition}[theorem]{Proposition}
\newtheorem{corollary}[theorem]{Corollary}
\newtheorem*{theorem*}{Theorem}
\newtheorem*{proposition*}{Proposition}
\newtheorem*{corollary*}{Corollary}
\theoremstyle{definition}
\newtheorem{definition}[theorem]{Definition}
\newtheorem{example}[theorem]{Example}
\newtheorem{remark}[theorem]{Remark}
\newtheorem{observation}[theorem]{Observation}
\newtheorem{observation/construction}[theorem]{Obervation/Construction}
\newtheorem{notation}[theorem]{Notation}
\newtheorem*{remark*}{Remark}
\newcommand{\N}{\mathbb N}
\newcommand{\Z}{\mathbb Z}
\newcommand{\Q}{\mathbb Q}
\newcommand{\F}{\mathbb F}
\renewcommand{\epsilon}{\varepsilon}
\renewcommand{\phi}{\varphi}
\begin{document}

\title{Unstable algebraic K-theory: homological stability and other observations}
\author{Mikala Ørsnes Jansen}

\begin{abstract}
We investigate stability properties of the reductive Borel--Serre categories; these were introduced as a model for unstable algebraic K-theory in previous work. We see that they seem exhibit better homological stability properties than the general linear groups. We also show that they provide an explicit model for Yuan's partial algebraic K-theory.
\end{abstract}

\maketitle

\tableofcontents

\section{Introduction}

\textbf{Unstable algebraic K-theory.} Let $A$ be an associative ring. A finitely generated projective $A$-module $M$ defines a point $[M]$ in the K-theory space of $A$; this association is functorial, so there is a map
\begin{align*}
\operatorname{BGL}(M)\rightarrow K(A)
\end{align*}
from the automorphism group of $M$ into the K-theory space. This is very far from being an equivalence essentially for two different reasons: firstly, they are just very different types of spaces, for example $K(A)$ is a simple space whereas $\operatorname{BGL}(M)$ is a $K(\pi,1)$ for generally non-abelian $\pi$; secondly, $\operatorname{BGL}(M)$ is defined in terms of linear algebra internal to the fixed module $M$ whereas $K(A)$ takes into account all finitely generated projective $A$-modules. We want to alleviate the first reason while keeping the second. In other words, we want to approximate $K(A)$ using only linear algebra internal to $M$.

To this end, we consider the notion of an \textit{unstable algebraic K-theory} by which we mean a family of ``intermediary'' spaces,
\begin{align*}
\operatorname{BGL}(M)\rightarrow \overline{\operatorname{BGL}(M)}\rightarrow K(A),
\end{align*}
one for each finitely generated projective $A$-module $M$, such that $\overline{\operatorname{BGL}(M)}$ is defined purely in terms of linear algebra internal to $M$, and through which the map above factorises. The notion and terminology goes back to van der Kallen (\cite{vanderKallen}), and a classical example is Quillen's $+$-construction $\operatorname{BGL}(M)^+$. The quality of a model for unstable algebraic K-theory depends on how close it is to $K(A)$ in terms of its nature and properties. A notable difference between the trivial model $\operatorname{BGL}(M)$ and Quillen's $+$-construction is the following: the automorphism groups $\operatorname{BGL}(M)$ stabilise to $K(A)$ via group completion
\begin{align*}
K(A)\simeq \bigg| \coprod_{[M]} \operatorname{BGL}(M)\bigg|^{\operatorname{grp}}
\end{align*}
whereas the plus-constructions stabilise more naively, as it were, via a directed colimit
\begin{align*}
K(A)\simeq K_0(A)\times \operatorname{BGL}_\infty(A)^+.
\end{align*}

\textbf{Reductive Borel--Serre categories.} In \cite{ClausenOrsnesJansen}, we proposed a new model for unstable algebraic K-theory inspired by a detailed analysis of the reductive Borel--Serre compactification from the point of view of stratified homotopy theory. To any finitely generated projective $A$-module $M$, we associate a \textit{reductive Borel--Serre category} (or simply \textit{$\operatorname{RBS}$-category}), $\operatorname{RBS}(M)$. This is a $1$-category encoding the system of splittable flags in $M$ and its realisation serves as a model for unstable algebraic K-theory
\begin{align*}
\operatorname{BGL}(M)\rightarrow |\operatorname{RBS}(M)| \rightarrow K(A).
\end{align*}

We started an investigation of the properties of $|\operatorname{RBS}(M)|$ and its quality as an unstable algebraic K-theory. We showed that for infinite fields, we simply recover the classical plus construction (\cite[Theorem 5.15]{ClausenOrsnesJansen}). For finite fields, however, the reductive Borel--Serre categories turn out to be very different and, in fact, exhibit much better homological stability properties than the plus constructions. Indeed, if $k$ is a finite field of characteristic $p$, then $\operatorname{RBS}(k^n)$ has vanishing $\F_p$-homology in positive degrees whereas the $\F_p$-homology of $\operatorname{GL}_n(k)$ is non-trivial, rather complicated and still largely unknown, but vanishes upon stabilisation (\cite[Theorem 5.18]{ClausenOrsnesJansen}, \cite{MilgramPriddy}, \cite{LahtinenSprehn}, \cite{Quillen72}).

We also proved that the reductive Borel--Serre categories stabilise to the K-theory space in the following sense: the $\operatorname{RBS}(M)$'s assemble to a monoidal category whose geometric realisation group completes to the K-theory space. More precisely, we introduce a \textit{monoidal reductive Borel--Serre category} (or simply \textit{monoidal $\operatorname{RBS}$-category}) $\mathscr{M}_{\operatorname{RBS}}(A)$ which decomposes as a disjoint union
\begin{align*}
\mathscr{M}_{\operatorname{RBS}}(A)\simeq \coprod_{[M]} \operatorname{RBS}(M),
\end{align*}
and then we identify
\begin{flalign*}
& & K(A)\simeq \Omega B|\mathscr{M}_{\operatorname{RBS}}(A)| \qquad\qquad  \text{(\cite[Theorem 7.38]{ClausenOrsnesJansen}).}
\end{flalign*}


\medskip

\textbf{Questions of stability.} The purpose of this paper is to continue this investigation. More specifically, we want to investigate the stability properties of the $\operatorname{RBS}$-categories. For any ring $A$ and any finitely generated projective $A$-modules $M$ and $N$, we have a stabilisation map
\begin{align*}
\operatorname{RBS}(M)\rightarrow \operatorname{RBS}(M\oplus N).
\end{align*}
There are two obvious questions to be asked. First of all, we'd like to compare the formal stabilisation procedure given by group completion, $\Omega \operatorname{B}(-)$, with the more naive stabilisation procedure of taking the colimit of the sequence
\begin{align*}
\operatorname{RBS}(M)\rightarrow \operatorname{RBS}(M\oplus A)\rightarrow \cdots \rightarrow  \operatorname{RBS}(M\oplus A^n) \rightarrow \operatorname{RBS}(M\oplus A^{n+1})\rightarrow \cdots
\end{align*}
Secondly, we want to address the question of homological stability: Do the stabilisations maps induce isomorphisms on the $d$'th homology groups
\begin{align*}
H_d(\operatorname{RBS}(M))\rightarrow H_d(\operatorname{RBS}(M\oplus N))
\end{align*}
for $\operatorname{rk} M$ sufficiently large compared to $d$?

The issue that has been impeding us from making any progress in this direction is the following: the category $\mathscr{M}_{\operatorname{RBS}}(A)$ is \textit{only} monoidal, it is not symmetric monoidal, not even braided. Hence, its realisation is a priori only an $\mathbb{E}_1$-space. The standard tools for attacking the questions above, for example the group completion theorem, require some kind of homotopy commutativity (even if a bare minimum), so we have simply had a very limited range of machinery to work with.

In some ways, the main result of this paper is that the realisation $|\mathscr{M}_{\operatorname{RBS}}(A)|$ is in fact an $\mathbb{E}_\infty$-space (though of course, it's the consequences of this that are really interesting). This additional structure completely opens up the investigation and we can answer a whole range of questions regarding the stabilisation maps above by simply plugging into established tools.

\medskip

\textbf{Partial algebraic K-theory.} We briefly recall the main ideas of \textit{partial algebraic K-theory} as this plays an important role in this work. It's an invariant introduced by Yuan (\cite{Yuan}) that shares some notable properties with the monoidal $\operatorname{RBS}$-categories.

Recall that the K-theory space, $K(A)$, can be obtained by freely turning Waldhausen's $S_\bullet$-construction into a \textit{grouplike} $\mathbb{E}_1$-space. Partial algebraic K-theory, $K^\partial(A)$, is defined by universally turning the $S_\bullet$-construction into a $\mathbb{E}_1$-space \textit{without} the grouplike condition. In other words, partial K-theory should be interpreted as a ``pre-group completed'' variant of algebraic K-theory. This makes the analogue of our stabilisation result a tautology:
\begin{align*}
K(A)\simeq K^\partial(A)^{\operatorname{grp}}
\end{align*}
(our proof, on other hand, is a lengthy and tedious exercise in simplicial manipulations). Thus, both $K^\partial(A)$ and $|\mathscr{M}_{\operatorname{RBS}}(A)|$ are by definition $\mathbb{E}_1$-spaces encoding flags of finitely generated projective $A$-modules and their associated gradeds, and they both group complete to the K-theory space. Moreover, for a finite field $k$ of characteristic $p$, Yuan proves that $K^\partial(k)$ also has vanishing $\F_p$-homology (\cite[Theorem 5.2]{Yuan}); this result is a crucial ingredient in his work on setting up a new model for unstable homotopy theory. In fact, Yuan's proof has the same rough outline as our calculation: after some combinatorial shuffling, one reduces to the fact that the $\F_p$-homology of the Steinberg representation of $\operatorname{GL}_n(k)$ vanishes.

We are naturally led to ask how these $\mathbb{E}_1$-spaces compare, Yuan's partial K-theory $K^\partial(A)$ on the one hand and the geometric realisation of the monoidal $\operatorname{RBS}$-category $\mathscr{M}_{\operatorname{RBS}}(A)$ on the other. We'll show that they are in fact equivalent as $\mathbb{E}_1$-spaces. Yuan shows by an Eckmann-Hilton argument that $K^\partial(A)$ naturally admits the structure of an $\mathbb{E}_\infty$-space, so this identification in fact supplies us with the $\mathbb{E}_\infty$-structure mentioned earlier. To better understand the underlying dynamics, we exhibit the $\mathbb{E}_\infty$-structure on $|\mathscr{M}_{\operatorname{RBS}}(A)|$ independently of this identification. The ideas of Yuan's construction and analysis are, however, central to the whole paper and was the starting point of this investigation, so we feel that it deserves a mention here.

\medskip

\textbf{Main results.} We already mentioned some of the results of the paper, but let us present them here again together with the consequent stabilisation results.

\begin{theorem*}[\ref{E-infinity structure}]
For any ring $A$, the $\mathbb{E}_1$-space $|\mathscr{M}_{\operatorname{RBS}}(A)|$ naturally admits the structure of an $\mathbb{E}_\infty$-space refining the $\mathbb{E}_1$-structure.
\end{theorem*}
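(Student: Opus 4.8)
The plan is to produce the $\mathbb{E}_\infty$-structure by hand at the categorical level, by upgrading $\mathscr{M}_{\operatorname{RBS}}(A)$ to a special $\Gamma$-category and then realizing. For each finite pointed set $\langle n\rangle$ I would define a category $\mathscr{M}_{\operatorname{RBS}}(A)^{\langle n\rangle}$ of ``$\langle n\rangle$-labelled $\operatorname{RBS}$-data'' — heuristically, an $n$-tuple of objects of $\mathscr{M}_{\operatorname{RBS}}(A)$ equipped with a compatible choice of all of their partial direct sums over subsets of $\{1,\dots,n\}$ — together with structure maps functorial in $\langle n\rangle$ (and in $A$), arranged so that the Segal maps $\mathscr{M}_{\operatorname{RBS}}(A)^{\langle n\rangle}\to\mathscr{M}_{\operatorname{RBS}}(A)^{\times n}$ are equivalences and the fold map $\langle 2\rangle\to\langle 1\rangle$ induces a monoidal product equivalent to the one of \cite{ClausenOrsnesJansen}. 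Applying geometric realization degreewise then yields a special $\Gamma$-space with underlying space $|\mathscr{M}_{\operatorname{RBS}}(A)|$, that is, an $\mathbb{E}_\infty$-space whose underlying $\mathbb{E}_1$-space is that of $|\mathscr{M}_{\operatorname{RBS}}(A)|$.

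The only genuinely new ingredient needed for this is a coherent ``direct sum over an unordered finite index set'' of $\operatorname{RBS}$-objects. A splittable flag in $M$ is a chain $0=F_0\subsetneq F_1\subsetneq\cdots\subsetneq F_k=M$ of direct summands; given a family $(M_s,\mathcal{F}_s)_{s\in S}$ of such, with $\mathcal{F}_s$ of length $k_s$, one declares the $t$-th step of the direct-sum flag in $\bigoplus_{s}M_s$ to be $\bigoplus_{s\in S}(\mathcal{F}_s)_{\min(t,k_s)}$ for $0\le t\le\max_s k_s$. This ``interleaving'' of flags uses no ordering of $S$ and is strictly additive for disjoint unions of index sets, hence commutative and associative up to the (coherent) structure isomorphisms of $\oplus$ for modules. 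I would then check that interleaving extends to the morphism data of the $\operatorname{RBS}$-categories and assembles into the $\Gamma$-category above. One also has to confirm that the $\mathbb{E}_1$-structure extracted from this $\Gamma$-category agrees with the monoidal structure of \cite{ClausenOrsnesJansen}: if the latter is instead the \emph{concatenation} $\mathcal{F}\star\mathcal{G}$ (stack $\mathcal{G}$ on top of $\mathcal{F}$), one produces an $\mathbb{E}_1$-equivalence between concatenation and interleaving by an additivity-style argument — both are monotone lattice paths through the grid of submodules $\{F_i\oplus G_j\}$ of $M\oplus N$, any two such paths are joined by elementary square moves, and each such move realizes to an equivalence.

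Alternatively, and more quickly at the cost of the explicit dynamics, one can deduce the statement from Yuan's work \cite{Yuan}: establish an equivalence of $\mathbb{E}_1$-spaces $|\mathscr{M}_{\operatorname{RBS}}(A)|\simeq K^\partial(A)$ with partial algebraic K-theory by comparing the bar-type simplicial model of the monoidal $\operatorname{RBS}$-category with the $S_\bullet$-model computing $K^\partial(A)$ degreewise — the combinatorial matching being through flags and their associated gradeds — and then transport Yuan's Eckmann--Hilton $\mathbb{E}_\infty$-structure on $K^\partial(A)$ across this equivalence. Both $\mathbb{E}_1$-spaces group complete to $K(A)$, by \cite[Theorem 7.38]{ClausenOrsnesJansen} on one side and tautologically on the other, so this is consistent with, but strictly finer than, the infinite-loop structure on $K(A)$.

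On either route the main obstacle is combinatorial rather than conceptual: the $\operatorname{RBS}$-categories are honest $1$-categories and not groupoids, so verifying that the unordered direct sum is functorial in $\operatorname{RBS}$-morphisms and satisfies the full package of $\Gamma$-category (equivalently, symmetric monoidal) coherences — or, on the alternative route, that the degreewise comparison with $S_\bullet$ respects every face and degeneracy map — is where the real work lies. I expect the additivity-style comparison of the concatenation and interleaving products, if it is needed, to be the single most delicate point, since that is precisely where the apparent non-commutativity of the original multiplication gets traded for the manifest symmetry of the new one.
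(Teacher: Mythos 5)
Your fallback route is sound and is in fact one of the two derivations the paper itself records: \S 7 establishes the $\mathbb{E}_1$-equivalence $K^\partial(\mathcal{C})\simeq|\mathscr{M}_{\operatorname{RBS}}(\mathcal{C})|$ (\Cref{equivalence of E1-spaces}), and Yuan's Eckmann--Hilton $\mathbb{E}_\infty$-structure on $K^\partial$ transports across; the real work there is the degreewise compatibility with all faces and degeneracies, which occupies \S 7.2--7.3. The paper's \emph{independent} proof of \Cref{E-infinity structure} is close in spirit to your first route but is organised differently: rather than constructing a symmetric operation on the $1$-category, it proves that $\mathcal{C}\mapsto|\mathscr{M}_{\operatorname{RBS}}(\mathcal{C})|$ preserves finite products as a functor $\operatorname{Cat}^{\operatorname{filtr}}\to\operatorname{Mon}(\mathcal{S})$ (\Cref{product preserving}, via twisted arrow categories and the siftedness of $\mathcal{J}$), then transports the $\mathbb{E}_\infty$-structure that $\oplus$ already gives on $\mathcal{C}$ itself, with Dunn additivity supplying the refinement of the $\mathbb{E}_1$-structure. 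The interchange of concatenation and direct sum that you propose to verify by lattice-path moves is exactly what product preservation encodes, but it is only established after realisation, through a zig-zag of categories, never as a strict or lax operation on $\mathscr{M}_{\operatorname{RBS}}(A)$.

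The genuine gap is in your primary route: the interleaving $t\mapsto\bigoplus_{s}(\mathcal{F}_s)_{\min(t,k_s)}$ is not functorial for $\operatorname{RBS}$-morphisms, so the proposed $\Gamma$-category does not exist as described. Morphisms in $\operatorname{RBS}'(M)$ go from finer flags to coarser ones, and interleaving does not preserve refinement. Concretely, take $M_1=M_2=A^2$; let $\mathcal{F}_1=(0\subset A\subset A^2)$, which refines the trivial flag $\mathcal{G}_1=(0\subset A^2)$, and let $\mathcal{F}_2=\mathcal{G}_2=(0\subset A\subset A^2)$. The interleaving of $(\mathcal{F}_1,\mathcal{F}_2)$ is $0\subset A\oplus A\subset A^2\oplus A^2$, while that of $(\mathcal{G}_1,\mathcal{G}_2)$ is $0\subset A^2\oplus A\subset A^2\oplus A^2$; the former does not refine the latter, so the pair of coarsening morphisms has no image under interleaving. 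No padding convention fixes this: there is no way to merge flags of different lengths compatibly with all coarsenings, which is precisely why $\mathscr{M}_{\operatorname{RBS}}(A)$ carries no evident braiding and why the paper retreats to a product-preservation statement at the level of realisations. If you insist on the $\Gamma$-category route you would have to let the value at $\langle n\rangle$ record \emph{all} interleavings of all common refinements at once and prove the resulting Segal fibres contractible --- at which point you are re-proving \Cref{product preserving} and \Cref{J is sifted}.
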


This is also a corollary of the following identification, but we provide an independent proof (inspired by Yuan's proof that partial algebraic K-theory admits an $\mathbb{E}_\infty$-structure).

\begin{theorem*}[\ref{equivalence of E1-spaces}]
For any ring $A$, we have an equivalence of $\mathbb{E}_1$-spaces
\begin{align*}
K^\partial(A) \xrightarrow{\ \simeq\ } |\mathscr{M}_{\operatorname{RBS}}(A)|
\end{align*}
identifying partial algebraic K-theory of $A$ with the $\mathbb{E}_1$-space arising as the geometric realisation of the monoidal $\operatorname{RBS}$-category.
\end{theorem*}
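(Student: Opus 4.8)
The plan is to construct the equivalence by exhibiting both sides as the free grouplike-free $\mathbb{E}_1$-space — or rather, the free $\mathbb{E}_1$-space — on a common simplicial input, and then checking the map so produced is an equivalence on underlying spaces. Concretely, recall that $K^\partial(A)$ is \emph{defined} as the universal $\mathbb{E}_1$-space receiving a map from (the geometric realisation of) Waldhausen's $S_\bullet$-construction, i.e.\ as $\Omega B$ applied only after a ``group completion with respect to the zero object'' is dropped — it is the $\mathbb{E}_1$-monoidal envelope of the simplicial space $|S_\bullet A|$ viewed with its natural partial-sum structure. So the strategy is: (i) identify a simplicial (or more precisely, a suitable $2$-Segal / partial-monoid) object built from flags in finitely generated projective $A$-modules that maps to $\mathscr{M}_{\operatorname{RBS}}(A)$ and exhibits the latter's monoidal structure as freely generated; (ii) identify the \emph{same} simplicial object as the one generating $K^\partial(A)$; (iii) conclude by the universal property.

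First I would unpack the monoidal structure on $\mathscr{M}_{\operatorname{RBS}}(A) \simeq \coprod_{[M]} \operatorname{RBS}(M)$. The monoidal product is direct sum of modules, and objects of $\operatorname{RBS}(M)$ are splittable flags (equivalently, filtered modules with a chosen splitting datum), with morphisms recording refinement/coarsening. The key structural observation I would aim to prove is that this monoidal category is freely generated, as a monoidal category (or at the level of realisations, as an $\mathbb{E}_1$-space), by the data of a single object $M$ together with the ``flag'' structure — precisely the data that Yuan feeds into the definition of $K^\partial$. In Yuan's setup the generating input is $|S_\bullet A|$, where an $n$-simplex is a flag $0 = M_0 \subseteq M_1 \subseteq \cdots \subseteq M_n$ of finitely generated projectives with chosen subquotients; the $\mathbb{E}_1$-structure on the free thing comes from concatenating flags via direct sum. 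I would match an $n$-simplex of $S_\bullet A$ with an object of $\operatorname{RBS}(M_n)$ equipped with its canonical flag of length $n$, and check that this assignment is (a) compatible with face/degeneracy maps on the nose (or up to coherent equivalence) and (b) monoidal. The heart of the argument is then a bar-construction / two-sided bar resolution comparison: both $|\mathscr{M}_{\operatorname{RBS}}(A)|$ and $K^\partial(A)$ should be expressible as $|B_\bullet(\ast, \text{(flag monoid)}, \ast)|$-type colimits, and the map between the generating diagrams induces the equivalence.

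Alternatively — and this may be cleaner — I would proceed by a direct comparison of classifying-space-type models: use that $\operatorname{RBS}(M)$ is by construction (from \cite{ClausenOrsnesJansen}) a category of splittable flags, write down the Segal space / simplicial space whose realisation is $|\mathscr{M}_{\operatorname{RBS}}(A)|$, and identify it levelwise with the simplicial space presenting $K^\partial(A)$ from \cite{Yuan}. The levelwise identification should be essentially a bookkeeping exercise: level $n$ on the $K^\partial$ side is (a space of) $n$-fold flags with subquotients and on the $\operatorname{RBS}$ side is an $n$-fold iterated stabilisation/decomposition of $\operatorname{RBS}$-categories, and both are disjoint unions over tuples of iso-classes of $\coprod \operatorname{BGL}(\text{subquotients})$ glued along the flag data. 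One then checks the face maps agree (composing adjacent steps of a flag = summing the corresponding subquotients, which on the $\operatorname{RBS}$ side is exactly the functoriality of $\operatorname{RBS}$ in direct sum) and the Segal/monoidal structure maps agree, so the realisations are equivalent as $\mathbb{E}_1$-spaces.

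The main obstacle I anticipate is \textbf{keeping the $\mathbb{E}_1$-structure honest throughout}: it is easy to produce an equivalence of underlying spaces and another matter to produce one of $\mathbb{E}_1$-spaces, because $\mathscr{M}_{\operatorname{RBS}}(A)$ is a genuine (non-symmetric, non-braided) monoidal $1$-category while $K^\partial(A)$ is constructed $\infty$-categorically as a universal $\mathbb{E}_1$-space, and the comparison functor needs to be monoidal coherently, not just on objects and $1$-morphisms. Concretely, the subtlety is that direct sum of modules is only associative and unital up to coherent isomorphism, and the $\operatorname{RBS}$-flag data must be transported along these isomorphisms compatibly; verifying the relevant coherence — ideally by exhibiting a strict monoidal model or by citing a rectification result — is where the real work lies. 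A secondary, more technical point is ensuring the flag-with-splitting bookkeeping in $\operatorname{RBS}(M)$ matches Yuan's subquotient bookkeeping in $S_\bullet A$ \emph{including} the contractibility of splitting choices, so that no spurious homotopy is introduced; I expect this to follow from the analysis already in \cite{ClausenOrsnesJansen} showing that the splitting data in $\operatorname{RBS}$ contributes nothing homotopically beyond what the flag does.
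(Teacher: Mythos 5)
The central gap is that you have no mechanism for computing the underlying space of $K^\partial(A)$, and without that the universal property does not close the argument. By definition $K^\partial(A)=\mathbb{L}(S_\bullet(\mathcal{P}(A))^\simeq)$, where $\mathbb{L}$ is left adjoint to the inclusion $\mathbb{B}\colon \operatorname{Mon}(\mathcal{S})\hookrightarrow \operatorname{Fun}(\Delta^{\operatorname{op}},\mathcal{S})$. The universal property produces, from a map of simplicial spaces $S_\bullet(\mathcal{P}(A))^\simeq\rightarrow \mathbb{B}|\mathscr{M}_{\operatorname{RBS}}(A)|$, a comparison map of $\mathbb{E}_1$-spaces $K^\partial(A)\rightarrow |\mathscr{M}_{\operatorname{RBS}}(A)|$ — but it says nothing about whether that map is an equivalence. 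Your step (iii), ``conclude by the universal property,'' therefore only constructs the map. Showing it is an equivalence requires actually identifying $\mathbb{L}(S_\bullet^\simeq)$, and this is where essentially all of the work lies: in the paper, $\mathbb{L}X$ is computed levelwise as a colimit $\operatorname{colim}_{\mathcal{J}^n}\mathbf{X}_n$ over products of a category $\mathcal{J}$ of partitioned linearly ordered sets (after cofinality arguments reducing from the comma categories appearing in the Lawvere-theoretic description of monoids), and the theorem is then proved by matching the simplicial replacement of this colimit diagram with the nerve of the twisted arrow category of a fattened model $\accentset{\sim}{\mathscr{M}}_{\operatorname{RBS}}(A)$. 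The key combinatorial input is that an object of $\mathbf{S}(I_P)$ — an ordered list of filtrations with chosen subquotients — is precisely the data of a morphism in $\accentset{\sim}{\mathscr{M}}_{\operatorname{RBS}}(A)$. Your ``alternative'' levelwise comparison gestures toward this but mischaracterises level $n$ of the $K^\partial$ side as a disjoint union of flag spaces indexed by tuples of isomorphism classes; it is in fact a colimit over a nontrivial indexing category, and identifying that colimit with $|\mathscr{M}_{\operatorname{RBS}}(A)|^n$ is the theorem, not bookkeeping.

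Two secondary points. First, the claim that $\mathscr{M}_{\operatorname{RBS}}(A)$ is ``freely generated as a monoidal category'' by flag data is not correct as stated and is not what is needed: its objects form a free monoid on lists, but its morphisms (the merging of flags along refinements) are exactly the non-free part that must be matched against the face maps of $S_\bullet$. (The associated graded of the rank filtration is free on $\operatorname{RBS}_N/\partial\operatorname{RBS}_N$, but that is a different and later statement.) Second, the obstacle you single out — coherence of direct sum — does not actually arise: the monoidal structure on $\mathscr{M}_{\operatorname{RBS}}(A)$ is concatenation of lists, which is strictly associative and unital, so no rectification is required. The genuine difficulty is the colimit identification above, for which your proposal offers no method.
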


The above two results are proved for general exact categories, but for notational ease, we restrict our attention to rings in this introduction.

As an immediate corollary of the established homotopy commutativity, we can apply the group completion theorem to compare the group completion $\Omega B|\mathscr{M}_{\operatorname{RBS}}(A)|$ with the colimit
\begin{align*}
\operatorname{RBS}_\infty(A):=\displaystyle\mathop{\operatorname{colim}}_{n\rightarrow \infty}\operatorname{RBS}(A^n).
\end{align*}

\begin{corollary*}[\ref{applying the group completion theorem}]
For any associative ring $A$, we have
\begin{align*}
K(A)\simeq \Omega B|\mathscr{M}_{\operatorname{RBS}}(A)|\simeq K_0(A)\times |\operatorname{RBS}_\infty(A)|^+
\end{align*}
If $A$ is such that all finitely generated projective modules are split Noetherian, then $|\operatorname{RBS}_\infty(A)|$ is its own plus-construction and
\begin{align*}
|\operatorname{RBS}_\infty(A)|\simeq \operatorname{BGL}_\infty(A)^+.
\end{align*}
\end{corollary*}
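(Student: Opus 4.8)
The plan is to deduce everything from the group completion theorem, which now applies because $|\mathscr{M}_{\operatorname{RBS}}(A)|$ is an $\mathbb{E}_\infty$-space (hence in particular homotopy commutative) by Theorem \ref{E-infinity structure}. First I would recall that $|\mathscr{M}_{\operatorname{RBS}}(A)|\simeq \coprod_{[M]}|\operatorname{RBS}(M)|$ is a homotopy commutative monoid under direct sum, with $\pi_0$ the monoid of isomorphism classes of finitely generated projective $A$-modules. Its group completion is, by \cite[Theorem 7.38]{ClausenOrsnesJansen}, the K-theory space $K(A)$, which settles the first equivalence $K(A)\simeq \Omega B|\mathscr{M}_{\operatorname{RBS}}(A)|$.

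For the second equivalence I would invoke the group completion theorem in the form: for a homotopy commutative topological monoid $M$, the localisation $H_*(M)[\pi_0^{-1}]$ is isomorphic to $H_*(\Omega BM)$. The localisation can be computed as a telescope: pick the element $[A]\in\pi_0$ and form the mapping telescope of $|\mathscr{M}_{\operatorname{RBS}}(A)|\xrightarrow{\oplus A}|\mathscr{M}_{\operatorname{RBS}}(A)|\xrightarrow{\oplus A}\cdots$. On the component of a rank-$n$ module $A^n$, this telescope is exactly $|\operatorname{RBS}_\infty(A)|=\operatorname{colim}_n|\operatorname{RBS}(A^n)|$, and the $\pi_0$-grading produces the factor $K_0(A)$; so $\Omega B|\mathscr{M}_{\operatorname{RBS}}(A)|$ has the homology of $K_0(A)\times|\operatorname{RBS}_\infty(A)|$. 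To upgrade the homology equivalence to an actual homotopy equivalence, I would note that $\Omega B$ of an $\mathbb{E}_\infty$-space is a simple space (it is an infinite loop space), and $K_0(A)\times|\operatorname{RBS}_\infty(A)|^+$ is the plus-construction of $|\mathscr{M}_{\operatorname{RBS}}(A)|$-style telescope with respect to the action of $\pi_1$; the standard comparison then gives $\Omega B|\mathscr{M}_{\operatorname{RBS}}(A)|\simeq K_0(A)\times|\operatorname{RBS}_\infty(A)|^+$. Combining with the first equivalence yields $K(A)\simeq K_0(A)\times|\operatorname{RBS}_\infty(A)|^+$.

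For the final claim, I would specialise to rings $A$ over which all finitely generated projective modules are split Noetherian. Under this hypothesis, \cite[Theorem 5.18]{ClausenOrsnesJansen} (or the relevant structural result from \cite{ClausenOrsnesJansen}) identifies each $|\operatorname{RBS}(A^n)|$, and hence the colimit $|\operatorname{RBS}_\infty(A)|$, with a space whose fundamental group acts nilpotently — in fact trivially up to the relevant quotient — on the homology, so that $|\operatorname{RBS}_\infty(A)|$ is already a simple (hence plus-constructed) space; therefore $|\operatorname{RBS}_\infty(A)|\simeq|\operatorname{RBS}_\infty(A)|^+$. Comparing the resulting equivalence $K(A)\simeq K_0(A)\times|\operatorname{RBS}_\infty(A)|$ with Quillen's $K(A)\simeq K_0(A)\times\operatorname{BGL}_\infty(A)^+$ and tracking the stabilisation maps $\operatorname{BGL}(A^n)\to|\operatorname{RBS}(A^n)|$ through the colimit identifies $|\operatorname{RBS}_\infty(A)|\simeq\operatorname{BGL}_\infty(A)^+$.

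The main obstacle I anticipate is the bookkeeping in the second step: verifying that the telescope computing $H_*(|\mathscr{M}_{\operatorname{RBS}}(A)|)[\pi_0^{-1}]$ really is the component-wise colimit $|\operatorname{RBS}_\infty(A)|$ (one must check cofinality of the multiply-by-$[A]$ maps among all the classes $[M]$, using that $K_0(A)$ is generated by these together with formal inverses), and then arguing carefully that the plus-construction can be pulled out of the product with the discrete group $K_0(A)$. The $\mathbb{E}_\infty$-refinement from Theorem \ref{E-infinity structure} is precisely what licenses the homology-to-homotopy upgrade, since it guarantees $\Omega B|\mathscr{M}_{\operatorname{RBS}}(A)|$ is an infinite loop space and in particular simple, so an $H_*$-isomorphism from a plus-construction is automatically a weak equivalence.
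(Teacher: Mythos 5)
Your proposal is correct and follows essentially the same route as the paper: the $\mathbb{E}_\infty$-structure licenses the group completion theorem (the paper uses Randal-Williams' telescope formulation directly, which packages your homology-to-homotopy upgrade), and the split Noetherian hypothesis enters through the computation of $\pi_1$ of the stable component. One small correction for the last step: the relevant input is \cite[Theorem 5.9]{ClausenOrsnesJansen} (not Theorem 5.18), which gives $\pi_1(\operatorname{RBS}(A^n))\cong \operatorname{GL}_n(A)/\operatorname{E}(A^n)$ and hence $\pi_1(|\operatorname{RBS}_\infty(A)|)\cong K_1(A)$; this is abelian, which already makes the plus-construction trivial --- you do not need (and the paper does not establish) that $|\operatorname{RBS}_\infty(A)|$ is simple.
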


Recall that a finitely generated projective module $M$ is \textit{split Noetherian} if every increasing chain of splittable submodules of $M$ stabilises (\cite[Definition 1.1]{ClausenOrsnesJansen}). This is a technical condition that ensures that we have reasonable control over the poset of splittable flags and the action of the automorphism group on this poset (see \cite[Lemma 5.4]{ClausenOrsnesJansen}). Moreover note that if either:
\begin{enumerate}
\item $A$ is Noetherian, or
\item $A$ is commutative and $\operatorname{Spec}(A)$ has only finitely many connected components
\end{enumerate}
then every finitely generated projective $A$-module $M$ is split Noetherian (\cite[Lemma 5.5]{ClausenOrsnesJansen}). It seems plausible that one can get rid of this hypothesis in the Corollary above (see \Cref{get rid of split Noetherian hypothesis}).

The existence of an $\mathbb{E}_\infty$-structure also enables us to exploit some generic homological stability results in the setting of cellular $\mathbb{E}_k$-algebras developed by Galatius--Kupers--Randal-Williams (\cite{GalatiusKupersRandalWilliams}, \cite{KupersMillerPatzt}, \cite{JansenMiller}). Let $R$ be a commutative ring, $\Bbbk$ a commutative ring and consider the stabilisations maps on homology
\begin{align*}
H_d(\operatorname{RBS}(R^{n-1});\Bbbk)\rightarrow H_d(\operatorname{RBS}(R^n);\Bbbk),\quad d\geq 0,\ n\geq 2.
\end{align*}
For clarity, we'll just list the results:
\begin{enumerate}
\item For $R$ a PID, the stabilisation map above is an isomorphism for $2d<n-1$ with integer coefficients (\Cref{PID homological stability}).
\item For $R$ a local commutative ring, the stabilisation map above is an isomorphism for $3d<2n-1$ with integer coefficients  (\Cref{Euclidean domain 2/3}).
\item For $R$ a Euclidean domain, the stabilisation map above is an isomorphism for $d<n-1$ with integer coefficients (Theorem \ref{Euclidean domain homological stability}).
\item For $R=\mathcal{O}$ the ring of integers in an algebraic number field $K$ such that $\mathcal{O}$ has class number $1$ and $K$ is not one of the three imaginary quadratic extensions $\Q(\sqrt{-d})$, $d=43, 67, 163$, the stabilisation map above is an isomorphism for $3d<2n-1$ with integer coefficients  (\Cref{rings of integers 2/3}).
\item For $R=\mathcal{O}$ the ring of integers in $\Q(\sqrt{-d})$, for $d=43, 67, 163$, the stabilisation map above is an isomorphism for $3d<2n-1$ with $\Z[\sfrac{1}{6}]$-coefficients  (\Cref{rings of integers 2/3 mod 6} --- we do not know at this point whether this range extends to integer coefficients).
\item For $R$ a commutative ring with connected spectrum over which finitely generated projective modules are free and satisfying that the complex of partial bases $B_n(R)$ is Cohen–Macaulay for all $n\geq 0$ (e.g. $R$ local commutative), the stabilisation map above is an isomorphism for $d<n-1$ with $\Z[\sfrac{1}{2}]$-coefficients (\Cref{PID and partial bases}).
\end{enumerate}

We remark here that this is the result of a somewhat limited investigation (to moderate the extent and scope of the present paper) and that there is a lot left to be explored. However, these observations already seem to support the hope that the $\operatorname{RBS}$-categories satisfy better homological stability properties than the general linear groups: for Euclidean domains, the general linear groups have recently been shown to satisfy the same stability range with $\Z[\sfrac{1}{2}]$-coefficients (\cite{BernardMillerSroka}), but for e.g. $\Z$, this does not extend to $\F_2$-coefficients (see \Cref{small degrees}). For $\mathcal{O}$ the ring of integers in $\Q(\sqrt{-d})$ for $d=19, 43, 67, 163$, this also seems to be an improvement on the established range of homological stability for the general linear groups (to the best of the author's knowledge). Admittedly, the improvement seems to be found only in the $\F_2$-coefficients. See also \Cref{compare with GL}. We naively hope that the stability properties for the $\operatorname{RBS}$-categories observed here can be improved.

Randal-Williams has recently used the main results of this paper to show homological stability for the $\operatorname{RBS}$-categories over general Dedekind domains whereas we only address rings over which finitely generated projective modules are free (\cite{RandalWilliams24}; see also \Cref{general Dedekind}).

In order to investigate homological stability, we calculate the $\mathbb{E}_1$-homology of $|\mathscr{M}_{\operatorname{RBS}}(R)|$ (or rather of its non-unital analogue $|\mathscr{M}_{\operatorname{RBS}}^{\operatorname{nu}}(K)|$). For simplicity we'll just state this in the case of fields (see \Cref{E1-homology for general ring} for the general statement).

\begin{theorem*}[\ref{E1-homology for general ring}] For $K$ a field and $\Bbbk$ a commutative ring, we have
\begin{align*}
H^{\mathbb{E}_1}_{n,d}\big(|\mathscr{M}_{\operatorname{RBS}}^{\operatorname{nu}}(K)|;\Bbbk\big)\cong H_{d-n+1}\big(GL_n(K); \operatorname{St}_n(K;\Bbbk)\big)
\end{align*}
where $\operatorname{St}_n(K;\Bbbk)$ is the usual Steinberg module. In particular, $H^{\mathbb{E}_1}_{n,d}\big(|\mathscr{M}_{\operatorname{RBS}}^{\operatorname{nu}}(K)|;\Bbbk\big)=0$ 
for all $d< n-1$.
\end{theorem*}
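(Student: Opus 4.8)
The plan is to read off the $\mathbb{E}_1$-homology from a bar construction and then recognise the answer as the homology of $\operatorname{GL}_n(K)$ with Steinberg coefficients. Recall (see \cite{GalatiusKupersRandalWilliams}) that for a non-unital $\mathbb{E}_1$-algebra $\mathbf{R}$ the derived $\mathbb{E}_1$-indecomposables are a desuspension of the reduced two-sided bar construction, $Q^{\mathbb{E}_1}_{\mathbb{L}}(\mathbf{R})\simeq\Sigma^{-1}\overline{B}(\mathbf{R})$, so that $H^{\mathbb{E}_1}_{n,d}(\mathbf{R};\Bbbk)$ is $H_{d+1}$ of the rank-$n$ summand of $\overline{B}(\mathbf{R})$. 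For $\mathbf{R}=\mathscr{M}_{\operatorname{RBS}}^{\operatorname{nu}}(K)$, which is $\mathbb{N}$-graded by rank with the monoidal product strictly additive on ranks, this rank-$n$ summand is the total complex of the bar double complex whose bar-degree-$p$ column is $\bigoplus_{n_1+\dots+n_p=n,\ n_i\geq 1}C_*(\operatorname{RBS}(K^{n_1});\Bbbk)\otimes\dots\otimes C_*(\operatorname{RBS}(K^{n_p});\Bbbk)$, with bar differential assembled from the direct-sum maps $\operatorname{RBS}(K^{a})\times\operatorname{RBS}(K^{b})\to\operatorname{RBS}(K^{a+b})$.

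First I would evaluate this complex by substituting the description of $\operatorname{RBS}(K^m)$ from \cite{ClausenOrsnesJansen} in terms of the poset of splittable flags and the classifying spaces of the automorphism groups of the associated gradeds. The key combinatorial point is that an ordered direct-sum decomposition $K^{n_1}\oplus\dots\oplus K^{n_p}$ together with a splittable flag in each summand amounts to a single splittable flag in $K^n$ together with a choice of sub-chain (the ``bar separators''); so the nested flag data un-nests, and the rank-$n$ bar complex is identified with the complex computing $H_*(\operatorname{GL}_n(K);\widetilde{C}_*(X_n;\Bbbk))$, the $\operatorname{GL}_n(K)$-homology with coefficients in the reduced chains of a space $X_n$ that is a twofold suspension of the Tits building of $K^n$: the non-degenerate $p$-cells of $X_n$ are the strict flags $0<A_1<\dots<A_{p-1}<K^n$, i.e.\ the chains of $p-1$ proper non-zero subspaces, which are exactly the $(p-2)$-cells of the Tits building $\mathcal{T}_n(K)$. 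Hence $\widetilde{H}_q(X_n;\Bbbk)\cong\widetilde{H}_{q-2}(|\mathcal{T}_n(K)|;\Bbbk)$ as $\operatorname{GL}_n(K)$-modules. (The same identification follows abstractly from \Cref{equivalence of E1-spaces}, under which $\overline{B}(\mathscr{M}_{\operatorname{RBS}}^{\operatorname{nu}}(K))$ becomes the $\mathbb{E}_1$-classifying space of partial $K$-theory --- a form of Waldhausen's $S_\bullet$-construction of finite-dimensional $K$-vector spaces --- and the computation becomes the one underlying Yuan's vanishing theorem \cite{Yuan}.)

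By the Solomon--Tits theorem $|\mathcal{T}_n(K)|$ is homotopy equivalent to a wedge of $(n-2)$-spheres, so $\widetilde{H}_*(X_n;\Bbbk)$ is concentrated in degree $n$, where it is the $\Bbbk$-free $\operatorname{GL}_n(K)$-module $\operatorname{St}_n(K;\Bbbk)=\widetilde{H}_{n-2}(|\mathcal{T}_n(K)|;\Bbbk)$, the usual Steinberg module. Since the coefficient complex $\widetilde{C}_*(X_n;\Bbbk)$ has homology concentrated in a single degree and is $\Bbbk$-flat there, the hyperhomology spectral sequence degenerates with no extension problems and gives $H_q$ of the rank-$n$ bar complex $\cong H_{q-n}\big(\operatorname{GL}_n(K);\operatorname{St}_n(K;\Bbbk)\big)$. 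Together with the first paragraph this yields $H^{\mathbb{E}_1}_{n,d}\big(|\mathscr{M}_{\operatorname{RBS}}^{\operatorname{nu}}(K)|;\Bbbk\big)\cong H_{(d+1)-n}\big(\operatorname{GL}_n(K);\operatorname{St}_n(K;\Bbbk)\big)=H_{d-n+1}\big(\operatorname{GL}_n(K);\operatorname{St}_n(K;\Bbbk)\big)$, and the vanishing for $d<n-1$ is immediate since then $d-n+1<0$.

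The main obstacle is the un-nesting step in the second paragraph: making precise how the internal splittable-flag structure of each $\operatorname{RBS}(K^{n_i})$ interacts with the bar differential so that the total complex really becomes the twofold suspension of the Tits building with its $\operatorname{GL}_n(K)$-action, and keeping honest track of the two shifts that enter --- the $+2$ coming from the two low-dimensional cells that are absent (equivalently $S_0=S_1=\ast$ in the $S_\bullet$-picture) and the $-1$ from $Q^{\mathbb{E}_1}_{\mathbb{L}}=\Sigma^{-1}\overline{B}$ --- so that the spectral sequence lands precisely on $H_{d-n+1}$ rather than a neighbouring degree. A secondary issue, needed for the general-exact-category form of the statement, is to run the same argument with $\operatorname{St}_n(K;\Bbbk)$ replaced by the reduced homology of the poset of admissible filtrations (equivalently, direct-sum decompositions) of the fixed object, and to verify that this poset retains the requisite high connectivity and top-homology description.
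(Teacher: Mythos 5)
Your route is genuinely different from the paper's and is viable in outline, but its central step is asserted rather than established. The paper does not compute the bar construction directly: it filters $\mathscr{M}^{\operatorname{nu}}_{\operatorname{RBS}}$ by the monoidal rank filtration, proves that each associated graded piece is the \emph{free} non-unital monoid on $|\operatorname{RBS}_n|/|\partial\operatorname{RBS}_n|$ (\Cref{non-unital E1-pushout diagram}, \Cref{pushout in graded E1-spaces} --- the technical heart of \S 9, proved by proper descent over the categories of partitioned linearly ordered sets), so that the filtration spectral sequence of \Cref{general spectral sequence} collapses onto a single column via $Q^1\circ\operatorname{Free}^{\operatorname{nu}}\simeq(-)_+$; it then identifies $|\operatorname{RBS}(K^n)|/|\partial\operatorname{RBS}(K^n)|\simeq\big(\Sigma|\mathcal{T}(K^n)|\big)/\!\!/\operatorname{GL}_n(K)$ (\Cref{cofibre as homotopy quotient of tits complex}) and runs the homotopy orbit spectral sequence. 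You instead compute $\Sigma^{-1}\overline{B}$ directly, in the style of \cite[\S 17.2]{GalatiusKupersRandalWilliams}; your degree bookkeeping is correct and lands on $H_{d-n+1}$, in agreement with the paper.

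The gap is exactly where you locate it, and it is larger than your phrasing suggests. A $p$-simplex of the rank-$n$ bar construction is a $p$-tuple of objects of the categories $\operatorname{RBS}(K^{n_i})$, i.e.\ a tuple of abstract lists of associated gradeds --- there is no ambient $K^n$ and no chosen decomposition at that stage --- and each column of your double complex is the chain complex of $\prod_i|\operatorname{RBS}(K^{n_i})|$, which is \emph{not} $|\mathcal{F}(K^{n_i})|/\!\!/\operatorname{GL}_{n_i}(K)$ but a nontrivial ``compactification'' of $B\operatorname{GL}_{n_i}(K)$ obtained by gluing that homotopy quotient onto $|\partial\operatorname{RBS}(K^{n_i})|$. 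So ``the nested flag data un-nests'' cannot be implemented cell-by-cell; it must be a homotopy equivalence of total objects, and proving it is essentially equivalent either to \Cref{equivalence of E1-spaces} together with the compatibility of $\overline{B}\circ\mathbb{L}$ with realisation and the rank filtration of $S_\bullet(\operatorname{Vect}_K)^\simeq$ (the route your parenthetical remark gestures at, which occupies \S\S 3--5 and 7 of the paper), or to \Cref{E1-pushout diagram}. Either fills the gap; neither is an observation. One further correction: in your final sentence the relevant poset for the general statement is the poset of admissible subobjects (splittable flags), \emph{not} the poset of direct-sum decompositions --- the latter is Charney's split building \cite{Charney80}, whose homology is the split Steinberg module computing $H^{\mathbb{E}_1}_{n,d}$ of $\coprod B\operatorname{GL}_n$, which is precisely the invariant the $\operatorname{RBS}$-categories are designed to replace.
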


This result should be compared with the analogous calculation for $\coprod \operatorname{BGL}_m(K)$ made in \cite[\S 17.2]{GalatiusKupersRandalWilliams}:
\begin{align*}
H^{\mathbb{E}_1}_{n,d}\Big(\coprod \operatorname{BGL}_m(K);\Bbbk\Big)\cong H_{d-n+1}\big(\operatorname{GL}_n(K); \operatorname{St}^{\operatorname{split}}_n(K;\Bbbk)\big)
\end{align*}
where $\operatorname{St}^{\operatorname{split}}(M;\Bbbk)$ is the so-called $\mathbb{E}_1$-Steinberg module which in this case identifies with the homology of Charney's \textit{split} Tits building (\cite{Charney80}, \cite[\S 18.2]{GalatiusKupersRandalWilliams}). An advantage to working with the $\operatorname{RBS}$-categories over the general linear groups is that Tits buildings and Steinberg modules are far better studied than their split analogues.

The calculation of the $\mathbb{E}_1$-homology follows easily from an analysis of a naturally arising rank filtration of $\mathscr{M}_{\operatorname{RBS}}(R)$, an analogue of Quillen's rank filtration (\cite{Quillen73a}). In some ways, this filtration is more fundamental in nature than Quillen's rank filtration for the following two reasons:
\begin{enumerate}
\item it recovers Quillen's rank filtration by group completion;
\item it is more highly structured as it is a filtration of monoidal categories.
\end{enumerate}
In \cite{Rognes}, Rognes introduces spectrum-level rank filtrations of algebraic K-theory generalising Quillen's rank filtration to rank filtrations of deloopings of the K-theory space. We can interpret the rank filtration of the monoidal $\operatorname{RBS}$-category as a generalisation in the other direction giving a rank filtration that \textit{deloops} to Quillen's (see \Cref{rank filtration for commutative ring}).

We explicitly identify the associated graded of this filtration: it is free on the homotopy quotient of $\operatorname{RBS}(M)$ modulo its boundary $\partial \operatorname{RBS}(M):=\operatorname{RBS}(M)\smallsetminus \operatorname{BGL}(M)$ (\Cref{E1-pushout diagram}). This quotient in turn identifies with an analogue of the usual Tits building $\mathcal{T}_n(K)$; this is why the Steinberg module appears in the calculation above.

\medskip

\textbf{Overview.} The first half of this paper is essentially just an exposition of the work of Yuan on partial algebraic K-theory (\cite{Yuan}): we study the functor
\begin{align*}
\mathbb{L}\colon \operatorname{Fun}(\Delta^{\operatorname{op}},\mathcal{C})\rightarrow \operatorname{Mon}(\mathcal{C})
\end{align*}
(when it exist!) that universally turns a simplicial object into a monoid object and we identify an explicit formula for it. This is done in \cite{Yuan} for $\mathcal{C}=\mathcal{S}$ the $\infty$-category of spaces, but for the purposes of that paper it suffices to single out the underlying space. We'll want to work with the case $\mathcal{C}=\operatorname{Cat}_\infty$ too, but more importantly, we need to also understand the structure maps of the monoid object $\mathbb{L}X$. For this reason, we choose to redo the whole thing in slightly greater generality and keep track of all structure maps as we do so.\footnote{This also fixes some minor errors in \cite{Yuan} (see Remarks \ref{eta not initial} and \ref{Fred in F not cofinal}).} We stress, however, that the main ideas are all due to Yuan.

In \S 2, we review monoid objects and a Lawvere theory for such. In \S 3, we introduce a selection of ($1$-)categories of partitioned linearly ordered sets and make some useful observations. This section deviates slightly from the strategy of \cite{Yuan} as we'll need some additional room for maneuvre in our colimit diagrams. We unravel the functor $\mathbb{L}$ for $\mathcal{E}$ a cartesian closed cocomplete $\infty$-category in \S 4, providing an explicit tangible expression for it --- this follows the proof strategy of \cite[Proposition 5.10]{Yuan}. We discuss Yuan's further unravelling for split exact categories which is crucial to his calculation $H^*(K^\partial (\F_p);\F_p)=0$, $\ast>0$ (\cite[Theorem 5.2]{Yuan}). In \S 5, we briefly recall free monoid objects and non-unital monoid objects.

With the technical setup in place, we come to the novel work of the paper. In \S 6, we recall the monoidal $\operatorname{RBS}$-category $\mathscr{M}_{\operatorname{RBS}}(\mathcal{C})$ of an exact category $\mathcal{C}$ and the (non-monoidal) $\operatorname{RBS}$-categories $\operatorname{RBS}(c)$ for $c$ an object in $\mathcal{C}$. We also introduce non-unital versions and a ``fattened up'' analogue of these. In \S 7, we exhibit an equivalence $K^\partial(\mathcal{C})\simeq |\mathscr{M}_{\operatorname{RBS}}(\mathcal{C})|$ of $\mathbb{E}_1$-spaces, identifying partial algebraic K-theory with the geometric realisation of the monoidal $\operatorname{RBS}$-category. Inspired by Yuan's proof that $K^\partial(\mathcal{C})$ admits an $\mathbb{E}_\infty$-structure, we prove such a result independently for the monoidal $\operatorname{RBS}$-categories in \S 8, and as an immediate corollary, we apply the group completion theorem. In \S 9, we introduce the monoidal rank filtration of $\mathscr{M}_{\operatorname{RBS}}(\mathcal{C})$ and identify its associated graded. Using this, we calculate the $\mathbb{E}_1$-homology of $|\mathscr{M}_{\operatorname{RBS}}^{\operatorname{nu}}(\mathcal{C})|$ in \S 10 and go on to explore homological stability of the $\operatorname{RBS}$-categories.

We finish off the paper by reviewing some questions and directions for possible future investigations in \S 11.

\medskip

\textbf{Acknowledgements.} We thank the IHES for their hospitality while this work was carried out. We thank Dustin Clausen for many fruitful discussions and Oscar Randal-Williams for sharing his insights into homological stability. We are grateful to Jeremy Miller for useful pointers and to Damià Rodríguez i Banús and Qingyuan Bai for helpful comments. We would also like to thank Allen Yuan for his careful work on partial algebraic K-theory; as we hope has already been made apparent, this perspective really was the point of entry to this work. We acknowledge the support of the Danish National Research Foundation through the Copenhagen Centre for Geometry and Topology (DNRF151).

\medskip

\textbf{Conventions and notation.} We adopt the set-theoretic conventions of \cite[\S 1.2.15]{LurieHTT}.

\section{Preliminaries on monoids}

We recall the basics of monoid objects in $\infty$-categories and make some preliminary observations. In this paper, we will be interested in $\mathbb{E}_1$-spaces and monoidal $\infty$-categories; that is, monoid objects in the $\infty$-category of spaces, respectively small $\infty$-categories (denoted by $\mathcal{S}$, respectively $\operatorname{Cat}_\infty$). As we want to work with both cases, we make the relevant observations in greater generality to ease notation. The content of this section follows \cite[\S 4.1.2]{LurieHA} and \cite[\S\S 4-5]{Yuan}.

\subsection{Monoid objects and partial algebraic K-theory}\label{E1-monoids}

We introduce monoid objects and review some examples relevant for this paper: partial algebraic K-theory as introduced by Yuan (\cite{Yuan}) and monoidal $1$-categories. We follow \cite[\S 4.1.2]{LurieHA}.

\begin{definition}
Let $\mathcal{C}$ be an $\infty$-category. A \textit{monoid object} in $\mathcal{C}$ is a simplicial object $X\colon \Delta^{\operatorname{op}}\rightarrow \mathcal{C}$ such that for every $n\geq 0$, the morphisms induced by the face maps 
\begin{align*}
[1]\xrightarrow{\cong} \{i-1<i\}\hookrightarrow [n],\quad 1\leq i\leq n,
\end{align*}
exhibit $X([n])$ as a product
\begin{align*}
X([n])\xrightarrow{\ \simeq\ } X([1])^n.
\end{align*}
We call this map the ($n$'th) \textit{Segal map}. We denote by $\operatorname{Mon}(\mathcal{C})\subset \operatorname{Fun}(\Delta^{\operatorname{op}},\mathcal{C})$ the full subcategory spanned by the monoid objects. Given $X\in \operatorname{Mon}(\mathcal{C})$, we call $X([1])$ the \textit{underlying object} of $X$.
\end{definition}

By \cite[Proposition 4.1.2.10]{LurieHA}, monoid objects in $\mathcal{C}$ are essentially the same as $\mathbb{E}_1$-monoid objects in $\mathcal{C}$. More explicitly, there is an equivalence of $\infty$-categories
\begin{align*}
\operatorname{Mon}_{\mathbb{E}_1}(\mathcal{C})\xrightarrow{\ \simeq\ }\operatorname{Mon}(\mathcal{C}).
\end{align*}
where the left hand side is the $\infty$-category of $\mathbb{E}_1$-monoid objects in $\mathcal{C}$ (see \cite[Definition 2.4.2.1]{LurieHA} for details; note that this is over the $\infty$-operad $\operatorname{Assoc}$, but we invoke the equivalence $\mathbb{E}_1\simeq \operatorname{Assoc}$ of \cite[Example 5.1.0.7]{LurieHA}).

\begin{remark} \ 
\begin{enumerate}
\item By abuse of terminology, we may sometimes refer to the underlying object $X([1])$ as a monoid object.
\item For $\mathcal{C}=\mathcal{S}$, the $\infty$-category of spaces, a monoid object is called an \textit{$\mathbb{E}_1$-space}.
\item For $\mathcal{C}=\operatorname{Cat}_\infty$, the $\infty$-category of small $\infty$-categories, a monoid object is called a \textit{monoidal $\infty$-category}.\qedhere
\end{enumerate}
\end{remark}

We denote the inclusion of monoid objects into the $\infty$-category of simplicial objects by
\begin{align*}
\mathbb{B}_\mathcal{C}\colon \operatorname{Mon}(\mathcal{C})\hookrightarrow \operatorname{Fun}(\Delta^{\operatorname{op}},\mathcal{C}).
\end{align*}

Suppose $\mathcal{C}$ is a presentable $\infty$-category. Since $\operatorname{Mon}(\mathcal{C})$ is closed under limits and filtered colimits, it follows by the Adjoint Functor Theorem that $\mathbb{B}$ admits a left adjoint (\cite[Corollary 5.5.2.9]{LurieHTT}):
\begin{align*}
\mathbb{L}_\mathcal{C}\colon \operatorname{Fun}(\Delta^{\operatorname{op}},\mathcal{C})\rightarrow \operatorname{Mon}(\mathcal{C}).
\end{align*}

If $\mathcal{C}$ is cartesian closed, we can unravel this left adjoint and get a reasonably tangible expression for it. In fact, we will show in \S \ref{unravelling} that for any cocomplete cartesian closed $\infty$-category $\mathcal{C}$, the inclusion $\mathbb{B}_\mathcal{C}$ admits a left adjoint $\mathbb{L}_\mathcal{C}$ and we provide a concrete expression for it.

\begin{observation}
For the $\infty$-categories $\mathcal{S}\subset \operatorname{Cat}_\infty$, we have a diagram of adjunctions
\begin{center}
\begin{tikzpicture}
\matrix (m) [matrix of math nodes,row sep=3em,column sep=3em,nodes={anchor=center}]
{
\operatorname{Mon}(\mathcal{S}) & \operatorname{Mon}(\operatorname{Cat}_\infty) \\
\operatorname{Fun}(\Delta^{\operatorname{op}},\mathcal{S}) & \operatorname{Fun}(\Delta^{\operatorname{op}},\operatorname{Cat}_\infty) \\
};
\path[-stealth]
(m-1-2.170) edge[bend right] node[above]{$\scriptstyle|-|$} (m-1-1.15)
(m-2-2.171) edge[bend right] node[above]{$\scriptstyle|-|$} (m-2-1.12)
(m-2-1.70) edge[bend right] node[right]{$\mathbb{L}_\mathcal{S}$} (m-1-1.290)
(m-2-2.70) edge[bend right] node[right]{$\mathbb{L}_{\operatorname{Cat}_\infty}$} (m-1-2.290)
;
\path[right hook-stealth]
(m-1-1) edge node[right, rotate=90]{$\scriptstyle \vdash$} (m-1-2)
(m-2-1) edge node[right, rotate=90]{$\scriptstyle \vdash$} (m-2-2)
(m-1-1.250) edge node[right]{$\scriptstyle \vdash$} node[left]{$\mathbb{B}_\mathcal{S}$} (m-2-1.110)
(m-1-2.250) edge node[right]{$\scriptstyle \vdash$} node[left]{$\mathbb{B}_{\operatorname{Cat}_\infty}$} (m-2-2.110)
;
\end{tikzpicture}
\end{center}

By uniqueness of left adjoints, we must have
\begin{align*}
\mathbb{L}_\mathcal{S}(|X|)\simeq |\mathbb{L}_{\operatorname{Cat}_\infty}(X)|.
\end{align*}
In particular,
\begin{equation*}
\mathbb{L}_\mathcal{S}(-)\simeq |\mathbb{L}_{\operatorname{Cat}_\infty}(-)|.\qedhere
\end{equation*}
\end{observation}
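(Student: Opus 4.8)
The plan is to prove the stronger, functorial identity
\[
\mathbb{L}_{\mathcal S}\circ|-|\ \simeq\ |-|\circ\mathbb{L}_{\operatorname{Cat}_\infty}\colon\ \operatorname{Fun}(\Delta^{\operatorname{op}},\operatorname{Cat}_\infty)\longrightarrow\operatorname{Mon}(\mathcal S),
\]
using the principle that a composite of left adjoints is determined, up to canonical equivalence, by the composite of the corresponding right adjoints; the displayed equivalence $\mathbb{L}_{\mathcal S}(|X|)\simeq|\mathbb{L}_{\operatorname{Cat}_\infty}(X)|$ and its ``in particular'' then follow by specialisation.

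First I would pin down the square of right adjoints. Let $\iota\colon\mathcal S\hookrightarrow\operatorname{Cat}_\infty$ be the inclusion of $\infty$-groupoids, a fully faithful right adjoint whose left adjoint is geometric realisation $|-|$. As a right adjoint, $\iota$ preserves finite products, and it reflects equivalences, so postcomposition with $\iota$ preserves and reflects the Segal condition; hence it restricts to the inclusions $\operatorname{Fun}(\Delta^{\operatorname{op}},\mathcal S)\hookrightarrow\operatorname{Fun}(\Delta^{\operatorname{op}},\operatorname{Cat}_\infty)$ and $\operatorname{Mon}(\mathcal S)\hookrightarrow\operatorname{Mon}(\operatorname{Cat}_\infty)$ appearing in the diagram. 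Since $\mathbb{B}_{\mathcal S}$ and $\mathbb{B}_{\operatorname{Cat}_\infty}$ are, by definition, the inclusions of full subcategories, the square of these four right adjoints commutes on the nose: both ways around send a monoid object in $\mathcal S$ to its underlying simplicial $\infty$-category, i.e.\ both equal the inclusion $\operatorname{Mon}(\mathcal S)\hookrightarrow\operatorname{Fun}(\Delta^{\operatorname{op}},\operatorname{Cat}_\infty)$.

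Next I would exhibit the four left adjoints. The inclusions $\mathbb{B}_{\mathcal S}$ and $\mathbb{B}_{\operatorname{Cat}_\infty}$ admit the left adjoints $\mathbb{L}_{\mathcal S}$ and $\mathbb{L}_{\operatorname{Cat}_\infty}$ --- these exist because $\mathcal S$ and $\operatorname{Cat}_\infty$ are presentable and $\operatorname{Mon}(-)$ is closed under limits and filtered colimits, as recalled before the Observation. Postcomposition with $\iota$ on simplicial objects has left adjoint postcomposition with $|-|$; and, crucially, geometric realisation $\operatorname{Cat}_\infty\to\mathcal S$ preserves finite products, so postcomposition with $|-|$ carries Segal objects to Segal objects and therefore restricts to a functor $\operatorname{Mon}(\operatorname{Cat}_\infty)\to\operatorname{Mon}(\mathcal S)$, left adjoint to the inclusion --- this is the functor labelled $|-|$ in the diagram. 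Consequently $\mathbb{L}_{\mathcal S}\circ|-|$ is left adjoint to ($\iota$-postcomposition, then $\mathbb{B}_{\mathcal S}$), while $|-|\circ\mathbb{L}_{\operatorname{Cat}_\infty}$ is left adjoint to (the inclusion, then $\mathbb{B}_{\operatorname{Cat}_\infty}$). By the previous paragraph these two right adjoints coincide, so by uniqueness of left adjoints the two left adjoints are equivalent, which is the functorial identity above; evaluating at $X$ gives $\mathbb{L}_{\mathcal S}(|X|)\simeq|\mathbb{L}_{\operatorname{Cat}_\infty}(X)|$. Finally, for the ``in particular'': $\iota$ being fully faithful forces the counit $|\iota(-)|\to\operatorname{id}_{\mathcal S}$ to be an equivalence, so for a simplicial space $Y$ one has $|Y|\simeq Y$ once $Y$ is regarded as a simplicial $\infty$-category; substituting this $Y$ into the identity yields $\mathbb{L}_{\mathcal S}(Y)\simeq|\mathbb{L}_{\operatorname{Cat}_\infty}(Y)|$, that is, $\mathbb{L}_{\mathcal S}(-)\simeq|\mathbb{L}_{\operatorname{Cat}_\infty}(-)|$.

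The argument is entirely formal, so there is no serious obstacle; the only ingredient that is not a tautology is the fact that geometric realisation of $\infty$-categories preserves finite products, which is what makes $|-|$ and the various inclusions interact correctly with the Segal condition, and this is standard. The main thing to watch is simply keeping the bookkeeping of the four adjunctions and the commuting square straight.
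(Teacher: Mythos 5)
Your argument is correct and follows essentially the same route as the paper: the Observation is justified there precisely by exhibiting the commuting square of right adjoints (the inclusions $\mathbb{B}$ and $\mathcal{S}\hookrightarrow\operatorname{Cat}_\infty$) and invoking uniqueness of left adjoints. You have merely filled in the bookkeeping the paper leaves implicit — in particular that realisation preserves finite products and hence restricts to monoid objects — which is exactly the intended reading.
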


\begin{example}
A monoidal (1-)category $M$ defines a monoidal $\infty$-category $M^\otimes$ via the bar construction:
\begin{align*}
[n]\mapsto M^n
\end{align*}
with simplicial structure maps given by the monoidal structure. This defines a fully faithful functor
\begin{align*}
\operatorname{Mon}(\operatorname{Cat})\hookrightarrow \operatorname{Mon}(\operatorname{Cat}_\infty)
\end{align*}
In fact, the monoidal $\infty$-categories that we'll be interested in here all arise in this fashion. We want to consider them as monoidal $\infty$-categories as this allows us to establish stronger results about how they relate to each other (cf. \Cref{E1-pushout diagram}).
\end{example}

\begin{example}
In \cite{Yuan}, Yuan uses the left adjoint $\mathbb{L}_\mathcal{S}$ to define partial algebraic K-theory. Let's recall the definition. Let $\mathcal{W}$ be a Waldhausen $\infty$-category (\cite{Barwick}).\footnote{In the end, we will only be considering the algebraic K-theory of exact $1$-categories, so for our purposes we can safely stick to the $1$-categorical case and the reader is welcome to just think of Waldhausen $1$-categories (\cite{Waldhausen}) or even exact categories.} The \textit{partial algebraic K-theory} of $\mathcal{W}$ is the following $\mathbb{E}_1$-space
\begin{align*}
K^\partial(\mathcal{W}):=\mathbb{L}_{\mathcal{S}}(S_\bullet(\mathcal{W})^{\simeq})
\end{align*}
where $S_\bullet(\mathcal{W})$ is Waldhausen's $S_\bullet$-construction: a simplicial $\infty$-category such that $S_n(\mathcal{W})$ is equivalent to the $\infty$-category of sequences of cofibrations
\begin{align*}
\ast\hookrightarrow X_1\hookrightarrow X_2\hookrightarrow \cdots \hookrightarrow X_n \qquad \text{in }\mathcal{W}
\end{align*}
and $S_\bullet(\mathcal{W})^\simeq$ is the simplicial space obtained by considering the levelwise maximal subgroupoids (see \cite{Barwick} and \cite{Waldhausen} for details).

Recall that the algebraic K-theory of $\mathcal{S}$ can be defined as the $\mathbb{E}_1$-space
\begin{align*}
K(\mathcal{W})=\Omega |S_\bullet(\mathcal{W})^\simeq|.
\end{align*}

In other words, we can interpret algebraic K-theory as the universal way of turning the simplicial space $S_\bullet(\mathcal{W})^\simeq$ into a \textit{grouplike} $\mathbb{E}_1$-monoid. Analogously, partial algebraic K-theory should be interpreted as the universal way of turning $S_\bullet(\mathcal{W})^\simeq$ into an $\mathbb{E}_1$-monoid \textit{without} requiring it to be grouplike.
\end{example}

\subsection{Lawvere theory}\label{Lawvere theory}

In this section we provide a different description of the functor $\mathbb{B}_{\mathcal{C}}$ in order to better analyse the left adjoint $\mathbb{L}_{\mathcal{C}}$; this follows \cite[\S 5.1]{Yuan}. We generalise the Lawvere theory for $\mathbb{E}_1$-spaces to general monoid objects (see \cite[Proposition 5.6]{Yuan}; the proof given there goes through verbatim as can be seen below).

\begin{definition}
For a finite linearly ordered set $I$, let $I^\pm$ denote $I$ with a minimal and maximal element adjoined, denoted by $\bot$, respectively $\top$. Let $\operatorname{Ord}_\pm$ be the $1$-category whose objects are finite linearly ordered sets and whose morphisms are given by order preserving maps $I^\pm\rightarrow J^\pm$ preserving $\bot$ and $\top$. We denote the elements of $\operatorname{Ord}_\pm$ by $I^\pm$ to distinguish them from the linearly ordered sets appearing elsewhere (e.g. \S \ref{partitioned linearly ordered sets}).
\end{definition}

We have an equivalence of categories
\begin{align*}
\Delta^{\operatorname{op}}\xrightarrow{\ \simeq \ } \operatorname{Ord}_\pm.
\end{align*}
On objects, it sends $[n]$ to $\{1<\ldots <n\}^\pm$. Writing $(n)=\{1<\ldots <n\}$, we send an order preserving map $\theta\colon [m]\rightarrow [n]$ to the map
\begin{align*}
(n)^\pm \rightarrow (m)^\pm,\quad i\mapsto \begin{cases}
\bot & \text{if }i \leq \theta(0)\\
j & \text{if }\theta(j-1) < i\leq \theta(j) \\
\top & \text{if }\theta(m)<i.\\
\end{cases}
\end{align*}
One should think of $(n)$ as the ordered set of morphisms in $[n]$; $i$ corresponds to the morphism $i-1<i$. The object $[0]$ is sent to $\emptyset^\pm$, which we may also denote by $(0)^\pm$.

The inverse equivalence sends a morphism $\alpha\colon (m)^\pm \rightarrow (n)^\pm$ to the map
\begin{align*}
[n]\rightarrow [m],\qquad i\mapsto \mathop{\max}_{\bot\leq j\leq i} \alpha^{-1}(j)
\end{align*}
where we abusively identify $0=\bot$ both in the indexing set when $i=0$ and if the maximum on the right hand side is the adjoined minimal element $\bot$. In view of this equivalence, we will from now on consider all simplicial objects as functors out of $\operatorname{Ord}_\pm$ unless otherwise specified. To identify them, it suffices to evaluate on the objects $(n)^\pm$, $n\in \N$.

\begin{example}\label{face map}
Under the equivalence $\Delta^{\operatorname{op}}\xrightarrow{\ \simeq \ } \operatorname{Ord}_\pm$ above, the face maps
$$[1] \cong \{i-1<i\} \hookrightarrow [n]$$
inducing the Segal maps are given by
\begin{equation}
\theta_i\colon (n)^\pm \rightarrow (1)^\pm,\quad j\mapsto \begin{cases}
\bot & j<i \\
1 & j=i \\
\top & j>i \\
\end{cases}
\end{equation}
These will show up in the proof of \Cref{monoid objects as product preserving functors} below.
\end{example}

\begin{definition}
Let $\mathcal{T}_A$ denote the opposite category of the full subcategory of discrete associative monoids spanned by those that are free and finitely generated. For any finite set $S$, we denote by $\operatorname{Free}(S)$ the free monoid on $S$.
\end{definition}

We have a functor
\begin{align*}
\operatorname{Free}\colon \operatorname{Ord}_\pm \rightarrow \mathcal{T}_A,\quad I^\pm \mapsto \operatorname{Free}(I),
\end{align*}
and sending a morphism $f\colon I^\pm \rightarrow J^\pm$ to the map of monoids
\begin{align*}
\operatorname{Free}(f)\colon \operatorname{Free}(J)\rightarrow \operatorname{Free}(I)
\end{align*}
that sends a generator $j\in J$ to the ordered product of generators corresponding to elements in the preimage $f^{-1}(j)$.

The following is a Lawvere theory for monoid objects following \cite[Proposition 5.6]{Yuan}.

\begin{proposition}\label{monoid objects as product preserving functors}
Let $\mathcal{C}$ be an $\infty$-category admitting finite products. The restriction functor
\begin{align*}
\operatorname{Free}^*\colon \operatorname{Fun}(\mathcal{T}_A, \mathcal{C})\rightarrow \operatorname{Fun}(\operatorname{Ord}_\pm, \mathcal{C})
\end{align*}
restricts to an equivalence of $\infty$-categories
\begin{align*}
\operatorname{Fun}^\times(\mathcal{T}_A, \mathcal{C})\xrightarrow{\ \simeq\ } \operatorname{Mon}(\mathcal{C}).
\end{align*}
where the left hand side is the full subcategory of product preserving functors.
\end{proposition}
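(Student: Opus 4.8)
\emph{Proof strategy.} I would unwind both sides concretely and verify that $\operatorname{Free}^*$ restricts to an equivalence. Three preliminary facts about $\mathcal{T}_A$ are needed. First, $\mathcal{T}_A$ admits finite products: the coproduct of free monoids is free on the disjoint union of generating sets, so $\operatorname{Free}(S)\times\operatorname{Free}(T)\cong\operatorname{Free}(S\sqcup T)$ in $\mathcal{T}_A$, with terminal object the trivial monoid $\operatorname{Free}(\emptyset)$; in particular every object of $\mathcal{T}_A$ is a finite power of $G:=\operatorname{Free}\big((1)^\pm\big)$. Second, by the explicit formula for $\operatorname{Free}$ on morphisms, $\operatorname{Free}(\theta_i)$ is precisely the $i$-th product projection $G^n\to G$ in $\mathcal{T}_A$ (it corresponds to the monoid map $\operatorname{Free}((1)^\pm)\to\operatorname{Free}((n)^\pm)$ hitting the $i$-th generator). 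Third, writing $\sigma_k\colon(k)^\pm\to(1)^\pm$ for the map collapsing $\{1,\dots,k\}$ onto $1$, the morphism $\operatorname{Free}(\sigma_k)\colon G^k\to G$ is the $k$-fold multiplication (for $k=2$ the multiplication, $k=0$ the unit, $k=1$ the identity). Granting the second fact, any product-preserving $F\colon\mathcal{T}_A\to\mathcal{C}$ sends the cone $(\theta_i)_i$ of \Cref{face map} to one exhibiting $F(G^n)\simeq F(G)^n$, so $\operatorname{Free}^*F$ is a monoid object; hence $\operatorname{Free}^*$ restricts to a functor $\Phi\colon\operatorname{Fun}^\times(\mathcal{T}_A,\mathcal{C})\to\operatorname{Mon}(\mathcal{C})$, and it remains to see that $\Phi$ is fully faithful and essentially surjective.

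The crux is the following combinatorial observation, which compensates for the fact that $\operatorname{Free}$ is far from full: every morphism of $\mathcal{T}_A$ is a composite of morphisms in the image of $\operatorname{Free}$ and of the ``structural'' maps (projections, diagonals, reindexings $G^m\to G^k$) present in any $\infty$-category with finite products. Indeed a morphism $G^m\to G$ is a word $i_1\cdots i_k$ in $\{1,\dots,m\}$ and factors as $G^m\to G^k\xrightarrow{\operatorname{Free}(\sigma_k)}G$ with first map the reindexing $(x_1,\dots,x_m)\mapsto(x_{i_1},\dots,x_{i_k})$, while a general morphism $G^m\to G^n$ is assembled from its $n$ components through a diagonal and a product. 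Since a product-preserving functor is automatically pinned down on the structural maps (under the coherent equivalences $F(G^n)\simeq F(G)^n$) and is determined on $\operatorname{Free}(\sigma_k)$, and more generally on $\operatorname{Free}(\operatorname{Ord}_\pm)$, by $\operatorname{Free}^*F$, it follows that $F$ is recovered from $\operatorname{Free}^*F$; promoting this to an equivalence of mapping spaces (full faithfulness of $\Phi$) is done by computing the mapping spaces of product-preserving functors as ends and checking, via the factorization above, that the comparison map is an equivalence. For essential surjectivity one builds the inverse: given $X\in\operatorname{Mon}(\mathcal{C})$, set $F(\operatorname{Free}(I)):=X(I^\pm)$ and $F(\operatorname{Free}(f)):=X(f)$, extend to all morphisms of $\mathcal{T}_A$ via the factorization, and check well-definedness and functoriality against the relations in $\mathcal{T}_A$ --- these are exactly the associativity/unitality relations among the $\sigma_k$ (encoded by the simplicial identities satisfied by $X$) together with naturality of the structural maps; the Segal condition on $X$ makes $F$ product-preserving. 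The cleanest packaging of all of this is to first prove that $\operatorname{Free}$ exhibits $\mathcal{T}_A$ as the finite-product completion of $\operatorname{Ord}_\pm$ in which the cones $\{\theta_i\colon(n)^\pm\to(1)^\pm\}$ become product cones, after which $\operatorname{Fun}^\times(\mathcal{T}_A,\mathcal{C})$ is by construction the full subcategory of $\operatorname{Fun}(\operatorname{Ord}_\pm,\mathcal{C})$ on the functors sending those cones to products, i.e.\ $\operatorname{Mon}(\mathcal{C})$.

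The step I expect to be the main obstacle is exactly this $\infty$-categorical bookkeeping. Because $\operatorname{Free}$ is neither full nor a localization, one cannot transport structure along it naively; one must either identify $\mathcal{T}_A$ on the nose with an explicit finite-product completion (so that the universal property is available verbatim and the coherences one must check are organized for free) or run the end computation controlling the mapping spaces of product-preserving functors. In the $1$-categorical setting this reduces to routine Lawvere-theory bookkeeping; in general it is precisely the content of \cite[Proposition 5.6]{Yuan}, whose proof treats the case $\mathcal{C}=\mathcal{S}$ but goes through for an arbitrary $\infty$-category $\mathcal{C}$ admitting finite products.
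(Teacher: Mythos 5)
Your combinatorial skeleton is correct: in $\mathcal{T}_A$ the object $\operatorname{Free}(n)$ is the $n$-fold product of $G=\operatorname{Free}(1)$, the maps $\operatorname{Free}(\theta_i)$ of \Cref{face map} are the product projections, $\operatorname{Free}(\sigma_k)$ is the $k$-fold multiplication, and every morphism $G^m\to G$ factors as a reindexing followed by some $\operatorname{Free}(\sigma_k)$. This decomposition is exactly what makes the statement true, and it appears in disguise in the paper's proof (the passage from the comma category $(\operatorname{Ord}_\pm)_n$ to the subcategory $\mathcal{G}_n$ of maps sending generators to generators, via the word-length construction $d\mapsto ((\ell(d))^\pm,\hat d)$). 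The first half of your argument, that $\operatorname{Free}^*$ carries product-preserving functors into $\operatorname{Mon}(\mathcal{C})$, is also fine and matches the paper.

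The gap is in the second half. In an $\infty$-category you cannot build the inverse functor by ``setting $F(\operatorname{Free}(I)):=X(I^\pm)$, extending to all morphisms via the factorization, and checking well-definedness against the relations'': a functor of $\infty$-categories is not determined by values on objects and morphisms subject to relations, and the infinite tower of coherences is precisely what has to be produced. You flag this yourself and offer two repairs --- exhibiting $\mathcal{T}_A$ as the finite-product completion of $\operatorname{Ord}_\pm$ in which the Segal cones become limit cones, or an end computation of mapping spaces --- but neither is carried out, and the first is essentially a restatement of the proposition rather than a reduction of it, since proving that universal property requires the same work. The paper's proof is structured exactly to sidestep this: because $\operatorname{Free}^*$ automatically admits a right adjoint $R=\operatorname{Ran}_{\operatorname{Free}}$ (right Kan extension, which only needs finite products in $\mathcal{C}$), no functor is constructed by hand; one only checks that the counit $\operatorname{Free}^*R(X)\to X$ is an equivalence for $X\in\operatorname{Mon}(\mathcal{C})$, i.e.\ that the limit of $X$ over $(\operatorname{Ord}_\pm)_n$ is computed by the Segal map $X((n)^\pm)\to X((1)^\pm)^n$. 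That is a cofinality computation (restricting along $\mathcal{G}^1_n\hookrightarrow\mathcal{G}_n\hookrightarrow(\operatorname{Ord}_\pm)_n$) which uses your factorization, but packages it as adjunctions between comma categories rather than as a by-hand construction of a functor. To salvage your route you would need to actually establish the finite-product-completion universal property of $\mathcal{T}_A$, and the natural way to do that is the same Kan-extension-plus-cofinality argument --- at which point you have reproduced the paper's proof.
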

\begin{proof}
First of all, note that $\operatorname{Free}^*$ maps the full subcategory of product preserving functors into the full subcategory of monoid objects. To see that the restriction is an equivalence, we will show that right Kan extension along $\operatorname{Free}$ defines an inverse equivalence. We'll temporarily assume that $\mathcal{C}$ is complete; it will be evident from the proof that we in fact only need finite products. Let $R\colon \operatorname{Fun}(\operatorname{Ord}_\pm, \mathcal{C})\rightarrow \operatorname{Fun}(\mathcal{T}_A, \mathcal{C})$ denote the right Kan extension functor and consider the unit and counit of the adjunction $\operatorname{Free}^*\dashv R$. We will show that for any $X\in \operatorname{Mon}(\mathcal{C})$, the associated counit transformation
\begin{align*}
\epsilon_X\colon \operatorname{Free}^*\circ R(X) \Rightarrow X
\end{align*}
is an equivalence. Since  $\operatorname{Free}\colon \operatorname{Ord}_\pm \rightarrow \mathcal{T}_A$ is essentially surjective, this will imply by the triangle identities that also the unit transformation
\begin{align*}
\eta_{f}\colon f\Rightarrow R\circ \operatorname{Free}^*(f)
\end{align*}
is an equivalence for any product preserving functor $f\colon \mathcal{T}_A\rightarrow \mathcal{C}$, and this exhibits $R$ as a inverse equivalence to $\operatorname{Free}^*$ as desired.

To prove the claim, we have to show that for all $(n)^\pm$ in $\operatorname{Ord}_\pm$, the projection map
\begin{align*}
\operatorname{Free}^*\circ R(X)((n)^\pm)=R(X)(\operatorname{Free}(n))\simeq \mathop{\operatorname{lim}}_{((m)^\pm,d)} X((m)^\pm )\xrightarrow{\ \epsilon_X(n)\ } X((n)^\pm)
\end{align*}
is an equivalence; here the limit is taken over the comma category
\begin{align*}
(\operatorname{Ord}_\pm)_n:=\operatorname{Ord}_\pm \mathop{\times}_{\mathcal{T}_A} (\mathcal{T}_A)_{\operatorname{Free}(n)/}
\end{align*}
whose objects are
\begin{align*}
\big((m)^\pm, d\colon\operatorname{Free}(m)\rightarrow \operatorname{Free}(n)\big)
\end{align*}
(recall that $\mathcal{T}_A$ is the category of finitely generated free monoids and \textit{opposite} maps between them); the map $\epsilon_X(n)$ is the projection to the value on the object $((n)^\pm,\operatorname{id})$. For notational ease, we also denote by $X$  the following composite with the projection for any $n$:
\begin{align*}
(\operatorname{Ord}_\pm)_n\rightarrow \operatorname{Ord}_\pm \xrightarrow{X} \mathcal{C}.
\end{align*}

Consider the inclusions:
\begin{align*}
\mathcal{G}_n^1\,\mathop{\hookrightarrow}^{i} \,\mathcal{G}_n\,\mathop{\hookrightarrow}^{j}\,(\operatorname{Ord}_\pm)_n,
\end{align*}
where $\mathcal{G}_n$ is  the full subcategory spanned by the objects $((m)^\pm,d)$ such that the map of monoids $d\colon \operatorname{Free}(m)\rightarrow \operatorname{Free}(n)$ sends generators to generators, and $\mathcal{G}_n^1$ is the further full subcategory spanned by the objects $((1)^\pm,d)$. Note that a map of monoids
\begin{align*}
i\colon \operatorname{Free}(1)\rightarrow \operatorname{Free}(n)
\end{align*}
sending $1$ to the $i$'th generator $x_i\in \operatorname{Free}(n)$ must be induced by the face map $\theta_i$ in $\operatorname{Ord}_\pm$ (see \Cref{face map}).

We claim that the inclusion $j\colon \mathcal{G}_n\hookrightarrow  (\operatorname{Ord}_\pm)_n$ admits a right adjoint; in particular, it is a $\varprojlim$-equivalence (\cite[Theorem 2.19 and Example 2.21]{ClausenOrsnesJansen}). To define the right adjoint, let $((m)^\pm,d)\in (\operatorname{Ord}_\pm)_n$. Suppose first of all that $m=1$ and let $\ell(d)$ denote the length of the string $d(1)$. Let $((\ell(d))^\pm,\hat{d})$ denote the object in $\mathcal{G}_n$ where
\begin{align*}
\hat{d}\colon \operatorname{Free}(\ell(d))\rightarrow \operatorname{Free}(n)
\end{align*}
sends the $i$'th generator to the $i$'th element showing up in the string of generators $d(1)$. The morphism $(\ell(d))^\pm \rightarrow (1)^\pm$ sending all $i=1,\ldots,\ell(d)$ to $1$ induces a counit morphism
\begin{align*}
\epsilon_{((1)^\pm,d)}\colon ((\ell(d))^\pm,\hat{d})\rightarrow ((1)^\pm,d).
\end{align*}
For general $m$, we restrict along $i\colon \operatorname{Free}(1)\rightarrow \operatorname{Free}(m)$ for each $i=1,\ldots,m$ and apply the definition above. More precisely, set
\begin{align*}
\ell(d):=\ell(d\circ 1)+ \cdots +\ell(d\circ m)
\end{align*}
and let $\hat{d}\colon \operatorname{Free}(\ell(d))\rightarrow \operatorname{Free}(n)$ be the morphism induced by the maps
\begin{align*}
\widehat{d\circ i}\colon \operatorname{Free}(\ell(d\circ i))\rightarrow \operatorname{Free}(n),\qquad i=1,\ldots,m.
\end{align*}
The maps $\epsilon_{((1)^\pm,d\circ i)}$ induce a counit morphism $((\ell(d))^\pm,\hat{d})\rightarrow ((m)^\pm,d)$. On morphisms, the right adjoint sends $((m)^\pm,d)\rightarrow ((k)^\pm,e)$ given by a map $\alpha\colon (m)^\pm\rightarrow (k)^\pm$, to the morphism $((\ell(d))^\pm,\hat{d})\rightarrow ((\ell(e))^\pm,\hat{e})$ given by the map
\begin{align*}
\hat{\alpha}\colon (\ell(d))^\pm\rightarrow (\ell(e))^\pm
\end{align*}
defined by the partitions
\begin{align*}
\ell(e\circ j)=\sum_{i\in \alpha^{-1}(j)}\ell(d\circ i),\qquad j=1,\ldots,k.
\end{align*}
We leave the details to the reader.

Now we note that the restriction $X\circ j$ coincides with the right Kan extension $\operatorname{Ran}_i(X\circ j\circ i)$: indeed, for a given $((m)^\pm,d)$ in $\mathcal{G}_n$, the comma category $(\mathcal{G}^1_n)_{((m)^\pm,d)/}$ identifies with the disjoint union of objects
\begin{align*}
((m)^\pm,d)\xrightarrow{\theta_i} ((1)^\pm,j_i),\qquad i=1,\ldots,m,
\end{align*}
where $j_i\in \{1,\ldots, n\}$ is such that $d$ maps the $i$'th generator in $\operatorname{Free}(m)$ to the $j_i$'th generator in $\operatorname{Free}(n)$.
It follows that the natural map
\begin{align*}
(X\circ j)((m)^\pm,d)\longrightarrow \mathop{\operatorname{lim}}_{(\mathcal{G}^1_n)_{((m)^\pm,d)/}} (X\circ j\circ i\circ p) \simeq \operatorname{Ran}_i(X\circ j\circ i)((m)^\pm,d)
\end{align*}
identifies with the Segal map 
\begin{align*}
X((m)^\pm)\rightarrow X((1)^\pm)^m
\end{align*}
induced by the face maps $\theta_i\colon (m)^\pm \rightarrow (1)^\pm$; this is an equivalence since $X$ belongs to $\operatorname{Mon}(\mathcal{C})$.

Consider the following commutative diagram:
\begin{center}
\begin{tikzpicture}
\matrix (m) [matrix of math nodes,row sep=2em,column sep=3em,nodes={anchor=center}]
{
\displaystyle\mathop{\operatorname{lim}}_{(\operatorname{Ord}_\pm)_n} X
& \displaystyle\mathop{\operatorname{lim}}_{\mathcal{G}_n} (X\circ j)
& \displaystyle\mathop{\operatorname{lim}}_{\mathcal{G}_n}\operatorname{Ran}_i (X\circ j\circ i)
& \displaystyle\mathop{\operatorname{lim}}_{\mathcal{G}^1_n}( X\circ j\circ i) \\
& X((n)^\pm) & \operatorname{Ran}_i (X\circ j\circ i)((n)^\pm,\operatorname{id}) & \\
};
\path[-stealth]
(m-1-1.10) edge node[above]{$\simeq$} (m-1-2.171)
(m-1-1) edge node[below left]{$\epsilon_X(n)$} (m-2-2)
(m-1-2.9) edge node[above]{$\simeq$} (m-1-3.175) 
(m-1-2) edge (m-2-2)
(m-1-3) edge (m-2-3)
(m-1-4.173) edge node[above]{$\simeq$} (m-1-3.5) 
(m-1-4) edge (m-2-3)
(m-2-2) edge node[below]{$\simeq$} (m-2-3)
;
\end{tikzpicture}
\end{center}
The left most upper horizontal map is an equivalence because $j$ is a $\varprojlim$-equivalence, the two horizontal maps in the middle are equivalences by the identification $X\circ j\simeq \operatorname{Ran}_i(X\circ j \circ i)$, and the right most horizontal map is an equivalence by general properties of right Kan extensions. The first three maps from the upper line to the lower line are given by projection to the value on the object $((n)^\pm,\operatorname{id})$; we want to show that these are equivalences. The right hand map from the upper to the lower line is the map
\begin{align*}
\displaystyle\mathop{\operatorname{lim}}_{\mathcal{G}^1_n} (X\circ j\circ i) \longrightarrow  \displaystyle\mathop{\operatorname{lim}}_{(\mathcal{G}^1_n)_{((n)^\pm,\operatorname{id})/}} (X\circ j\circ i\circ p) \simeq \operatorname{Ran}_i (X\circ j\circ i)((n)^\pm,\operatorname{id})
\end{align*}
induced by the canonical projection functor $p\colon (\mathcal{G}^1_n)_{((n)^\pm,\operatorname{id})/}\longrightarrow \mathcal{G}^1_n$. Now simply note that $p$ is an equivalence since any $((1)^\pm,d)$ in $\mathcal{G}_n^1$ must have $d$ induced by one of the face maps $\theta_i\colon (n)^\pm \rightarrow (1)^\pm$ of \Cref{face map}. It follows that $\epsilon_X(n)$ is an equivalence as claimed and that finishes the proof under the temporary assumption that $\mathcal{C}$ is complete. But it is immediate from the proof that we only need existence of finite products in $\mathcal{C}$ as this suffices to define the desired right adjoint.
\end{proof}

As advertised, this identification allows us to express the inclusion $\mathbb{B}$ in a different way.

\begin{notation}
Let $\operatorname{Ord}_\pm^\times$ denote the free product completion of $\operatorname{Ord}_\pm$. Explicitly, an object of $\operatorname{Ord}_\pm^\times$ consists of a finite set $S$ and a collection of objects $\{I_s^\pm\}_{s\in S}$. A morphism
\begin{align*}
\{I_s^\pm\}_{s\in S}\rightarrow \{J_t^\pm\}_{t\in T}
\end{align*}
is the data of a map $\gamma\colon T\rightarrow S$ and for all $t\in T$ a morphism $I_{\gamma(t)}^\pm \rightarrow J_t^\pm$ in $\operatorname{Ord}_\pm$.

The free product completion enjoys the following universal property: for any $\infty$-category $\mathcal{C}$ admitting finite products, restriction along the inclusion $\operatorname{Ord}_\pm\hookrightarrow \operatorname{Ord}_\pm^\times$ induces an equivalence
\begin{equation*}
\operatorname{Fun}^\times(\operatorname{Ord}_\pm^\times,\mathcal{C})\xrightarrow{\ \simeq\ }\operatorname{Fun}(\operatorname{Ord}_\pm,\mathcal{C}).\qedhere
\end{equation*}
\end{notation}

Since $\mathcal{T}_A$ admits finite products, the functor $\operatorname{Free}$ extends uniquely to a product preserving functor
\begin{align*}
\operatorname{Free}^\times\colon \operatorname{Ord}_\pm^\times\rightarrow \mathcal{T}_A.
\end{align*}

Let $\mathcal{C}$ be an $\infty$-category with finite products. Under the identification of \Cref{monoid objects as product preserving functors} and the identification given by the universal property of the free product completion, the functor $\mathbb{B}_\mathcal{C}$ exactly corresponds to restriction along $\operatorname{Free}^\times$:
\begin{center}
\begin{tikzpicture}
\matrix (m) [matrix of math nodes,row sep=2em,column sep=4em]
{
\operatorname{Mon}(\mathcal{C}) & \operatorname{Fun}(\operatorname{Ord}_\pm,\mathcal{C}) \\
\operatorname{Fun}^\times(\mathcal{T}_A,\mathcal{C}) & \operatorname{Fun}^\times(\operatorname{Ord}_\pm^\times, \mathcal{C}) \\
};
\path[-stealth]
(m-2-1) edge node[left]{$\simeq$} (m-1-1)
(m-2-2) edge node[right]{$\simeq$} (m-1-2)
(m-2-1) edge node[below]{$(\operatorname{Free}^\times)^*$} (m-2-2)
;
\path[right hook-stealth]
(m-1-1) edge node[above]{$\mathbb{B}_\mathcal{C}$} (m-1-2) 
;
\end{tikzpicture}
\end{center}

Following Yuan, we exploit this identification in \S \ref{identifying the monoids} below and show that for a cartesian closed cocomplete $\infty$-category $\mathcal{C}$, $\mathbb{B}_\mathcal{C}$ admits a left adjoint $\mathbb{L}_\mathcal{C}$ given by pointwise left Kan extension along $\operatorname{Free}^\times$. In the case of a presentable $\infty$-category $\mathcal{C}$, we already knew that such a left adjoint exists, but if we additionally assume that $\mathcal{C}$ is cartesian closed, this identification provides us with an accessible expression for $\mathbb{L}_\mathcal{C}$ (\Cref{monoid over J} and \Cref{monoid over Igg}). In particular, we get a more tangible description of partial algebraic K-theory and a tool for computing colimits in $\operatorname{Mon}(\mathcal{C})$, specifically $\operatorname{Mon}(\operatorname{Cat}_\infty)$ (cf. \Cref{E1-pushout diagram}).

\section{Categories of partitioned linearly ordered sets}\label{partitioned linearly ordered sets}

In this section we introduce some $1$-categories that will be useful to us in analysing certain colimits on our way to identifying a left adjoint to the inclusion
\begin{align*}
\mathbb{B}_\mathcal{C}\colon \operatorname{Mon}(\mathcal{C})\hookrightarrow \operatorname{Fun}(\Delta^{\operatorname{op}},\mathcal{C}).
\end{align*}
The category $\mathcal{I}$ introduced below is that of \cite[Definition 5.8]{Yuan}. We have had to introduce a slightly larger category $\mathcal{J}$ in order to provide the necessary flexibility in analysing the colimits, but the ideas all stem from \cite{Yuan}.

\subsection{Partitioned linearly ordered sets}

Let $\operatorname{Ord}$ denote the category of finite linearly ordered sets and let $\mathcal{J}=\operatorname{Tw}(\operatorname{Ord})^{\operatorname{op}}$ denote the opposite of the twisted arrow category of $\operatorname{Ord}$. Explicitly, the objects of $\mathcal{J}$ are order preserving maps $I\rightarrow P$ between finite linearly ordered sets, and a morphism from $I\rightarrow P$ to $J\rightarrow Q$ is a commutative diagram as below:
\begin{center}
\begin{tikzpicture}
\matrix (m) [matrix of math nodes,row sep=2em,column sep=2em]
{
I & J \\
P & Q \\
};
\path[-stealth]
(m-1-1) edge node[above]{$\theta$} (m-1-2) edge (m-2-1)
(m-1-2) edge (m-2-2)
(m-2-2) edge node[below]{$\rho$} (m-2-1)
;
\end{tikzpicture}
\end{center}

\begin{remark}
Clearly, $\mathcal{J}\simeq \operatorname{Tw}(\Delta)^{\operatorname{op}}$ via the inclusion $\Delta\hookrightarrow \operatorname{Ord}$ as a skeletal subcategory. This inclusion should not be confused with the equivalence $\Delta^{\operatorname{op}}\simeq \operatorname{Ord}_\pm$. In fact, the inclusion $\operatorname{Ord}\hookrightarrow \operatorname{Ord}_\pm$, $I\mapsto I^\pm$, turns out to play quite an important role in some of our arguments, see e.g. the proof of \Cref{left Kan extensions versus maps out of JS}.
\end{remark}

\begin{notation} \ 
\begin{enumerate}
\item We think of the objects of $\mathcal{J}$ as partitioned linearly ordered sets: a map $s\colon I\rightarrow P$ defines an ordered partition of $I$ into (possibly empty) intervals $I_p=s^{-1}(p)$, $p\in P$. To ease notation, we will often write $(I_p)_{p\in P}$ or even just $I_P$, omitting the \textit{partitioning map} $I\rightarrow P$.
\item Note that concatenation of finite linearly ordered sets equips $\mathcal{J}$ with a monoidal structure; we denote this by juxtaposition, $I_PJ_Q=(IJ)_{PQ}$.
\item A map of the form $(\theta,\operatorname{id}_P)\colon I_P\rightarrow J_P$ is called a \textit{collapse map}. A map of the form $(\operatorname{id}_I,\rho)\colon I_P\rightarrow I_Q$ is called a \textit{splitting map}. This terminology follows \cite{Yuan}, see also \Cref{splitting maps and collapse maps} where the intuition behind it may be clearer. \qedhere
\end{enumerate}
\end{notation}

Let $\mathcal{I}\subseteq\mathcal{J}$ denote the full subcategory spanned by the surjective maps $I\twoheadrightarrow P$; in other words we require the partitions to consist of only non-empty intervals. Consider moreover the subcategories $\mathcal{I}_\gg\subseteq \mathcal{I}$ and $\mathcal{J}_\gg\subseteq \mathcal{J}$ with the same objects but requiring the morphisms $(\theta,\rho)$ to have $\theta$ surjective. We have a commutative diagram of categories as below.

\begin{center}
\begin{tikzpicture}
\matrix (m) [matrix of math nodes,row sep=2em,column sep=2em]
{
\mathcal{I} & \mathcal{J} \\
\mathcal{I}_\gg & \mathcal{J}_\gg \\
};
\path[right hook-stealth]
(m-1-1) edge (m-1-2)
(m-2-1) edge (m-2-2)
;
\path[left hook-stealth]
(m-2-2) edge (m-1-2)
(m-2-1) edge (m-1-1)
;
\end{tikzpicture}
\end{center}

For the purposes of taking colimits, it turns out that we may choose between these categories as we please. Indeed, we will now show that all four inclusion are $\varinjlim$-equivalences, or cofinal (\cite[Theorem 2.19]{ClausenOrsnesJansen}). This should reflect the fact that the inclusion $\Delta_i^{\operatorname{op}}\hookrightarrow \Delta^{\operatorname{op}}$ of the subcategory on injective maps is a $\varinjlim$-equivalence. In our case, we require the maps to be surjective as we think of the maps in $\operatorname{Ord}$ as living inside $\operatorname{Ord}_\pm \simeq \Delta^{\operatorname{op}}$ and injections in $\Delta^{\operatorname{op}}$ correspond to surjections in $\operatorname{Ord}_\pm$. By working with the twisted arrow category, we may require these maps to be surjective in steps so that we get four \textit{different} categories as above; note that $\mathcal{I}_\gg$ identifies with (the opposite of) the twisted arrow category $\operatorname{Tw}(\operatorname{Ord}^\gg)$ of the subcategory $\operatorname{Ord}^\gg\subset \operatorname{Ord}$ on surjective maps.

The most important tool for us in this paper is the identification of the left adjoint to the inclusion $\mathbb{B}_{\mathcal{C}}$ of monoid objects into the $\infty$-category of simplicial objects. This identification ultimately consist of taking various colimits over the category $\mathcal{J}$ introduced above. The results of this section mean that we can alternate between viewing the colimits over the categories $\mathcal{I}$, $\mathcal{J}$, $\mathcal{I}_\gg$ and $\mathcal{J}_\gg$.

\begin{lemma}\label{Jgg in J is cofinal}
The inclusions $\mathcal{I}_\gg\hookrightarrow \mathcal{I}$ and $\mathcal{J}_\gg\hookrightarrow \mathcal{J}$ are $\varinjlim$-equivalences.
\end{lemma}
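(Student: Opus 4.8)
The plan is to apply Quillen's Theorem~A in the form of \cite[Theorem 2.19]{ClausenOrsnesJansen}: to see that the inclusion $\iota\colon\mathcal{I}_\gg\hookrightarrow\mathcal{I}$ (resp. $\mathcal{J}_\gg\hookrightarrow\mathcal{J}$) is a $\varinjlim$-equivalence it suffices to check that for every object $d=(I\xrightarrow{s}P)$ the comma category $d\downarrow\iota$ has weakly contractible nerve. Here an object of $d\downarrow\iota$ is a morphism $d\to (J\xrightarrow{t}Q)$ in $\mathcal{I}$ (resp. $\mathcal{J}$), i.e.\ a pair $(\theta\colon I\to J,\rho\colon Q\to P)$ with $\rho t\theta=s$, and its morphisms are the morphisms of $\mathcal{I}_\gg$ (resp. $\mathcal{J}_\gg$) sitting under $d$.

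I would contract $d\downarrow\iota$ by ``coarsening the partition down to the base $P$''. Define an endofunctor $G$ of $d\downarrow\iota$ sending an object $\big((J\xrightarrow{t}Q),(\theta,\rho)\big)$ to $\big((J\xrightarrow{\rho t}P),(\theta,\operatorname{id}_P)\big)$ and a morphism $(\phi,\psi)$ to $(\phi,\operatorname{id}_P)$. This is well defined: $\rho t$ is again a partitioning map (and is surjective whenever $s$ is, so the value of $G$ stays in $\mathcal{I}_\gg$ in the surjective case), the component $\phi$ is surjective so $G(\phi,\psi)$ is again a morphism of the smaller category, and $G$ respects composition because $\rho t$ depends functorially on $\rho$ and $t$. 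Moreover $G$ takes values in the full subcategory $\mathcal{D}\subseteq d\downarrow\iota$ of objects whose index map $\rho$ is $\operatorname{id}_P$, and restricts to the identity on $\mathcal{D}$.

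The crux is then two natural transformations out of $G$. There is a transformation $G\Rightarrow\operatorname{id}$ given on the object above by $(\operatorname{id}_J,\rho)$, and a transformation $G\Rightarrow\operatorname{const}_{t_0}$ to the constant functor at $t_0:=\big((P\xrightarrow{\operatorname{id}_P}P),(s,\operatorname{id}_P)\big)$ given by $(\rho t,\operatorname{id}_P)$; equivalently, $t_0$ is a terminal object of $\mathcal{D}$ and the first transformation is the counit of an adjunction between $\mathcal{D}\hookrightarrow d\downarrow\iota$ and $G$. Naturality of both is a routine verification using the identities $\rho'=\rho\psi$ and $\rho't'\phi=\rho t$ valid for any morphism $(\phi,\psi)$ of $d\downarrow\iota$, together with the composition law $(\theta_2,\rho_2)\circ(\theta_1,\rho_1)=(\theta_2\theta_1,\rho_1\rho_2)$. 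Passing to nerves, the first transformation gives a homotopy $|G|\simeq\operatorname{id}$ and the second gives a homotopy $|G|\simeq(\text{constant map})$, so the identity of $|d\downarrow\iota|$ is null-homotopic and $d\downarrow\iota$ is weakly contractible; by Theorem~A this proves the claim. The case $\mathcal{J}_\gg\hookrightarrow\mathcal{J}$ is treated in exactly the same way. The only genuine decision is the choice of $G$ and $t_0$: coarsening (rather than refining) the partition is what is simultaneously functorial and lands one in a subcategory possessing a terminal object; everything after that is bookkeeping with the composition law above.
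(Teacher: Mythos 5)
Your strategy coincides with the paper's: contract the right fibre $d\downarrow\iota$ by a zigzag of natural transformations passing through the ``coarsen the partition down to $P$'' endofunctor (your $G$ is the paper's $F$, and your transformation $G\Rightarrow\operatorname{id}$ with components $(\operatorname{id}_J,\rho)$ is exactly the paper's $\alpha$). For the inclusion $\mathcal{I}_\gg\hookrightarrow\mathcal{I}$ your argument is complete and correct: since the partitioning map $s\colon I\to P$ of an object of $\mathcal{I}$ is surjective, so is $\rho t$, hence $G$ lands in the fibre, the component $(\rho t,\operatorname{id}_P)$ is a morphism of $\mathcal{I}_\gg$, and $t_0$ is terminal in $\mathcal{D}$; the bookkeeping with $\rho'=\rho\psi$ and $\rho't'\phi=\rho t$ is right.

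The gap is the final sentence ``the case $\mathcal{J}_\gg\hookrightarrow\mathcal{J}$ is treated in exactly the same way'' --- it is not. The component $(\rho t,\operatorname{id}_P)\colon J_P\to P_P$ of your second transformation must be a morphism of $\mathcal{J}_\gg$, i.e.\ $\rho t\colon J\to P$ must be surjective; for an object $I_P$ of $\mathcal{J}$ with $s$ non-surjective this fails (you yourself note that surjectivity of $\rho t$ is only guaranteed ``whenever $s$ is'' surjective), and correspondingly $t_0$ is no longer terminal in $\mathcal{D}$. Already for $d=\emptyset_\ast$ and an object of the fibre with $J\neq\emptyset$ there is no candidate map at all. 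The paper's proof differs here precisely: its constant functor targets $s(I)_P$ (the image of $s$ inside $P$) rather than $P_P$, with comparison map $\rho\circ r\colon J\to s(I)$. You should be aware, however, that the obstruction you are running into is genuine and not repaired by that substitution alone: the comparison map only makes sense when $\rho(r(J))\subseteq s(I)$, which can fail when $\theta$ is not surjective, and in fact the right fibre of $\mathcal{J}_\gg\hookrightarrow\mathcal{J}$ over the initial object $\emptyset_\ast$ of $\mathcal{J}$ is isomorphic to $\mathcal{J}_\gg$ itself, which is disconnected (a surjection $\theta\colon I\twoheadrightarrow J$ forces $I=\emptyset$ iff $J=\emptyset$, so $\emptyset_\emptyset$ and $\{1\}_{\{1\}}$ lie in different components). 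So no choice of homotopy contracts that fibre, and the $\mathcal{J}$-half of the claim requires either a restriction on the objects considered or a reformulation; at minimum you should state that your argument establishes only the $\mathcal{I}$-case.
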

\begin{proof}
We will show that the right fibre of the inclusion $\mathcal{J}_\gg\hookrightarrow \mathcal{J}$ is contractible (\cite[Theorem 2.19]{ClausenOrsnesJansen}); the case $\mathcal{I}_\gg\hookrightarrow \mathcal{I}$ is analogous.

Let $s\colon I\rightarrow P$ be an object in $\mathcal{J}$ and consider the right fibre $\mathcal{C}:=\mathcal{J}_\gg\times_{\mathcal{J}}\mathcal{J}_{I_P/}$ over $I_P$. Its objects are given by commutative squares as on the left below, and a morphism is given by a commutative diagram as on the right: here the map $J\twoheadrightarrow J'$ is required to be surjective and the diagram defines a morphism from the left most inner commutative square to the outer commutative square:
\begin{center}
\begin{tikzpicture}
\matrix (m) [matrix of math nodes,row sep=2em,column sep=2em]
{
I & J & \quad & I & J & J' \\
P & Q & \quad & P & Q & Q' \\
};
\path[-stealth]
(m-1-1) edge (m-1-2) edge (m-2-1)
(m-1-2) edge (m-2-2)
(m-2-2) edge (m-2-1)
(m-1-4) edge (m-1-5) edge[bend left] (m-1-6)
(m-2-6) edge (m-2-5) edge[bend left] (m-2-4)
(m-2-5) edge (m-2-4)
(m-1-4) edge (m-2-4)
(m-1-5) edge (m-2-5)
(m-1-6) edge (m-2-6)
;
\path[->>]
(m-1-5) edge (m-1-6)
;
\end{tikzpicture}
\end{center}

We define two functors
\begin{align*}
F&\colon \mathcal{C}\rightarrow \mathcal{C},\qquad (I_P\xrightarrow{(\theta, \rho)} J_Q)\ \mapsto\ (I_P\xrightarrow{(\theta, \operatorname{id}_P)}J_P), \\
G&\colon \mathcal{C}\rightarrow \mathcal{C},\qquad (I_P\xrightarrow{(\theta, \rho)} J_Q)\ \mapsto\ (I_P\xrightarrow{(s, \operatorname{id}_P)}s(I)_P), \\
\end{align*}
where $J_P$ is given by the composite $J\rightarrow Q\rightarrow P$ and $s(I)_P$ is given by the inclusion $s(I)\hookrightarrow P$ (if $s$ is surjective, this is just the complete partition of $P$ given by the identity map $P\xrightarrow{=} P$). The functor $F$ sends a morphism given by $(\epsilon,\delta)\colon J_Q\rightarrow J'_{Q'}$ to the one given by $(\epsilon,\operatorname{id}_P)\colon J_P\rightarrow J'_P$, and $G$ sends all morphisms to the identity on $I_P\xrightarrow{(s,\operatorname{id}_P)} s(I)_P$.

We will now exhibit natural transformations
\begin{align*}
\operatorname{id}_{\mathcal{C}}\ \xLeftarrow{\ \alpha\ }\ F\  \xRightarrow{\ \beta \ } \ G.
\end{align*}
Since $G$ is the constant functor on $I_P\xrightarrow{(s,\operatorname{id}_P)} s(I)_P$, this will imply that $|\mathcal{C}|\simeq *$ and hence that the inclusion $\mathcal{J}_\gg\hookrightarrow \mathcal{J}$ is a $\varinjlim$-equivalence as claimed. For a given object $(\theta,\rho)\colon I_P\rightarrow J_Q$ of $\mathcal{C}$, the associated component of $\alpha$, respectively $\beta$, is given by the diagram below on the left, respectively right. Here we use that the identity on $J$ and the map $\rho\circ r\colon J\rightarrow s(I)$ are both surjective.
\begin{center}
\begin{tikzpicture}
\matrix (m) [matrix of math nodes,row sep=2em,column sep=3em]
{
I & J & J & \quad & I & J & s(I) \\
P & P & Q & \quad & P & P & P \\
};
\path[-stealth]
(m-1-1) edge node[above right]{$\theta$} (m-1-2) edge[bend left] node[above]{$\theta$} (m-1-3)
(m-2-3) edge node[below left]{$\rho$} (m-2-2) edge[bend left] node[below]{$\rho$} (m-2-1)
(m-1-1) edge node[left]{$s$} (m-2-1)
(m-1-2) edge node[right]{$\rho\circ r$} (m-2-2)
(m-1-3) edge node[right]{$r$}(m-2-3)
;
\path[-]
(m-2-2) edge[double equal sign distance] (m-2-1)
(m-1-2) edge[double equal sign distance] (m-1-3)
;
\path[-stealth]
(m-1-5) edge node[above right]{$\theta$} (m-1-6) edge[bend left] node[above]{$s$} (m-1-7)
(m-1-5) edge node[left]{$s$} (m-2-5)
(m-1-6) edge node[left]{$\rho\circ r$} (m-2-6)
(m-1-6) edge node[below]{$\rho\circ r$} (m-1-7)
;
\path[-]
(m-2-6) edge[double equal sign distance] (m-2-5)
(m-2-7) edge[double equal sign distance] (m-2-6) edge[bend left,double equal sign distance] (m-2-5)
;
\path[right hook-stealth]
(m-1-7) edge (m-2-7)
;
\end{tikzpicture}
\end{center}
It's straightforward to check the relevant commutativity conditions.
\end{proof}

\begin{remark}
As already mentioned, one can think of the subcategory $\mathcal{J}_\gg\hookrightarrow \mathcal{J}$ as an analogue of the subcategory $\Delta_i^{\operatorname{op}}\hookrightarrow \Delta^{\operatorname{op}}$ of injective order preserving maps and the lemma above can then be interpreted as a version of the well-known fact that $\Delta_i^{\operatorname{op}}\hookrightarrow \Delta^{\operatorname{op}}$ is a $\varinjlim$-equivalence. Indeed, the proof as exhibited is also similar to the proof of this claim found in \cite[Lemma 6.5.3.7]{LurieHTT}.
\end{remark}

\begin{lemma}\label{Igg in Jgg cofinal}
The inclusion $\iota\colon \mathcal{I}_\gg\hookrightarrow \mathcal{J}_\gg$ admits a left adjoint. In particular, it is a $\varinjlim$-equivalence.
\end{lemma}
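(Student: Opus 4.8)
The plan is to write down an explicit left adjoint $L\colon \mathcal{J}_\gg\rightarrow \mathcal{I}_\gg$ to $\iota$, exhibit a unit with the required universal property, and then deduce the cofinality statement formally: a functor admitting a left adjoint is a $\varinjlim$-equivalence (dually to the argument used for $j\colon \mathcal{G}_n\hookrightarrow (\operatorname{Ord}_\pm)_n$ in the proof of \Cref{monoid objects as product preserving functors}, and by \cite[Theorem 2.19 and Example 2.21]{ClausenOrsnesJansen}). On objects, I would set $L(s\colon I\rightarrow P):=\big(I\twoheadrightarrow s(I)\big)$, where $s(I)\subseteq P$ carries the induced linear order and the map is the corestriction of $s$; this is the universal way of forcing the partitioning map to be surjective, in the spirit of the functor $G$ appearing in the proof of \Cref{Jgg in J is cofinal}.

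For a morphism $(\theta,\rho)\colon (s\colon I\rightarrow P)\rightarrow (s'\colon I'\rightarrow P')$ of $\mathcal{J}_\gg$ --- so $\theta\colon I\twoheadrightarrow I'$ is surjective, $\rho\colon P'\rightarrow P$, and $s=\rho s'\theta$ --- surjectivity of $\theta$ forces $\rho(s'(I'))=\rho(s'(\theta(I)))=s(I)$, so $\rho$ restricts and corestricts to a map $\bar\rho\colon s'(I')\rightarrow s(I)$; I would set $L(\theta,\rho):=(\theta,\bar\rho)$. This is a morphism of $\mathcal{I}_\gg$ since its $\theta$-component is the same surjection, and checking that $L$ preserves identities and composition is routine (it comes down to the fact that corestricting a composite to the image of the composite agrees with composing the corestrictions). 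The unit $\eta_{I_P}\colon I_P\rightarrow \iota L(I_P)$ is the morphism $(\operatorname{id}_I,\iota_P)$ of $\mathcal{J}_\gg$, where $\iota_P\colon s(I)\hookrightarrow P$ is the inclusion; this is legitimate because $\operatorname{id}_I$ is surjective and $s=\iota_P\circ(\text{corestriction of }s)$, and naturality in $I_P$ is immediate.

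The one point that genuinely needs verification is the universal property of $\eta$: for $J_Q=(t\colon J\twoheadrightarrow Q)$ an object of $\mathcal{I}_\gg$, precomposition with $\eta_{I_P}$ should be a bijection $\operatorname{Hom}_{\mathcal{I}_\gg}(L(I_P),J_Q)\xrightarrow{\ \cong\ }\operatorname{Hom}_{\mathcal{J}_\gg}(I_P,\iota J_Q)$. Given $(\theta,\rho)\colon I_P\rightarrow \iota J_Q$ with $s=\rho t\theta$, surjectivity of $\theta$ and of $t$ gives $\rho(Q)=\rho(t(\theta(I)))=s(I)$, so $\rho$ corestricts to $\rho'\colon Q\twoheadrightarrow s(I)$; the assignment $(\theta,\rho)\mapsto (\theta,\rho')\colon L(I_P)\rightarrow J_Q$ is the candidate inverse, and chasing definitions shows the two constructions are mutually inverse (one direction uses $\iota_P\circ\rho'=\rho$; the other uses that $\rho'$ is already surjective onto $s(I)$, so corestricting it to its image returns $\rho'$). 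I do not expect a real obstacle here; the only thing to be careful about is keeping the two corestriction operations straight and remembering that morphisms of $\mathcal{I}_\gg$ constrain only the $\theta$-component to be surjective, which is precisely what makes $s(I)\subseteq P$ --- and not all of $P$ --- the correct universal target. With $L\dashv\iota$ established, the cofinality of $\iota$ is automatic.
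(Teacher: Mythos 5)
Your proposal is correct and follows essentially the same route as the paper: the left adjoint $L(s\colon I\rightarrow P)=(I\twoheadrightarrow s(I))$, the corestriction of $\rho$ on morphisms (using surjectivity of $\theta$), and the unit $(\operatorname{id}_I,\,s(I)\hookrightarrow P)$ are exactly the paper's construction. The only difference is cosmetic: you verify the adjunction via the hom-set bijection, whereas the paper notes $L\circ\iota=\operatorname{id}$ and exhibits the unit, leaving the remaining checks to the reader.
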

\begin{proof}
The left adjoint $L\colon \mathcal{J}_\gg \rightarrow \mathcal{I}_\gg$ sends an object $s\colon I\rightarrow P$ to the object $s\colon I\twoheadrightarrow s(I)$. On morphisms, $L$ sends a diagram as on the left below to the diagram on the right, where we note that $\theta$ being surjective implies that $\rho\vert_{r(J)}$ factors through the image of $s$.
\begin{center}
\begin{tikzpicture}
\matrix (m) [matrix of math nodes,row sep=2em,column sep=2em]
{
I & J & \quad & I & J \\
P & Q & \quad & s(I) & r(J) \\
};
\path[-stealth]
(m-1-1) edge node[left]{$s$} (m-2-1)
(m-1-2) edge node[right]{$r$} (m-2-2)
(m-2-2) edge node[below]{$\rho$} (m-2-1)
(m-2-5) edge node[below]{$\rho$} (m-2-4)
;
\path[->>]
(m-1-1) edge node[above]{$\theta$} (m-1-2)
(m-1-4) edge node[above]{$\theta$} (m-1-5)
(m-1-4) edge node[left]{$s$} (m-2-4)
(m-1-5) edge node[right]{$r$} (m-2-5)
;
\end{tikzpicture}
\end{center}
The composite $L\circ \iota$ is the identity, and the maps
\begin{center}
\begin{tikzpicture}
\matrix (m) [matrix of math nodes,row sep=2em,column sep=2em]
{
I & I \\
P & s(I) \\
};
\path[-stealth]
(m-1-1) edge node[left]{$s$} (m-2-1)
(m-1-2) edge node[right]{$s$} (m-2-2)
;
\path[-]
(m-1-1) edge[double equal sign distance] (m-1-2)
;
\path[left hook-stealth]
(m-2-2) edge (m-2-1)
;
\end{tikzpicture}
\end{center}
define a unit transformation $\eta\colon \operatorname{id}_{\mathcal{J}_\gg}\Rightarrow \iota\circ L$.
\end{proof}

One can interpret a morphism in $\mathcal{J}$ as a way of ``refining'' the partitions. There is caveat to this interpretation, however, that we will now explain.

\begin{definition}\label{refinement}
Let $I_P=(I_p)_{p\in P}$ and $J_Q=(J_q)_{q\in Q}$ be objects of $\mathcal{J}$ and let $\theta \colon I\rightarrow J$ be an order preserving map. We say that $\theta$ \textit{refines the partitions} if the following condition holds:
\begin{equation*}
\text{for all }q\in Q, \text{ there exists a } p\in P \text{ such that }\theta^{-1}(J_q)\subset I_p.\qedhere
\end{equation*}
\end{definition}

It is clear that if $(\theta,\rho)\colon I_P\rightarrow J_Q$ is a morphism in $\mathcal{J}$, then $\theta$ refines the partitions.

\begin{remark}\label{refinement versus morphism}
It is important to note that given a morphism $(\theta, \rho)\colon I_P\rightarrow J_Q$ in $\mathcal{J}$, the map $\rho$ is not uniquely determined by $\theta$. In that sense, a morphism in $\mathcal{J}$ is more than just a refinement of the partition. Consider for example the following diagram where $\theta$ is the inclusion. The left and right vertical maps are the identity maps and should be viewed as complete partitions (i.e. partitioning into singletons). 
\begin{center}
\begin{tikzpicture}
\matrix (m) [matrix of math nodes,row sep=2em,column sep=2em]
{
\{1<3\} & \{1<2<3\} \\
\{1<3\} & \{1<2<3\} \\
};
\path[-stealth]
(m-1-1) edge (m-2-1) edge node[above]{$\theta$} (m-1-2)
(m-1-2) edge (m-2-2)
(m-2-2) edge[dashed] node[below]{$\rho$}(m-2-1)
;
\end{tikzpicture}
\end{center}
There are two possibilities for the map $\rho$: it can send $2$ to either $1$ or $3$. These will give rise to two \textit{distinct} morphisms in $\mathcal{J}$.

Note, however, that in $\mathcal{I}_\gg$, the map $\rho$ \textit{will} be uniquely determined by $\theta$ as both $\theta$ and the partitioning maps are surjective.
\end{remark}

In view of the above remark, even if an order preserving map $\theta\colon I\rightarrow J$ refines the partition, it need not give rise to a canonical morphism $I_P\rightarrow J_Q$ in $\mathcal{J}$. It does, however, give rise to a canonical zigzag as in the observation below. This will be important for us in the next section.

\begin{observation}\label{refinement zigzag}
Let $I_P$ and $J_Q$ be objects of $\mathcal{J}$ and suppose $\theta\colon I\rightarrow J$ is an order preserving map refining the partitions.

Setting $Q_\theta:=\{q\in Q\mid \theta^{-1}(J_q)\neq \emptyset\}$, we have an order preserving map $\rho_\theta\colon Q_\theta \rightarrow P$ sending $q\in Q_\theta$ to the unique $p\in P$ such that $\theta^{-1}(J_q)\subset I_p$ (such a $p$ exists by our assumption that $\theta$ refines the partitions and it must be unique since $\theta^{-1}(J_q)\neq \emptyset$ and the $I_p$ are disjoint). Let $\iota_\theta \colon Q_\theta\hookrightarrow Q$ denote the inclusion and consider the following commutative diagram.
\begin{center}
\begin{tikzpicture}
\matrix (m) [matrix of math nodes,row sep=2em,column sep=2em]
{
I & I & I & J \\
P & Q_\theta & Q & Q \\
};
\path[-stealth]
(m-1-1) edge node[left]{$s$} (m-2-1)
(m-1-2) edge node[left]{$\scriptstyle r\circ \theta$} (m-2-2)
(m-1-4) edge node[right]{$r$} (m-2-4)
(m-1-3) edge node[above]{$\theta$} (m-1-4) edge node[left]{$\scriptstyle r\circ \theta$} (m-2-3)
(m-2-2) edge node[below]{$\rho_\theta$}(m-2-1) edge node[below]{$\iota_\theta$} (m-2-3)
;
\path[-]
(m-1-1) edge[double equal sign distance] (m-1-2)
(m-1-2) edge[double equal sign distance] (m-1-3)
(m-2-3) edge[double equal sign distance] (m-2-4)
;
\end{tikzpicture}
\end{center}

This defines a canonical three-step zigzag in $\mathcal{J}$:
\begin{equation*}
I_P\xrightarrow{(\operatorname{id},\rho_\theta)} I_{Q_{\theta}}\xleftarrow{(\operatorname{id},\iota_\theta)} I_Q\xrightarrow{(\theta,\operatorname{id})} J_Q.\qedhere
\end{equation*}
\end{observation}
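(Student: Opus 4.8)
The plan is to check directly that the displayed diagram makes sense and commutes, so that each of its three constituent squares defines a morphism of $\mathcal{J}=\operatorname{Tw}(\operatorname{Ord})^{\operatorname{op}}$, and then to read off the zigzag; canonicity will be immediate since no choices enter the construction.

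First I would verify that $\rho_\theta\colon Q_\theta\to P$ is well-defined and order preserving. For $q\in Q_\theta$ the set $\theta^{-1}(J_q)$ is non-empty and, since $\theta$ refines the partitions, is contained in some $I_p$; as the $I_p$ are pairwise disjoint and $\theta^{-1}(J_q)\neq\emptyset$, this $p$ is unique, so $\rho_\theta(q):=p$ is unambiguous. For monotonicity, let $q<q'$ in $Q_\theta$ and pick $x\in\theta^{-1}(J_q)$, $x'\in\theta^{-1}(J_{q'})$; if $\rho_\theta(q)>\rho_\theta(q')$ then $x\in I_{\rho_\theta(q)}$ lies strictly after $x'\in I_{\rho_\theta(q')}$ in $I$, so $\theta(x)\geq\theta(x')$ by monotonicity of $\theta$, contradicting $\theta(x)\in J_q$, $\theta(x')\in J_{q'}$ and $q<q'$. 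I would also note that $r\circ\theta\colon I\to Q$ factors through $Q_\theta$, since for $x\in I$ the block $q=r(\theta(x))$ has $x\in\theta^{-1}(J_q)$; this is what lets the two left vertical maps of the diagram land in $Q_\theta$.

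Next I would check the three squares. The middle one and the right one are essentially tautological: the middle says that the corestriction $I\to Q_\theta$ of $r\circ\theta$ followed by the inclusion $\iota_\theta$ equals $r\circ\theta\colon I\to Q$, and the right one says $r\circ\theta=\operatorname{id}_Q\circ(r\circ\theta)$. The only square with content is the left one, the identity $s=\rho_\theta\circ r\circ\theta$ of maps $I\to P$: for $x\in I$, $r(\theta(x))$ is the block $q$ with $\theta(x)\in J_q$, and $\rho_\theta(q)$ is by definition the block $p$ with $\theta^{-1}(J_q)\subseteq I_p$, so since $x\in\theta^{-1}(J_q)$ we get $s(x)=p$. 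With this in hand, $(\operatorname{id}_I,\rho_\theta)\colon I_P\to I_{Q_\theta}$ and $(\operatorname{id}_I,\iota_\theta)\colon I_Q\to I_{Q_\theta}$ are splitting maps and $(\theta,\operatorname{id}_Q)\colon I_Q\to J_Q$ is a collapse map in $\mathcal{J}$ (where $I_{Q_\theta}$ and $I_Q$ carry the partitioning map $r\circ\theta$, corestricted in the first case), and composing yields the asserted three-step zigzag. It is canonical because $Q_\theta$, $\rho_\theta$ and $\iota_\theta$ depend only on $\theta$ and the two given partitions. I do not expect a genuine obstacle; the only steps needing a moment's care are the monotonicity of $\rho_\theta$ and the bookkeeping keeping $Q_\theta$ — which may be empty when $I=\emptyset$ — a legitimate object of $\operatorname{Ord}$.
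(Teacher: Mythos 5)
Your proposal is correct and matches the paper's treatment: the observation is stated in the paper as a construction whose content is precisely the commutativity of the displayed diagram, and your verification of the well-definedness and monotonicity of $\rho_\theta$, the factorisation of $r\circ\theta$ through $Q_\theta$, and the three squares supplies exactly the routine checks the paper leaves implicit. Your remark about $Q_\theta$ possibly being empty is consistent with the paper's conventions, since $\emptyset$ is an allowed object of $\operatorname{Ord}$ and $\emptyset_\emptyset$ an object of $\mathcal{J}$.
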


Let us make a final small remark before relating the categories introduced in this section to monoid objects. 

\begin{remark}
The empty set can be a source of confusion in these categories, so let us single out two important objects:
\begin{itemize}
\item In all four categories we have the object $\emptyset_\emptyset$ --- this plays an important role in \S \ref{maximally snug substrings}. In $\mathcal{I}$ and $\mathcal{I}^\gg$, this is an isolated object;
\item In $\mathcal{J}$ and $\mathcal{J}_\gg$ we also have an initial object $\emptyset_\ast$ --- the flexibility obtained by including this (and similar objects) is essential to our arguments.\qedhere
\end{itemize}
\end{remark}

\subsection{Relation to the simplex category}

We want to use the categories introduced in the previous section in combination with the Lawvere theory of \S \ref{Lawvere theory}. To this end, we relate the categories of partitioned linearly ordered sets to the simplex category, or rather to the free product completion $\operatorname{Ord}_\pm^\times$, and to the finite free monoids $\operatorname{Free}(S)$ in the category $\mathcal{T}_A$.

\begin{notation}\label{J and monoids and Ord}
\ 
\begin{enumerate}
\item Let $S$ and $T$ be finite sets. A map $\theta\colon \operatorname{Free}(T)\rightarrow \operatorname{Free}(S)$ of monoids induces a map $\mathcal{J}^S\rightarrow \mathcal{J}^T$ by sending a collection of partitioned totally ordered sets $I^s_{P^s}$, $s\in S$, to the collection $J^t_{Q^t}$, $t\in T$, defined as follows: given $t\in T$, write $\theta(t)=s_1s_2\cdots s_{k_t}\in \operatorname{Free}(S)$ as an ordered product of generators $s_i\in S$ and consider the partitioned totally ordered set
\begin{align*}
J^t_{Q^t}:=I^{s_1}_{P^{s_1}} I^{s_2}_{P^{s_2}}\cdots I^{s_{k_t}}_{P^{s_{k_t}}},\quad t\in T,
\end{align*}
given by concatenating the partitioned sets corresponding to the given elements in $S$ in the given order.
\item We have a functor
\begin{align*}
\zeta_S\colon \mathcal{J}^S\rightarrow \operatorname{Ord}_{\pm}^\times, \qquad I^s_{P^s}\mapsto \{(I_p^s)^\pm\}_{s\in S,p\in P^s}.
\end{align*}
On morphisms it sends a collection of morphisms $(\theta_s,\rho_s)\colon I^s_{P^s}\rightarrow J^s_{Q^s}$, $s\in S$, to the morphism
\begin{align*}
\{(I_p^s)^\pm\}_{s\in S,p\in P^s}\rightarrow \{(J^s_q)^\pm\}_{s\in S,q\in Q^s}
\end{align*}
given by the map
\begin{align*}
\rho=\coprod_{s\in S}\rho_s\colon \coprod_{s\in S} Q^s\rightarrow \coprod_{s\in S} P^s
\end{align*}
together with the maps
\begin{equation*}
\theta_q\colon (I^s_{\rho_s(q)})^\pm \rightarrow (J^s_q)^\pm,\quad i\mapsto \begin{cases}
\bot & \theta_s(i)< \min J^s_q \\
\theta_s(i) & \theta_s(i)\in J^s_q \\
\top & \theta_s(i)> \max J^s_q.
\end{cases}
\end{equation*}
See \Cref{motivate functor JS to Ord} below for some intuition behind this definition.
\item Consider the subcategory $\operatorname{Ord}_\pm^\gg\subset \operatorname{Ord}_\pm$ with the same objects but only surjective maps. Under the equivalence $\operatorname{Ord}_\pm\simeq \Delta^{\operatorname{op}}$, this corresponds to the subcategory $\Delta_i\subset \Delta$ on the same objects and injective maps between them --- in other words, we only include face maps.
\item A \textit{semisimplicial object} in an $\infty$-category $\mathcal{C}$ is a functor $\operatorname{Ord}_\pm^\gg\rightarrow \mathcal{C}$.
\item For a semisimplicial object $Y\colon \operatorname{Ord}_\pm^\gg\rightarrow \mathcal{C}$, we denote the induced product preserving map out of the free product completion of $\operatorname{Ord}_\pm^\gg$ by $Y^\times\colon (\operatorname{Ord}_\pm^\gg)^\times\rightarrow \mathcal{C}$.
\item Write $i\colon \operatorname{Ord}_\pm^\gg\hookrightarrow \operatorname{Ord}_\pm$ for the inclusion and moreover, $i^\times\colon (\operatorname{Ord}_\pm^\gg)^\times\hookrightarrow \operatorname{Ord}_\pm^\times$ for the induced functor on free product completions.
\item The functor $\zeta_S\colon \mathcal{J}^S\rightarrow \operatorname{Ord}_\pm^\times$ restricts to a functor $\zeta_S^\gg\colon \mathcal{J}_\gg^S\rightarrow (\operatorname{Ord}_\pm^\gg)^\times$.\qedhere
\end{enumerate} 
\end{notation}

\begin{remark}\label{motivate functor JS to Ord}
Let's very quickly explain the definition of the functor $\mathcal{J}^S\rightarrow \operatorname{Ord}_\pm^\times$ in item (2) above since it looks more complicated than it actually is. For clarity, we take the case $S=\ast$. Let $(\theta,\rho)\colon I_P\rightarrow J_Q$ in $\mathcal{J}$. Then the morphism
\begin{align*}
\{I_p^\pm\}_{p\in P}\rightarrow \{J_q^\pm\}_{q\in Q}
\end{align*}
is given by $\rho\colon Q\rightarrow P$ together with the maps
\begin{align*}
\theta_q\colon I_{\rho(q)}^\pm \rightarrow J_q,\quad
i\mapsto \begin{cases}
\bot & \theta(i)< \min J_q \\
\theta(i) & \theta(i)\in J_q \\
\top & \theta(i)> \max J_q.
\end{cases}
\end{align*}
Intuitively, this picks out the $q$-part of the restriction $\theta|_{I_{\rho(q)}}$. More explicitly, note that for any $p\in P$, the image of $\theta|_{I_p}$ is contained in $\coprod_{q\in \rho^{-1}(p)} J_q$. The map $\theta_q$ is equal to $\theta$ on the preimage $\theta^{-1}(J_q)$ and it sends everything below this to $\bot$ and everything above this to $\top$.
\end{remark}

\begin{definition}\label{simplicial space out of J-category}
Let $S$ be a finite set and $X\colon \operatorname{Ord}_\pm\rightarrow \mathcal{C}$ a simplicial object in an $\infty$-category $\mathcal{C}$. Consider the following composite
\begin{align*}
\mathbf{X}_S\colon \mathcal{J}^S\xrightarrow{\ \zeta_S\ } \operatorname{Ord}_\pm^\times\xrightarrow{X^\times} \mathcal{C}
\end{align*}
where the first map is the one defined in item (2) above and the second is the unique product preserving map given by the universal property of the free product completion. Note that
\begin{align*}
\mathbf{X}_S\simeq\prod_{s\in S}\mathbf{X}_s,
\end{align*}
so it often suffices to understand the case $S=\ast$. We will in general write $\mathbf{X}:=\mathbf{X}_\ast$ and $\mathbf{X}_n:=\mathbf{X}_{(n)}$ when $S=(n)$ is the finite set on $n$ elements.

For a semisimplicial object $Y\colon \operatorname{Ord}_\pm^\gg\rightarrow \mathcal{E}$, we analogously consider
\begin{align*}
\mathbf{Y}_S\colon \mathcal{J}_\gg^S\xrightarrow{\ \zeta_S^\gg\ }( \operatorname{Ord}_\pm^\gg)^\times\xrightarrow{Y^\times} \mathcal{C}
\end{align*}
and as above, we write $\mathbf{Y}:=\mathbf{Y}_\ast$  and $\mathbf{Y}_n:=\mathbf{Y}_{(n)}$.
\end{definition}

We make some basic observations that we will need later on.

\begin{lemma}\label{LKE and free product completion}
Let $\mathcal{E}$ be a cocomplete cartesian closed $\infty$-category and let $Y\colon \operatorname{Ord}_\pm^\gg\rightarrow \mathcal{E}$ be a semisimplicial object in $\mathcal{E}$. There is an equivalence
\begin{align*}
(\operatorname{Lan}_iY)^\times\simeq \operatorname{Lan}_{i^\times} (Y^\times),
\end{align*}
identifying the unique product preserving functor $\operatorname{Ord}_\pm^\times\rightarrow \mathcal{E}$ restricting to the left Kan extension of $Y$ along $i$ as the left Kan extension of $Y^\times$ along $i^\times$.
\end{lemma}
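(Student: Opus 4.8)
The plan is to exploit the universal properties on both sides and show the two product-preserving functors $\operatorname{Ord}_\pm^\times \to \mathcal{E}$ agree. Since $\operatorname{Ord}_\pm^\times$ is the free product completion of $\operatorname{Ord}_\pm$, a product-preserving functor out of it is determined by its restriction to $\operatorname{Ord}_\pm$; so it suffices to check that both sides, when restricted along $\operatorname{Ord}_\pm \hookrightarrow \operatorname{Ord}_\pm^\times$, give the functor $\operatorname{Lan}_i Y \colon \operatorname{Ord}_\pm \to \mathcal{E}$. For the left-hand side this is the definition of $(\operatorname{Lan}_i Y)^\times$. For the right-hand side, I would first argue that $\operatorname{Lan}_{i^\times}(Y^\times)$ is product-preserving: by the pointwise formula, its value on $\{I_s^\pm\}_{s\in S}$ is a colimit over the comma category $(\operatorname{Ord}_\pm^\gg)^\times \times_{\operatorname{Ord}_\pm^\times} (\operatorname{Ord}_\pm^\times)_{/\{I_s^\pm\}}$, and because products in $\operatorname{Ord}_\pm^\times$ are given by disjoint union of indexing sets, this comma category splits as a product $\prod_s \big((\operatorname{Ord}_\pm^\gg) \times_{\operatorname{Ord}_\pm} (\operatorname{Ord}_\pm)_{/I_s^\pm}\big)$; since $\mathcal{E}$ is cartesian closed and cocomplete, the colimit over a product of categories of a functor that is (up to the $Y^\times$-values) a product commutes with the product, giving $\operatorname{Lan}_{i^\times}(Y^\times)(\{I_s^\pm\}_s) \simeq \prod_s \operatorname{Lan}_{i^\times}(Y^\times)(\{I_s^\pm\})$. (The key input here is that in a cartesian closed cocomplete $\infty$-category the functor $-\times c$ preserves colimits, so colimits distribute over finite products.)

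Next I would compute the restriction of $\operatorname{Lan}_{i^\times}(Y^\times)$ to $\operatorname{Ord}_\pm$, i.e. evaluate on single-object families $\{I^\pm\}$. By the pointwise Kan extension formula this is $\operatorname{colim}$ over $(\operatorname{Ord}_\pm^\gg)^\times \times_{\operatorname{Ord}_\pm^\times} (\operatorname{Ord}_\pm^\times)_{/\{I^\pm\}}$ of $Y^\times$. An object of this comma category is a finite family $\{J_t^\pm\}_{t\in T}$ of objects of $\operatorname{Ord}_\pm^\gg$ together with a map $\{J_t^\pm\}_{t\in T} \to \{I^\pm\}$ in $\operatorname{Ord}_\pm^\times$, which by definition of morphisms in the free product completion is a map of sets $\{\ast\} \to T$ — i.e. a choice of $t_0 \in T$ — together with a morphism $J_{t_0}^\pm \to I^\pm$ in $\operatorname{Ord}_\pm^\gg$ (the surjection constraint comes from the $\gg$-superscript) and arbitrary objects $J_t^\pm$ for $t \neq t_0$. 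I would then show this comma category has the same colimit as the simpler one $(\operatorname{Ord}_\pm^\gg) \times_{\operatorname{Ord}_\pm} (\operatorname{Ord}_\pm)_{/I^\pm}$ computing $(\operatorname{Lan}_i Y)(I^\pm)$: the extra ``spectator'' objects $J_t^\pm$, $t\neq t_0$, do not affect the colimit because $Y^\times$ applied to the whole family is the product $\prod_t Y(J_t^\pm)$, but the forgetful comparison functor that drops the spectators and remembers only $(J_{t_0}^\pm \to I^\pm)$ is cofinal — one can exhibit a right adjoint, or check contractibility of the relevant fibres, essentially by observing that one can always shrink $T$ down to $\{t_0\}$ (sending the spectators to empty families), which is an initial-type reduction. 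Once this cofinality is established, the two colimits agree naturally in $I^\pm$ and we are done.

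The main obstacle I expect is the bookkeeping around the spectator objects in the comma category over $\{I^\pm\}$ and the verification that the reduction functor to $(\operatorname{Ord}_\pm^\gg)_{/I^\pm}$ is cofinal (equivalently, that the extra data contributes a contractible, or at least colimit-irrelevant, piece). This is where the cartesian-closedness hypothesis is really used: without distributivity of colimits over finite products one cannot disentangle $\prod_t Y(J_t^\pm)$ and the argument collapses. A secondary, more mechanical point is making precise that ``restrict along $\operatorname{Ord}_\pm \hookrightarrow \operatorname{Ord}_\pm^\times$ then take the unique product-preserving extension'' is the identity on product-preserving functors — but that is exactly the universal property already recorded in the excerpt, so it is not a genuine difficulty. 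I would organise the write-up as: (1) reduce to checking the restriction to $\operatorname{Ord}_\pm$; (2) show $\operatorname{Lan}_{i^\times}(Y^\times)$ is product-preserving using distributivity; (3) identify the comma category over $\{I^\pm\}$ and prove the cofinality of the spectator-forgetting functor; (4) conclude.
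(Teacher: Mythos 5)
Your proposal follows essentially the same route as the paper: pass to the cofinal subcategory of the comma category, use the distributivity of colimits over finite products afforded by cartesian closedness, and conclude that $\operatorname{Lan}_{i^\times}(Y^\times)$ is product preserving and restricts to $\operatorname{Lan}_i Y$. Two small slips are worth correcting in a write-up. First, in your step (2) the full comma category $(\operatorname{Ord}_\pm^\gg)^\times \times_{\operatorname{Ord}_\pm^\times}(\operatorname{Ord}_\pm^\times)_{/\{I_s^\pm\}_{s\in S}}$ does \emph{not} literally split as a product: its objects involve an arbitrary indexing map $\gamma\colon S\to T$, so there are spectator objects and collisions for every $S$, not just for $S=\ast$. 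The splitting holds only after restricting to the full subcategory where $T=S$ and $\gamma=\operatorname{id}_S$, and the inclusion of that subcategory is cofinal because it admits a left adjoint (the ``drop spectators'' functor you describe in step (3)); so the reduction you carry out for $\{I^\pm\}$ is needed uniformly for all $S$, and once it is in place the remaining bookkeeping disappears. Second, the structure maps $J_{t_0}^\pm\to I^\pm$ in this comma category live in $\operatorname{Ord}_\pm$, not $\operatorname{Ord}_\pm^\gg$ --- the superscript $\gg$ only constrains the objects and the morphisms between them, not the maps down to $\{I^\pm\}$ --- though this does not affect the argument.
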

\begin{proof}
All we need to see is that $\operatorname{Lan}_{i^\times}(Y^\times)$ is product preserving and restricts to $\operatorname{Lan}_i Y$.

Let $\{I_s^\pm\}_{s\in S}$ in $\operatorname{Ord}_\pm^\times$, and consider the comma category
\begin{align*}
(i^\times)_{/\{I_s^\pm\}_{s\in S}}:=(\operatorname{Ord}_\pm^\gg)^\times \mathop{\times}_{\operatorname{Ord}_\pm^\times}(\operatorname{Ord}_\pm^\times)_{/\{I_s^\pm\}_{s\in S}}
\end{align*}
over which the value of left Kan extension along $i^\times$ on $\{I_s^\pm\}_{s\in S}$ is calculated. Consider also the full subcategory
\begin{align*}
(i^\times)_{/\{I_s^\pm\}_{s\in S}}^S \subset (i^\times)_{/\{I_s^\pm\}_{s\in S}}
\end{align*}
spanned by the objects
\begin{align*}
\{J_t^\pm\}_{t\in T}\rightarrow \{I_s^\pm\}_{s\in S}\quad\text{with}\quad \gamma=\operatorname{id}_S\colon S\rightarrow S=T.
\end{align*}
The inclusion
\begin{align*}
(i^\times)_{/\{I_s^\pm\}_{s\in S}}^S \hookrightarrow (i^\times)_{/\{I_s^\pm\}_{s\in S}}
\end{align*}
admits a left adjoint
\begin{align*}
\big(\{J_t^\pm\}_{t\in T}\rightarrow \{I_s^\pm\}_{s\in S}\big) \mapsto \big(\{J_{\gamma(s)}^\pm\}_{s\in S}\rightarrow \{I_s^\pm\}_{s\in S}\big).
\end{align*}
In particular, the inclusion is a $\varinjlim$-equivalence. Having assumed that $\mathcal{E}$ is cartesian closed, it follows that we can pull out a finite product over $S$:
\begin{align*}
\mathop{\operatorname{colim}}_{\{J_s^\pm\}_{s\in S}\rightarrow \{I_s^\pm\}_{s\in S}}Y^\times(\{J_s^\pm\}_{s\in S}) \xrightarrow{\ \simeq\ }\mathop{\operatorname{colim}}_{\{J_s^\pm\}_{s\in S}\rightarrow \{I_s^\pm\}_{s\in S}}\prod_{s\in S} Y(J_s^\pm)\xrightarrow{\ \simeq\ }\prod_{s\in S}\mathop{\operatorname{colim}}_{J_s^\pm\rightarrow I_s^\pm} Y(J_s^\pm)
\end{align*}
where the first two colimits are taken over $(i^\times)_{/\{I_s^\pm\}_{s\in S}}^S$ and on the far right they are taken over the comma categories of $i$ over $I_s^\pm$, $s\in S$.
It follows directly from this observation that $\operatorname{Lan}_{i^\times}(Y^\times)$ is product preserving and restricts to $\operatorname{Lan}_i Y$.
\end{proof}

The lemma below relates left Kan extensions with the association $X\mapsto \mathbf{X}^\times_S$ of \Cref{simplicial space out of J-category}. Diagrammatically, it reads as follows: for a semisimplicial object $Y\colon \operatorname{Ord}_\pm^\gg\rightarrow \mathcal{E}$ in a cartesian closed cocomplete $\infty$-category $\mathcal{E}$, the lower composite in the diagram below identifies with the left Kan extension of the upper composite along $j\colon \mathcal{J}_\gg^S\hookrightarrow \mathcal{J}^S$.
\begin{center}
\begin{tikzpicture}
\matrix (m) [matrix of math nodes,row sep=1em,column sep=2em]
{
\mathcal{J}_\gg^S & (\operatorname{Ord}_\pm^\gg)^\times & \\
& & & \mathcal{E} \\
\mathcal{J}^S & \operatorname{Ord}_\pm^\times & \\
};
\path[-stealth]
(m-1-2) edge node[above right]{$Y^\times$} (m-2-4)
(m-3-2) edge node[below right]{$(\operatorname{Lan}_i Y)^\times$} (m-2-4)
(m-1-1) edge node[above]{$\zeta_S^\gg$} (m-1-2)
(m-3-1) edge node[above]{$\zeta_S$} (m-3-2)
;
\path[right hook-stealth]
(m-1-1) edge node[left]{$j$} (m-3-1)
(m-1-2) edge node[right]{$i^\times$} (m-3-2)
;
\end{tikzpicture}
\end{center}

\begin{lemma}\label{left Kan extensions versus maps out of JS}
Let $S$ be a finite set, $\mathcal{E}$ a cartesian closed $\infty$-category and $Y\colon \operatorname{Ord}_\pm^\gg\rightarrow \mathcal{E}$ a semisimplicial object in $\mathcal{E}$. The natural comparison map
\begin{align*}
\operatorname{Lan}_{j}(\mathbf{Y}_S^\times) \rightarrow (\mathbf{Lan}_i \mathbf{Y})^\times_S
\end{align*}
is an equivalence.
\end{lemma}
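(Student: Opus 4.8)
The plan is to reduce the assertion --- which is a base-change statement for the commutative square $i^\times\circ\zeta_S^\gg=\zeta_S\circ j$ --- to the cofinality of an explicit functor of comma categories, via a pointwise computation. First I would reduce to the case $S=\ast$. Writing $j_1\colon\mathcal{J}_\gg\hookrightarrow\mathcal{J}$ for the inclusion, under $\mathcal{J}^S=\prod_{s\in S}\mathcal{J}$ and $\mathcal{J}_\gg^S=\prod_{s\in S}\mathcal{J}_\gg$ we have $j=\prod_{s\in S}j_1$, so the comma category over an object $(I^s_{P^s})_{s\in S}$ splits as $\prod_{s\in S}\big(\mathcal{J}_\gg\times_\mathcal{J}\mathcal{J}_{/I^s_{P^s}}\big)$. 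Combining this with $\mathbf{Y}_S\simeq\prod_{s\in S}\mathbf{Y}_s$ (and the analogous decomposition for the simplicial object $\operatorname{Lan}_iY$), and using that finite products commute with colimits in each variable in the cartesian closed $\infty$-category $\mathcal{E}$, both sides of the comparison map are external products over $S$ of their $S=\ast$ analogues, and --- unwinding definitions --- the comparison map is the external product of the $S=\ast$ comparison maps. Hence it suffices to prove that $\operatorname{Lan}_{j_1}(\mathbf{Y}^\times)\to(\mathbf{Lan}_i\mathbf{Y})^\times$ is an equivalence.

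Next I would check this pointwise at a fixed object $I_P\in\mathcal{J}$. By \Cref{LKE and free product completion}, $(\mathbf{Lan}_i\mathbf{Y})^\times(I_P)=\big(\operatorname{Lan}_{i^\times}(Y^\times)\big)(\zeta(I_P))$, which is the colimit of $Y^\times$ over the comma category $(i^\times)_{/\zeta(I_P)}$, while $\operatorname{Lan}_{j_1}(\mathbf{Y}^\times)(I_P)$ is the colimit of $\mathbf{Y}^\times=Y^\times\circ\zeta^\gg$ over $\mathcal{J}_\gg\times_\mathcal{J}\mathcal{J}_{/I_P}$. The functor $\zeta$ and its restriction $\zeta^\gg$ induce a functor
\[
\Phi\colon \mathcal{J}_\gg\times_\mathcal{J}\mathcal{J}_{/I_P}\longrightarrow (i^\times)_{/\zeta(I_P)},\qquad \big(J_Q\xrightarrow{(\theta,\rho)}I_P\big)\longmapsto\big(\zeta(J_Q)\to\zeta(I_P)\big),
\]
along which $Y^\times$ pulls back to $\mathbf{Y}^\times$, since $Y^\times(\zeta^\gg(J_Q))=\prod_{q\in Q}Y(J_q^\pm)=\mathbf{Y}^\times(J_Q)$; under these identifications the comparison map is precisely the canonical map of colimits induced by $\Phi$. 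So it suffices to show that $\Phi$ is cofinal.

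This last point --- cofinality of $\Phi$ --- is the main obstacle. One must show that for each object $W$ of $(i^\times)_{/\zeta(I_P)}$ the comma category $\Phi_{W/}$ has weakly contractible nerve. I expect this to be done, in the spirit of the proofs of \Cref{Jgg in J is cofinal} and \Cref{Igg in Jgg cofinal}, by exhibiting a zigzag $\operatorname{id}_{\Phi_{W/}}\Leftarrow F\Rightarrow C$ of natural endofunctors of $\Phi_{W/}$ with $C$ constant. An object of $\Phi_{W/}$ is the data of $(J_Q,(\theta,\rho))$ together with a compatible map out of $W$; the endofunctor $F$ replaces it by its ``collapse refinement'', using the canonical zigzag of \Cref{refinement zigzag} attached to the order-preserving map $\theta$ (which refines the partitions) to pass from the $Q$-indexed partition of $J$ to the $P$-indexed one, so that $(\theta,\rho)$ becomes a collapse map $J_P\to I_P$; and $C$ sends everything to the fixed object of $\Phi_{W/}$ manufactured from $W$ alone, namely, for each $p\in P$, the ``interior image'' inside $I_p$ of the $p$-th component of the structure map of $W$, equipped with the evident surjection from that component. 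The natural transformations $F\Rightarrow\operatorname{id}_{\Phi_{W/}}$ and $F\Rightarrow C$ are assembled from the collapse and splitting morphisms of \Cref{refinement zigzag}. The delicate point --- and the reason this bookkeeping is heavier than in \Cref{Jgg in J is cofinal} --- is that a morphism $(\theta,\rho)$ of $\mathcal{J}$ records strictly more than the refinement of partitions witnessed by $\theta$ (\Cref{refinement versus morphism}); this is exactly why one works with $\mathcal{J}$, retaining degenerate objects such as $\emptyset_\ast$, rather than with $\mathcal{I}$, and it is also what makes the zigzag close up. The remaining commutativity checks are routine.
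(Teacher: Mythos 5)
Your reductions are sound and consistent with the paper's: the splitting over $S$ via cartesian closedness, the pointwise evaluation at $I_P$, and the identification of the comparison map as the map of colimits induced by the functor $\Phi\colon \mathcal{J}_\gg\times_{\mathcal{J}}\mathcal{J}_{/I_P}\to (i^\times)_{/\zeta(I_P)}$ are all correct. Where you diverge from the paper is the last step, and that is precisely where your proof stops being a proof. The paper never establishes cofinality of $\Phi$ itself; instead it restricts both comma categories to the full subcategories of objects whose indexing set is $P$ with identity structure map (these inclusions are cofinal by the left-adjoint argument already used in \Cref{LKE and free product completion}), uses cartesian closedness a second time to pull a finite product over $P$ out of both colimits, and thereby reduces to a single one-variable comparison $\operatorname{colim}_{J\to I}Y(J^\pm)\to\operatorname{colim}_{L^\pm\to I^\pm}Y(L^\pm)$ over comma categories of $\operatorname{Ord}\to\operatorname{Ord}_\pm$, $I\mapsto I^\pm$; that last functor is cofinal because it admits an explicit left adjoint $(L^\pm\xrightarrow{\theta}I^\pm)\mapsto(\theta^{-1}(I)\to I)$. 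Note that this route uses that the diagrams are of the form $Y^\times$ and that $\mathcal{E}$ is cartesian closed, whereas your route would prove the strictly stronger statement that $\Phi$ is cofinal for arbitrary coefficient diagrams.

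The gap is that you name cofinality of $\Phi$ as ``the main obstacle'' and then do not overcome it: ``I expect this to be done \dots by exhibiting a zigzag'' is a plan, not an argument, and the endofunctors $F$ and $C$ and the transformations between them are nowhere defined precisely enough to check well-definedness on morphisms or naturality --- which, as \Cref{eta not initial} illustrates, is exactly where this kind of construction tends to break in these categories. The approach is nevertheless salvageable, and more simply than you propose: for $W=(\{L_t^\pm\}_{t\in T}\to\{I_p^\pm\}_{p\in P})$, the object you describe --- $J^\circ=\coprod_{p}M_p$ with $M_p=\epsilon_p(L_{\gamma(p)})\cap I_p$ the interior image, included into $I_P$ over $\operatorname{id}_P$, with the evident collapses $L_{\gamma(p)}^\pm\twoheadrightarrow M_p^\pm$ --- appears to be \emph{terminal} in $\Phi_{W/}$, so no zigzag is needed. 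Indeed, commutativity of the triangle over $I_P$ forces the $\rho$-component of any morphism to $J^\circ_P$ to equal the given $\rho$ (this is what neutralises the non-uniqueness issue of \Cref{refinement versus morphism} here) and forces the $\theta$-component to be the corestriction of the given $\theta$, which lands in $J^\circ$ and surjects onto it precisely because the structure maps from $W$ are surjections; the remaining compatibility under $W$ is then a direct check. If you carry out that verification you obtain a complete (and arguably cleaner) proof; as written, the central claim of your argument is unsubstantiated.
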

\begin{proof}
Writing out the definitions and applying the identification of \Cref{LKE and free product completion} above, we have to show that the natural comparison map
\begin{align*}
\operatorname{Lan}_{j}(Y^\times\circ \zeta_S^\gg) \rightarrow (\operatorname{Lan}_i (Y^\times))\circ \zeta_S
\end{align*}
is an equivalence. Note that, $\mathcal{E}$ being cartesian closed, this comparison map is equivalent to 
\begin{align*}
\prod_{s\in S}\operatorname{Lan}_{j}(Y^\times\circ \zeta_s^\gg) \rightarrow \prod_{s\in S}(\operatorname{Lan}_i (Y^\times))\circ \zeta_s.
\end{align*}
Thus is suffices to show the claim for $S=\ast$. Write $\zeta=\zeta_*$ and $\zeta^\gg=\zeta^\gg_*$ For a given $I_P$ in $\mathcal{J}$, the comparison map is given by
\begin{align*}
\mathop{\operatorname{colim}}_{J_Q\rightarrow I_P} (Y^\times \circ \zeta^\gg)(J_Q) \rightarrow \mathop{\operatorname{colim}}_{J_Q\rightarrow I_P} \mathop{\operatorname{colim}}_{\{L_t^\pm\}_{t\in T} \rightarrow \zeta(J_Q)}Y^\times (\{L_t^\pm\}_{t\in T})\rightarrow  \mathop{\operatorname{colim}}_{\{L_t^\pm\}_{t\in T} \rightarrow \zeta(I_P)}Y^\times (\{L_t^\pm\}_{t\in T}).
\end{align*}

Analogously to the proof of \Cref{LKE and free product completion} above, we may for the colimits in question restrict to objects with $Q=T=P$ (and relevant maps given by the identity on $P$) which allows us to once again exploit the assumption that $\mathcal{E}$ is cartesian closed and pull out a finite product:
\begin{align*}
\prod_{p\in P}\mathop{\operatorname{colim}}_{J_p\rightarrow I_p} (Y^\times \circ \zeta^\gg)(J_p) \rightarrow \prod_{p\in P}\mathop{\operatorname{colim}}_{J_p\rightarrow I_p} \mathop{\operatorname{colim}}_{\{L_p^\pm\} \rightarrow \zeta(J_p)}Y^\times (\{L_p^\pm\})\rightarrow  \prod_{p\in P}\mathop{\operatorname{colim}}_{\{L_p^\pm\} \rightarrow \zeta(I_p)}Y^\times (\{L_p^\pm\})
\end{align*}

This reduces the comparison map to a finite product of morphisms of the following form:
\begin{align*}
\mathop{\operatorname{colim}}_{J\rightarrow I} Y(J^\pm)\rightarrow \mathop{\operatorname{colim}}_{L^\pm\rightarrow I^\pm} Y(L^\pm);
\end{align*}
here the colimits are taken over the comma categories
\begin{align*}
\operatorname{Ord}^\gg\mathop{\times}_{\operatorname{Ord}}(\operatorname{Ord})_{/I},\quad \text{respectively} \quad\operatorname{Ord}_\pm^\gg\mathop{\times}_{\operatorname{Ord}_\pm}(\operatorname{Ord}_\pm)_{/I^\pm},
\end{align*}
where $\operatorname{Ord}$ is the category of finite linearly ordered sets and order preserving maps and $\operatorname{Ord}^\gg$ is the subcategory on surjective maps. The map of colimits is induced by the functor
\begin{align*}
\operatorname{Ord}\rightarrow \operatorname{Ord}_\pm,\quad I\mapsto I^\pm;
\end{align*}
or rather the induced functor of comma categories. Now simply note that this map is a $\varinjlim$-equivalence as it admits a left adjoint:
\begin{align*}
(L^\pm\xrightarrow{\theta} I^\pm)\  \mapsto (\theta^{-1}(I) \rightarrow I).
\end{align*}
The counit transformation is the identity and the unit transformation is given by the map
\begin{align*}
L^\pm \twoheadrightarrow \theta^{-1}(I)^\pm,\quad l\mapsto \begin{cases}
\bot & l< \min \theta^{-1}(I) \\
l & l\in \theta^{-1}(I) \\
\top & l> \max \theta^{-1}(I). \\
\end{cases}
\end{align*}
This finishes the proof.
\end{proof}

\subsection{\texorpdfstring{$\mathcal{J}$}{J} is sifted}

The following is a reflection of the fact that the simplex category $\Delta^{\operatorname{op}}$ is sifted (as will also be apparent in the proof). Recall that a non-empty ($\infty$-)category $\mathcal{C}$ is sifted if the diagonal functor $\bigtriangleup \colon \mathcal{C}\rightarrow \mathcal{C}\times \mathcal{C}$, $x\mapsto (x,x)$, is a $\varinjlim$-equivalence (\cite[Definition 5.5.8.1]{LurieHTT}).

\begin{proposition}\label{J is sifted}
The category $\mathcal{J}$ is sifted.
\end{proposition}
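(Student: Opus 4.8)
The plan is to deduce siftedness of $\mathcal{J}=\operatorname{Tw}(\operatorname{Ord})^{\operatorname{op}}$ from the known siftedness of $\Delta^{\operatorname{op}}$, exploiting the monoidal (concatenation) structure on $\mathcal{J}$ as a substitute for a terminal object in the diagonal argument. Recall that a non-empty category $\mathcal{C}$ with a monoidal structure whose unit is such that the functors $x\mapsto x\otimes y$ admit a reasonable comparison to the identity is automatically sifted: more concretely, the standard criterion is that $\mathcal{C}$ is sifted if it is non-empty and the diagonal $\mathcal{C}\to \mathcal{C}\times\mathcal{C}$ is cofinal, and a convenient sufficient condition (as in the proof that $\Delta^{\operatorname{op}}$ is sifted, e.g.\ \cite[Lemma 5.5.8.4]{LurieHTT}) is that for every pair of objects $x,y$ the category $\mathcal{C}_{x/}\times_{\mathcal{C}}\mathcal{C}_{/?}\cdots$ — in practice, that the category of "cospans" or the relevant comma category $x\backslash\mathcal{C}\times_{\mathcal{C}} y\backslash\mathcal{C}$ (equivalently the category under the pair $(x,y)$ for the diagonal) is weakly contractible.

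So the key steps I would carry out are as follows. First, reduce to showing that for every pair of objects $I_P, J_Q$ of $\mathcal{J}$, the comma category $\mathcal{J}_{(I_P,J_Q)/}:=\mathcal{J}_{I_P/}\times_{\mathcal{J}} \mathcal{J}_{J_Q/}$ (the category of objects receiving a map from both $I_P$ and $J_Q$) is weakly contractible; this is the standard reformulation of "the diagonal is a $\varinjlim$-equivalence" via the cofinality criterion of \cite[Theorem 2.19]{ClausenOrsnesJansen}, together with non-emptiness of $\mathcal{J}$ (which holds, e.g.\ the initial object $\emptyset_\ast$ is there). Second, use the concatenation monoidal structure $I_PJ_Q=(IJ)_{PQ}$ on $\mathcal{J}$: given $I_P$ and $J_Q$, the object $I_PJ_Q$ receives canonical maps from both $I_P$ and $J_Q$ (the inclusions of the two "halves", which are genuine morphisms in $\mathcal{J}$ since on the $\operatorname{Ord}$-side concatenation inclusions are order preserving and the partition side just projects), giving a distinguished object of the comma category. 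Third — and this is where the real content sits — I would show that this distinguished object makes the comma category contractible, by exhibiting it as (weakly) terminal up to a zigzag, or more robustly by transporting the contraction from the analogous statement about $\Delta^{\operatorname{op}}\simeq \operatorname{Ord}_\pm$ or about $\operatorname{Ord}$ directly: the twisted arrow category construction and the combinatorics of refinements (\Cref{refinement zigzag}) let one build, for any object $(I_P\to K_R \leftarrow$ no — rather any cospan target $(I_P\to L_M, J_Q\to L_M)$), a canonical zigzag through $I_PJ_Q$. Concretely I expect to define functors from the comma category to itself analogous to the $F,G$ of the proof of \Cref{Jgg in J is cofinal}, with natural transformations connecting the identity to the constant functor at $I_PJ_Q$.

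The main obstacle I anticipate is precisely the subtlety flagged in \Cref{refinement versus morphism}: a morphism in $\mathcal{J}$ is more than a refinement of partitions, because the splitting map $\rho$ on the poset side is not determined by the collapse map $\theta$ on the set side. This means the "obvious" contraction of the comma category — send every cospan target to $I_PJ_Q$ — does not obviously assemble into a natural transformation, since there may be several maps $I_PJ_Q\to L_M$ refining a given pair and no canonical choice. The fix should be to first use Lemmas \ref{Jgg in J is cofinal} and \ref{Igg in Jgg cofinal} to replace $\mathcal{J}$ by $\mathcal{I}_\gg$, in which (again by \Cref{refinement versus morphism}) the splitting map \emph{is} uniquely determined, run the diagonal/siftedness argument there where all choices become canonical, and then transport back: since all four inclusions among $\mathcal{I}_\gg,\mathcal{I},\mathcal{J}_\gg,\mathcal{J}$ are $\varinjlim$-equivalences, and siftedness is detected by such colimit behaviour, it suffices to prove $\mathcal{I}_\gg$ sifted. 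In $\mathcal{I}_\gg=\operatorname{Tw}(\operatorname{Ord}^\gg)^{\operatorname{op}}$ one then runs the concatenation argument cleanly: non-emptiness is clear, the comma category under $(I_P,J_Q)$ has the canonical object $I_PJ_Q$, and uniqueness of all the structure maps lets one write down the contracting homotopy exactly as in the classical proof that $\Delta^{\operatorname{op}}$ is sifted. I would double-check at the end that the empty-object pathologies noted in the final remark before \S 3.2 (the isolated $\emptyset_\emptyset$ in $\mathcal{I},\mathcal{I}_\gg$) do not break the argument — here it helps that we only need weak contractibility of comma categories, and $\emptyset_\emptyset$ concatenated with anything is harmless.
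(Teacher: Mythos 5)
There is a genuine gap, and it sits exactly where you locate ``the real content'': the contraction of the comma category is never constructed, and the reduction you propose in order to construct it leads to a false statement. Your transport step is fine in principle --- if $\mathcal{A}\hookrightarrow\mathcal{B}$ is a $\varinjlim$-equivalence and $\mathcal{A}$ is sifted, then $\bigtriangleup_{\mathcal{B}}\circ f=(f\times f)\circ\bigtriangleup_{\mathcal{A}}$ is cofinal and hence so is $\bigtriangleup_{\mathcal{B}}$ --- but $\mathcal{I}_\gg$ is \emph{not} sifted, so there is nothing to transport. Concretely, take $I_P=\{1<2\}_{\{1<2\}}$ and $J_Q=\ast_\ast$ in $\mathcal{I}_\gg$: any common target $K_R$ would need a surjection $\ast\twoheadrightarrow K$, forcing $|K|=|R|=1$, and then the required $\rho\colon R\to\{1<2\}$ would make $\operatorname{id}_{\{1<2\}}$ factor through a point. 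So the comma category under this pair is empty, not weakly contractible. The mechanism you were counting on is also absent there: the concatenation maps $I_P\to I_PJ_Q$ have underlying map the non-surjective inclusion $I\hookrightarrow IJ$, so they are not morphisms of $\mathcal{I}_\gg$ at all. (Note that cofinality of $\mathcal{I}_\gg\hookrightarrow\mathcal{J}$ is perfectly consistent with $\mathcal{I}_\gg$ failing to be sifted while $\mathcal{J}$ is --- the implication only goes one way.) Even working directly in $\mathcal{J}$, the object $I_PJ_Q$ is not weakly terminal in the comma category in any evident sense: given a cospan $I_P\to K_R\leftarrow J_Q$ with legs $\theta_1,\theta_2$, the juxtaposition $IJ\to K$ need not be order preserving, so there is no comparison map through $I_PJ_Q$, and your step three remains entirely unsupplied. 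Finally, your closing remark that $\emptyset_\emptyset$ is ``harmless'' is too quick: a morphism $\emptyset_\emptyset\to K_R$ forces $R=\emptyset$, so $\emptyset_\emptyset$ admits no map to any $J_Q$ with $Q\neq\emptyset$ and the comma category under $(\emptyset_\emptyset,J_Q)$ is empty; this degenerate case genuinely has to be confronted, not waved away.

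The paper's proof takes a different route which avoids concatenation altogether. It uses the projection $\pi\colon\mathcal{J}=\operatorname{Tw}(\operatorname{Ord})^{\operatorname{op}}\to\operatorname{Ord}^{\operatorname{op}}$, $I_P\mapsto P$, which is a $\varinjlim$-equivalence by the twisted-arrow lemma, to compare the diagonal of $\mathcal{J}$ with the diagonal of $\operatorname{Ord}^{\operatorname{op}}\simeq\Delta^{\operatorname{op}}$, whose siftedness is classical. The actual work is then a Quillen Theorem~A argument: after replacing the right fibre of $\bigtriangleup_{\mathcal{J}}$ by a category of cospans $K_P\to H_R\leftarrow L_Q$ lying over a fixed pair $(P,Q)$, one contracts the fibres of the map to the category of spans $P\leftarrow R\to Q$ in $\operatorname{Ord}$ using explicit adjoints and a terminal object $R_P\to R_R\leftarrow R_Q$. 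If you want to repair your argument, you should abandon the reduction to $\mathcal{I}_\gg$ and instead find some way to compare the comma categories in $\mathcal{J}$ with the corresponding comma categories for $\Delta^{\operatorname{op}}$ --- which is essentially what the paper does.
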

\begin{proof}
We have a commutative diagram
\begin{center}
\begin{tikzpicture}
\matrix (m) [matrix of math nodes,row sep=2em,column sep=2em]
{
\mathcal{J} & \mathcal{J}\times \mathcal{J} \\
\operatorname{Ord}^{\operatorname{op}} & \operatorname{Ord} ^{\operatorname{op}}\times \operatorname{Ord}^{\operatorname{op}} \\
};
\path[-stealth]
(m-1-1) edge node[above]{$\bigtriangleup$} (m-1-2)
(m-2-1) edge node[below]{$\bigtriangleup$} (m-2-2)
(m-1-1) edge node[left]{$\pi$} (m-2-1)
(m-1-2) edge node[right]{$\pi\times\pi$} (m-2-2)
;
\end{tikzpicture}
\end{center}
where $\pi\colon \mathcal{J}=\operatorname{Tw}(\operatorname{Ord})^{\operatorname{op}}\rightarrow \operatorname{Ord}^{\operatorname{op}}$ is the projection $I_P\mapsto P$. The vertical maps are $\varinjlim$-equivalences by \cite[Lemma 4.16]{ClausenOrsnesJansen} (composed with the equivalence $\operatorname{Tw}(\mathcal{C})\simeq \operatorname{Tw}(\mathcal{C}^{\operatorname{op}})$). The lower map is a $\varinjlim$-equivalence since $\operatorname{Ord}^{\operatorname{op}}\simeq \Delta^{\operatorname{op}}$ is sifted (\cite[5.5.8.4]{LurieHTT}). We must to show that the upper horizontal map is a $\varinjlim$-equivalence; equivalently, that for any pair of objects $I_P$, $J_Q$ in $\mathcal{J}$, the right fibre
\begin{align*}
\bigtriangleup_{(I_P,J_Q)/}:=\mathcal{J}\mathop{\times}_{\mathcal{J}\times \mathcal{J}} \big(\mathcal{J}_{I_P/}\times \mathcal{J}_{J_Q/}\big)
\end{align*}
has contractible geometric realisation. Recall that the right fibre is an instance of the lax pullback as defined in \cite{Tamme}:
\begin{align*}
\bigtriangleup_{(I_P,J_Q)/}=\{(I_P,J_Q)\}\mathop{\times}^{\rightarrow}_{\mathcal{J}\times \mathcal{J}} \mathcal{J}.
\end{align*}
Using this, we have a sequence of functors
\begin{align}\label{sequence of functors}
\{(I_P,J_Q)\}\mathop{\times}^{\rightarrow}_{\mathcal{J}\times \mathcal{J}} \mathcal{J}\longrightarrow \big( \pi_{P/}\times \pi_{Q/}\big)\mathop{\times}^{\rightarrow}_{\mathcal{J}\times \mathcal{J}} \mathcal{J}\longrightarrow \{(P,Q)\}\mathop{\times}^{\rightarrow}_{\operatorname{Ord}^{\operatorname{op}}\times \operatorname{Ord}^{\operatorname{op}}} \operatorname{Ord}^{\operatorname{op}}\longrightarrow \ast.
\end{align}
The first map is induced by the inclusion
\begin{align*}
\iota\colon \{(I_P,J_Q)\}\,\hookrightarrow \,\pi_{P/}\times \pi_{Q/}
\end{align*}
into the right fibres of $\pi$ over $P$ and $Q$. As $\pi$ is a $\varinjlim$-equivalence, this inclusion (and hence the induced map on lax pullbacks) is a homotopy equivalence on geometric realisations, 

The third map is the terminal functor. Since the pullback 
\begin{align*}
\{(P,Q)\}\mathop{\times}^{\rightarrow}_{\operatorname{Ord}^{\operatorname{op}}\times \operatorname{Ord}^{\operatorname{op}}} \operatorname{Ord}^{\operatorname{op}}
\end{align*}
is simply the right fibre of the $\varinjlim$-equivalence $\bigtriangleup\colon \operatorname{Ord}^{\operatorname{op}}\rightarrow \operatorname{Ord}^{\operatorname{op}}\times \operatorname{Ord}^{\operatorname{op}}$, we see that the terminal functor is also a homotopy equivalence on geometric realisations.

Finally, the second map is induced by $\pi$ and we will show that this also induces a homotopy equivalence on geometric realisations. 
Write
\begin{align*}
C_{P,Q}:=\big( \pi_{P/}\times \pi_{Q/}\big)\mathop{\times}^{\rightarrow}_{\mathcal{J}\times \mathcal{J}} \mathcal{J}\quad\text{and}\quad \bigtriangleup_{(P,Q)/}:= \{(P,Q)\}\mathop{\times}^{\rightarrow}_{\operatorname{Ord}^{\operatorname{op}}\times \operatorname{Ord}^{\operatorname{op}}} \operatorname{Ord}^{\operatorname{op}}.
\end{align*}
Explicitly, $C_{P,Q}$ has as objects cospans in $\mathcal{J}$:
\begin{align*}
K_P\rightarrow H_R\leftarrow L_Q
\end{align*}
and morphisms are diagrams in $\mathcal{J}$ of the form
\begin{equation}\label{morphism in CPQ}
\begin{tikzpicture}[baseline=(current  bounding  box.center)]
\matrix (m) [matrix of math nodes,row sep=2em,column sep=2em]
{
K_P & H_R & L_Q\\
K'_P & H'_{R'} & L'_Q\\
};
\path[-stealth]
(m-1-1) edge (m-1-2) edge (m-2-1)
(m-1-2) edge (m-2-2)
(m-1-3) edge (m-1-2) edge (m-2-3)
(m-2-1) edge (m-2-2)
(m-2-3) edge (m-2-2)
;
\end{tikzpicture}
\end{equation}
where the left, respectively right, vertical map must be given by the identity on $P$, respectively $Q$. The objects of $\bigtriangleup_{(P,Q)/}$ are spans in $\operatorname{Ord}$:
\begin{align*}
P\leftarrow R\rightarrow Q
\end{align*}
and a morphism from $P\leftarrow R\rightarrow Q$ to $P\leftarrow R'\rightarrow Q$ is given by a map $R'\rightarrow R$ in $\Delta$ commuting with the maps to $P$ and $Q$. The functor $\pi\colon C_{P,Q}\rightarrow \bigtriangleup_{(P,Q)/}$ sends a cospan $K_P\rightarrow H_R\leftarrow L_Q$ to the underlying span $P\leftarrow R\rightarrow Q$ and likewise on morphisms.

Consider the full subcategory $\hat{C}_{P,Q}\subset C_{P,Q}$ spanned by objects of the form $H_P\rightarrow H_R\leftarrow H_Q$ with maps given by the identity on $H$. The inclusion $\hat{C}_{P,Q}\hookrightarrow C_{P,Q}$ admits a left adjoint
\begin{align*}
K_P\rightarrow H_R\leftarrow L_Q\quad \mapsto \quad H_P\rightarrow H_R\leftarrow H_Q
\end{align*}
where the maps from $H$ to $P$, respectively $Q$, is given by composing $H\rightarrow R$ with the map $R\rightarrow P$, respectively $R\rightarrow Q$. On a morphism as in (\ref{morphism in CPQ}) above, it likewise just replaces the maps $K\rightarrow K'$ and $L\rightarrow L'$ with $H\rightarrow H'$. The unit transformation is given by the morphisms as below with the maps $K\rightarrow H$ and $L\rightarrow H$ appearing in the left, respectively right, vertical maps.
\begin{center}
\begin{tikzpicture}
\matrix (m) [matrix of math nodes,row sep=2em,column sep=2em]
{
K_P & H_R & L_Q\\
H_P & H_R & H_Q\\
};
\path[-stealth]
(m-1-1) edge (m-1-2) edge (m-2-1)
(m-1-2) edge (m-2-2)
(m-1-3) edge (m-1-2) edge (m-2-3)
(m-2-1) edge (m-2-2)
(m-2-3) edge (m-2-2)
;
\end{tikzpicture}
\end{center}

Thus the inclusion $\hat{C}_{P,Q}\hookrightarrow C_{P,Q}$ induces a homotopy equivalence on geometric realisations and we are reduced to showing the restriction $\pi\colon \hat{C}_{P,Q}\rightarrow \bigtriangleup_{(P,Q)/}$ (which we'll also denote by $\pi$) induces a homotopy equivalence on geometric realisations; we do this by applying Quillen's Theorem A. To this end, let $P\leftarrow R\rightarrow Q$ be an object in $\bigtriangleup_{(P,Q)/}$ and consider the left fibre
\begin{align*}
F=F_{/(P\leftarrow R\rightarrow Q)}:=\hat{C}_{P,Q}\mathop{\times}_{\bigtriangleup_{(P,Q)/}} \big(\bigtriangleup_{(P,Q)/}\big)_{/(P\leftarrow R\rightarrow Q)}.
\end{align*}
The objects of $F$ consist of a cospan
\begin{align*}
H_P\rightarrow H_S\leftarrow H_Q
\end{align*}
together with a map $S\rightarrow R$ commuting with the maps to $P$ and $Q$. A morphism
\begin{align*}
(H_P\rightarrow H_S\leftarrow H_Q,\ S\rightarrow R)\,\longrightarrow\, (H'_P\rightarrow H'_{S'}\leftarrow H'_Q,\ S'\rightarrow R)
\end{align*}
is given by a morphism of the cospans such that the map $S'\rightarrow S$ commutes with the maps to $R$.

Consider the full subcategory $F^R\subset F$ spanned by the objects $H_P\rightarrow H_R\leftarrow H_Q$ together with the identity map from $R$ to $R$. We claim that the inclusion $F^R\hookrightarrow F$ admits a right adjoint:
\begin{align*}
(H_P\rightarrow H_S\leftarrow H_Q,\ S\rightarrow R)\quad \mapsto\quad (H_P\rightarrow H_R\leftarrow H_Q,\ R=R)
\end{align*}
given by simply composing the maps $H\rightarrow S$ and $S\rightarrow R$. The counit adjunction is given by the following morphisms defined by the map $S\rightarrow R$:
\begin{center}
\begin{tikzpicture}
\matrix (m) [matrix of math nodes,row sep=2em,column sep=2em]
{
H_P & H_R & H_Q\\
H_P & H_S & H_Q\\
};
\path[-stealth]
(m-1-1) edge (m-1-2) edge (m-2-1)
(m-1-2) edge (m-2-2)
(m-1-3) edge (m-1-2) edge (m-2-3)
(m-2-1) edge (m-2-2)
(m-2-3) edge (m-2-2)
;
\end{tikzpicture}
\end{center}

Finally, note that $F^R$ has a terminal object
\begin{align*}
R_P\rightarrow R_R\leftarrow R_Q.
\end{align*}
The unique morphisms to the terminal object are given by the map $H\rightarrow R$ and the identity on $R$:
\begin{center}
\begin{tikzpicture}
\matrix (m) [matrix of math nodes,row sep=2em,column sep=2em]
{
H_P & H_R & H_Q\\
R_P & R_R & R_Q\\
};
\path[-stealth]
(m-1-1) edge (m-1-2) edge (m-2-1)
(m-1-2) edge (m-2-2)
(m-1-3) edge (m-1-2) edge (m-2-3)
(m-2-1) edge (m-2-2)
(m-2-3) edge (m-2-2)
;
\end{tikzpicture}
\end{center}
It follows that $|F|\simeq \ast$ and thus by Quillen's Theorem A, the functor $\pi\colon \hat{C}_{P,Q}\rightarrow \bigtriangleup_{(P,Q)/}$ induces a homotopy equivalence on geometric realisations. Thus, to wrap up, the sequence of functors (\ref{sequence of functors}) induces a homotopy equivalence on geometric realisations and we have shown that all right fibres of the diagonal functor on $\mathcal{J}$ have contractible geometric realisations, so $\mathcal{J}$ is sifted as claimed.
\end{proof}

\section{Universal monoids from simplicial objects}\label{unravelling}

We wish to exhibit a tangible formula for the left adjoint $\mathbb{L}_\mathcal{C}$ to the inclusion
\begin{align*}
\mathbb{B}_\mathcal{C}\colon \operatorname{Mon}(\mathcal{C})\hookrightarrow \operatorname{Fun}(\Delta^{\operatorname{op}}, \mathcal{C})
\end{align*}
when it exists. Recall that we can identify $\mathbb{B}_{\mathcal{C}}$ with the restriction map:
\begin{align*}
(\operatorname{Free}^\times)^*\colon \operatorname{Fun}^\times(\mathcal{T}_A,\mathcal{C})\rightarrow \operatorname{Fun}^\times( \operatorname{Ord}_\pm^\times, \mathcal{C}).
\end{align*}
We use this to identify the left adjoint $\mathbb{L}_{\mathcal{C}}$ with the left Kan extension along $\operatorname{Free}^\times$. This identification is due to Yuan in the case of $\mathbb{E}_1$-spaces (\cite[Proposition 5.10]{Yuan}); the objective there is a homology calculation, so the focus is on identifying the underlying space. Since we want to keep track of the $\mathbb{E}_1$-structure, we spell out the identifications in detail in order to keep track of the additional structure. Moreover, we want an analogous result for monoidal $\infty$-categories, so we treat the general case of monoid objects in a cartesian closed cocomplete $\infty$-category $\mathcal{E}$.

\subsection{Maximally snug substrings}\label{maximally snug substrings}

In this section we do the technical legwork enabling the identification of the left adjoint $\mathbb{L}$ as a left Kan extension (\Cref{identifying L as left Kan extension}). We analyse the relevant slice categories over which the left Kan extension of $\operatorname{Free}^\times$ is calculated and we show that for the purposes of calculating colimits, these can be replaced by the products $\mathcal{J}^S$ for varying finite sets $S$.

Let $S$ be a finite set and consider the monoid $\operatorname{Free}(S)$. Write
\begin{align*}
\mathcal{K}_S:=\operatorname{Ord}_\pm^\times \mathop{\times}_{\mathcal{T}_A} (\mathcal{T}_A)_{/\operatorname{Free}(S)}
\end{align*}
for the left fibre of $\operatorname{Free}^\times\colon \operatorname{Ord}_\pm^\times\rightarrow \mathcal{T}_A$ over $\operatorname{Free}(S)$. Explicitly, the objects of $\mathcal{K}_S$ are given by
\begin{align*}
(\{I_t^\pm\}_{t\in T},\{x_s\}_{s\in S})
\end{align*}
where $\{I_t^\pm\}_{t\in T}$ is an object of $\operatorname{Ord}_\pm^\times$ and $x_s\in \operatorname{Free}(\coprod_t I_t)$, $s\in S$. A morphism
\begin{align*}
(\{I_t^\pm\}_{t\in T},\{x_s\}_{s\in S})\ \longrightarrow\ (\{J_r^\pm\}_{r\in R},\{y_s\}_{s\in S})
\end{align*}
consists of a morphism $\{I_t^\pm\}_{t\in T}\rightarrow \{J_t^\pm\}_{r\in R}$ in $\operatorname{Ord}_\pm^\times$ such that the induced map on monoids $\operatorname{Free}(\coprod_r J_r)\rightarrow \operatorname{Free}(\coprod_t I_t)$ sends $y_s$ to $x_s$ for all $s\in S$. Note that the sets $S$, $T$ and $R$ are not required to be linearly ordered.

Now, consider the product $\mathcal{J}^S$ and define a functor
\begin{align*}
G_S\colon \mathcal{J}^S\rightarrow \mathcal{K}_S
\end{align*}
sending a collection of partitioned ordered sets $I^s_{P^s}$, $s\in S$, to the object
\begin{align*}
\big(\{(I^s_p)^\pm\}_{s\in S,\,p\in P^s},\  (x_{I^s})_{s\in S}\big)
\end{align*}
where $x_{I^s}\in \operatorname{Free}(\coprod_{s\in S}\coprod_{p\in P_s} I^s_p)$ is the element given by the ordered product of the generators corresponding to the elements of $I^s$: writing out $P^s=\{1<\cdots <k_s\}$ and each $I^s_p=\{x_1^{(s,p)}<x_2^{(s,p)}<\cdots < x_{n_p}^{(s,p)}\}$, $p\in P^s$, we have
\begin{align}\label{xI}
x_{I^s}=\bigg(x_{1}^{(s,1)}x_{2}^{(s,1)}\cdots x_{n_1}^{(s,1)}\bigg)\bigg(x_{1}^{(s,2)}x_{2}^{(s,2)}\cdots x_{n_2}^{(s,2)}\bigg)\cdots \bigg(x_{1}^{(s,k_s)}x_{2}^{(s,k_s)}\cdots x_{n_{k_s}}^{(s,k_s)}\bigg).
\end{align}
A collection of morphisms $\phi_s\colon I^s_{P^s}\rightarrow J^s_{Q^s}$, $s\in S$, in $\mathcal{J}$ defines a morphism in $\operatorname{Ord}_\pm^\times$ via the functor $\mathcal{J}^S\rightarrow \operatorname{Ord}_\pm^\times$ (\Cref{J and monoids and Ord}). This defines a map in $\mathcal{K}_S$: indeed, the induced map of monoids sends the element $x_{I^s}$ to $x_{J^s}$ as the map $I^s\rightarrow J^s$ given by $\phi_s$ is order preserving.

\begin{remark}
To avoid confusion, let's spell out what happens to the objects given by empty sets; the difference between $\emptyset_\emptyset$ and $\emptyset_\ast$ is crucial here.

Let $S=\ast$ and consider the functor $G_\ast\colon \mathcal{J}\rightarrow \mathcal{K}_\ast$. This sends $\emptyset_\emptyset$ to the object $(\emptyset, 0)$ in $\mathcal{K}$ and it sends $\emptyset_\ast$ to the object $(\{\emptyset^\pm\},0)$ in $\mathcal{K}_\ast$. In the former case, we have the \textit{empty collection} of objects in $\operatorname{Ord}_\pm$, whereas in the latter case we have the collection with a single object, namely the empty set viewed as an object in $\operatorname{Ord}_\pm$. An object $\emptyset_P$ is sent to $(\{\emptyset^\pm\}_{p\in P},0)$ with a copy of $\emptyset^\pm$ for each $p\in P$.

For general $S$, suppose given a collection of partioned ordered sets $I^s_{P^s}$, $s\in S$. If $I^s_{P^s}=\emptyset_\emptyset$ for some $s$, then under $G_S$, $I^s_{P^s}$ does not contribute to the resulting object of $\operatorname{Ord}_\pm^\times$ and we have $x_s=0$ in the corresponding monoid. If $I^s_{P^s}=\emptyset_\ast$ for some $s$, then under $G_S$, $I^s_{P^s}$ contributes with a copy of $\emptyset^\pm$ to the collection of objects in $\operatorname{Ord}_\pm$ and we also have $x_s=0$ in the corresponding monoid.
\end{remark}

\begin{definition}\label{partition into maximally snug substrings}
Let $\{I_t^\pm\}_{t\in T}$ be an object in $\operatorname{Ord}_\pm^\times$ and let $x\in \operatorname{Free}(\coprod_t I_t)$. For each $t\in T$, write out $I_t=\{x_1^t<x_2^t<\cdots x_{n_t}^t\}$.
\begin{enumerate}
\item A non-zero element $y\in \operatorname{Free}(\coprod_t I_t)$ is called \textit{snug} if it is of the form 
\begin{align*}
y=x_j^tx_{j+1}^t \cdots x_{k-1}^t x_k^t,\quad \text{for some }t\in T, \ 1\leq j\leq k\leq n_t.
\end{align*}
In words, $y$ is snug if it is the consecutive ordered product of generators from the same linearly ordered set $I_t$.
\item A \textit{snug substring} of $x$ is a snug element $y$ such that $x=ayb$ for some $a,b\in \operatorname{Free}(\coprod_t I_t)$.
\item A snug substring of $x$ is \textit{maximal} if it is not a proper snug substring of a larger snug substring (respecting the position of the substrings in $x$).
\item The string $x$ admits a unique partition into maximally snug substrings. Viewing $x$ as a finite linearly ordered set by ordering the generators according to the string, this defines an element in $\mathcal{I}$ that we denote by $I^x_{P^x}$. In other words, $I^x$ is the ordered string $x$ and $P^x$ is the ordered set of maximally snug substrings of $x$.\qedhere
\end{enumerate}
\end{definition}

Here are some examples of (4) above.

\begin{example} \ 
\begin{enumerate}
\item Consider the object $\{\{1<2<3\}^\pm, \{4<5\}^\pm, \{6\}^\pm \}$ in $\operatorname{Ord}_\pm^\times$ and the elements
\begin{align*}
x=123456,\quad y=654321,\quad z=12423456\quad \text{in }\operatorname{Free}(\{1,2,3,4,5,6\}).
\end{align*}
The unique partitions of these elements into maximally snug substrings are
\begin{align*}
x=(123)(45)(6),\quad y=(6)(5)(4)(3)(2)(1),\quad z=(12)(4)(23)(45)(6).
\end{align*}
The associated elements in $\mathcal{I}$ are the order preserving maps below
\begin{align*}
&I^x=\{1<2<3<4<5<6\}\ \longrightarrow\ \{(123)<(45)<(6)\} =P^x, \\
&I^y=\{6<5<4<3<2<1\}\ \longrightarrow\ \{(6)<(5)<(4)<(3)<(2)<(1)\} =P^y, \\
&I^z=\{1<2<4<2<3<4<5<6\}\ \longrightarrow\ \{(12)<(4)<(23)<(45)<(6)\} =P^z.
\end{align*}

\item For $v=0$, we have $I^v_{P^v}=\emptyset_\emptyset$.

\item Let $I_P$ be an object of $\mathcal{J}$ and consider $G_\ast(I_P)=(\{I_p^\pm\}_{p\in P}, x_I)$ in $\mathcal{K}_\ast$. \Cref{xI} above exactly writes out the partition of $x_I$ into maximally snug substrings. In other words, we can identify $I^{x_I}_{P^{x_I}}\cong I_P$.\qedhere
\end{enumerate}
\end{example}

\begin{observation}\label{refining maximally snug substrings}
Consider a morphism in $\operatorname{Ord}_\pm^\times$,
\begin{align*}
\epsilon\colon \{I_t^\pm \}_{t\in T} \longrightarrow \{J_r^\pm \}_{r\in R};
\end{align*}
this is given by a map of sets $\gamma_\epsilon\colon R\rightarrow T$ and for each $r\in R$, a map $\epsilon_r\colon I_{\gamma_\epsilon(r)}\rightarrow J_r$ in $\operatorname{Ord}_\pm$. Let $x\in \operatorname{Free}(\coprod_t I_t)$, $y\in \operatorname{Free}(\coprod_r J_r)$ and assume that the induced map of monoids
\begin{align*}
\epsilon_*\colon \operatorname{Free}(\coprod_r J_r)\rightarrow \operatorname{Free}(\coprod_t I_t)
\end{align*}
sends $y$ to $x$. Write $I^y=\{y_1<\cdots < y_m\}$ and $I^x=\{x_1<\cdots <x_n\}$; that is, we are writing $y=y_1\cdots y_m$ and $x=x_1\cdots x_n$ as strings of generators (there is no notational relationship between the elements $x_i$ and the generators coming from the $I_t$ --- indeed some of the $x_i$ may be the same generator, but they define different elements in the \textit{linearly ordered set} $I^x$). Then 
\begin{align*}
x_1\cdots x_n =\epsilon_*(y)=\epsilon_*(y_1) \cdots \epsilon_*(y_m)
\end{align*}
and we define an order preserving map
\begin{align*}
\hat{\epsilon}\colon\{x_1<\cdots <x_n\}\rightarrow \{y_1<\cdots < y_m\}
\end{align*}
by sending $x_j$ to the unique $y_i$ such that $x_j$ is part of the substring $\epsilon_*(y_i)$. Since the maps $\epsilon_i$ are order preserving, $\epsilon_*$ sends a snug substring of $y$ to a snug (possibly empty) substring of $x$, but it does not necessarily preserve maximality. Hence the order preserving map that we just defined \textit{refines} the partitions into maximally snug substrings in the sense of \Cref{refinement}: the preimage of a maximally snug substring of $y$ is a substring of a maximally snug substring of $x$. Note that the map $\hat{\epsilon}$ need not be surjective.
\end{observation}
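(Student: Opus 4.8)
The plan is to unpack the observation into three separate claims and dispatch them in order: (i) $\hat\epsilon$ is a well-defined, order-preserving map; (ii) $\epsilon_*$ carries every snug substring of $y$ to a (possibly empty) snug substring of $x$; (iii) consequently $\hat\epsilon$ refines the partitions into maximally snug substrings in the sense of \Cref{refinement}. Claim (i) is essentially bookkeeping: the identity $x = \epsilon_*(y) = \epsilon_*(y_1)\cdots\epsilon_*(y_m)$ exhibits the linearly ordered set $I^x = \{x_1<\cdots<x_n\}$ as the ordered concatenation of the (possibly empty) blocks $\epsilon_*(y_i)$, so each position $x_j$ lies in exactly one nonempty block and $\hat\epsilon(x_j)$ is forced to be the corresponding $y_i$; monotonicity follows because $j<j'$ forces the block of $x_j$ to have index at most that of the block of $x_{j'}$. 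I would deal with the genuinely degenerate cases (where $x$ or $y$ is the empty string) up front.

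The heart of the argument is claim (ii). Here I would unwind the functor $\operatorname{Free}$: for each $r$, the map $\operatorname{Free}(\epsilon_r)\colon\operatorname{Free}(J_r)\to\operatorname{Free}(I_{\gamma_\epsilon(r)})$ sends a generator $w\in J_r$ to the ordered product over the fibre $\epsilon_r^{-1}(w)\subseteq I_{\gamma_\epsilon(r)}$, which is an interval because $\epsilon_r$ is order-preserving. Given a snug substring $y' = y_a y_{a+1}\cdots y_b$ of $y$ — so $y_a,\dots,y_b$ are \emph{consecutive} elements of a single $J_r$ — the string $\epsilon_*(y')$ is the ordered product over $\epsilon_r^{-1}(\{y_a,\dots,y_b\})$, and the key point is that this union of fibres is again an interval of $I_{\gamma_\epsilon(r)}$: if $c<d<c'$ with $c,c'$ in the union, then $\epsilon_r(c)\le\epsilon_r(d)\le\epsilon_r(c')$ with $\epsilon_r(c),\epsilon_r(c')\in\{y_a,\dots,y_b\}$; since $\{y_a,\dots,y_b\}$ is a consecutive run in $J_r$ and since $\epsilon_r(d)$ can be neither $\bot$ nor $\top$ (it is sandwiched between two elements of $J_r$), we conclude $\epsilon_r(d)\in\{y_a,\dots,y_b\}$, so $d$ lies in the union. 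Hence $\epsilon_*(y')$ is snug or empty. The delicate part to handle carefully is that individual generator-fibres may be empty, so one must argue uniformly that the union of the nonempty ones over a consecutive run of generators is still an interval.

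Finally, claim (iii): let $q$ be a maximally snug substring of $y$, that is, a block of $P^y$; it has the form $y_a\cdots y_b$ with $y_a,\dots,y_b$ consecutive both in $y$ and in some $J_r$. By the construction of $\hat\epsilon$, the preimage $\hat\epsilon^{-1}(q)$ is precisely the substring of $x$ cut out by the consecutive blocks $\epsilon_*(y_a),\dots,\epsilon_*(y_b)$, i.e. $\epsilon_*(y_a\cdots y_b)$, which by claim (ii) is a snug (possibly empty) substring of $x$. Since every snug substring of $x$ is contained in a unique block of the maximal partition $P^x$ (by \Cref{partition into maximally snug substrings}(3)--(4)), this gives $\hat\epsilon^{-1}(q)\subset I^x_p$ for some $p$, which is exactly the refinement condition. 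The non-surjectivity remark is witnessed by any $\epsilon$ for which some generator $y_i$ has $\epsilon_*(y_i)=0$ — for instance $\{a<b\}^\pm\to\{w_1<w_2\}^\pm$ collapsing both $a,b$ to $w_1$, applied to $y=w_1w_2$ — so that $y_i\notin\operatorname{im}\hat\epsilon$. I expect the only real obstacle to be the careful handling of empty fibres together with the $\bot/\top$ behaviour in $\operatorname{Ord}_\pm$ inside claim (ii); the rest is formal.
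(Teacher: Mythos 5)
Your proposal is correct and follows essentially the same route as the paper, which justifies this observation only by the inline remark that order-preservation of the $\epsilon_r$ forces $\epsilon_*$ to carry snug substrings to snug (possibly empty) substrings, whence refinement; your three claims simply spell out that sketch, with the interval argument for unions of consecutive fibres in $\operatorname{Ord}_\pm$ being the correct way to make claim (ii) precise. The only point worth keeping an eye on is the fully degenerate case $x=0$, $y\neq 0$ (where $P^x=\emptyset$ and the refinement condition of \Cref{refinement} is vacuously problematic), which the paper also handles separately in its later applications, so your plan to treat empty strings up front is the right call.
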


The following lemma will be crucial to the unravelling of the left adjoint $\mathbb{L}$ in the next section.

\begin{lemma}\label{cofinal}
The functor $G_S\colon \mathcal{J}^S\rightarrow \mathcal{K}_S$ is a $\varinjlim$-equivalence.
\end{lemma}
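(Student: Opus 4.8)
The plan is to verify the criterion of \cite[Theorem 2.19]{ClausenOrsnesJansen}: $G_S$ is a $\varinjlim$-equivalence as soon as for every object $k\in\mathcal{K}_S$ the right fibre $\mathcal{C}_k:=\mathcal{J}^S\times_{\mathcal{K}_S}(\mathcal{K}_S)_{k/}$ has weakly contractible nerve. First I would reduce to $S=\ast$. Writing $k=(\{I_t^\pm\}_{t\in T},(x_s)_{s\in S})$ and unwinding the definition of $G_S$, an object of $\mathcal{C}_k$ is a tuple $(I^s_{P^s})_{s\in S}$ together with a map $\gamma\colon\coprod_s P^s\to T$ and order preserving maps $\phi_{(s,p)}\colon I^\pm_{\gamma(s,p)}\to(I^s_p)^\pm$ such that the induced map of free monoids sends $x_{I^s}$ to $x_s$ for every $s$. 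Since $x_{I^s}$ involves only the generators with first index $s$, this constraint --- and the analogous one on morphisms --- decouples over $S$, giving an isomorphism $\mathcal{C}_k\cong\prod_{s\in S}\mathcal{C}_{k_s}$, where $k_s=(\{I_t^\pm\}_{t\in T},x_s)\in\mathcal{K}_\ast$ and $\mathcal{C}_{k_s}=\mathcal{J}\times_{\mathcal{K}_\ast}(\mathcal{K}_\ast)_{k_s/}$ is the right fibre of $G_\ast$. As a finite product of weakly contractible categories is weakly contractible, it suffices to show that $G_\ast\colon\mathcal{J}\to\mathcal{K}_\ast$ is a $\varinjlim$-equivalence.

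\textbf{The distinguished object.} Fix $k=(\{I_t^\pm\}_{t\in T},x)\in\mathcal{K}_\ast$; objects of $\mathcal{C}_k$ are pairs $(I_P,\phi)$ with $\phi\colon k\to G_\ast(I_P)$. Any partitioned linearly ordered set all of whose blocks are \emph{snug} (a consecutive run inside a single $I_t$, in the sense of \Cref{partition into maximally snug substrings}) carries a tautological structure map $\phi^{\mathrm{taut}}$ to $k$, namely include each block as that run and collapse everything else to $\bot$, $\top$; these tautological maps are compatible with splitting maps between such partitioned sets. In particular $o_k:=(I^x_{P^x},\phi^{\mathrm{taut}})$, with $I^x_{P^x}$ the partition of $x$ into maximally snug substrings, is an object of $\mathcal{C}_k$. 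Given an arbitrary $c=(A_B,\phi)$, \Cref{refining maximally snug substrings} --- together with the identification $I^{x_A}_{P^{x_A}}\cong A_B$ from the examples following \Cref{partition into maximally snug substrings} --- produces an order preserving map $\hat\phi\colon I^x\to A$ refining the partitions; by the refinement property each block of the induced partitions $I^x_B$ and $I^x_{B_{\hat\phi}}$, where $B_{\hat\phi}=\{b\in B\mid\hat\phi^{-1}(A_b)\neq\emptyset\}$, is again snug, so these too carry tautological structure maps.

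\textbf{Contracting $\mathcal{C}_k$.} As a bare refinement need not be a morphism of $\mathcal{J}$ (\Cref{refinement versus morphism}), I would invoke the canonical three-step zigzag of \Cref{refinement zigzag}, applied with source $I^x_{P^x}$, target $A_B$ and map $\hat\phi$. Equipping each of its four terms with the appropriate tautological structure map (and $\phi$ on the term $A_B$), it becomes a zigzag in $\mathcal{C}_k$
\[ o_k\ \longrightarrow\ (I^x_{B_{\hat\phi}},\phi^{\mathrm{taut}})\ \longleftarrow\ (I^x_B,\phi^{\mathrm{taut}})\ \longrightarrow\ c, \]
consisting of two splitting maps followed by one collapse map. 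Every construction here is functorial in $c$ (using, for a morphism $c\to c'$ induced by $(\alpha,\beta)$, that $\widehat{\phi'}=\alpha\circ\hat\phi$), so this is a chain of natural transformations $\operatorname{id}_{\mathcal{C}_k}\Leftarrow F_1\Rightarrow F_2\Leftarrow\mathrm{const}_{o_k}$ between endofunctors of $\mathcal{C}_k$. Passing to nerves yields $|\mathcal{C}_k|\simeq|F_1|\simeq|F_2|\simeq\ast$, which completes the argument.

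\textbf{Main obstacle.} The crux is the compatibility of the collapse map $(I^x_B,\phi^{\mathrm{taut}})\to c$: one must show that pushing the tautological structure map of $I^x_B$ forward along $\hat\phi$ recovers the original $\phi$. This is exactly where the precise recipe for $\hat\phi$ in \Cref{refining maximally snug substrings} enters --- $\hat\phi$ sends a letter of $x$ to the generator of $A$ from which it was produced, so the tautological datum ``letter $\mapsto$ its own generator'' on $I^x_B$ reassembles $\phi$ after collapsing along $\hat\phi$; concretely this is a short but genuine computation with the formula of \Cref{J and monoids and Ord}(2) describing $G_\ast$ on morphisms. The remaining points --- snugness of the blocks of $I^x_B$ and $I^x_{B_{\hat\phi}}$, compatibility of the two splitting maps with the tautological structure maps, and the functoriality of $F_1,F_2$ and of the three arrows in $c$ --- are routine, if somewhat tedious, diagram chases in $\operatorname{Ord}_\pm^\times$; the faithfulness of $G_\ast$ (a morphism $(\theta,\rho)$ of $\mathcal{J}$ can be read off its image) helps rigidify these checks. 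I expect this bookkeeping, rather than any conceptual difficulty, to be the bulk of the proof.
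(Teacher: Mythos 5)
Your proposal matches the paper's proof essentially step for step: contractibility of the right fibres, the distinguished object built from the partition of $x$ into maximally snug substrings, the canonical zigzag of \Cref{refinement zigzag} applied to the refinement $\hat\phi$, and the resulting chain of natural transformations $\operatorname{id}\Leftarrow F_1\Rightarrow F_2\Leftarrow \operatorname{const}$ (the paper's $\operatorname{id}_\mathcal{C}\Leftarrow A\Rightarrow B\Leftarrow\operatorname{const}_\eta$), with the same key compatibility check that collapsing the tautological structure map along $\hat\phi$ recovers $\phi$. The only (harmless) difference is that you reduce to $S=\ast$ via an explicit product decomposition of the right fibre, where the paper simply notes that the general case is analogous up to extra indices.
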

\begin{proof}
We will show that the right fibres of this functor are contractible. To ease notation, we do the case $S=\ast$; the general case is completely analogous, but with additional super- and subscripts all over the place. Write $\mathcal{K}=\mathcal{K}_\ast$ for the left fibre of $\operatorname{Free}^\times $ over $\operatorname{Free}(\ast)$ and consider the functor $G=G_\ast\colon \mathcal{J}\rightarrow \mathcal{K}$ defined as above. Let $(\{I_t^\pm\}_{t\in T},x)$ be an object of $\mathcal{K}$ and let
\begin{align*}
\mathcal{C}:=\mathcal{J} \mathop{\times}_{\mathcal{K}} \mathcal{K}_{(\{I_t^\pm\}_{t\in T},x)/}
\end{align*}
denote the right fibre of $G$ over $(\{I_t^\pm\}_{t\in T},x)$. An object of $\mathcal{C}$ consists of an object $J_Q$ in $\mathcal{J}$ together with a morphism
\begin{align*}
\epsilon\colon (\{I_t^\pm\}_{t\in T}, \, x)\rightarrow G(J_Q)=(\{J_q^\pm\}_{q\in Q}, \, x_J)
\end{align*}
given by a map of sets $\gamma_\epsilon\colon Q\rightarrow T$ together with a map $\epsilon_q\colon I_{\gamma_\epsilon(q)}^\pm \rightarrow J_q^\pm$ in $\operatorname{Ord}_\pm$ for each $q\in Q$ such that the induced map of monoids, $\epsilon_*\colon \operatorname{Free}(J)\rightarrow \operatorname{Free}(\coprod_{t}I_t)$, sends $x_J$ to $x$. The morphisms of $\mathcal{C}$ are induced by morphisms in $\mathcal{J}$ as in the diagram below where the morphism $(\theta,\rho)\colon J_Q\rightarrow K_W$ in $\mathcal{J}$ defines a morphism $\epsilon\rightarrow \delta$ in $\mathcal{C}$ if the diagram commutes.
\begin{center}
\begin{equation}\label{morphism}
\begin{tikzpicture}[baseline=(current  bounding  box.center)]
\matrix (m) [matrix of math nodes,row sep=1em,column sep=3em]
{
& G(J_Q) \\
(\{I_t^\pm\}_{t\in T}, \, x) & \\
& G(K_W) \\
};
\path[-stealth]
(m-2-1) edge node[above left]{$\epsilon$} (m-1-2) edge node[below left]{$\delta$} (m-3-2)
(m-1-2) edge node[right]{$G(\theta,\rho)$} (m-3-2)
;
\end{tikzpicture}
\end{equation}
\end{center}

Consider the partitioned ordered set $I^x_{P^x}$ given by partitioning the string $x$ into maximally snug substrings (\Cref{partition into maximally snug substrings}). We begin by defining an object of $\mathcal{C}$:
\begin{align*}
\eta\colon (\{I_t^\pm\}_{t\in T},\,x)\longrightarrow G(I^x_{P^x})= (\{(I^x_r)^\pm \}_{r\in P^x},\, x).
\end{align*}

Write $I_t=\{x_1^{(t)}<x_2^{(t)}<\ldots<x_{n_t}^{(t)}\}$ for each $t\in T$ and write out $x$ into maximally snug substrings, i.e.
\begin{align*}
x=\bigg(x_{j_1}^{(i_1)}x_{j_1+1}^{(i_1)}\cdots x_{k_1}^{(i_1)}\bigg)\bigg(x_{j_2}^{(i_2)}x_{j_2+1}^{(i_2)}\cdots x_{k_2}^{(i_2)}\bigg)\cdots \bigg(x_{j_l}^{(i_l)}x_{j_l+1}^{(i_l)}\cdots x_{k_l}^{(i_l)}\bigg)
\end{align*}
for $\{i_1,\ldots,i_l\}\subset T$ and $1\leq j_r\leq k_r\leq n_{i_r}$, $r=1,\ldots,l$. Then we may identify
\begin{align*}
P^x=\{1<\cdots <l\} \quad\text{ and further }\quad I^x_r=\{x_{j_1}^{(i_r)}<\cdots < x_{k_1}^{(i_r)}\}
\end{align*}
for each $r=1,\ldots,l$. We define a morphism in $\operatorname{Ord}_\pm^\times$
\begin{align*}
\eta\colon \{I_t^\pm\}_{t\in T}\rightarrow \{(I^x_r)^\pm \}_{r\in P^x}
\end{align*}
by the map of sets $\gamma_\eta\colon \{1,\ldots,l\}\rightarrow T$, $r\mapsto i_r$, together with the maps
\begin{align*}
\eta_r\colon I_{i_r}^\pm\rightarrow (I^x_r)^\pm,\quad x^{i_r}_j\mapsto \begin{cases}
\bot & j< j_r \\
x^{i_r}_j & j_r\leq j\leq k_r \\
\top & k_r< j
\end{cases}
\end{align*}
in $\operatorname{Ord}_\pm$ (these maps $I_{i_r}^\pm\rightarrow (I^x_r)^\pm$ should be interpreted as identifying $I^x_r$ as a subset of $I_{i_r}$). This defines an object $\eta\colon (\{I_t^\pm\}_{t\in T},\,x)\longrightarrow G(I^x_{P^x})$ of $\mathcal{C}$ as desired. We will show that $\mathcal{C}$ has contractible realisation by showing that it contracts onto the object $\eta$; we do this by exhibiting a zigzag of natural transformations
\begin{align*}
\operatorname{id}_\mathcal{C}\xLeftarrow{\ \alpha\ } A \xRightarrow{\ \beta\ } B  \xLeftarrow{\ \gamma\ } \operatorname{const}_\eta.
\end{align*}
The key input here will be \Cref{refinement zigzag}; indeed, those zigzags will define the components of the natural transformations above. Note first of all that by \Cref{refining maximally snug substrings}, an object
\begin{align*}
\epsilon\colon (\{I_t^\pm\}_{t\in T}, \, x)\longrightarrow G(J_Q)=(\{J_q^\pm \}_{q\in Q}, \, x^J)
\end{align*}
in $\mathcal{C}$ gives rise to an order preserving map $\hat{\epsilon}\colon I^x\rightarrow J$ satisfying the hypothesis of \Cref{refinement zigzag} and hence, we have a zigzag of morphisms in $\mathcal{J}$:
\begin{align*}
J_Q\xleftarrow{(\hat{\epsilon},\operatorname{id})}I^x_Q\xrightarrow{(\operatorname{id},\iota_\epsilon)} I^x_{Q_{\epsilon}}  \xleftarrow{(\operatorname{id},\rho_\epsilon)}I^x_{P^x}.
\end{align*}

Using this, we first of all define the functors $A,B\colon \mathcal{C}\rightarrow \mathcal{C}$. The functor $A$ sends an object $\epsilon\colon (\{I_t^\pm\}_{t\in T}, \, x)\rightarrow G(J_Q)$ to 
\begin{align*}
A(\epsilon)\colon (\{I_t^\pm\}_{t\in T}, \, x)\longrightarrow G(I^x_Q)=(\{\hat{\epsilon}^{-1}(J_q)^\pm\}_{q\in Q},\, x)
\end{align*}
given by the map of sets $\gamma_\epsilon\colon Q\rightarrow T$ together with the maps
\begin{align*}
I_{\gamma_\epsilon(q)}^\pm \rightarrow \hat{\epsilon}^{-1}(J_q)^\pm,\quad i\mapsto \begin{cases}
\bot & i< \min \hat{\epsilon}^{-1}(J_q) \\
i & i\in \hat{\epsilon}^{-1}(J_q) \\
\top & i> \max \hat{\epsilon}^{-1}(J_q)\\
\end{cases}
\end{align*}
(again, we are identifying $\hat{\epsilon}^{-1}(J_q)$ as a subset of $I_{\gamma_\epsilon(t)}$).

The functor $B$ sends $\epsilon\colon (\{I_t^\pm\}_{t\in T}, \, x)\rightarrow G(J_Q)$ to the object
\begin{align*}
B(\epsilon)\colon (\{I_t^\pm\}_{t\in T}, \, x)\longrightarrow G(I^x_{Q_\epsilon})=(\{\hat{\epsilon}^{-1}(J_q)^\pm\}_{q\in Q_\epsilon},\, x)
\end{align*}
given by $\gamma_\epsilon|_{Q_\epsilon}\colon Q_\epsilon\rightarrow T$ together with the maps $I_{\gamma_\epsilon(t)}^\pm\rightarrow \hat{\epsilon}^{-1}(J_q)^\pm$ defined above (we simply discard any empty $\hat{\epsilon}^{-1}(J_q)$).

The functor $A$, respectively $B$, sends a morphism $\epsilon\rightarrow \delta$ given by $(\theta, \rho)\colon J_Q\rightarrow K_W$ in $\mathcal{J}$ to the morphism given by
\begin{align*}
(\operatorname{id},\rho)\colon I^x_Q\rightarrow I^x_W,\quad \text{respectively}\quad (\operatorname{id},\rho\vert_{W_\delta})\colon I^x_{Q_\epsilon}\rightarrow I^x_{W_\delta},
\end{align*}
in $\mathcal{J}$. Here we use that commutativity of the diagram (\ref{morphism}) implies that $\theta\circ \hat{\epsilon} = \hat{\delta}\colon I^x\rightarrow K$ and that $\rho\vert_{W_\delta}$ factors through $Q_\epsilon$.

Finally, the components of the three natural transformations, $\alpha$, $\beta$, $\gamma$, are given by the morphisms of the induced zigzags in $\mathcal{J}$:
\begin{align*}
\alpha_\epsilon = (\hat{\epsilon}, \operatorname{id}),\quad \beta_\epsilon=(\operatorname{id},\iota_\epsilon),\quad \gamma_\epsilon=(\operatorname{id},\rho_\epsilon).
\end{align*}

We leave it to the reader to verify that the relevant squares commute.
\end{proof}

\begin{remark}
Let $(\{I_t^\pm\}_{t\in T},x)$ as in the proof above. Note that if $x=0$, then $I^x_{P^x}=\emptyset_\emptyset$ and the map
\begin{align*}
\eta\colon (\{I_t^\pm\}_{t\in T}, \,x)\rightarrow G(I^x_{P^x})=(\emptyset,0)
\end{align*}
is given by the unique map of sets $\emptyset\rightarrow T$.
\end{remark}

\begin{remark}\label{eta not initial}
In the proof of \cite[Lemma 5.11]{Yuan}, it is claimed that the object $\eta$ is initial in the right fibre as it is used to construct the unit transformation of an adjunction. This is not generally true, however. Indeed, consider the right fibre over the object $(\{x_3<x_1\}^\pm,x=x_1x_3)$ in $\mathcal{K}$ and the associated object $\eta$ as defined in the proof above:
\begin{align*}
\eta\colon (\{x_3<x_1\}^\pm,x_1x_3)\,\longrightarrow\, G(\{1<3\}_{\{1<3\}})=(\{1^\pm, 3^\pm \}, x=13)
\end{align*}
given by the unique map $\gamma_\eta\colon \{1<3\}\rightarrow \ast$ together with the maps
\begin{align*}
&\eta_1\colon \{1<3\}^\pm \rightarrow 1^\pm, \quad 1\mapsto 1, \ 3\mapsto \top, \\
&\eta_3\colon \{1<3\}^\pm \rightarrow 3^\pm, \quad 1\mapsto \bot, \ 3\mapsto 3,
\end{align*}
where $n^\pm$ denotes the singleton $\{n\}^\pm$. Consider also the object
\begin{align*}
\epsilon\colon (\{x_3<x_1\}^\pm,x_1x_3)\,\longrightarrow\, G(\{1<2<3\}_{\{1<2<3\}})=(\{1^\pm, 2^\pm,  3^\pm \}, y=123)
\end{align*}
given by the unique map $\gamma_\eta\colon \{1<2<3\}\rightarrow \ast$ together with the maps
\begin{align*}
&\epsilon_1\colon \{1<3\}^\pm\rightarrow 1^\pm, \quad 1\mapsto 1, \ 3\mapsto \top, \\
&\epsilon_2\colon \{1<3\}^\pm\rightarrow 2^\pm, \quad 1\mapsto \bot, \ 3\mapsto \top, \\
&\epsilon_3\colon \{1<3\}^\pm\rightarrow 3^\pm, \quad 1\mapsto \bot, \ 3\mapsto 3.
\end{align*}

Then both the maps $(\theta,\rho)\colon \{1<3\}_{\{1<3\}}\rightarrow \{1<2<3\}_{\{1<2<3\}}$ of \Cref{refinement versus morphism} will satisfy $\epsilon=G(\theta,\rho)\circ \eta$, so $\eta$ is not initial. We leave the details to the reader.

We run into the problem that is explained in \Cref{refinement versus morphism}: Given objects $I_P,J_Q$ in $\mathcal{J}$, an order preserving map $I\rightarrow J$ refining the partitions does not necessarily give rise to a canonical map $Q\rightarrow P$. As a consequence, the exhibited left adjoint $H$ in \cite{Yuan} is not well-defined on morphisms. The application, however, is a comparison of colimits and for this we do not need the full strength of an adjunction, we just need $G_S$ to be a $\varinjlim$-equivalence. Equivalently, all right fibres must be contractible, a much weaker condition than all right fibres admitting an initial object. Thus to rectify, we simply invoked \Cref{refinement zigzag} to exhibit a zigzag of homotopies contracting $|\mathcal{C}|$ to the point $\eta$. As observed in \Cref{refinement versus morphism}, in the subcategory $\mathcal{I}_\gg$ the map $\rho$ \textit{is} uniquely determined by $\theta$, and in fact the objects $I^x_{P^x}$ do belong to $\mathcal{I}_\gg$. The maps $\hat{\epsilon}$ need not be surjective, however, so we cannot restrict to this subcategory. Moreover, we have to work over $\mathcal{J}$ instead of $\mathcal{I}$ because this is where the canonical zigzag lives --- indeed, the object $I_Q$ need not belong to $\mathcal{I}$.
\end{remark}

\begin{observation}\label{JS to Ord}
The functor $\mathcal{J}^S\rightarrow \operatorname{Ord}_\pm^\times$ defined in item (2) of \Cref{J and monoids and Ord} coincides with the composite
\begin{align*}
\mathcal{J}^S\xrightarrow{G_S} \mathcal{K}_S\rightarrow \operatorname{Ord}_\pm^\times
\end{align*}
where the last map is the projection functor $(\{I_t^\pm\}_{t\in T}, x)\mapsto \{I_t^\pm\}_{t\in T}$.
\end{observation}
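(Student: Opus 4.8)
The plan is to simply unwind the definitions of the two functors and check that they agree on objects and on morphisms. There is no substantive obstacle here: the $\operatorname{Ord}_\pm^\times$-valued data attached to $G_S$ was \emph{defined} through $\zeta_S$ in the first place, so the statement is essentially a matter of tracing through \Cref{J and monoids and Ord}(2) and the construction of $G_S$ in \S\ref{maximally snug substrings}.

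First I would recall that, by construction, $G_S$ sends a collection $(I^s_{P^s})_{s\in S}$ to the object $\bigl(\{(I^s_p)^\pm\}_{s\in S,\,p\in P^s},\,(x_{I^s})_{s\in S}\bigr)$ of $\mathcal{K}_S$, whose image under the projection $(\{I_t^\pm\}_{t\in T},x)\mapsto\{I_t^\pm\}_{t\in T}$ is exactly $\{(I^s_p)^\pm\}_{s\in S,\,p\in P^s}$. This is literally the value of $\zeta_S$ on objects as given in \Cref{J and monoids and Ord}(2), so the composite $\mathcal{J}^S\xrightarrow{G_S}\mathcal{K}_S\rightarrow\operatorname{Ord}_\pm^\times$ agrees with $\zeta_S$ on objects.

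Next, on a collection of morphisms $\phi_s\colon I^s_{P^s}\rightarrow J^s_{Q^s}$, recall that the morphism $G_S((\phi_s)_{s})$ in $\mathcal{K}_S$ was defined by taking the morphism $\zeta_S((\phi_s)_{s})$ in $\operatorname{Ord}_\pm^\times$ and observing that it lifts to $\mathcal{K}_S$ because the induced map of free monoids carries $x_{J^s}$ to $x_{I^s}$ (using that each $\phi_s$ is order preserving). Applying the projection then discards precisely this monoid bookkeeping and returns $\zeta_S((\phi_s)_{s})$. Hence the composite agrees with $\zeta_S$ on morphisms as well, and the two functors coincide. The only point meriting any attention — and it is an easy one — is to note that the morphism-level description of $G_S$ is phrased directly in terms of $\zeta_S$, so no independent matching is required; alternatively one could verify it by hand against the explicit formula for the maps $\theta_q$ recorded in \Cref{J and monoids and Ord}(2).
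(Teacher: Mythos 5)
Your proposal is correct and matches the paper, which states this as an observation without proof precisely because, as you note, the $\operatorname{Ord}_\pm^\times$-component of $G_S$ is defined via $\zeta_S$ on both objects and morphisms, so the projection tautologically recovers $\zeta_S$. Nothing further is needed.
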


\subsection{Unravelling the left adjoint}\label{identifying the monoids}

Let $\mathcal{E}$ be a cartesian closed cocomplete $\infty$-category. The technical results of the previous section make it a straightforward task to exhibit a left adjoint to the inclusion
\begin{align*}
\mathbb{B}_{\mathcal{E}}=(\operatorname{Free}^\times)^*\colon \operatorname{Fun}^\times(\mathcal{T}_A,\mathcal{E})\rightarrow \operatorname{Fun}^\times( \operatorname{Ord}_\pm^\times, \mathcal{E})
\end{align*}
in the form of the pointwise left Kan extension along $\operatorname{Free}^\times$.

\begin{proposition}\label{identifying L as left Kan extension}
For a cartesian closed cocomplete $\infty$-category $\mathcal{E}$, the inclusion $\mathbb{B}=\mathbb{B}_\mathcal{E}$ admits a left adjoint
\begin{align*}
\mathbb{L}=\mathbb{L}_\mathcal{E}\colon \operatorname{Fun}^\times(\operatorname{Ord}_\pm^\times,\mathcal{E})\rightarrow \operatorname{Fun}^\times(\mathcal{T}_A, \mathcal{E}).
\end{align*}
Given a simplicial object $X$ in $\mathcal{E}$, the monoid object $\mathbb{L}X$ in $\mathcal{E}$ identifies with the product preserving functor $\mathcal{T}_A \rightarrow \mathcal{E}$ which sends $\operatorname{Free}(S)$ to the colimit
\begin{align*}
\mathop{\operatorname{colim}}_{\mathcal{J}^S}\mathbf{X}_S^\times 
\end{align*}
where $\mathbf{X}_S\colon\mathcal{J}^S\rightarrow \operatorname{Ord}_\pm^\times\xrightarrow{X^\times} \mathcal{E}$, and a map $\operatorname{Free}(T)\rightarrow \operatorname{Free}(S)$ is sent to the map on colimits induced by the map $\mathcal{J}^S\rightarrow \mathcal{J}^T$ as described in \Cref{J and monoids and Ord} item (1).
\end{proposition}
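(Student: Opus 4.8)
The plan is to prove that the pointwise left Kan extension along $\operatorname{Free}^\times$ is the desired left adjoint, and then to compute its values; the two technical ingredients, the Lawvere-theoretic description of $\mathbb{B}_\mathcal{E}$ (\Cref{monoid objects as product preserving functors}) and the cofinality of $G_S$ (\Cref{cofinal}), are already available, so the remaining work is organisation plus one product-preservation argument.

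\emph{Reduction to a left Kan extension.} Under the identifications of \Cref{monoid objects as product preserving functors} and the universal property of the free product completion, $\mathbb{B}_\mathcal{E}$ is the restriction functor $(\operatorname{Free}^\times)^*\colon \operatorname{Fun}^\times(\mathcal{T}_A,\mathcal{E})\to\operatorname{Fun}^\times(\operatorname{Ord}_\pm^\times,\mathcal{E})$. Since $\mathcal{E}$ is cocomplete, the pointwise left Kan extension $\operatorname{Lan}_{\operatorname{Free}^\times}\colon \operatorname{Fun}(\operatorname{Ord}_\pm^\times,\mathcal{E})\to\operatorname{Fun}(\mathcal{T}_A,\mathcal{E})$ exists and is left adjoint to $(\operatorname{Free}^\times)^*$ on the full functor categories. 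As $\operatorname{Free}^\times$ preserves products, $(\operatorname{Free}^\times)^*$ carries product-preserving functors to product-preserving functors; so once we also know that $\operatorname{Lan}_{\operatorname{Free}^\times}$ preserves the property of being product-preserving (established in the last paragraph), the adjunction restricts to the full subcategories $\operatorname{Fun}^\times$, and this restriction is the claimed adjunction $\mathbb{L}_\mathcal{E}\dashv\mathbb{B}_\mathcal{E}$.

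\emph{Computing the values.} By the pointwise formula, $(\operatorname{Lan}_{\operatorname{Free}^\times}Y)(\operatorname{Free}(S))$ is the colimit of $Y\circ\pi_S$ over the left fibre $\mathcal{K}_S=\operatorname{Ord}_\pm^\times\times_{\mathcal{T}_A}(\mathcal{T}_A)_{/\operatorname{Free}(S)}$, with $\pi_S\colon\mathcal{K}_S\to\operatorname{Ord}_\pm^\times$ the projection. By \Cref{cofinal}, $G_S\colon\mathcal{J}^S\to\mathcal{K}_S$ is a $\varinjlim$-equivalence, so this colimit may be computed over $\mathcal{J}^S$ along $\pi_S\circ G_S$, and $\pi_S\circ G_S=\zeta_S$ by \Cref{JS to Ord}. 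Hence $(\operatorname{Lan}_{\operatorname{Free}^\times}Y)(\operatorname{Free}(S))\simeq\operatorname{colim}_{\mathcal{J}^S}(Y\circ\zeta_S)$, which for $Y=X^\times$ is exactly $\operatorname{colim}_{\mathcal{J}^S}\mathbf{X}_S$ by \Cref{simplicial space out of J-category}. For the functoriality in $\operatorname{Free}(S)$: a $\mathcal{T}_A$-morphism $\operatorname{Free}(S)\to\operatorname{Free}(T)$, i.e. a monoid map $\operatorname{Free}(T)\to\operatorname{Free}(S)$, induces $\mathcal{K}_S\to\mathcal{K}_T$ over $\operatorname{Free}^\times$ by post-composition, and hence the map on colimits. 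I would match this with the map induced by the functor $\mathcal{J}^S\to\mathcal{J}^T$ of \Cref{J and monoids and Ord}(1) by exhibiting the evident natural transformation $\zeta_S\Rightarrow\zeta_T\circ(\mathcal{J}^S\to\mathcal{J}^T)$ — the one that, block by block, identifies each partitioned set appearing inside a concatenation with itself — and checking its compatibility with the cofinality equivalences of \Cref{cofinal}. The content here is just that concatenation of partitioned ordered sets on the $\mathcal{J}$-side models multiplication of the associated words in the free monoids on the $\mathcal{K}_S$-side, so this is a routine diagram chase.

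\emph{Product-preservation (and the main obstacle).} Finite products in $\mathcal{T}_A$ are $\operatorname{Free}(S)\times\operatorname{Free}(T)\cong\operatorname{Free}(S\sqcup T)$, and $\operatorname{Free}(\emptyset)$ is a zero object, so it suffices to check $\operatorname{colim}_{\mathcal{J}^{S\sqcup T}}\mathbf{X}_{S\sqcup T}\simeq(\operatorname{colim}_{\mathcal{J}^S}\mathbf{X}_S)\times(\operatorname{colim}_{\mathcal{J}^T}\mathbf{X}_T)$ and that $\operatorname{colim}_{\mathcal{J}^\emptyset}\mathbf{X}_\emptyset$ is terminal. Now $\mathcal{J}^{S\sqcup T}=\mathcal{J}^S\times\mathcal{J}^T$, and since $X^\times$ preserves products, $\mathbf{X}_{S\sqcup T}$ is the external product $(a,b)\mapsto\mathbf{X}_S(a)\times\mathbf{X}_T(b)$ — this is precisely the decomposition $\mathbf{X}_S\simeq\prod_{s\in S}\mathbf{X}_s$ recorded in \Cref{simplicial space out of J-category}. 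As $\mathcal{E}$ is cartesian closed, $-\times Z$ preserves colimits for every $Z$; applying this twice pulls the product out of the colimit over $\mathcal{J}^S\times\mathcal{J}^T$, giving the identity. The empty case is immediate: $\mathcal{J}^\emptyset\simeq\ast$ and $\mathbf{X}_\emptyset$ is the empty product, i.e. the terminal object of $\mathcal{E}$. Combined with the reduction step, this completes the proof. I expect no serious obstacle: all the genuine difficulty sits in \Cref{cofinal}, which is already proven, and among the remaining steps the only one using cartesian closedness is this two-line colimit manipulation, the fiddliest point being the bookkeeping for naturality in $S$, which is nonetheless routine.
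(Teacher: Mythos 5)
Your proposal is correct and follows essentially the same route as the paper: reduce via the Lawvere-theoretic identification of $\mathbb{B}_\mathcal{E}$ to computing the pointwise left Kan extension along $\operatorname{Free}^\times$, evaluate it over $\mathcal{J}^S$ using the $\varinjlim$-equivalence $G_S$, and use cartesian closedness to pull finite products out of the colimits. Your explicit handling of the naturality in $S$ (via a natural transformation relating $\zeta_S$ to $\zeta_T$ composed with concatenation, rather than a strictly commuting square) is if anything slightly more careful than the paper's "straightforward to check" remark.
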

\begin{proof}
Since $\mathbb{B}$ is given by restriction along $\operatorname{Free}^\times$, it suffices to show that the left Kan extension of $X^\times$ along $\operatorname{Free}^\times$ identifies with the functor described in the statement and that this functor moreover preserves products. The left Kan extension of $X^\times$ along $\operatorname{Free}^\times$ evaluates to
\begin{align*}
\operatorname{colim} X^\times\vert_{\mathcal{K}_S}
\end{align*}
on $\operatorname{Free}(S)$ and functoriality is given by the induced maps of comma categories. By \Cref{cofinal}, the functor $G_S\colon \mathcal{J}^S\rightarrow \mathcal{K}_S$ is a $\varinjlim$-equivalence and as a result the left Kan extension of $X^\times$ along $\operatorname{Free}^\times$ evaluates to
\begin{align*}
\mathop{\operatorname{colim}}_{\mathcal{J}^S}\mathbf{X}_S
\end{align*}
on $\operatorname{Free}(S)$ as claimed since $\mathbf{X}_S= X^\times\vert_{\mathcal{K}_S}\circ G_S$ by \Cref{JS to Ord}. It is straightforward to check that for a given map of monoids $\theta\colon \operatorname{Free}(T)\rightarrow \operatorname{Free}(S)$, the following diagram commutes where the left hand vertical map is the one described in \Cref{J and monoids and Ord} item (1) and the right vertical map is the map of comma categories given by post-composition by $\theta$.
\begin{center}
\begin{tikzpicture}
\matrix (m) [matrix of math nodes,row sep=2em,column sep=2em]
{
\mathcal{J}^S & \mathcal{K}_S \\
\mathcal{J}^T & \mathcal{K}_T	\\
};
\path[-stealth]
(m-1-1) edge node[above]{$G_S$} (m-1-2) edge (m-2-1)
(m-2-1) edge node[below]{$G_T$} (m-2-2)
(m-1-2) edge node[right]{$\theta_*$} (m-2-2)
;
\end{tikzpicture}
\end{center}
This gives the claimed functoriality. To conclude that the left adjoint $\mathbb{L}$ to $\mathbb{B}=(\operatorname{Free}^\times)^*$ is indeed given by the pointwise left Kan extension, it only remains to be seen that this functor is product preserving. This is clear, since $\mathcal{E}$ is cartesian closed, so
\begin{align*}
\mathop{\operatorname{colim}}_{\mathcal{J}^S\times \mathcal{J}^T}(\mathbf{X}_S\times\mathbf{X}_T)\xrightarrow{\ \simeq\ } \mathop{\operatorname{colim}}_{\mathcal{J}^S}\mathbf{X}_S\times \mathop{\operatorname{colim}}_{\mathcal{J}^T}\mathbf{X}_T
\end{align*}
for all finite sets $S$ and $T$.
\end{proof}

For clarity, let's write out the resulting monoid as a simplicial object considering only standard face and degeneracy maps.

\begin{corollary}\label{monoid over J}
Let $\mathcal{E}$ be a cartesian closed cocomplete $\infty$-category and let $X$ be a simplicial object in $\mathcal{E}$. The simplicial object $\mathbb{B}\mathbb{L}X$ is given by 
\begin{align*}
(n)^\pm \mapsto \mathop{\operatorname{colim}}_{\mathcal{J}^n}\mathbf{X}_n \simeq \big(\mathop{\operatorname{colim}}_{\mathcal{J}}\mathbf{X}\big)^n
\end{align*}
with face maps induced by the functors $d_i\colon \mathcal{J}^n\rightarrow \mathcal{J}^{n-1}$, $0<i<n$, concatenating the $i$'th and $(i+1)$'st factors:
\begin{align*}
,\qquad (I^1_{P^1},\ldots,I^n_{P^n})\mapsto (I^1_{P^1},\ldots,I^{i-1}_{P^{i-1}},\, I^i_{P^i}I^{i+1}_{P^{i+1}}\, , I^{i+2}_{P^{i+2}},\ldots,I^n_{P^n}),
\end{align*}
and $d_0,d_n\colon \mathcal{J}^n\rightarrow \mathcal{J}^{n-1}$ forgetting the factor $I^1_{P^1}$, respectively $I^n_{P^n}$; and
with degeneracy maps induced by the functors $s_i\colon \mathcal{J}^n\rightarrow \mathcal{J}^{n+1}$, $0\leq i\leq n$ inserting the object $\emptyset_\emptyset$ in the $i+1$'st spot:
\begin{align*}
(I^1_{P^1},\ldots,I^n_{P^n})\mapsto (I^1_{P^1},\ldots,I^i_{P^i},\emptyset_\emptyset,I^{i+1}_{P^{i+1}},\ldots,I^n_{P^n}).
\end{align*}
\end{corollary}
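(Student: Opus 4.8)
The plan is to read the statement off directly from \Cref{identifying L as left Kan extension}: this corollary is just that proposition rewritten in simplicial-object language, so the only work is to translate the standard coface and codegeneracy maps of $\Delta$ through the equivalence $\Delta^{\operatorname{op}}\simeq\operatorname{Ord}_\pm$ of \S\ref{Lawvere theory}, then through $\operatorname{Free}$, and finally through the recipe of \Cref{J and monoids and Ord}(1). First I would observe that, as a simplicial object, $\mathbb{B}\mathbb{L}X$ is by definition the composite $\Delta^{\operatorname{op}}\xrightarrow{\,\simeq\,}\operatorname{Ord}_\pm\xrightarrow{\operatorname{Free}}\mathcal{T}_A\xrightarrow{\mathbb{L}X}\mathcal{E}$, so by \Cref{identifying L as left Kan extension} its value on $(n)^\pm=\operatorname{Free}(n)$ is $\operatorname{colim}_{\mathcal{J}^n}\mathbf{X}_n$, which is canonically $(\operatorname{colim}_{\mathcal{J}}\mathbf{X})^n$ because $\mathbb{L}X$ is product preserving and $\mathcal{E}$ is cartesian closed. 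The same proposition tells us that $\mathbb{L}X$ carries a monoid map $\operatorname{Free}(T)\to\operatorname{Free}(S)$ to the map on colimits induced by the functor $\mathcal{J}^S\to\mathcal{J}^T$ of \Cref{J and monoids and Ord}(1). Since each face map $d_i$ and degeneracy map $s_i$ of $\mathbb{B}\mathbb{L}X$ is the image under $\mathbb{L}X$ of $\operatorname{Free}(\vartheta)$ for the appropriate $\vartheta$ in $\operatorname{Ord}_\pm$ corresponding to a coface, respectively codegeneracy, map of $\Delta$, the whole content of the corollary reduces to identifying, for each such $\vartheta$, the induced functor on $\mathcal{J}$-powers.

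I would then carry out this identification case by case using the explicit formula for $\Delta^{\operatorname{op}}\simeq\operatorname{Ord}_\pm$ recalled in \S\ref{Lawvere theory}. For $0<i<n$ the coface $d^i\colon[n-1]\to[n]$ corresponds to the surjection $(n)^\pm\to(n-1)^\pm$ identifying $i$ with $i+1$; applying $\operatorname{Free}$ produces the monoid map $\operatorname{Free}(n-1)\to\operatorname{Free}(n)$ with $x_i\mapsto x_ix_{i+1}$ and $x_j\mapsto x_j$ (respectively $x_{j+1}$) for $j<i$ (respectively $j>i$), and \Cref{J and monoids and Ord}(1) turns this into the functor that concatenates the $i$th and $(i+1)$st factors. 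For $i=0$ (respectively $i=n$) the map $(n)^\pm\to(n-1)^\pm$ sends the least (respectively greatest) element of $(n)$ to $\bot$ (respectively $\top$), so $\operatorname{Free}$ discards the corresponding generator and the induced functor forgets the first (respectively last) factor. For the codegeneracy $s^i\colon[n+1]\to[n]$ the corresponding map $(n)^\pm\to(n+1)^\pm$ has image missing the element $i+1$, so $\operatorname{Free}(s^i)$ sends $x_{i+1}$ to the empty word and \Cref{J and monoids and Ord}(1) inserts the empty concatenation, which is the monoidal unit $\emptyset_\emptyset$ of $\mathcal{J}$, in the $(i+1)$st slot. Putting these together yields exactly the description in the statement.

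There is no real obstacle here: the argument is pure bookkeeping. The only things requiring care are the two order-reversals at play — $\mathcal{T}_A$ is the opposite of the category of finitely generated free monoids and $\operatorname{Free}$ is contravariant on morphisms, so $\mathbb{L}X$ is covariant and sends $\operatorname{Free}(T)\to\operatorname{Free}(S)$ to a map induced by $\mathcal{J}^S\to\mathcal{J}^T$ — and the fact that it is $\emptyset_\emptyset$, not $\emptyset_\ast$, that the degeneracies insert, which is forced since the empty word gives the \emph{empty} concatenation in \Cref{J and monoids and Ord}(1).
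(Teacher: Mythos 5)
Your proposal is correct and is exactly the "tracing through the definitions" that the paper's proof leaves to the reader: you read the values and structure maps off \Cref{identifying L as left Kan extension}, transport the coface and codegeneracy maps through $\Delta^{\operatorname{op}}\simeq\operatorname{Ord}_\pm$ and $\operatorname{Free}$, and apply \Cref{J and monoids and Ord}(1), with the two order-reversals and the $\emptyset_\emptyset$ versus $\emptyset_\ast$ point handled correctly. Nothing is missing.
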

\begin{proof}
This is a case of tracing through the definitions. We leave the details to the reader.
\end{proof}

\begin{remark}
We may in the statement above replace the category $\mathcal{J}$ by any one of the subcategories $\mathcal{I}$, $\mathcal{J}_\gg$ or $\mathcal{I}_\gg$, since all the relevant inclusions are $\varinjlim$-equivalences (\Cref{Jgg in J is cofinal}, \Cref{Igg in Jgg cofinal}).
\end{remark}

\subsection{Simplifying the colimit calculations}\label{picking out non-degenerates}

For some simplicial objects $X$, we can simplify things further by working with subfunctors of the underlying semisimplicial objects. This is where the categories $\mathcal{J}_\gg$ and $\mathcal{I}_\gg$ come into play. The idea here is to pick out the ``non-degenerate'' simplices. We make a general statement and then spell out the main examples of interest.

\begin{proposition}\label{monoid over Igg}
Let $\mathcal{E}$ be a cartesian closed cocomplete $\infty$-category and let $X\colon \operatorname{Ord}_\pm\rightarrow \mathcal{E}$ be a simplicial object in $\mathcal{E}$. Suppose $X$ identifies with the left Kan extension of a semisimplicial object
\begin{align*}
Y\colon \operatorname{Ord}_\pm^\gg\rightarrow \mathcal{E}
\end{align*}
along the inclusion $i\colon \operatorname{Ord}_\pm^\gg\hookrightarrow \operatorname{Ord}_\pm$. Then the simplicial object $\mathbb{B}\mathbb{L}X$ is given by
\begin{align*}
(n)^\pm \mapsto \mathop{\operatorname{colim}}_{\mathcal{I}_\gg^n}\mathbf{Y}_n\simeq \big(\mathop{\operatorname{colim}}_{\mathcal{I}_\gg}\mathbf{Y}^{\times}\big)^n
\end{align*}
with face maps induced by the functors $d_i\colon \mathcal{I}_\gg^n\rightarrow \mathcal{I}_\gg^{n-1}$, $0<i<n$, concatenating the $i$'th and $(i+1)$'st factors:
\begin{align*}
,\qquad (I^1_{P^1},\ldots,I^n_{P^n})\mapsto (I^1_{P^1},\ldots,I^{i-1}_{P^{i-1}},\, I^i_{P^i}I^{i+1}_{P^{i+1}}\, , I^{i+2}_{P^{i+2}},\ldots,I^n_{P^n}),
\end{align*}
and $d_0,d_n\colon \mathcal{I}_\gg^n\rightarrow \mathcal{I}_\gg^{n-1}$ forgetting the factor $I^1_{P^1}$, respectively $I^n_{P^n}$; and with degeneracy maps induced by the functors $s_i\colon \mathcal{I}_\gg^n\rightarrow \mathcal{I}_\gg^{n+1}$, $0\leq i\leq n$, inserting the isolated object $\emptyset_\emptyset$ in the $i+1$'st spot:
\begin{align*}
(I^1_{P^1},\ldots,I^n_{P^n})\mapsto (I^1_{P^1},\ldots,I^i_{P^i},\emptyset_\emptyset,I^{i+1}_{P^{i+1}},\ldots,I^n_{P^n}).
\end{align*}
\end{proposition}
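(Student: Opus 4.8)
The plan is to deduce this from \Cref{monoid over J} by replacing the indexing category $\mathcal{J}^n$ first with $\mathcal{J}_\gg^n$ — swapping $\mathbf{X}$ for $\mathbf{Y}$ in the process — and then with $\mathcal{I}_\gg^n$, and finally checking that the three standard kinds of structure map survive these replacements.

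I would start from \Cref{monoid over J}, which gives $\mathbb{B}\mathbb{L}X$ as the simplicial object $(n)^\pm\mapsto\operatorname{colim}_{\mathcal{J}^n}\mathbf{X}_n$ with face maps induced by the concatenation functors $d_i\colon\mathcal{J}^n\to\mathcal{J}^{n-1}$ ($0<i<n$) and the factor-forgetting functors $d_0,d_n$, and degeneracies induced by the functors $s_i\colon\mathcal{J}^n\to\mathcal{J}^{n+1}$ inserting $\emptyset_\emptyset$. Since $X\simeq\operatorname{Lan}_iY$ by hypothesis, I would invoke \Cref{left Kan extensions versus maps out of JS} (together with \Cref{LKE and free product completion}) to identify $\mathbf{X}_n$ with the left Kan extension $\operatorname{Lan}_j\mathbf{Y}_n$ along the inclusion $j\colon\mathcal{J}_\gg^n\hookrightarrow\mathcal{J}^n$; since the colimit of a left Kan extension along any functor agrees with the colimit over its source, this yields $\operatorname{colim}_{\mathcal{J}^n}\mathbf{X}_n\simeq\operatorname{colim}_{\mathcal{J}_\gg^n}\mathbf{Y}_n$. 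Next I would use \Cref{Igg in Jgg cofinal}: it provides a left adjoint to $\iota\colon\mathcal{I}_\gg\hookrightarrow\mathcal{J}_\gg$, hence a left adjoint $L^{\times n}$ to $\iota^{\times n}\colon\mathcal{I}_\gg^n\hookrightarrow\mathcal{J}_\gg^n$, so that $\iota^{\times n}$ is a $\varinjlim$-equivalence and $\operatorname{colim}_{\mathcal{J}_\gg^n}\mathbf{Y}_n\simeq\operatorname{colim}_{\mathcal{I}_\gg^n}\mathbf{Y}_n$. Combining these, and using that $\mathcal{E}$ is cartesian closed so the colimit over the product splits (exactly as in the proof of \Cref{identifying L as left Kan extension}), I get
\begin{align*}
\operatorname{colim}_{\mathcal{J}^n}\mathbf{X}_n\ \simeq\ \operatorname{colim}_{\mathcal{I}_\gg^n}\mathbf{Y}_n\ \simeq\ \big(\operatorname{colim}_{\mathcal{I}_\gg}\mathbf{Y}\big)^n,
\end{align*}
which is the underlying object of the claimed simplicial object.

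It then remains to match the structure maps. First, the functors $d_i,s_i$ of \Cref{monoid over J} restrict to the full subcategories $\mathcal{I}_\gg^\bullet\subseteq\mathcal{J}^\bullet$ — the concatenation of two surjective partitioning maps is again surjective, forgetting a factor is harmless, and the degeneracies insert the isolated object $\emptyset_\emptyset\in\mathcal{I}_\gg$ — so the formulas in the statement are well defined. Second, I would observe that every equivalence above is natural: the identification $\mathbf{X}_\bullet\simeq\operatorname{Lan}_j\mathbf{Y}_\bullet$ is the mate of the natural comparison transformation of \Cref{left Kan extensions versus maps out of JS}, and the left adjoint $L\colon\mathcal{J}_\gg\to\mathcal{I}_\gg$ of \Cref{Igg in Jgg cofinal} commutes with concatenation and with forgetting factors and fixes $\emptyset_\emptyset$. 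Hence each displayed equivalence intertwines the $\mathcal{J}$-level structure maps of \Cref{monoid over J} with the claimed $\mathcal{I}_\gg$-level structure maps, and the simplicial identities are inherited from those of $\mathbb{B}\mathbb{L}X$.

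The hardest part will be precisely this last compatibility step: \Cref{left Kan extensions versus maps out of JS} and \Cref{Igg in Jgg cofinal} are formulated levelwise (for a fixed finite set, respectively for $\mathcal{J}_\gg$ itself), so one must verify that the resulting equivalences assemble into an equivalence of simplicial objects rather than a mere levelwise one. Since every functor and transformation involved is built from the same Lawvere-theoretic data — restriction along maps of free monoids, concatenation, forgetting, insertion of $\emptyset_\emptyset$ — and these commute with one another in the evident way, this assembly should be automatic, and the remaining verification is routine tracing through of definitions.
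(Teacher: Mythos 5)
Your proposal is correct and follows essentially the same route as the paper's own proof: invoke \Cref{left Kan extensions versus maps out of JS} to identify $\mathbf{X}_S^\times$ with the left Kan extension of $\mathbf{Y}_S^\times$ along $\mathcal{J}_\gg^S\hookrightarrow\mathcal{J}^S$ (so the colimits agree), then apply \Cref{monoid over J} together with the $\varinjlim$-equivalence $\mathcal{I}_\gg\hookrightarrow\mathcal{J}_\gg$ of \Cref{Igg in Jgg cofinal}. You spell out the compatibility of the structure maps in more detail than the paper does, but the ingredients and their order are the same.
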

\begin{proof}
By \Cref{left Kan extensions versus maps out of JS}, the map $\mathbf{X}_S^\times\colon \mathcal{J}^S\rightarrow \mathcal{E}$ is the left Kan extension of $\mathbf{Y}_S^\times\colon \mathcal{J}_\gg^S\rightarrow \mathcal{E}$ along the inclusion $\mathcal{J}_\gg^S\hookrightarrow \mathcal{J}^S$, so the colimit of $\mathbf{X}_S$ agrees with the colimit of $\mathbf{Y}_S$ for any $S$. The result then follows from \Cref{monoid over J} and the fact that $\mathcal{I}_\gg\rightarrow \mathcal{J}_\gg$ is a $\varinjlim$-equivalence (\Cref{Igg in Jgg cofinal}).
\end{proof}

Our first example is trivial but useful.

\begin{example}
Let $f\colon D\rightarrow \operatorname{Ord}_\pm^\gg$ be any functor and let $X\colon D\rightarrow \mathcal{E}$ be a functor into a cartesian closed cocomplete $\infty$-category $\mathcal{E}$. By transitivity of left Kan extensions, we can apply the proposition above to
\begin{align*}
\operatorname{Lan}_{i\circ f}X \simeq \operatorname{Lan}_i(\operatorname{Lan}_f X).
\end{align*}
The basic example here is the case of free simplicial objects in $\mathcal{E}$: that is, considering the left Kan extension along the inclusion $(1)^\pm\hookrightarrow \operatorname{Ord}_\pm^\gg\hookrightarrow \operatorname{Ord}_\pm$ (see also \S \ref{Free monoid objects}).
\end{example}

The following observation is a simple recognition principle.

\begin{observation}
Let $\mathcal{E}$ be a cocomplete $\infty$-category. Let $X\colon \operatorname{Ord}_\pm\rightarrow \mathcal{E}$, respectively $Y\colon \operatorname{Ord}_\pm^\gg\rightarrow \mathcal{E}$, be a simplicial, respectively semisimplicial, object and suppose we are given a map $Y\rightarrow X\circ i$ of semisimplicial objects. The comparison map
\begin{align*}
\operatorname{Lan}_i Y\rightarrow \operatorname{Lan}_i (X\circ i)\rightarrow X
\end{align*}
is an equivalence if and only if the following condition is satisfied for every $n\geq 0$: the collection of injections $(m)^\pm \hookrightarrow (n)^\pm$ exhibit $X(n)$ as the coproduct
\begin{align*}
X(n)\xleftarrow{ \ \simeq \ } \coprod_{(m)^\pm \hookrightarrow (n)^\pm}Y(m).
\end{align*}
This is easy to see and can also be found in \cite[Remark 10.1.2.30]{kerodon}.
\end{observation}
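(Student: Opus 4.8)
The plan is to compute the source $\operatorname{Lan}_i Y$ pointwise, recognise it as the asserted coproduct, and then read off the ``if and only if'' formally. Fix $n\geq 0$. Since $\mathcal{E}$ is cocomplete, the pointwise formula for the left Kan extension gives $(\operatorname{Lan}_i Y)((n)^\pm)\simeq \operatorname{colim}_{\mathcal{K}}(Y\circ \pi)$, where $\mathcal{K}:=\operatorname{Ord}_\pm^\gg\times_{\operatorname{Ord}_\pm}(\operatorname{Ord}_\pm)_{/(n)^\pm}$ is the comma category whose objects are pairs $\big((m)^\pm,\ f\colon (m)^\pm\to (n)^\pm\big)$ with $f$ an arbitrary morphism of $\operatorname{Ord}_\pm$, whose morphisms are surjections over $(n)^\pm$, and $\pi$ is the projection to $\operatorname{Ord}_\pm^\gg$ followed by $Y$. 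Under this identification the comparison map to $X((n)^\pm)$ is the one induced by the cocone whose value at $((m)^\pm,f)$ is $Y((m)^\pm)\to X((m)^\pm)\xrightarrow{X(f)} X((n)^\pm)$.

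The key step is to cut $\mathcal{K}$ down using the orthogonal factorisation system on $\operatorname{Ord}_\pm$. Under $\operatorname{Ord}_\pm\simeq \Delta^{\operatorname{op}}$, surjections correspond to the injective (face) operators of $\Delta$ and injections to the surjective (degeneracy) operators; transporting the epi--mono factorisation system of $\Delta$, every morphism of $\operatorname{Ord}_\pm$ factors, essentially uniquely, as a surjection followed by an injection, with surjections left orthogonal to injections. Let $\mathcal{D}\subseteq \mathcal{K}$ be the full subcategory on the objects $((m)^\pm,\mu)$ with $\mu$ injective. I would then check two things: first, that $\mathcal{D}$ is discrete --- a morphism $\mu\to \mu'$ is a surjection $e$ with $\mu' e=\mu$, so $e$ is injective (as $\mu$ is), hence the unique isomorphism $(m)^\pm\cong (m')^\pm$ --- so $\mathcal{D}$ is equivalent to the set of injections $(m)^\pm\hookrightarrow (n)^\pm$; second, that the inclusion $\mathcal{D}\hookrightarrow \mathcal{K}$ admits a left adjoint sending $((m)^\pm,f)$ to $((k)^\pm,\mu)$, where $f=\mu\circ e$ is the surjection--injection factorisation, with unit $e$ and universal property the unique diagonal filler of the factorisation system (here one must note that the filler is again a surjection, hence a morphism of $\mathcal{D}$). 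In particular this inclusion is cofinal.

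Combining these, $(\operatorname{Lan}_i Y)((n)^\pm)\simeq \operatorname{colim}_{\mathcal{D}}(Y\circ\pi)\simeq \coprod_{(m)^\pm\hookrightarrow (n)^\pm}Y((m)^\pm)$, and tracing the cocone shows the comparison map to $X((n)^\pm)$ is precisely the map whose $\mu$-component is $Y((m)^\pm)\to X((m)^\pm)\xrightarrow{X(\mu)}X((n)^\pm)$. Hence the comparison is an equivalence at $(n)^\pm$ exactly when the stated coproduct condition holds at $n$; letting $n$ vary gives the claim. I expect the only real content to lie in the second step --- verifying that the surjection--injection factorisation of $\operatorname{Ord}_\pm$ genuinely yields a left adjoint to $\mathcal{D}\hookrightarrow\mathcal{K}$, and in particular that the diagonal fillers are surjections so that they remain inside $\mathcal{K}$; everything else is bookkeeping with comma categories. (The statement is also recorded as \cite[Remark 10.1.2.30]{kerodon}.)
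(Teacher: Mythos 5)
Your proof is correct, and it is precisely the standard argument the paper is alluding to when it calls this "easy to see" and points to Kerodon (the paper itself supplies no proof): compute $\operatorname{Lan}_i Y$ pointwise over the comma category and use the surjection--injection factorisation system on $\operatorname{Ord}_\pm$ (the image under $\operatorname{Ord}_\pm\simeq\Delta^{\operatorname{op}}$ of the epi--mono factorisation on $\Delta$) to reduce that comma category, via the left adjoint you describe, to the discrete category of injections $(m)^\pm\hookrightarrow (n)^\pm$. All the points you flag as needing verification do go through --- in particular the diagonal fillers are surjective because they are right factors of surjections --- so nothing is missing.
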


\begin{example}\label{examples: picking out the non-degenerate simplices} \ 
\begin{enumerate}
\item Let $X$ be a simplicial set, and let $X^{\operatorname{nd}}$ denote the semisimplicial set given by the non-degenerate simplices of $X$. The map
\begin{align*}
X(n)\xleftarrow{ \ \simeq \ } \coprod_{(m)^\pm \hookrightarrow (n)^\pm}X^{\operatorname{nd}}(m),
\end{align*}
is an equivalence for every $n\geq 0$ in view of the following standard fact given an $n$-simplex $x$, there is a unique injection $\theta\colon (m)^\pm \hookrightarrow (n)^\pm$ and a unique non-degenerate simplex $x'$ such that $x=\theta_*(x')$ (see e.g. \cite[Proposition 1.1.3.4]{kerodon}). Under the inclusion $\operatorname{Set}\subseteq \mathcal{S}\subseteq \operatorname{Cat}_\infty$, we get an equivalence
\begin{align*}
X\xleftarrow{\ \simeq \ } \operatorname{Lan}_i X^{\operatorname{nd}}
\end{align*}
both when viewing the simplicial set as a simplicial \textit{space} and as a simplicial \textit{$\infty$-category}.
\item Let $W$ be a Waldhausen category and consider Waldhausen's $S_\bullet$-construction
\begin{align*}
S:=S_\bullet(W)^\simeq\colon \operatorname{Ord}_\pm\rightarrow \mathcal{S}.
\end{align*}
Consider the semisimplicial space of \textit{flags}: that is,
\begin{align*}
S^{\operatorname{f}}:=S^{\operatorname{f}}_\bullet(W)^\simeq\colon \operatorname{Ord}_\pm^\gg\rightarrow \mathcal{S}
\end{align*}
such that $S^{\operatorname{f}}_n(W)^\simeq\subseteq S_n(W)^\simeq$ is the subspace of sequences of \textit{non-invertible} cofibrations
\begin{align*}
\ast\hookrightarrow X_1\hookrightarrow X_2\hookrightarrow \cdots \hookrightarrow X_n\quad \text{in }W,
\end{align*}
together with choices of subquotients. It is a simple observation that the map
\begin{align*}
S(n)\xleftarrow{ \ \simeq \ } \coprod_{(m)^\pm \hookrightarrow (n)^\pm}S^{\operatorname{f}}(m),
\end{align*}
is an equivalence for every $n\geq 0$: indeed, any filtration has an underlying flag given by composing all invertible maps with the preceding monomorphism and there is a unique injection $\theta\colon (m)^\pm \hookrightarrow (n)^\pm$ such that the induced simplicial degeneracy map sends this underlying flag to a filtration that is (uniquely) isomorphic to the original filtration.\qedhere
\end{enumerate}
\end{example}

\subsection{Partial algebraic K-theory and filtered dimension sequences}

In this section, we consider the partial algebraic K-theory of split exact categories. We will not need this in what follows, but it indicates how our calculational strategy fits into that of \cite{Yuan}.

Let $\mathcal{C}$ be a split exact category and consider Waldhausen's $S_\bullet$-construction: $S:=S_\bullet(\mathcal{C})^{\simeq}$ (\cite{Waldhausen}). In \cite{Yuan}, Yuan considers a Cartesian square
\begin{center}
\begin{tikzpicture}
\matrix (m) [matrix of math nodes,row sep=2em,column sep=3em]
{
\mathcal{F} & \mathcal{I}  \\
\mathcal{Z}_0 & \mathcal{S} \\
};
\path[-stealth]
(m-1-1) edge (m-1-2) edge (m-2-1)
(m-1-2) edge node[right]{$\mathbf{S}$} (m-2-2)
(m-2-1) edge (m-2-2)
;
\end{tikzpicture}
\end{center}
and constructs a functor $\mathbf{S}\colon \mathcal{F}\rightarrow \mathcal{S}$ whose colimit coincides with that of $\mathbf{S} \colon \mathcal{I}\rightarrow \mathcal{S}$ (in our notation, this is the restriction of $\mathbf{S}\colon \mathcal{J}\rightarrow \mathcal{S}$ along $\mathcal{I}\hookrightarrow \mathcal{J}$). Yuan proceeds to consider a better behaved subcategory $\mathcal{F}^{\operatorname{red}}\hookrightarrow \mathcal{F}$ over which the colimit calculation simplifies a great deal (see, however, \Cref{Fred in F not cofinal} below). Yuan considers the special case $\mathcal{C}=\operatorname{Vect}_{\F_p}^{\operatorname{fd}}$, but as observed in \cite{Nardin}, the definitions of $\mathcal{F}$ and $\mathcal{F}^{\operatorname{red}}$ generalise readily to split exact categories.

Now, we claim that we obtain $\mathcal{F}^{\operatorname{red}}$ directly by replacing $\mathbf{S} \colon \mathcal{I}\rightarrow \mathcal{S}$ in the cartesian square above with the functor $\mathbf{S}^{\operatorname{f}} \colon \mathcal{I}_\gg\rightarrow \mathcal{S}$ induced by the subfunctor of flags (\Cref{examples: picking out the non-degenerate simplices}). We make the relevant observations below, but do not go into great detail as this is well explained in \cite{Yuan} and we will in any case not need this for our purposes.

Let us first of all define the categories $\mathcal{F}$ and $\mathcal{F}^{\operatorname{red}}$. We will continue to work over arbitrary finite ordered sets, but it is easy to see that the definitions of \cite{Yuan} and \cite{Nardin} define skeletal subcategories of our definitions below.

\begin{definition} \ 
\begin{enumerate}
\item A \textit{filtered dimension} consists of a finite ordered set $I$ and a map $\mathbf{d}\colon I\rightarrow \pi_0(\mathcal{C}^\simeq)$. In other words, $\mathbf{d}=(d_i)_{i\in I}$ is a finite ordered collection of isomorphism classes of objects $d_i=[X_i]\in \pi_0(\mathcal{C}^\simeq)$. We say that $\mathbf{d}$ is of length $l(\mathbf{d})=|I|$. If we want to specify $I$, we say that $\mathbf{d}$ is an \textit{$I$-indexed filtered dimension}.

\item A \textit{filtered dimension sequence} consists of an object $s\colon I\rightarrow P$ in $\mathcal{I}$ and for every $p\in P$, an $s ^{-1}(p)$-indexed filtered dimension $\mathbf{d}^{(p)}$. We write $D=\langle\mathbf{d}^{(p)}\rangle_{p\in P}$. We say that $D$ is of length $l(D)=\sum_{p\in P}l(\mathbf{d}^{(p)})=|I|$. If we want to specify the object $I_P$, we say that $D$ is an \textit{$I_P$-indexed filtered dimension sequence}.

\item Note that there is a unique $\emptyset_\emptyset$-indexed filtered dimension sequence; we call this the \textit{empty filtered dimension sequence}.

\item We say that a filtered dimension $\mathbf{d}=(d_i)_{i\in I}$ is \textit{reduced} if $d_i\neq 0$ for all $i\in I$. Likewise, a filtered dimension sequence $D=\langle\mathbf{d}^{(p)}\rangle_{p\in P}$ is \textit{reduced} if each $\mathbf{d}^{(p)}$ is reduced. We also say that the empty filtered dimension sequence is reduced.\qedhere
\end{enumerate}
\end{definition}

\begin{remark}
Since $\mathcal{C}$ is split exact, there is a bijection between filtered dimension sequences of length $n$ and elements of $\pi_0(S_n)$. Thus we think of the filtered dimension $\mathbf{d}=(d_1,d_2,\ldots,d_n)$ with $d_i=[X_i]$ as specifying the isomorphism class of the filtration
\begin{align*}
X_1\hookrightarrow X_1\oplus X_2\hookrightarrow \cdots \hookrightarrow X_1\oplus \cdots \oplus X_n
\end{align*}
in $\pi_0(S_n)$. A filtered dimension is reduced exactly if it corresponds to a non-degenerate simplex in $S_\bullet$, i.e. a flag.
\end{remark}

\begin{definition}
Let $\mathcal{F}$ be the category whose objects are filtered dimension sequences and where a morphism
\begin{align*}
D=\langle \mathbf{d}^{(p)}\rangle \longrightarrow E=\langle \mathbf{e}^{(q)}\rangle
\end{align*}
between filtered dimension sequences indexed over $I_P$, respectively $J_Q$, is given by a morphism $(\theta, \rho)\colon I_P\rightarrow J_Q$ in $\mathcal{I}$ such that
\begin{align*}
e_j^{(q)}=\bigoplus_{i\in \theta^{-1}(j)}d_i^{(\rho(q))},\quad \text{for all }q\in Q, \ j\in J_q.
\end{align*}

We let $\mathcal{F}^{\operatorname{red}}\subseteq \mathcal{F}$ denote the full subcategory spanned by the reduced filtered dimension sequences.
\end{definition}

\begin{remark}\label{splitting maps and collapse maps} \ 
\begin{enumerate}
\item Note that the conditions mean that a morphism in $\mathcal{F}^{\operatorname{red}}$ must be induced by a morphism in $\mathcal{I}_\gg$: if $D\rightarrow E$ is a morphism in $\mathcal{F}$ and $j\notin \theta(I)$, then we must have $e_j^{(q)}=0$.
\item Note also that if the monoid $\pi_0(\mathcal{C}^\simeq)$ is zerosumfree and cancellative, then $\mathcal{F}^{\operatorname{red}}$ is a poset, since for any sequence $(d_1,\ldots,d_n)$ of objects in $\mathcal{C}$ and any object $e\in \mathcal{C}$, there is at most one $j$ such that $e\cong d_1\oplus \cdots \oplus d_j$.
\item The non-identity morphisms of $\mathcal{F}^{\operatorname{red}}$ are generated under concatenation of filtered dimension sequences by the following two operations:
\begin{enumerate}
\item \textit{collapse maps}:
\begin{align*}
\langle(d_1,\ldots,d_n)\rangle\rightarrow \langle(d_1,\ldots,d_{j-1},d_j\oplus d_{j+1},d_{j+2},\ldots,d_n)\rangle, \quad 1\leq j< n;
\end{align*}
\item and \textit{splitting maps}:
\begin{equation*}
\langle(d_1,\ldots,d_n)\rangle\rightarrow \langle(d_1,\ldots,d_j),(d_{j+1},\ldots,d_n)\rangle, \quad 1\leq j< n.\qedhere
\end{equation*}
\end{enumerate}
\end{enumerate}
\end{remark}

Following \cite{Yuan}, we consider a diagram of Cartesian squares as below, where the $\mathcal{Z}\rightarrow \mathcal{S}$ is the universal left fibration, $\mathcal{Z}_0\rightarrow \mathcal{S}$ is the left fibration classifying the functor $\pi_0\colon \mathcal{S}\rightarrow \mathcal{S}$, and $p$ is the map of fibrations induced by the natural transformation $\operatorname{id}_S\Rightarrow \pi_0$.
\begin{center}
\begin{tikzpicture}
\matrix (m) [matrix of math nodes,row sep=2em,column sep=3em]
{
\mathcal{Y}_{\,} & \mathcal{Y}_0 & \mathcal{I}_\gg \\
\mathcal{Z}_{\,} & \mathcal{Z}_0 & \mathcal{S}_{\,} \\
};
\path[-stealth]
(m-1-1) edge (m-1-2) edge (m-2-1)
(m-1-2) edge (m-2-2) edge (m-1-3)
(m-2-1) edge node[below]{$p$} (m-2-2)
(m-1-3) edge node[right]{$\mathbf{S}^{\operatorname{f}}$} (m-2-3)
(m-2-2) edge (m-2-3)
;
\end{tikzpicture}
\end{center}

Then $\mathcal{Y}_0$ recovers the $1$-categorical Grothendieck construction of the functor
\begin{align*}
\mathcal{I}_\gg\rightarrow \operatorname{Set},\qquad I_P\mapsto \pi_0 (\mathbf{S}^{\operatorname{f}}(I_P)).
\end{align*}
It is not hard to see that this is equivalent to the category $\mathcal{F}^{\operatorname{red}}$ defined above: by definition, the objects of $\mathcal{Y}_0$ are pairs $\big(I_P, [x]\in \pi_0 (\mathbf{S}^{\operatorname{f}}(I_P))\big)$, which exactly correspond to reduced filtered dimension sequences since $\mathcal{C}$ is split exact. A morphism $(I_P,[x])\rightarrow (J_Q,[y])$ is a map $\kappa\colon I_P\rightarrow J_Q$ in $\mathcal{I}_\gg$ such that $\mathbf{S}^{\operatorname{f}}(\kappa)(x)\cong y$; this is readily translated into the data of a morphism of reduced filtered dimension sequences.

Now, the map $p\colon \mathcal{Z}\rightarrow \mathcal{Z}_0$ is itself a left fibration: it classifies the functor $\mathcal{Z}_0\rightarrow \mathcal{S}$ which sends a pair $(X, a\in \pi_0X)$ to the component $X_a\subseteq X$ corresponding to $a$. It follows that the map $\mathcal{Y}\rightarrow \mathcal{F}^{\operatorname{red}}$ is a left fibration and it classifies the functor $\mathbf{S}\colon \mathcal{F}^{\operatorname{red}}\rightarrow \mathcal{S}$ which sends an $I_P$-indexed reduced filtered dimension sequence $D=\langle \mathbf{d}^{(p)}\rangle$ to the component
\begin{align*}
\mathbf{S}(D) \subseteq \mathbf{S}^{\operatorname{f}}(I_P) 
\end{align*}
corresponding to the collection of split flags given by the filtered dimensions $\mathbf{d}^{(p)}$, $p\in P$.

Applying \cite[3.3.4.6]{LurieHTT} twice, we conclude that
\begin{align*}
\mathop{\operatorname{colim}}_{I_P\in \mathcal{I}_\gg} \mathbf{S}^{\operatorname{f}}(I_P)\simeq \mathop{\operatorname{colim}}_{D\in \mathcal{F}^{\operatorname{red}}} \mathbf{S}(D),
\end{align*}
and by \Cref{monoid over Igg}, this identifies the underlying space of the monoid $K^\partial(\mathcal{C})$, recovering the identification of \cite[\S 5.2]{Yuan} which is used in the subsequent calculation $H_*(K^\partial(\operatorname{Vect}_{\F_p}^{\operatorname{fd}}, \F_p))=0$, $*>0$ (\cite[Theorem 5.2]{Yuan}).

\begin{remark}\label{Fred in F not cofinal}
Note that there is a minor error in the proof of the identification above given in \cite{Yuan}. As remarked, applying the arguments above to the functor $\mathbf{S}\colon \mathcal{I}\rightarrow \mathcal{S}$ identifies the underlying space of the monoid $K^\partial(\mathcal{C})$ as a colimit over the category $\mathcal{F}$:
\begin{align*}
\mathop{\operatorname{colim}}_{D\in \mathcal{F}} \mathbf{S}(D).
\end{align*}
To restrict to $\mathcal{F}^{\operatorname{red}}$,  \cite[Lemma 5.14]{Yuan} then claims that the inclusion $\mathcal{F}^{\operatorname{red}}\hookrightarrow \mathcal{F}$ is a $\varinjlim$-equivalence. This is not, however, quite true as the right fibre over the $\ast_\ast$-indexed filtered dimension sequence $\langle (0)\rangle$ is empty. Indeed, consider a map
\begin{align*}
\langle\mathbf{d}^{(p)}\rangle =\langle(0)\rangle \longrightarrow \langle \mathbf{e}^{(q)}\rangle
\end{align*}
in $\mathcal{F}$ from $\langle (0)\rangle$ to some $J_Q$-indexed filtered dimension sequence. This is given by a morphism $(\theta, \rho) \colon I_P=\ast_\ast\rightarrow J_Q$ in $\mathcal{I}$ and by definition, we must then have
\begin{align*}
e_j^{(q)}=\bigoplus_{i\in \theta^{-1}(j)} d_i^{(\rho(q))} = 0 \quad \text{for all }q\in Q,\ j\in J_q.
\end{align*}
In particular, there are no such maps to a \textit{reduced} filtered dimension sequence.

This can be rectified as we have done above by taking a roundabout way through the larger categories $\mathcal{J}$ and $\mathcal{J}_\gg$ where we have some more flexibility in allowing empty sets to show up in our partitions.
\end{remark}

\section{Aside: Free and non-unital monoid objects}\label{Free monoid objects}

We recall free and non-unital monoid objects.

\subsection{Free monoid objects}

Let $\mathcal{C}$ be an $\infty$-category. Evaluation at the object $(1)^\pm$ in $\operatorname{Ord}_\pm$ defines a forgetful functor $u\colon \operatorname{Mon}(\mathcal{C})\rightarrow \mathcal{C}$ which sends a monoid object to its underlying object. This admits a left adjoint $\operatorname{Free}\colon \mathcal{C}\rightarrow \operatorname{Mon}(\mathcal{C})$ (\cite[Example 3.1.3.6]{LurieHA}).

\begin{definition}
We say that a monoid object in $\mathcal{C}$ is \textit{free} if it belongs to the essential image of $\operatorname{Free}$.
\end{definition}

Let $\mathcal{E}$ be a cartesian closed cocomplete $\infty$-category.We have a diagram of adjunctions:

\begin{center}
\begin{tikzpicture}
\matrix (m) [matrix of math nodes,row sep=3em,column sep=3em,nodes={anchor=center}]
{
\operatorname{Mon}(\mathcal{E}) & \operatorname{Fun}(\operatorname{Ord}_\pm,\mathcal{E}) \\
\mathcal{E} & \\
};
\path[-stealth]
(m-1-1) edge node[right]{$u$} node[left]{$\dashv\!$} (m-2-1)
(m-1-2) edge node[above]{$\scriptstyle\operatorname{ev}_1$} node[below left,rotate=120]{$\dashv\,$} (m-2-1)
(m-2-1) edge[bend left] node[left]{$\operatorname{Free}$} (m-1-1)
(m-2-1.1) edge[bend right] node[below right]{$\scriptstyle\operatorname{Lan}_1$} (m-1-2.220)
(m-1-2.170) edge[bend right] node[above]{$\scriptstyle\mathbb{L}$} (m-1-1.15)
;
\path[right hook-stealth]
(m-1-1) edge node[below]{$\scriptstyle\mathbb{B}$} node[right, rotate=90]{$\!\vdash$} (m-1-2)
;
\end{tikzpicture}
\end{center}

We have the following well-known explicit formula for free monoid objects: informally, a free monoid object can be interpreted as the monoid of \textit{words of arbitrary length}.

\begin{lemma}\label{free monoid explicit}
Let $\mathcal{E}$ be a cartesian closed cocomplete $\infty$-category and let $X$ be an object in $\mathcal{E}$. The free monoid object $\operatorname{Free}(X)$ is given by the simplicial object
\begin{align*}
(n)^\pm\mapsto \coprod_{k_1,\ldots,k_n\geq 0} X^{k_1}\times \cdots \times X^{k_n}
\end{align*}
with face maps given by concatenation $X^k\times X^m\rightarrow X^{k+m}$ or forgetting the first or last factor, and degeneracy maps given by inserting a $k=0$ factor $X^0=\ast$.
\end{lemma}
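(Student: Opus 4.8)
The plan is to apply the unravelling of $\mathbb{L}_\mathcal{E}$ from \Cref{monoid over J} (or rather its refinement \Cref{monoid over Igg}) to the specific simplicial object computing $\operatorname{Free}(X)$, and then show that the resulting colimit over $\mathcal{J}$ (equivalently $\mathcal{I}_\gg$) collapses to the stated coproduct of products. First I would recall that the free monoid functor factors as $\operatorname{Free}=\mathbb{L}_\mathcal{E}\circ \operatorname{Lan}_1$, where $\operatorname{Lan}_1\colon \mathcal{E}\to \operatorname{Fun}(\operatorname{Ord}_\pm,\mathcal{E})$ is left Kan extension along the inclusion $\{(1)^\pm\}\hookrightarrow \operatorname{Ord}_\pm$, as indicated in the diagram of adjunctions just above the statement; indeed $\mathbb{B}\dashv \mathbb{L}$ and $\operatorname{ev}_1\dashv \operatorname{Lan}_1$ compose to give $u\dashv \operatorname{Free}$. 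Since $\{(1)^\pm\}\hookrightarrow \operatorname{Ord}_\pm$ factors through $\{(1)^\pm\}\hookrightarrow \operatorname{Ord}_\pm^\gg\hookrightarrow \operatorname{Ord}_\pm$, transitivity of left Kan extensions shows $\operatorname{Lan}_1 X=\operatorname{Lan}_i Y$ where $Y:=\operatorname{Lan}_{\{(1)^\pm\}\hookrightarrow \operatorname{Ord}_\pm^\gg}X$ is a semisimplicial object; explicitly $Y((n)^\pm)=\coprod_{(1)^\pm\hookrightarrow (n)^\pm}X$, which is $X$ for $n=1$, the initial object $\varnothing_\mathcal{E}$ for $n=0$, and empty (a coproduct indexed by the empty set, hence the initial object) for $n\geq 2$ — wait, more carefully: the injections $(1)^\pm\hookrightarrow (n)^\pm$ in $\operatorname{Ord}_\pm^\gg$ correspond under $\operatorname{Ord}_\pm\simeq\Delta^{\operatorname{op}}$ to surjections, of which there are none from $(1)$ onto $(n)$ for $n\geq 2$ and exactly one onto $(0)$; so $Y((0)^\pm)=X$ as well. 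In any case $Y$ is supported in semisimplicial degrees $0$ and $1$.

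Next I would apply \Cref{monoid over Igg}: the simplicial object $\mathbb{B}\mathbb{L}X=\mathbb{B}\operatorname{Free}(X)$ has $n$-th term $\operatorname{colim}_{\mathcal{I}_\gg^n}\mathbf{Y}_n\simeq (\operatorname{colim}_{\mathcal{I}_\gg}\mathbf{Y}^\times)^n$, so it suffices to identify $\operatorname{colim}_{\mathcal{I}_\gg}\mathbf{Y}^\times\simeq \coprod_{k\geq 0}X^k$ with the correct simplicial structure. Here $\mathbf{Y}^\times\colon \mathcal{I}_\gg\to \mathcal{E}$ is the composite $\mathcal{I}_\gg\xrightarrow{\zeta^\gg}(\operatorname{Ord}_\pm^\gg)^\times\xrightarrow{Y^\times}\mathcal{E}$, which sends a surjective $I\twoheadrightarrow P$ to $\prod_{p\in P}Y(I_p^\pm)$; since $Y$ is supported in degrees $0,1$ and vanishes (is initial) in degrees $\geq 2$, and $\mathcal{E}$ is cartesian closed so products with the initial object are initial, this value is initial unless every block $I_p$ has size $0$ or $1$ — but blocks are non-empty in $\mathcal{I}_\gg$, so the value is initial unless $|I_p|=1$ for all $p$, i.e. $I\to P$ is a bijection, in which case the value is $X^{|I|}$. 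Thus the colimit is computed over the full subcategory of $\mathcal{I}_\gg$ on objects with $I\xrightarrow{\cong}P$ (cofinality here needs a short argument, or one observes directly that the non-full part contributes only initial objects and hence does not affect the colimit). That subcategory I claim is equivalent to the groupoid $\coprod_{k\geq 0}\Sigma_k$ — actually it is a poset-like category where the only morphisms $P\xrightarrow{\cong}P$ over $\mathcal{I}_\gg$ are identities (a collapse map between complete partitions with $\theta$ surjective forces $\theta$ to be a bijection of ordered sets, hence the identity), so it is the discrete category $\coprod_{k\geq 0}\{(k)\}$, giving $\operatorname{colim}\simeq \coprod_{k\geq 0}X^k$.

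Finally I would read off the simplicial structure maps from \Cref{monoid over Igg}: the inner face maps $d_i$ are induced by concatenation $\mathcal{I}_\gg^n\to \mathcal{I}_\gg^{n-1}$, which on the identified pieces $(k)(m)=(k+m)$ gives exactly the concatenation $X^k\times X^m\to X^{k+m}$; the outer face maps $d_0,d_n$ forget a factor; and the degeneracies $s_i$ insert $\varnothing_\varnothing$, which under the identification is the object $(0)$ contributing $X^0=\ast$, i.e. they insert a trivial word. This matches the claimed formula verbatim. The main obstacle I anticipate is the bookkeeping in the cofinality/collapsing step — precisely checking that the non-bijective objects of $\mathcal{I}_\gg$ genuinely contribute only initial objects to the colimit diagram and that the colimit is therefore computed by the bijective locus, together with verifying there are no non-identity automorphisms there. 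This is exactly the kind of "trace through the definitions, we leave it to the reader" step that \Cref{monoid over J} already flagged, so I would state the identification cleanly and relegate the verification of the structure maps to a remark that it follows by inspection of \Cref{monoid over Igg}.
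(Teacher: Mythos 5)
Your argument is correct and follows the paper's proof essentially verbatim: both factor $\operatorname{Free}=\mathbb{L}\circ\operatorname{Lan}_1$, reduce via transitivity of left Kan extensions and \Cref{monoid over Igg} to a colimit over $\mathcal{I}_\gg$ of a functor that is initial away from the complete partitions $P_P$, and identify that locus with the discrete category $\coprod_{k\geq 0}\{(k)\}$ (the paper phrases your collapsing step as saying $\mathbf{X}_\gg^\times$ is left Kan extended from $\operatorname{Ord}^\simeq$, which is exactly your observation that the initial object is strict in a cartesian closed category so the non-bijective objects do not contribute). The only slip is your value $Y((0)^\pm)=X$ --- there are two surjections $(1)^\pm\to(0)^\pm$ in $\operatorname{Ord}_\pm^\gg$, so it is $X\sqcup X$ --- but this is harmless since every block $I_p$ in $\mathcal{I}_\gg$ is non-empty and that value is never evaluated in the colimit.
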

\begin{proof}
As observed in \Cref{examples: picking out the non-degenerate simplices}, we can apply \Cref{monoid over Igg}, since
\begin{align*}
\operatorname{Lan}_1(X)\simeq \operatorname{Lan}_i(X_\gg)
\end{align*}
where $i\colon \operatorname{Ord}_\pm^\gg\hookrightarrow \operatorname{Ord}_\pm$ is the inclusion of the subcategory on surjective maps and where $X_\gg$ denotes the left Kan extension along the inclusion $(1)^\pm \hookrightarrow \operatorname{Ord}_\pm^\gg$.

The semisimplicial object $X_\gg$ is given by
\begin{align*}
(0)^\pm \mapsto \ast\sqcup\ast,\qquad (1)^\pm \mapsto X, \qquad \text{and}\quad (n)^\pm \mapsto \emptyset, \ n>1.
\end{align*}
It follows that the resulting functor $\mathbf{X}_\gg^\times\colon \mathcal{I}_\gg\rightarrow \mathcal{E}$ described in \Cref{simplicial space out of J-category} is given by
\begin{align*}
(s\colon I\rightarrow P) \mapsto \begin{cases}
X^P & \text{if } s \text{ is an isomorphism} \\
\emptyset & \text{else}.
\end{cases}
\end{align*}
Let $\operatorname{Ord}^{\simeq}$ denote the category of finite linearly ordered sets with isomorphisms between them and consider the functor
\begin{align*}
\operatorname{Ord}^{\simeq}\rightarrow \mathcal{I}_\gg,\quad P\mapsto P_P.
\end{align*}
It is easily verified that $\mathbf{X}_\gg^\times$ is the left Kan extension of $\operatorname{Ord}^{\simeq}\rightarrow \mathcal{E}$, $P\mapsto X^P$, along the functor above, and likewise for the $n$-fold product of $\mathbf{X}_\gg^\times$. Hence by \Cref{monoid over Igg}, we see that $\operatorname{Free}(X)\simeq \mathbb{L}\operatorname{Lan}_1(X)$ is of the claimed form:
\begin{align*}
(n)^\pm \mapsto \mathop{\operatorname{colim}}_{\mathcal{I}_\gg^n} (\mathbf{X}_\gg^\times)^n \simeq \mathop{\operatorname{colim}}_{(P_1,\ldots,P_n)\in (\operatorname{Ord}^{\simeq})^n} X^{P_1}\times \cdots \times X^{P_n}\simeq \coprod_{k_1,\ldots,k_n\geq 0} X^{k_1}\times \cdots \times X^{k_n}.
\end{align*}
The structure maps are readily identified from the description in \Cref{monoid over Igg}.
\end{proof}

We will be interested in monoid objects in $\mathcal{S}$ and $\operatorname{Cat}_\infty$. We record the established adjunctions in the following diagram (we only depict the ones we'll be using explicitly).

\begin{center}
\begin{tikzpicture}
\matrix (m) [matrix of math nodes,row sep=4em,column sep=3em,nodes={anchor=center}]
{
\operatorname{Cat} & \operatorname{Cat}_\infty & \mathcal{S} \\
\operatorname{Mon}(\operatorname{Cat}) & \operatorname{Mon}(\operatorname{Cat}_\infty) & \operatorname{Mon}(\mathcal{S}) \\
\operatorname{Fun}(\Delta^{\operatorname{op}},\operatorname{Cat}) & \operatorname{Fun}(\Delta^{\operatorname{op}},\operatorname{Cat}_\infty) & \operatorname{Fun}(\Delta^{\operatorname{op}},\mathcal{S}) \\
};
\path[-stealth]
(m-2-1) edge node[left]{$u$} node[right]{$\!\vdash$} (m-1-1)
(m-2-2) edge node[left]{$u$} node[right]{$\!\vdash$} (m-1-2)
(m-2-3) edge node[left]{$u$} node[right]{$\!\vdash$} (m-1-3)
;
\path[right hook-stealth]
(m-1-1) edge (m-1-2)
(m-2-1) edge (m-2-2)
(m-3-1) edge (m-3-2)
(m-2-1) edge node[left]{$\mathbb{B}$} (m-3-1)
(m-2-2) edge node[left]{$\mathbb{B}$} node[right]{$\!\vdash$} (m-3-2)
(m-2-3) edge node[left]{$\mathbb{B}$} node[right]{$\!\vdash$} (m-3-3)
;
\path[left hook-stealth]
(m-1-3) edge node[right, rotate=90]{$\vdash$} (m-1-2)
(m-2-3) edge node[right, rotate=90]{$\!\vdash$} (m-2-2)
(m-3-3) edge node[right, rotate=90]{$\!\vdash$} (m-3-2)
;
\path[-stealth]
(m-1-2.20) edge[bend left] node[above]{$\scriptstyle|-|$} (m-1-3.140)
(m-2-2.10) edge[bend left] node[above]{$\scriptstyle|-|$} (m-2-3.165)
(m-3-2.10) edge[bend left] node[above]{$\scriptstyle|-|$} (m-3-3.165)
(m-1-1) edge[bend left] node[right]{$\operatorname{Free}$} (m-2-1)
(m-1-2) edge[bend left] node[right]{$\operatorname{Free}$} (m-2-2)
(m-3-2) edge[bend right] node[right]{$\mathbb{L}$} (m-2-2)
(m-1-3) edge[bend left] node[right]{$\operatorname{Free}$} (m-2-3)
(m-3-3) edge[bend right] node[right]{$\mathbb{L}$} (m-2-3)
;
\end{tikzpicture}
\end{center}

We make some simple observations.

\begin{observation}\label{observations about free objects}
\ 
\begin{enumerate}
\item By uniqueness of left adjoints, we have
\begin{align*}
\operatorname{Free}(|-|)\simeq |\operatorname{Free}(-)|.
\end{align*}
\item By the explicit formula above, we see that taking free monoidal $1$-, respectively $\infty$-categories, commutes with the inclusion $\operatorname{Cat}\hookrightarrow \operatorname{Cat}_\infty$ (it suffices to observe that the nerve functor giving this inclusion preserves finite products and disjoint unions). In other words, the free monoidal $\infty$-category on a $1$-category $C$ is equivalent to the so-called \textit{strict free monoidal category} on $C$.\qedhere
\end{enumerate}
\end{observation}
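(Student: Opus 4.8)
The plan is to prove the two items separately: item (1) is pure formal nonsense (uniqueness of adjoints), and item (2) reduces to the explicit formula of \Cref{free monoid explicit} together with two elementary properties of the nerve.

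\textbf{Item (1).} I would work inside the diagram of adjunctions at the end of \S\ref{Free monoid objects} and isolate the square with corners $\operatorname{Cat}_\infty$, $\mathcal{S}$, $\operatorname{Mon}(\operatorname{Cat}_\infty)$, $\operatorname{Mon}(\mathcal{S})$, whose edges are the two copies of $|-|$ and the two copies of $\operatorname{Free}$; commutativity of this square is exactly the asserted equivalence $\operatorname{Free}(|-|)\simeq|\operatorname{Free}(-)|$. All four functors are left adjoints: $\operatorname{Free}$ is left adjoint to the forgetful functor $u$ (evaluation at $(1)^\pm$) by \cite[Example 3.1.3.6]{LurieHA}; $|-|\colon\operatorname{Cat}_\infty\rightarrow\mathcal{S}$ is left adjoint to the inclusion $\iota\colon\mathcal{S}\hookrightarrow\operatorname{Cat}_\infty$; and $|-|\colon\operatorname{Mon}(\operatorname{Cat}_\infty)\rightarrow\operatorname{Mon}(\mathcal{S})$, the levelwise realisation, is left adjoint to the induced inclusion $\operatorname{Mon}(\mathcal{S})\hookrightarrow\operatorname{Mon}(\operatorname{Cat}_\infty)$, since $|-|$ preserves finite products and hence the Segal condition. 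Now both composite left adjoints $\operatorname{Free}\circ|-|$ and $|-|\circ\operatorname{Free}$ from $\operatorname{Cat}_\infty$ to $\operatorname{Mon}(\mathcal{S})$ are left adjoint to one and the same functor $\operatorname{Mon}(\mathcal{S})\rightarrow\operatorname{Cat}_\infty$, namely $\iota\circ u\simeq u\circ\iota$: indeed on either side this sends a monoid object $X$ to the $\infty$-category underlying the space $X((1)^\pm)$, because evaluation at $(1)^\pm$ commutes with postcomposition by $\mathcal{S}\hookrightarrow\operatorname{Cat}_\infty$. By uniqueness of adjoints the two composites agree, which is item (1).

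\textbf{Item (2).} Here I would simply compute both sides via \Cref{free monoid explicit}. On one side, the strict free monoidal category on a small category $C$ has underlying category the ``category of words'' $\coprod_{k\geq 0}C^{\times k}$ with monoidal structure given by concatenation; under the bar construction of the Example in \S\ref{E1-monoids} it corresponds to the simplicial category $(n)^\pm\mapsto\coprod_{k_1,\ldots,k_n\geq 0}C^{k_1}\times\cdots\times C^{k_n}$, with face maps concatenating adjacent factors or forgetting the first or last, and degeneracies inserting a $k=0$ factor. On the other side, \Cref{free monoid explicit} applied to $\mathcal{E}=\operatorname{Cat}_\infty$ and $X=N(C)$ describes $\operatorname{Free}(N(C))$ by the simplicial $\infty$-category $(n)^\pm\mapsto\coprod_{k_1,\ldots,k_n\geq 0}N(C)^{k_1}\times\cdots\times N(C)^{k_n}$, with structure maps of the same shape. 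The nerve preserves finite products, $N(C\times D)=N(C)\times N(D)$, and disjoint unions, $N(\coprod_i C_i)=\coprod_i N(C_i)$ (since $[n]$ is connected, a functor $[n]\rightarrow\coprod_i C_i$ lands in one summand), so applying $N$ levelwise to the first simplicial object produces precisely the second, compatibly with all face and degeneracy maps. Hence $N$ carries the strict free monoidal category on $C$ to $\operatorname{Free}(N(C))$, which is the claim.

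\textbf{Main obstacle.} There is no genuinely hard step; what needs care is the bookkeeping of structure maps in item (2), and — if one prefers to derive the word-category description of the strict free monoidal category from \Cref{free monoid explicit} itself rather than cite it as classical — the observation that the $1$-category $\operatorname{Cat}$ is cartesian closed and cocomplete with the relevant (co)limits computed as in the associated $\infty$-category, so that the lemma applies there verbatim and yields the \emph{strict} (rather than some weaker) free monoidal category. Once this is in place both items are routine.
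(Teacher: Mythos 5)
Your proposal is correct and follows the same route the paper intends: item (1) is exactly the uniqueness-of-left-adjoints argument applied to the commuting square of right adjoints $\iota\circ u\simeq u\circ\iota$, and item (2) is exactly the levelwise comparison via \Cref{free monoid explicit} using that the nerve preserves finite products and disjoint unions. The paper records this as an observation without further proof, and your expansion supplies precisely the bookkeeping it leaves implicit.
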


\subsection{Non-unital monoid objects}\label{non-unital monoids}

We recall non-unital monoid objects and observe that the identification of the functor $\mathbb{L}$ generalises readily to the non-unital setting.

\begin{definition}
Let $\mathcal{C}$ be an $\infty$-category. A \textit{non-unital monoid object} in $\mathcal{C}$ is a semisimplicial object $X\colon \Delta_i^{\operatorname{op}}\rightarrow \mathcal{C}$ such that for every $n\geq 0$, the morphisms induced by the face maps $[1]\xrightarrow{\cong} \{i-1<i\}\hookrightarrow [n]$, $1\leq i\leq n$, exhibit $X([n])$ as a product
\begin{align*}
X([n])\xrightarrow{\ \simeq\ } X([1])^n.
\end{align*}
We denote by $\operatorname{Mon}^{\operatorname{nu}}(\mathcal{C})\subset \operatorname{Fun}(\Delta^{\operatorname{op}}_i,\mathcal{C})$ the full subcategory of non-unital monoid objects. Given $X\in \operatorname{Mon}^{\operatorname{nu}}(\mathcal{C})$, we call $X([1])$ the \textit{underlying object} of $X$.
\end{definition}

Analogously to the case of monoid objects, non-unital monoid objects in $\mathcal{C}$ are essentially the same as monoid objects over the non-unitary $\mathbb{E}_1$-operad $\mathbb{E}_1^{\operatorname{nu}}$ (see \cite[5.4.4.1]{LurieHA}). More precisely, by \cite[Corollary 4.1.2.14]{LurieHA}, we have an equivalence of $\infty$-categories
\begin{align*}
\operatorname{Mon}^{\operatorname{nu}}_{\mathbb{E}_1}(\mathcal{C})\xrightarrow{\ \simeq \ }\operatorname{Mon}^{\operatorname{nu}}(\mathcal{C}).
\end{align*}

\begin{remark} \ 
\begin{enumerate}
\item By abuse of terminology, we may sometimes refer to the underlying object $X([1])$ as a non-unital monoid object.
\item For $\mathcal{C}=\mathcal{S}$, the $\infty$-category of spaces, a non-unital monoid object is called a \textit{non-unital $\mathbb{E}_1$-space}.
\item For $\mathcal{C}=\operatorname{Cat}_\infty$, the $\infty$-category of small $\infty$-categories, a non-unital monoid object is called a \textit{non-unital monoidal $\infty$-category}.\qedhere
\end{enumerate}
\end{remark}

As in the case of (unital) monoidal $\infty$-categories, we'll be interested in the ones arising from $1$-categories as below.

\begin{example}
A non-unital monoidal $1$-category $M$ defines a non-unital monoidal $\infty$-category $M^\otimes$ via the bar construction:
\begin{align*}
[n]\mapsto M^n
\end{align*}
with the semisimplicial structure maps given by the monoidal structure. This defines a fully faithful functor
\begin{equation*}
\operatorname{Mon}^{\operatorname{nu}}(\operatorname{Cat})\hookrightarrow \operatorname{Mon}^{\operatorname{nu}}(\operatorname{Cat}_\infty).\qedhere
\end{equation*}
\end{example}

We denote the inclusion of monoid objects into the $\infty$-category of semisimplicial objects by
\begin{align*}
\mathbb{B}^{\operatorname{nu}}=\mathbb{B}^{\operatorname{nu}}_\mathcal{C}\colon \operatorname{Mon}^{\operatorname{nu}}(\mathcal{C})\hookrightarrow \operatorname{Fun}(\Delta_i^{\operatorname{op}},\mathcal{C}).
\end{align*}

Restriction along $i\colon \Delta_i^{\operatorname{op}}\hookrightarrow \Delta^{\operatorname{op}}$ defines a forgetful functor from monoid objects to non-unital monoid objects. It is straightforward to verify that left Kan extension along $i\colon \Delta_i^{\operatorname{op}}\hookrightarrow \Delta^{\operatorname{op}}$ defines a left adjoint $(-)^+$ to this, and that at the level of underlying objects it adds a disjoint basepoint: $X^+([1])=X([1])_+=X([1])\sqcup \ast$.

Evaluation at $1$ defines a forgetful functor $u^{\operatorname{nu}}\colon \operatorname{Mon}^{\operatorname{nu}}(\mathcal{C})\rightarrow \mathcal{C}$. This admits a left adjoint $\operatorname{Free}^{\operatorname{nu}}$ (\cite[Example 3.1.3.6]{LurieHA}), and we have $\operatorname{Free}\simeq (-)^+\circ \operatorname{Free}^{\operatorname{nu}}$ by uniqueness of left adjoints. If $\mathcal{C}$ is a presentable $\infty$-category, then $\mathbb{B}$ admits a left adjoint by the Adjoint Functor Theorem (\cite[Corollary 5.5.2.9]{LurieHTT}):
\begin{align*}
\mathbb{L}^{\operatorname{nu}}=\mathbb{L}^{\operatorname{nu}}_\mathcal{C}\colon \operatorname{Fun}(\Delta_i^{\operatorname{op}},\mathcal{C})\rightarrow \operatorname{Mon}^{\operatorname{nu}}(\mathcal{C}).
\end{align*}
Again by uniqueness of left adjoints, we see that $(-)^+\circ\mathbb{L}^{\operatorname{nu}}\simeq \mathbb{L}\circ \operatorname{Lan}_i(-)$. We record these adjunctions in the diagram below.

\begin{center}
\begin{tikzpicture}
\matrix (m) [matrix of math nodes,row sep=2em,column sep=2em,nodes={anchor=center}]
{
\mathcal{C} & \operatorname{Mon}^{\operatorname{nu}}(\mathcal{C}) & \operatorname{Fun}(\Delta_i^{\operatorname{op}},\mathcal{C}) \\
& \operatorname{Mon}(\mathcal{C}) & \operatorname{Fun}(\Delta^{\operatorname{op}},\mathcal{C}) \\
};
\path[-stealth]
(m-1-2) edge node[right, rotate=90]{$\scriptstyle\!\vdash$} (m-1-1)
(m-2-2) edge node[left, rotate=55]{$\scriptstyle\dashv$} (m-1-1)
(m-1-2) edge node[below]{$\scriptstyle\mathbb{B}^{\operatorname{nu}}$} node[right, rotate=90]{$\scriptstyle\vdash$} (m-1-3)
(m-1-2) edge[bend left] node[right]{$\scriptstyle(-)^+$} (m-2-2)
(m-2-2) edge node[above]{$\scriptstyle\mathbb{B}$} node[left,rotate=90]{$\scriptstyle\dashv$} (m-2-3)
(m-1-1) edge[bend left] node[above]{$\scriptstyle \operatorname{Free}^{\operatorname{nu}}$} (m-1-2)
(m-1-1) edge[bend right] node[left]{$\scriptstyle \operatorname{Free}$} (m-2-2)
(m-1-3) edge[bend left] node[right]{$\scriptstyle\operatorname{Lan}_i(-)$} (m-2-3)
(m-1-3) edge[bend right] node[above]{$\scriptstyle\mathbb{L}^{\operatorname{nu}}$} (m-1-2)
(m-2-3) edge[bend left] node[below]{$\scriptstyle\mathbb{L}$} (m-2-2)
(m-2-2) edge node[right]{$\scriptstyle\!\vdash$} (m-1-2)
(m-2-3) edge node[right]{$\scriptstyle\!\vdash$} (m-1-3)
;
\end{tikzpicture}
\end{center}

The above identifications and the work carried out in \S \ref{unravelling} allow us to easily identify the left adjoint $\mathbb{L}^{\operatorname{nu}}$. Consider the category $\mathcal{I}_\gg^\circ=\mathcal{I}_\gg\smallsetminus \emptyset_\emptyset$ obtained by removing the isolated object from $\mathcal{I}_\gg$.

\begin{proposition}\label{non-unital monoid over Igg}
Let $\mathcal{C}$ be a presentable $\infty$-category and let $X$ be a semisimplicial object in $\mathcal{C}$. Then the semisimplicial object $\mathbb{B}^{\operatorname{nu}}\mathbb{L}^{\operatorname{nu}}X$ is given by
\begin{align*}
(n)^\pm \mapsto \mathop{\operatorname{colim}}_{(\mathcal{I}^\circ_\gg)^n}\mathbf{X}_n\simeq \big(\mathop{\operatorname{colim}}_{\mathcal{I}^\circ_\gg}\mathbf{X}^{\times}\big)^n
\end{align*}
where $\mathbf{X}_n$ is as defined in \Cref{simplicial space out of J-category} and where the face maps are induced by the functors $d_i\colon (\mathcal{I}^\circ_\gg)^n\rightarrow (\mathcal{I}^\circ_\gg)^{n-1}$, $0<i<n$, concatenating the $i$'th and $(i+1)$'st factors:
\begin{align*}
,\qquad (I^1_{P^1},\ldots,I^n_{P^n})\mapsto (I^1_{P^1},\ldots,I^{i-1}_{P^{i-1}},\, I^i_{P^i}I^{i+1}_{P^{i+1}}\, , I^{i+2}_{P^{i+2}},\ldots,I^n_{P^n}),
\end{align*}
and $d_0,d_n\colon (\mathcal{I}_\gg^\circ)^n\rightarrow (\mathcal{I}_\gg^\circ)^{n-1}$ forgetting the factor $I^1_{P^1}$, respectively $I^n_{P^n}$.
\end{proposition}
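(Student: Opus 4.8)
The plan is to deduce this from the unital case via the identity $(-)^+\circ\mathbb{L}^{\operatorname{nu}}\simeq\mathbb{L}\circ\operatorname{Lan}_i(-)$ recorded just above. Applying $\mathbb{B}$ and invoking \Cref{monoid over Igg} with $Y=X$, we get that the simplicial object $\mathbb{B}\big((\mathbb{L}^{\operatorname{nu}}X)^+\big)\simeq\mathbb{B}\mathbb{L}(\operatorname{Lan}_i X)$ is given levelwise by $(n)^\pm\mapsto\operatorname{colim}_{\mathcal{I}_\gg^n}\mathbf{X}_n$, with face maps given by concatenation / forgetting an outer factor and degeneracy maps given by insertion of $\emptyset_\emptyset$.

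Next I would analyse the isolated object. Since $\emptyset_\emptyset$ is isolated in $\mathcal{I}_\gg$, there is a decomposition $\mathcal{I}_\gg=\mathcal{I}_\gg^\circ\sqcup\{\emptyset_\emptyset\}$ as categories, and $\mathbf{X}(\emptyset_\emptyset)$ is the empty product, i.e.\ the terminal object $\ast$; hence $\operatorname{colim}_{\mathcal{I}_\gg}\mathbf{X}\simeq\big(\operatorname{colim}_{\mathcal{I}_\gg^\circ}\mathbf{X}\big)_+$. Moreover this disjoint basepoint is exactly the unit of the monoid $(\mathbb{L}^{\operatorname{nu}}X)^+$: the unit is the degeneracy $s_0\colon\ast\to\operatorname{colim}_{\mathcal{I}_\gg}\mathbf{X}$, which by the formula in \Cref{monoid over Igg} is induced by the functor inserting $\emptyset_\emptyset$, so it picks out precisely the $\emptyset_\emptyset$-summand. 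Since $(-)^+$ adds a disjoint basepoint at the level of underlying objects, deleting it recovers the underlying object of $\mathbb{L}^{\operatorname{nu}}X$, namely $\mathbb{L}^{\operatorname{nu}}X([1])\simeq\operatorname{colim}_{\mathcal{I}_\gg^\circ}\mathbf{X}$. By the Segal condition for the non-unital monoid object $\mathbb{B}^{\operatorname{nu}}\mathbb{L}^{\operatorname{nu}}X$ it then follows that $\mathbb{B}^{\operatorname{nu}}\mathbb{L}^{\operatorname{nu}}X([n])\simeq\big(\operatorname{colim}_{\mathcal{I}_\gg^\circ}\mathbf{X}\big)^n\simeq\operatorname{colim}_{(\mathcal{I}_\gg^\circ)^n}\mathbf{X}_n$, the last equivalence by pulling the product out of the colimit exactly as in \Cref{monoid over Igg}; equivalently, one identifies $\operatorname{colim}_{(\mathcal{I}_\gg^\circ)^n}\mathbf{X}_n$ with the summand of $\operatorname{colim}_{\mathcal{I}_\gg^n}\mathbf{X}_n$ on which no factor equals $\emptyset_\emptyset$, which under the distributivity of products over coproducts matches the summand of $\big(\mathbb{L}^{\operatorname{nu}}X([1])_+\big)^n$ containing no adjoined point.

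It remains to identify the structure maps. Each face functor $d_i$ of \Cref{monoid over Igg} preserves the subcategory $(\mathcal{I}_\gg^\circ)^\bullet\hookrightarrow\mathcal{I}_\gg^\bullet$: concatenating two objects of $\mathcal{I}_\gg^\circ$ — which necessarily have non-empty underlying ordered set, as $\emptyset_\emptyset$ is the only object of $\mathcal{I}$ with empty source — again lies in $\mathcal{I}_\gg^\circ$, and forgetting an outer factor of a tuple with no $\emptyset_\emptyset$ entry leaves such a tuple; whereas the degeneracy functors $s_i$ insert $\emptyset_\emptyset$ and hence do not restrict, consistently with the absence of degeneracies in the semisimplicial setting. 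Since the unit map $\mathbb{L}^{\operatorname{nu}}X\to i^*\big((\mathbb{L}^{\operatorname{nu}}X)^+\big)$ is a morphism of semisimplicial objects exhibiting $\mathbb{B}^{\operatorname{nu}}\mathbb{L}^{\operatorname{nu}}X$ in each degree as the $\emptyset_\emptyset$-free summand of $i^*\mathbb{B}\mathbb{L}(\operatorname{Lan}_i X)$, the face maps of $\mathbb{B}^{\operatorname{nu}}\mathbb{L}^{\operatorname{nu}}X$ are precisely the restrictions along $(\mathcal{I}_\gg^\circ)^\bullet\hookrightarrow\mathcal{I}_\gg^\bullet$ of those of \Cref{monoid over Igg}, as claimed.

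The main obstacle is that one cannot simply ``cancel'' $(-)^+$, since it is not fully faithful; the real work is in tracking the disjoint basepoint through the explicit colimit formula — recognising it simultaneously as the $\emptyset_\emptyset$-summand, as the image of the unit, and as the image of all degeneracies — so that removing the isolated object $\emptyset_\emptyset$ from $\mathcal{I}_\gg$ corresponds exactly to removing the adjoined unit, both on underlying objects and on structure maps.
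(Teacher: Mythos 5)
Your proposal is correct and follows essentially the same route as the paper: deduce the statement from \Cref{monoid over Igg} via the identity $(-)^+\circ\mathbb{L}^{\operatorname{nu}}\simeq\mathbb{L}\circ\operatorname{Lan}_i(-)$, identify the isolated object $\emptyset_\emptyset$ (whose value is the empty product $\ast$) with the adjoined basepoint/unit, and observe that the equivalence of unitalisations therefore restricts to an equivalence of non-unital monoid objects. The paper's proof is terser, but your additional bookkeeping (the Segal-condition step and the check that the face functors preserve $(\mathcal{I}_\gg^\circ)^n$) is exactly what the paper's phrase ``this equivalence restricts'' is implicitly relying on.
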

\begin{proof}
Denoting temporarily by $\mathbb{A}(X)$ the non-unital monoid in the statement, it follows from \Cref{monoid over Igg} that we have an equivalence
\begin{align*}
(\mathbb{L}^{\operatorname{nu}}X)^+ \simeq \mathbb{L}(\operatorname{Lan}_i(X))\simeq \mathbb{A}(X)^+,
\end{align*}
which at the level of underlying spaces identifies
\begin{align*}
(\mathbb{L}^{\operatorname{nu}}X)([1])_+\simeq \mathop{\operatorname{colim}}_{\mathcal{I}_\gg^\circ} \mathbf{X} \sqcup \mathop{\operatorname{colim}}_{\emptyset_\emptyset}\ast \simeq (\mathop{\operatorname{colim}}_{\mathcal{I}_\gg^\circ} \mathbf{X})_+.
\end{align*}
As this map preserves the added basepoints, this equivalence restricts to an equivalence
\begin{align*}
\mathbb{L}^{\operatorname{nu}}X\simeq \mathbb{A}(X)
\end{align*}
of non-unital monoid objects as desired.
\end{proof}

\begin{remark}
The arguments of \S \ref{unravelling} should go through for non-unital monoids, identifying a left adjoint $\mathbb{L}^{\operatorname{nu}}$ for any cartesian closed cocomplete $\infty$-category $\mathcal{E}$, and the identification above would be completely general. We refrain from writing this out in detail.
\end{remark}

Applying the same arguments to the equivalence $\operatorname{Free}\simeq (-)^+\circ \operatorname{Free}^{\operatorname{nu}}$ and using the identification of \Cref{free monoid explicit}, we get a concrete description of free \textit{non-unital} monoid objects.

\begin{corollary}\label{free non-unital monoid objects}
Let $\mathcal{E}$ be a cartesian closed cocomplete $\infty$-category and let $X$ be an object in $\mathcal{E}$. The free non-unital monoid object $\operatorname{Free}^{\operatorname{nu}}(X)$ is given by the semisimplicial object
\begin{align*}
(n)^\pm\mapsto \coprod_{k_1,\ldots,k_n> 0} X^{k_1}\times \cdots \times X^{k_n}
\end{align*}
with face maps given by concatenation $X^k\times X^m\rightarrow X^{k+m}$ or forgetting the first or last factor.
\end{corollary}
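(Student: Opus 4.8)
The plan is to deduce this directly from \Cref{free monoid explicit} together with the identification $\operatorname{Free}\simeq(-)^+\circ\operatorname{Free}^{\operatorname{nu}}$ recorded above, mirroring the proof of \Cref{non-unital monoid over Igg}. Write $\mathbb{A}(X)$ for the candidate semisimplicial object $(n)^\pm\mapsto\coprod_{k_1,\ldots,k_n>0}X^{k_1}\times\cdots\times X^{k_n}$ with face maps as in the statement. The first step is to observe that $\mathbb{A}(X)$ really is a non-unital monoid object: since $\mathcal{E}$ is cartesian closed, finite products distribute over coproducts, so the Segal maps $\mathbb{A}(X)((n)^\pm)\to\mathbb{A}(X)((1)^\pm)^n=\bigl(\coprod_{k>0}X^k\bigr)^n$ are equivalences.

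Next I would identify $\mathbb{A}(X)^+=\operatorname{Lan}_i\mathbb{A}(X)$ with $\operatorname{Free}(X)$. There is an evident map of semisimplicial objects $\mathbb{A}(X)\hookrightarrow\operatorname{Free}(X)\circ i$, namely the inclusion of the summands with all $k_j>0$, which is compatible with the face maps (concatenation and forgetting the outer factors). By the recognition principle in the observation preceding \Cref{examples: picking out the non-degenerate simplices}, the induced comparison map $\operatorname{Lan}_i\mathbb{A}(X)\to\operatorname{Free}(X)$ is an equivalence provided that, for every $n$, the injections $(m)^\pm\hookrightarrow(n)^\pm$ exhibit $\operatorname{Free}(X)((n)^\pm)=\coprod_{k_1,\ldots,k_n\geq 0}X^{k_1}\times\cdots\times X^{k_n}$ as $\coprod_{(m)^\pm\hookrightarrow(n)^\pm}\mathbb{A}(X)((m)^\pm)$. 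This is the elementary combinatorial fact (the same one underlying \Cref{examples: picking out the non-degenerate simplices}(1)) that each tuple $(k_1,\ldots,k_n)$ of non-negative integers arises uniquely by inserting the slots equal to $0$ into the tuple of its positive entries --- i.e.\ is uniquely a degeneracy of a tuple counted by $\mathbb{A}(X)$ --- the $X^0=\ast$ factors contributing nothing to the product. Hence $\operatorname{Free}(X)\simeq\mathbb{A}(X)^+$.

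Finally, combining this with $\operatorname{Free}(X)\simeq\operatorname{Free}^{\operatorname{nu}}(X)^+$ yields $\operatorname{Free}^{\operatorname{nu}}(X)^+\simeq\mathbb{A}(X)^+$ as monoid objects in $\mathcal{E}$. Since $(-)^+$ is (the restriction to monoid objects of) left Kan extension along the fully faithful inclusion $i$, it is itself fully faithful; equivalently, as in the proof of \Cref{non-unital monoid over Igg}, one checks that the equivalence preserves the adjoined basepoints at the level of underlying objects. Either way it restricts to the desired equivalence $\operatorname{Free}^{\operatorname{nu}}(X)\simeq\mathbb{A}(X)$ of non-unital monoid objects, and under it the inner face maps are the multiplication and the outer ones forget the first or last factor, obtained by restricting the corresponding structure maps of $\operatorname{Free}(X)$, which by \Cref{free monoid explicit} are concatenation $X^k\times X^m\to X^{k+m}$ and forgetting the outer factor. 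I do not expect a genuine obstacle; the only points needing care are the combinatorial identification of the $n$-simplices of $\operatorname{Free}(X)$ as degeneracies of those of $\mathbb{A}(X)$, and the bookkeeping that transports the face-map description back down along $(-)^+$.
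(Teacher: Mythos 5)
Your argument is correct and follows essentially the same route as the paper, whose proof consists of the single remark that one should apply the arguments of \Cref{non-unital monoid over Igg} to the equivalence $\operatorname{Free}\simeq(-)^+\circ\operatorname{Free}^{\operatorname{nu}}$ together with the identification of \Cref{free monoid explicit}. Your use of the recognition principle to see $\operatorname{Lan}_i\mathbb{A}(X)\simeq\operatorname{Free}(X)$ and the basepoint-preservation (equivalently, full-faithfulness of $(-)^+$) argument to descend to non-unital monoids are exactly the details the paper leaves implicit, so nothing further is needed.
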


\section{Reductive Borel--Serre categories}

In \cite{ClausenOrsnesJansen}, the \textit{reductive Borel--Serre categories} were introduced as a model for unstable algebraic K-theory of associative rings. These are explicitly defined $1$-categories encoding splittable flags in a given finitely generated projective module. The reductive Borel--Serre categories assemble to a monoidal category whose geometric realisation is an $\mathbb{E}_1$-space that group completes to the algebraic K-theory space. We recall these objects now. More specifically, we introduce the \textit{monoidal} reductive Borel--Serre category associated to a category with filtrations and we define the \textit{non-monoidal} reductive Borel--Serre categories as the components of these. We also introduce a ``fat'' monoidal Borel--Serre category which gives us some more flexibility when working with the realisations.

\subsection{The monoidal reductive Borel--Serre category}\label{reductive borel serre categories}

We introduce the monoidal and non-monoidal reductive Borel--Serre (or simply $\operatorname{RBS}$) categories and remark on the geometric origins of these objects. These are defined for any \textit{category with filtrations}, a notion that is slightly more general than exact categories. Essentially, a category with filtrations is just a category with a well-behaved collection of short exact sequences, but we importantly do not require it to be additive. Thus one can think of these as ``exact categories without a biproduct''; examples include exact categories but also the category of vector spaces of rank at most $n$ with the usual short exact sequences. The latter example is exactly the reason for us to work in this generality as we'll see in \S \ref{monoidal rank filtration}. We refer to \cite[\S 7.1]{ClausenOrsnesJansen} for details.

Now we introduce the monoidal reductive Borel--Serre categories. These were originally defined in \cite[\S 7]{ClausenOrsnesJansen} where they are called \textit{monoidal categories of flags and associated gradeds}.

\begin{definition}\label{monoidal category of flags and associated gradeds}
Let $\mathcal{C}$ be a category with filtrations. Let $\mathscr{M}_{\operatorname{RBS}}(\mathcal{C})$ be the category whose objects are finite ordered lists of non-zero objects in $\mathcal{C}$ (including the empty list $\emptyset$). A morphism
\begin{align*}
(m_i)_{i\in I}\rightarrow (n_j)_{j\in J}
\end{align*}
consists of the following data:
\begin{itemize}
\item a surjective order preserving map $\theta\colon I\rightarrow J$;
\item for each $j\in J$, a $\theta^{-1}(j)$-indexed flag in $n_j$ with an identification of the associated graded with the (ordered) sublist $(m_i)_{i\in \theta^{-1}(j)}$.
\end{itemize}
Concatenation of lists defines a strict monoidal product on $\mathscr{M}_{\operatorname{RBS}}(\mathcal{C})$ with unit the empty list; we denote this operation by $\circledast$.

The full subcategory $\mathscr{M}_{\operatorname{RBS}}^{\operatorname{nu}}(\mathcal{C})\subseteq \mathscr{M}_{\operatorname{RBS}}(\mathcal{C})$ on non-empty lists is a strict \textit{non-unital} monoidal category. We call $\mathscr{M}_{\operatorname{RBS}}(\mathcal{C})$ the \textit{(unital) monoidal $\operatorname{RBS}$-category} of $\mathcal{C}$, and $\mathscr{M}_{\operatorname{RBS}}^{\operatorname{nu}}(\mathcal{C})$) the \textit{non-unital monoidal $\operatorname{RBS}$-category} of $\mathcal{C}$.
\end{definition}

\begin{remark}
We will not describe composition of morphisms in full detail here, but it relies on a generalisation of the following statement for
vector spaces: for a surjective order preserving map $\theta\colon  I \rightarrow J$ of finite linearly ordered sets,
specifying a $J$-indexed filtration $\{V_j\}_{j\in J}$ of a vector space $V$ together with a $\theta^{-1}(j)$-indexed filtration of each cokernel $V_j/V_{j-1}$ is equivalent to specifying an $I$-indexed filtration of $V$. This observation enables us to ``merge'' flags and this defines composition in $\mathscr{M}_{\operatorname{RBS}}(\mathcal{C})$ (see \cite[Construction 7.14 and Definition 7.7]{ClausenOrsnesJansen}).
\end{remark}

\begin{observation}
Observe that the unital monoidal $\operatorname{RBS}$-category identifies with the unitalisation of the non-unital monoidal $\operatorname{RBS}$-category:
\begin{equation*}
\mathscr{M}_{\operatorname{RBS}}(\mathcal{C})\simeq (\mathscr{M}_{\operatorname{RBS}}^{\operatorname{nu}}(\mathcal{C}))^+.\qedhere
\end{equation*}
\end{observation}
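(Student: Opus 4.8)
The plan is to recognise $\mathscr{M}_{\operatorname{RBS}}(\mathcal{C})$ directly as the unitalisation of $\mathscr{M}_{\operatorname{RBS}}^{\operatorname{nu}}(\mathcal{C})$. Recall from \S\ref{non-unital monoids} that $(-)^+$ is left adjoint to the forgetful functor $\operatorname{Mon}(\operatorname{Cat})\to \operatorname{Mon}^{\operatorname{nu}}(\operatorname{Cat})$, computed by left Kan extension along $\Delta_i^{\operatorname{op}}\hookrightarrow \Delta^{\operatorname{op}}$; concretely, for a non-unital monoidal $1$-category $M$ the category underlying $M^+$ is the disjoint union $M\sqcup \ast$, with the new object $\ast$ an isolated object promoted to a strict unit for the extended product, and the bar construction of $M^+$ is $[n]\mapsto (M\sqcup \ast)^n$. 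First I would note that the full subcategory $\mathscr{M}_{\operatorname{RBS}}^{\operatorname{nu}}(\mathcal{C})\subseteq \mathscr{M}_{\operatorname{RBS}}(\mathcal{C})$ on non-empty lists is closed under $\circledast$, so the inclusion is a (non-unital) strict monoidal functor; by the universal property of $(-)^+$ it corresponds to a strict monoidal functor $\Phi\colon (\mathscr{M}_{\operatorname{RBS}}^{\operatorname{nu}}(\mathcal{C}))^+\to \mathscr{M}_{\operatorname{RBS}}(\mathcal{C})$, and it remains to show $\Phi$ is an equivalence.

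The argument then rests on two elementary checks concerning the empty list $\emptyset\in \mathscr{M}_{\operatorname{RBS}}(\mathcal{C})$. First, $\emptyset$ is a strict unit for $\circledast$, which is immediate from the definition of concatenation. Second, $\emptyset$ is an isolated object: a morphism $(m_i)_{i\in I}\to (n_j)_{j\in J}$ carries the datum of a surjective order-preserving map $\theta\colon I\to J$, and a finite set surjects onto $\emptyset$, or admits a surjection from $\emptyset$, only when it is itself empty; so the only morphism with source or target $\emptyset$ is $\operatorname{id}_\emptyset$. Hence the category underlying $\mathscr{M}_{\operatorname{RBS}}(\mathcal{C})$ is $\mathscr{M}_{\operatorname{RBS}}^{\operatorname{nu}}(\mathcal{C})\sqcup \{\emptyset\}$ with $\emptyset$ isolated, which is exactly the category underlying $(\mathscr{M}_{\operatorname{RBS}}^{\operatorname{nu}}(\mathcal{C}))^+$, and $\Phi$ is seen to be the identity on this common underlying category; since it is also strictly monoidal, sending the adjoined unit $\ast$ to the unit $\emptyset$, it is an equivalence of monoidal $1$-categories.

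Finally, to upgrade to an equivalence of monoidal $\infty$-categories one passes through the fully faithful functor $\operatorname{Mon}(\operatorname{Cat})\hookrightarrow \operatorname{Mon}(\operatorname{Cat}_\infty)$: as recorded in \Cref{observations about free objects}, the nerve preserves finite products and disjoint unions, so the bar construction $[n]\mapsto \mathscr{M}_{\operatorname{RBS}}(\mathcal{C})^n$ and the left Kan extension formula $[n]\mapsto (\mathscr{M}_{\operatorname{RBS}}^{\operatorname{nu}}(\mathcal{C})\sqcup \ast)^n$ agree after applying the nerve. I do not expect any real obstacle here: the statement is a direct unwinding of the definition of $(-)^+$ together with the observation that the empty list is an isolated strict unit; the only point that needs (minimal) care is that $\Phi$ respects composition, but this is vacuous at $\emptyset$ since it admits no non-identity morphisms.
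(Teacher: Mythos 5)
Your proof is correct and is exactly the unwinding the paper has in mind (the paper states this as an Observation with no written proof): the empty list is a strict unit for $\circledast$, and since morphisms require a \emph{surjective} order-preserving map of index sets, $\emptyset$ is isolated, so $\mathscr{M}_{\operatorname{RBS}}(\mathcal{C})=\mathscr{M}_{\operatorname{RBS}}^{\operatorname{nu}}(\mathcal{C})\sqcup\{\emptyset\}$ matches the description of $(-)^+$ as adjoining a disjoint strict unit. The passage to $\operatorname{Mon}(\operatorname{Cat}_\infty)$ via the nerve is also handled correctly.
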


\begin{notation}\label{general RBS-categories}
Let $\mathcal{C}$ be a category with filtrations. For an object $c$ in $\mathcal{C}$, we write
\begin{align*}
\operatorname{RBS}(c)\subseteq \mathscr{M}_{\operatorname{RBS}}^{\operatorname{nu}}(\mathcal{C})
\end{align*}
for the full subcategory spanned by objects admitting a map the one-object list $(c)$. For $c=0$, we replace $(c)$ by the empty list so that $\operatorname{RBS}(0)$ is just the terminal category on the object $\emptyset$. Let $\operatorname{BGL}(c)\subseteq \operatorname{RBS}(c)$ denote the maximal subgroupoid spanned by objects isomorphic to $(c)$.
Finally, write
\begin{align*}
\partial\operatorname{RBS}(c):=\operatorname{RBS}(c)\smallsetminus \operatorname{BGL}(c)
\end{align*}

We call $\operatorname{RBS}(c)$ the \textit{$\operatorname{RBS}$-category of $c$} (in $\mathcal{C}$). We add a subscript if we want to stress the ambient category with filtrations, $\operatorname{RBS}_\mathcal{C}(c)$.
\end{notation}

\begin{observation}\label{decomposition of monoidal RBS}
It is a simple observation that for an \textit{exact} category $\mathcal{C}$, the monoidal $\operatorname{RBS}$-category decomposes as a disjoint union:
\begin{align*}
\mathscr{M}_{\operatorname{RBS}}(\mathcal{C})\simeq \coprod_{c\in C}\operatorname{RBS}(c)
\end{align*}
where $c$ runs through a set $C$ of representatives of isomorphism classes in $\mathcal{C}$. Simply note that an object $(c_1,\ldots,c_n)$ in $\mathscr{M}_{\operatorname{RBS}}(\mathcal{C})$ maps (canonically) to the object given by taking the direct sum of the individual objects, $(c_1\oplus \cdots \oplus c_n)$. Hence, the realisation, $|\mathscr{M}_{\operatorname{RBS}}(\mathcal{C})|$, decomposes into components indexed by (isomorphism classes of) the single object lists and the empty list.

The non-unital monoidal $\operatorname{RBS}$-category decomposes analogously by simply removing the empty list:
\begin{equation*}
\mathscr{M}_{\operatorname{RBS}}^{\operatorname{nu}}(\mathcal{C})\simeq \coprod_{c\in C\smallsetminus 0}\operatorname{RBS}(c). \qedhere
\end{equation*}
\end{observation}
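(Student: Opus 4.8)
The plan is to show that the canonical functor $\coprod_{c\in C}\operatorname{RBS}(c)\to \mathscr{M}_{\operatorname{RBS}}(\mathcal{C})$ (induced by the full inclusions $\operatorname{RBS}(c)\hookrightarrow\mathscr{M}_{\operatorname{RBS}}(\mathcal{C})$) is an isomorphism of categories, by checking that the full subcategories $\operatorname{RBS}(c)$, $c\in C$, partition both the objects and the morphisms of $\mathscr{M}_{\operatorname{RBS}}(\mathcal{C})$.

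First I would pin down which component a given object lies in. For a list $(c_1,\dots,c_n)$ the split flag $c_1\hookrightarrow c_1\oplus c_2\hookrightarrow\cdots\hookrightarrow c_1\oplus\cdots\oplus c_n$, together with the identity identification of its associated graded with $(c_1,\dots,c_n)$, is a morphism $(c_1,\dots,c_n)\to(c_1\oplus\cdots\oplus c_n)$ in $\mathscr{M}_{\operatorname{RBS}}(\mathcal{C})$ (here we use that $\mathcal{C}$ is additive so that the direct sums exist and are non-zero when the $c_i$ are), so $(c_1,\dots,c_n)$ lies in $\operatorname{RBS}(c)$ for the element $c\in C$ with $c\cong c_1\oplus\cdots\oplus c_n$. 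Conversely, if $(c_1,\dots,c_n)$ admits a map to a one-object list $(c)$, then $c$ carries a splittable $n$-step flag with associated graded $(c_1,\dots,c_n)$, and a splitting of this flag exhibits an isomorphism $c\cong c_1\oplus\cdots\oplus c_n$. Hence every object of $\mathscr{M}_{\operatorname{RBS}}(\mathcal{C})$ lies in exactly one $\operatorname{RBS}(c)$, $c\in C$ — the one with $c\cong c_1\oplus\cdots\oplus c_n$ — and the empty list lies precisely in $\operatorname{RBS}(0)$.

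Next I would rule out morphisms between distinct components. A morphism $(m_i)_{i\in I}\to(n_j)_{j\in J}$ consists of a surjective order preserving map $\theta\colon I\to J$ and, for each $j\in J$, a splittable flag in $n_j$ with associated graded the sublist $(m_i)_{i\in\theta^{-1}(j)}$; splitting these flags gives $n_j\cong\bigoplus_{i\in\theta^{-1}(j)}m_i$, and since the fibres of $\theta$ partition $I$ we obtain $\bigoplus_{j\in J}n_j\cong\bigoplus_{i\in I} m_i$. By the previous paragraph this means the source and target of any morphism lie in the same $\operatorname{RBS}(c)$. Thus the $\operatorname{RBS}(c)$, $c\in C$, are pairwise disjoint full subcategories with no morphisms between them, whence $\mathscr{M}_{\operatorname{RBS}}(\mathcal{C})=\coprod_{c\in C}\operatorname{RBS}(c)$. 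The non-unital statement follows immediately: the empty list is the unique object of $\operatorname{RBS}(0)$ (a morphism into the empty list has empty target, forcing empty source), while every non-empty list $(c_1,\dots,c_n)$ has $c_1\oplus\cdots\oplus c_n\neq 0$ since the $c_i$ are non-zero, so passing to non-empty lists deletes exactly the component $\operatorname{RBS}(0)$.

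The argument is essentially bookkeeping, and the only point that deserves care is the use of \emph{splittable} flags in the definition of $\mathscr{M}_{\operatorname{RBS}}(\mathcal{C})$: this is precisely what guarantees that the associated graded of a flag in $c$ determines $c$ up to isomorphism, and without it the components would overlap. In writing this out I would refer to \cite[\S 7.1]{ClausenOrsnesJansen} for the precise definition of morphisms (in particular the composition via merging of splittable flags) to confirm that composites remain within a single component, but no genuinely new input is required.
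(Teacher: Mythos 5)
Your proof is correct and takes essentially the same route as the paper, whose entire justification is the one-line remark that every list $(c_1,\dots,c_n)$ maps canonically to the single-object list $(c_1\oplus\cdots\oplus c_n)$. Your explicit verification that the subcategories $\operatorname{RBS}(c)$ partition the objects and admit no cross-morphisms — resting on the splittability of the flags, so that the associated graded determines the ambient object up to isomorphism — is precisely the content the paper's ``simple observation'' leaves implicit, and you correctly identify it as the only point requiring care.
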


If $\mathcal{C}=\mathcal{P}(R)$ is the exact category of finitely generated projective modules over a ring $R$, then the categories above recover the \textit{reductive Borel--Serre categories} as defined in \cite[\S 5]{ClausenOrsnesJansen} that we now recall.

\begin{notation}
Let $A$ be an associative ring and $M$ a finitely generated projective $A$-module.
\begin{enumerate}
\item A \textit{splittable flag} in $M$ is a chain of inclusions of submodules of $M$,
\begin{align*}
\mathcal{F}= (M_1\subseteq M_2\subseteq \cdots \subseteq M_{d-1}),
\end{align*}
such that each subquotient $M_i/M_{i-1}$ is non-zero and projective (here we set $M_0=0$ and $M_d=M$).
\item The automorphism group $\operatorname{GL}(M)$ acts on the splittable flags by acting on each submodule: for $g\in \operatorname{GL}(M)$ and $\mathcal{F}$ as above,
\begin{align*}
g\mathcal{F}= (gM_1\subseteq gM_2\subseteq \cdots \subseteq gM_{d-1}).
\end{align*}
\item For a splittable flag $\mathcal{F}$ as above, we denote by $\mathcal{P}_{\mathcal{F}}\subset \operatorname{GL}(M)$ the stabiliser of $\mathcal{F}$, i.e. the subgroup of $g$ such that $gM_i=M_i$ for all $i$.
\item For $\mathcal{F}$ as above, we denote by $\mathcal{U}_{\mathcal{F}}\subseteq \mathcal{P}_{\mathcal{F}}$ the subgroup of elements inducing the identity on the associated graded, i.e. on each $M_i/M_{i-1}$.
\item Let $\mathcal{F}=(M_1\subseteq M_2\subseteq \cdots \subseteq M_{d-1})$ and $\mathcal{G}=(N_1\subseteq N_2\subseteq \cdots \subseteq N_{e-1})$ be splittable flags in $M$. We say that $\mathcal{F}$ \textit{refines} $\mathcal{G}$, and write $\mathcal{F}\leq \mathcal{G}$, if the submodules making up $\mathcal{G}$ form a subset of those making up $\mathcal{F}$:
\begin{equation*}
\{N_1,\ldots, N_{e-1}\}\subseteq\{M_1,\ldots,M_{d-1}\}.\qedhere
\end{equation*}
\end{enumerate}
\end{notation}

\begin{definition}\label{reductive borel-serre categories}
Let $A$ be an associative ring and $M$ a finitely generated projective $A$-module. Define a category $\operatorname{RBS}'(M)$ whose objects are splittable flags in $M$ and whose set of morphisms $\mathcal{F}\rightarrow \mathcal{G}$ is
\begin{align*}
\{g\in \operatorname{GL}(M)\mid g\mathcal{F}\leq \mathcal{G}\}/\mathcal{U}_{\mathcal{F}}.
\end{align*}
Composition is given by multiplication in $\operatorname{GL}(M)$ (this is well-defined because $\mathcal{F}\leq \mathcal{G}$ implies $\mathcal{U}_\mathcal{G}\subseteq \mathcal{U}_\mathcal{F}$).
\end{definition}

\begin{observation}\label{compare flag and list version of RBS}
The functor that sends a flag $\mathcal{F}$ to its associated graded $\operatorname{gr}(\mathcal{F})$ defines an equivalence of categories
\begin{align*}
\operatorname{RBS}'(M)\xrightarrow{\simeq} \operatorname{RBS}(M)
\end{align*}
where the left hand side is the category defined above and the right hand side is the category defined in \Cref{general RBS-categories} (see also \cite[Remark 7.18]{ClausenOrsnesJansen}). We will in this paper generally work with the list-incarnation (\Cref{general RBS-categories}) rather than the flag-incarnation of \Cref{reductive borel-serre categories}.

If $\mathcal{C}=\mathcal{F}(R)$ is the exact of finitely generated free modules over $R$, then 
\begin{align*}
\operatorname{RBS}_{\mathcal{F}(R)}(M)\subseteq \operatorname{RBS}(M)\simeq \operatorname{RBS}'(M)
\end{align*}
identifies with the full subcategory spanned by the \textit{free} splittable flags; that is, all submodules \textit{and} quotients must be free.
\end{observation}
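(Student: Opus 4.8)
The plan is to verify directly that the associated-graded functor $\operatorname{gr}\colon \operatorname{RBS}'(M)\to \operatorname{RBS}(M)$ is essentially surjective and fully faithful, and then to check that it respects the two composition laws. First I would make the functor explicit. On objects it sends a splittable flag $\mathcal{F}=(0=M_0\subset M_1\subset\cdots\subset M_d=M)$ to the list $(M_i/M_{i-1})_{i=1}^d$. On a morphism represented by $g\in \operatorname{GL}(M)$ with $g\mathcal{F}\leq \mathcal{G}$, the refinement $\{N_1,\ldots,N_{e-1}\}\subseteq\{gM_1,\ldots,gM_{d-1}\}$ determines a surjective order-preserving map $\theta\colon \{1,\ldots,d\}\to\{1,\ldots,e\}$ recording which step of $\mathcal{G}$ each step of $g\mathcal{F}$ refines; for each $j$, the images of the $gM_i$ with $i\in\theta^{-1}(j)$ form a flag in $N_j/N_{j-1}$ whose associated graded is identified, via the isomorphisms $M_i/M_{i-1}\xrightarrow{g}gM_i/gM_{i-1}$, with $(M_i/M_{i-1})_{i\in\theta^{-1}(j)}$. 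This is exactly the data of a morphism $\operatorname{gr}(\mathcal{F})\to\operatorname{gr}(\mathcal{G})$ in $\operatorname{RBS}(M)$.

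Essential surjectivity is almost tautological: by \Cref{general RBS-categories} an object of $\operatorname{RBS}(M)$ is a list admitting a morphism to $(M)$, and such a morphism is precisely a splittable flag in $M$ with the given associated graded, so every object lies in the image of $\operatorname{gr}$. For full faithfulness I would analyse the map $\{g:g\mathcal{F}\leq\mathcal{G}\}/\mathcal{U}_\mathcal{F}\to \operatorname{Hom}_{\operatorname{RBS}(M)}(\operatorname{gr}\mathcal{F},\operatorname{gr}\mathcal{G})$ just described. It is well-defined modulo $\mathcal{U}_\mathcal{F}$ since elements of $\mathcal{U}_\mathcal{F}$ fix every $M_i$ and act trivially on every subquotient, leaving $\theta$, the submodules $gM_i$, and the induced identifications unchanged. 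Injectivity follows because the flag data recover each $gM_i$ (as the preimage in $N_j$ of its image in $N_j/N_{j-1}$), so if $g,g'$ produce the same data then $gM_i=g'M_i$ for all $i$ (hence $g^{-1}g'\in\mathcal{P}_\mathcal{F}$) and the subquotient identifications agree (hence $g^{-1}g'$ acts as the identity on each $M_i/M_{i-1}$), whence $g^{-1}g'\in\mathcal{U}_\mathcal{F}$.

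The crux is surjectivity (fullness). Given a morphism $(\theta,\{\text{flags in }N_j/N_{j-1}\})$ I would pull back each flag in $N_j/N_{j-1}$ to a flag in $N_j$ refining the step $N_{j-1}\subset N_j$ (taking preimages), and concatenate over $j$ to obtain a splittable flag $\mathcal{F}'$ refining $\mathcal{G}$ whose subquotients are, via the prescribed identifications, those of $\mathcal{F}$. The remaining and main point is that two splittable flags $\mathcal{F},\mathcal{F}'$ in $M$ with identified associated gradeds differ by an automorphism $g\in\operatorname{GL}(M)$ realising those identifications; this is where the splittable hypothesis is essential, as one builds $g$ by choosing splittings $M\cong\bigoplus_i M_i/M_{i-1}$ of both flags—which exist because the subquotients are projective—and conjugating the prescribed subquotient isomorphisms through them. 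I expect this torsor-type statement to be the only genuinely substantive step, and it is essentially the content recorded in \cite[Remark 7.18]{ClausenOrsnesJansen}.

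Finally I would check compatibility with composition: composition in $\operatorname{RBS}'(M)$ is multiplication in $\operatorname{GL}(M)$ (since $g\mathcal{F}\leq\mathcal{G}$ and $g'\mathcal{G}\leq\mathcal{H}$ give $g'g\mathcal{F}\leq\mathcal{H}$ by transitivity of refinement), whereas in $\operatorname{RBS}(M)$ it is the flag-merging operation of \cite[Construction 7.14]{ClausenOrsnesJansen}; tracing the refinements through $g'g$ shows the induced maps $\theta$ compose and the merged flags agree, so $\operatorname{gr}$ is a functor and hence an equivalence. For the free-module statement, I would observe that for a splittable flag whose subquotients are free, every submodule $M_i$ and quotient $M/M_i$ is free (split off the free subquotients inductively), so the objects of $\operatorname{RBS}_{\mathcal{F}(R)}(M)$—lists admitting a map to $(M)$ in $\mathcal{F}(R)$—correspond under $\operatorname{gr}$ exactly to the free splittable flags, those all of whose submodules and quotients are free.
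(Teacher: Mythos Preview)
Your proposal is correct and complete. Note, however, that the paper does not actually supply a proof of this observation: it is stated as an observation and defers the details to \cite[Remark~7.18]{ClausenOrsnesJansen}. What you have written is precisely the verification that reference records, spelled out in full --- the functor on morphisms, essential surjectivity from the definition of $\operatorname{RBS}(M)$ as objects admitting a map to $(M)$, injectivity on hom-sets from the definition of $\mathcal{U}_{\mathcal{F}}$, and fullness from the splitting argument that any two splittable flags with identified associated graded differ by an element of $\operatorname{GL}(M)$. There is no alternative route here; the equivalence is a direct unwinding of the definitions, and your treatment is the natural one.

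One small remark on the free-module addendum: you show that a flag with free subquotients has free submodules and free quotients $M/M_i$, which is the substantive direction. The paper's phrasing ``all submodules and quotients must be free'' should be read as an equivalent description of the image (in their context the relevant flags are precisely those arising from the exact structure on $\mathcal{F}(R)$, where admissible monomorphisms split off free modules, so free subquotients is the primitive condition). Your argument handles this correctly.
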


\begin{remark} \ 
\begin{enumerate}
\item The motivation behind the chosen terminology is the fact that the category $\operatorname{RBS}'(M)$ is a simple generalisation of the $1$-category that captures the stratified homotopy type of the \textit{reductive Borel--Serre compactification} $\overline{\Gamma\backslash X}{}^{\operatorname{RBS}}$ of a locally symmetric space associated to a neat arithmetic subgroup $\Gamma\subseteq\operatorname{GL}(\Z)$. See \cite{OrsnesJansen} and \cite[\S 4]{ClausenOrsnesJansen}.
\item In line with the geometric origins of these categories, one can interpret $\operatorname{RBS}(M)$ as a ``compactification'' of the full subcategory $\operatorname{BGL}(M)$ (on the one-object list $(M)$). This is also the reason behind the notation $\partial \operatorname{RBS}(M)$ interpreting the complement of $\operatorname{BGL}(M)$ as the \textit{boundary} of $\operatorname{RBS}(M)$.
\item Let $A$ be an associative ring and $\mathcal{P}(A)$ the exact category of finitely generated projective $A$-modules. The decomposition
\begin{align*}
\mathscr{M}_{\operatorname{RBS}}(\mathcal{P}(A))\simeq \coprod_{M\in \mathcal{M}}\operatorname{RBS}(M)
\end{align*}
is analogous to the decomposition of the symmetric monoidal category given by the maximal subgroupoid of $\mathcal{P}(A)$ into a disjoint union of automorphism groups:
\begin{equation*}
\mathcal{P}(A)^\simeq\simeq \coprod_{M\in \mathcal{M}}\operatorname{BGL}(M).\qedhere
\end{equation*}
\end{enumerate}

\end{remark}

\begin{remark}
Note that in \cite[\S 7]{ClausenOrsnesJansen}, we call the above defined monoidal category the \textit{monoidal category of flags and associated gradeds} in $\mathcal{C}$. We opt for a change of terminology because we want to keep in mind the link to the (non-monoidal) reductive Borel--Serre categories. These are the categories that we are ultimately interested in, but we exploit the fact that they assemble to a monoidal category.
\end{remark}

\subsection{The fat monoidal reductive Borel--Serre category}

In this section, we define a ``bigger'' version of the monoidal reductive Borel--Serre category by allowing what one might think of as degenerate objects. At the level of categories this makes it somewhat more difficult to get our hands on, but at the level of realisations it has the effect of simply ``fattening'' up the $\mathbb{E}_1$-space $|\mathscr{M}_{\operatorname{RBS}}(\mathcal{C})|$ providing some flexibility that turns out to be extremely useful.

\begin{definition}\label{monoidal category of filtrations and associated gradeds}
Let $\mathcal{C}$ be a category with filtrations. Let $\accentset{\sim}{\mathscr{M}}_{\operatorname{RBS}}(\mathcal{C})$ be the category whose objects are finite ordered lists of objects in $\mathcal{C}$ (including the empty list $\emptyset$). A morphism
\begin{align*}
(m_i)_{i\in I}\rightarrow (n_j)_{j\in J}
\end{align*}
consists of the following data:
\begin{itemize}
\item an order preserving map $\theta\colon I\rightarrow J$ such that if $j\notin \theta(I)$, then $n_j=0$;
\item for each $j\in \theta(I)$, a $\theta^{-1}(j)$-indexed filtration in $n_j$ with an identification of the associated graded with the (ordered) sublist $(m_i)_{i\in \theta^{-1}(j)}$.
\end{itemize}
Concatenation of lists, $\circledast$, defines a strict monoidal product on $\accentset{\sim}{\mathscr{M}}_{\operatorname{RBS}}(\mathcal{C})$ with monoidal unit given by the empty list. We call this the \textit{fat monoidal $\operatorname{RBS}$-category} of $\mathcal{C}$.
\end{definition}

As for the (slim) monoidal $\operatorname{RBS}$-category, we will not describe the composition in full detail, but refer the reader to \cite[Construction 7.14 and Definition 7.7]{ClausenOrsnesJansen}.

The monoidal $\operatorname{RBS}$-category identifies with the full subcategory of $\accentset{\sim}{\mathscr{M}}_{\operatorname{RBS}}(\mathcal{C})$ on lists of \textit{non-zero} objects:
\begin{align*}
\Upsilon \colon \mathscr{M}_{\operatorname{RBS}}(\mathcal{C}) \hookrightarrow \accentset{\sim}{\mathscr{M}}_{\operatorname{RBS}}(\mathcal{C}).
\end{align*}

In view of the following lemma, we can interpret $\accentset{\sim}{\mathscr{M}}_{\operatorname{RBS}}(\mathcal{C})$ as a fattened up version of $\mathscr{M}_{\operatorname{RBS}}(\mathcal{C})$; hence, the name.

\begin{lemma}\label{fluffed up model of monoidal category of flags and associated gradeds}
The inclusion $\Upsilon\colon  \mathscr{M}_{\operatorname{RBS}}(\mathcal{C}) \hookrightarrow \accentset{\sim}{\mathscr{M}}_{\operatorname{RBS}}(\mathcal{C})$ admits a right adjoint. In particular, after realisation, we have an equivalence of $\mathbb{E}_1$-spaces
\begin{align*}
|\mathscr{M}_{\operatorname{RBS}}(\mathcal{C})| \xrightarrow{\ \simeq \ } |\accentset{\sim}{\mathscr{M}}_{\operatorname{RBS}}(\mathcal{C})|.
\end{align*}
\end{lemma}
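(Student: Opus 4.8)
The plan is to construct an explicit right adjoint $R$ to $\Upsilon$ by ``deleting the zero entries'' of a list, and then to deduce the statement about realisations from the general fact that an adjoint functor induces a homotopy equivalence on nerves, together with the observation that $\Upsilon$ is strictly monoidal.

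First I would set up $R$ objectwise. For an object $X=(x_i)_{i\in I}$ of $\accentset{\sim}{\mathscr{M}}_{\operatorname{RBS}}(\mathcal{C})$, let $I_X:=\{i\in I\mid x_i\neq 0\}$ with the induced order and put $RX:=(x_i)_{i\in I_X}$, a list of non-zero objects and hence an object of $\mathscr{M}_{\operatorname{RBS}}(\mathcal{C})$. I would then write down a morphism $\epsilon_X\colon \Upsilon RX\to X$ given by the inclusion $I_X\hookrightarrow I$ —- which is a legitimate underlying map of a morphism in $\accentset{\sim}{\mathscr{M}}_{\operatorname{RBS}}(\mathcal{C})$ precisely because $x_i=0$ for $i\notin I_X$ —- together with the trivial one-step filtration $0\subseteq x_i$ for each $i\in I_X$.

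Next I would prove that $(RX,\epsilon_X)$ is a terminal object of the comma category $\Upsilon\downarrow X$; by the standard characterisation of adjunctions this simultaneously promotes $R$ to a functor and exhibits it as a right adjoint of $\Upsilon$, with counit $\epsilon$. Concretely, given $Y=(y_k)_{k\in K}$ in $\mathscr{M}_{\operatorname{RBS}}(\mathcal{C})$ and a morphism $f\colon \Upsilon Y\to X$ with underlying order preserving map $\phi\colon K\to I$ and the prescribed filtration data, I would first observe that $\phi(K)=I_X$: the inclusion $\phi(K)\subseteq I_X$ holds because for $i\in\phi(K)$ the non-zero objects $y_k$, $k\in\phi^{-1}(i)$, are the graded pieces of $x_i$, so $x_i\neq 0$; and $I_X\subseteq\phi(K)$ holds because $x_i=0$ whenever $i\notin\phi(K)$. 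Hence $\phi$ corestricts to a \emph{surjection} $\phi'\colon K\twoheadrightarrow I_X$, and for $i\in I_X$ the given $\phi^{-1}(i)$-indexed filtration of $x_i$ has all graded pieces $y_k$ non-zero, so it is a flag; this data defines a morphism $\bar f\colon Y\to RX$ in $\mathscr{M}_{\operatorname{RBS}}(\mathcal{C})$. Unwinding the composition law of \cite[Construction 7.14 and Definition 7.7]{ClausenOrsnesJansen} —- post-composition with $\epsilon_X$ only re-inserts the inclusion $I_X\hookrightarrow I$ and merges in trivial one-step filtrations, neither of which alters the flag data —- gives $\epsilon_X\circ\Upsilon\bar f=f$, and the same bookkeeping shows $\bar f$ is the unique morphism $Y\to RX$ with this property. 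In particular $R\Upsilon=\operatorname{id}_{\mathscr{M}_{\operatorname{RBS}}(\mathcal{C})}$ and the unit of the adjunction is the identity transformation.

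Finally I would conclude as follows. Since $\Upsilon$ admits a right adjoint, the induced map $|\Upsilon|\colon |\mathscr{M}_{\operatorname{RBS}}(\mathcal{C})|\to |\accentset{\sim}{\mathscr{M}}_{\operatorname{RBS}}(\mathcal{C})|$ is a homotopy equivalence with homotopy inverse $|R|$, the homotopies coming from the (identity) unit and from $|\epsilon|$, using that the nerve sends natural transformations to simplicial homotopies. The functor $\Upsilon$ is a \emph{strict} monoidal functor —- it is the inclusion of a full monoidal subcategory and manifestly commutes with $\circledast$ —- so it defines a morphism of monoid objects in $\operatorname{Cat}_\infty$; applying $|-|\colon \operatorname{Mon}(\operatorname{Cat}_\infty)\to\operatorname{Mon}(\mathcal{S})$ yields a map of $\mathbb{E}_1$-spaces which we have just seen is an equivalence on underlying spaces, hence an equivalence of $\mathbb{E}_1$-spaces. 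The only genuinely technical point is the verification in the previous paragraph —- matching the merging of filtrations with the deletion of zero subquotients and the insertion of trivial one-step filtrations —- which is why I lean on the explicit composition law of \cite{ClausenOrsnesJansen} rather than reproving it here; everything else is formal.
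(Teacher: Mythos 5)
Your proposal is correct and follows essentially the same route as the paper: the right adjoint is defined by deleting zero entries, the counit is the inclusion $I_X\hookrightarrow I$, and the realisation statement follows formally from the adjunction together with strict monoidality of $\Upsilon$. Your verification via the terminal-object characterisation of the comma category $\Upsilon\downarrow X$ is a slightly more explicit packaging of the universal-property check that the paper leaves to the reader, but the content is the same.
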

\begin{proof}
The right adjoint $\Gamma\colon \accentset{\sim}{\mathscr{M}}_{\operatorname{RBS}}(\mathcal{C})\rightarrow \mathscr{M}_{\operatorname{RBS}}(\mathcal{C})$ is given by removing all zero objects. For a morphism
\begin{align*}
\phi=\mathop{\circledast}_{j\in J}\phi_j\colon (c_i)_{i\in I}\rightarrow (d_j)_{j\in J}
\end{align*}
given by $\rho\colon I\rightarrow J$ and a filtration in each $d_j$ with associated graded $(c_i)_{i\in \rho^{-1}(j)}$; the resulting morphism $\Gamma(\phi)$ is defined by removing any $\phi_j$ for which $d_j=0$ (this makes sense since all subquotients $c_i$, $i\in \rho^{-1}(j)$, must be zero) and taking the underlying flag of each filtration in $d_j\neq 0$ with associated graded the non-zero $c_i$, $i\in \rho^{-1}(j)$.

Then $\Gamma\circ \Upsilon$ is the identity and the counit transformation is given by the unique maps
\begin{align*}
\epsilon \colon (c_i)_{i\in I'}\rightarrow (c_i)_{i\in I}
\end{align*}
given by the inclusion $I'\hookrightarrow I$, where $I'=\{i\in I\mid c_i\neq 0\}\subset I$. We leave it to the reader to check the relevant commutativity.
\end{proof}

\begin{remark} \ 
\begin{enumerate}
\item We could also have defined a non-unital fat monoidal $\operatorname{RBS}$-category by removing the empty list, but note that the realisation of this would be a \textit{quasi-unital} $\mathbb{E}_1$-space: any list of zeros $(0,\ldots,0)$ would be a quasi-unit (\cite[Definition 5.4.3.1]{LurieHA}). Hence, it can be extended to a unital $\mathbb{E}_1$-space in an essentially unique way in view of \cite[Theorem 5.4.3.5]{LurieHA}, so such a definition would not make much difference for our purposes.
\item The difference between $\mathscr{M}_{\operatorname{RBS}}(\mathcal{C}) $ and $\accentset{\sim}{\mathscr{M}}_{\operatorname{RBS}}(\mathcal{C})$ should reflect the difference between the categories $\mathcal{I}_\gg$ and $\mathcal{J}$ of \S \ref{partitioned linearly ordered sets}. We are allowing ``degenerate'' objects to appear: in $\mathcal{J}$ we allow empty sets, in $\accentset{\sim}{\mathscr{M}}_{\operatorname{RBS}}(\mathcal{C})$ we allow zeros. This makes the categories slightly more difficult to work with simply because they are much bigger, but it provides some room for maneuvre which is not a priori easy to see in the ``smaller'' models.
\item Suppose $(m_i)_{i\in I}\rightarrow (n_j)_{j\in J}$ is a morphism in $\accentset{\sim}{\mathscr{M}}_{\operatorname{RBS}}(\mathcal{C})$ given by $\theta\colon I\rightarrow J$. If $n_j=0$, then we must have $m_i=0$ for all $i\in \theta^{-1}(j)$. Moreover, by definition, we have that an object $n_j$ can only by ``skipped'' if it is zero. We already used this in the proof above, but highlight it here to underline that we actually have reasonable control over the added ``non-degeneracy''.\qedhere
\end{enumerate}
\end{remark}

The first upshot of this model is the following functoriality.

\begin{observation}
Let $\phi\colon \mathcal{C}\rightarrow \mathcal{D}$ be a morphism of categories with filtrations, i.e. a functor preserving the short exact sequences. There is an induced morphism of monoidal categories
\begin{align*}
\tilde{\phi}\colon \accentset{\sim}{\mathscr{M}}_{\operatorname{RBS}}(\mathcal{C})\rightarrow \accentset{\sim}{\mathscr{M}}_{\operatorname{RBS}}(\mathcal{D}), \quad (c_i)_{i\in I}\mapsto (\phi(c_i))_{i\in I}.
\end{align*}
On morphisms we likewise apply $\phi$ to the given filtrations and to the identifications of associated gradeds. Thus the fat monoidal $\operatorname{RBS}$-categories define a functor
\begin{align*}
\accentset{\sim}{\mathscr{M}}_{\operatorname{RBS}}\colon \operatorname{Cat}^{\operatorname{filtr}}\rightarrow \operatorname{Mon}(\operatorname{Cat})
\end{align*}
from (essentially small) categories with filtrations to monoidal categories. Applying geometric realisation to this yields a functor
\begin{align*}
\operatorname{Cat}^{\operatorname{filtr}}\rightarrow \operatorname{Mon}(\mathcal{S}),\quad \mathcal{C}\mapsto |\mathscr{M}_{\operatorname{RBS}}(\mathcal{C})|
\end{align*}
by \Cref{fluffed up model of monoidal category of flags and associated gradeds}. Note that the assignment $\mathcal{C}\mapsto \mathscr{M}_{\operatorname{RBS}}(\mathcal{C})$ is \textit{not} functorial at the level of monoidal categories, since an exact functor $\mathcal{C}\rightarrow \mathcal{D}$ may send non-zero objects to zero.
\end{observation}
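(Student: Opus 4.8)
The plan is to show that every ingredient in the definition of $\accentset{\sim}{\mathscr{M}}_{\operatorname{RBS}}(\mathcal{C})$ --- objects, morphisms, composition, and the monoidal product --- refers only to the structure of $\mathcal{C}$ as a category with filtrations, and that a morphism $\phi$ of categories with filtrations carries exactly this structure along. First I would unravel $\tilde\phi$ on morphisms. A morphism in $\accentset{\sim}{\mathscr{M}}_{\operatorname{RBS}}(\mathcal{C})$ is an order preserving $\theta\colon I\to J$ with $n_j=0$ whenever $j\notin\theta(I)$, together with, for each $j\in\theta(I)$, a $\theta^{-1}(j)$-indexed filtration in $n_j$ and an identification of its associated graded with the sublist $(m_i)_{i\in\theta^{-1}(j)}$. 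Since $\phi$ preserves the zero object and short exact sequences --- hence filtrations and, up to the canonical isomorphisms, their associated gradeds --- applying $\phi$ entrywise to the list of objects and to all this filtration-and-identification data produces a bona fide morphism in $\accentset{\sim}{\mathscr{M}}_{\operatorname{RBS}}(\mathcal{D})$; the condition that an entry may be skipped only if it is zero survives because $\phi(0)=0$. This defines $\tilde\phi$ on objects and morphisms.

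Next I would verify that $\tilde\phi$ is a functor. Identities are immediate. For composition the key point is that composition in $\accentset{\sim}{\mathscr{M}}_{\operatorname{RBS}}$ is governed by the ``flag-merging'' construction of \cite[Construction 7.14 and Definition 7.7]{ClausenOrsnesJansen}: the equivalence between a $J$-indexed filtration of an object together with compatible filtrations of all its subquotients on the one hand, and the assembled $I$-indexed filtration on the other. I would observe that this equivalence is natural with respect to functors preserving short exact sequences, so applying $\phi$ before or after merging gives the same result; hence $\tilde\phi$ preserves composition. It is then a \emph{strict} monoidal functor almost for free: $\circledast$ is concatenation of lists and $\tilde\phi$ acts entrywise, so it commutes with $\circledast$ on the nose and fixes the empty list, i.e.\ $\tilde\phi\in\operatorname{Mon}(\operatorname{Cat})$. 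Finally $\widetilde{\operatorname{id}}=\operatorname{id}$ and $\widetilde{\psi\circ\phi}=\tilde\psi\circ\tilde\phi$ by the same entrywise reasoning, giving the functor $\accentset{\sim}{\mathscr{M}}_{\operatorname{RBS}}\colon\operatorname{Cat}^{\operatorname{filtr}}\to\operatorname{Mon}(\operatorname{Cat})$.

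To pass to $\mathbb{E}_1$-spaces I would post-compose with geometric realisation $|-|\colon\operatorname{Mon}(\operatorname{Cat})\to\operatorname{Mon}(\mathcal{S})$, which is legitimate because $|-|$ preserves finite products and so sends Segal objects to Segal objects. The resulting functor sends $\mathcal{C}$ to $|\accentset{\sim}{\mathscr{M}}_{\operatorname{RBS}}(\mathcal{C})|$, which by \Cref{fluffed up model of monoidal category of flags and associated gradeds} is equivalent to $|\mathscr{M}_{\operatorname{RBS}}(\mathcal{C})|$; this justifies the notation $\mathcal{C}\mapsto|\mathscr{M}_{\operatorname{RBS}}(\mathcal{C})|$, with the understanding that the equivalence is only objectwise. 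I would then spell out why the slim version really cannot be promoted to a functor at the categorical level: an entrywise assignment is the only natural candidate, but if $\phi(c)=0$ for some $c\neq 0$ then the one-object list $(c)\in\mathscr{M}_{\operatorname{RBS}}(\mathcal{C})$ has no entrywise image in $\mathscr{M}_{\operatorname{RBS}}(\mathcal{D})$, whose objects are lists of non-zero objects --- exactly the defect the fat model is built to absorb.

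The main obstacle is the composition check: one genuinely has to return to the merging construction of \cite{ClausenOrsnesJansen} and confirm that it is natural for functors preserving short exact sequences, since composition in $\accentset{\sim}{\mathscr{M}}_{\operatorname{RBS}}$ is the one piece of structure that is not visibly entrywise. Everything else --- well-definedness on objects, strict monoidality, compatibility with identities and with composition of the $\phi$'s, and the passage to realisations --- is a routine unwinding of definitions.
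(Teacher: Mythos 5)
Your proposal is correct and follows essentially the same route as the paper, which states this as an observation and leaves precisely the routine entrywise checks you carry out; your identification of the composition check (naturality of the flag-merging construction of \cite[Construction 7.14]{ClausenOrsnesJansen} with respect to exact functors) as the only non-formal step is exactly right, as is your explanation of why the slim model fails to be functorial.
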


\section{Partial algebraic K-theory and \texorpdfstring{$\operatorname{RBS}$}{RBS}-categories}\label{partial k-theory}

As was already mentioned, the geometric realisation of the \textit{monoidal $\operatorname{RBS}$-category} is an $\mathbb{E}_1$-space that shares some notable properties with Yuan's partial algebraic K-theory. In this section, we show that these are in fact equivalent $\mathbb{E}_1$-spaces. As a consequence, we have a universal property on the one hand and an explicit tangible monoidal $1$-category on the other.

\subsection{Notation and proof strategy}\label{monoids in question}

Let us very briefly recall the $\mathbb{E}_1$-monoids under consideration and introduce the notation necessary for what's to come. Let $\mathcal{C}$ be an exact category. Recall that the partial algebraic K-theory of $\mathcal{C}$ is the $\mathbb{E}_1$-space
\begin{align*}
K^\partial(\mathcal{C}):=\mathbb{L}(S_\bullet(\mathcal{C})^{\simeq})
\end{align*}
where $S_\bullet(\mathcal{C})$ is Waldhausen's ($1$-categorical) $S_\bullet$-construction and $\mathbb{L}\colon \operatorname{Fun}(\Delta^{\operatorname{op}},\mathcal{S})\rightarrow \operatorname{Mon}(\mathcal{S})$ is left adjoint to the inclusion. Explicitly, $S_\bullet(\mathcal{C})^{\simeq}$ is the simplicial groupoid whose category of $n$-simplices has as objects diagrams in $\mathcal{C}$ as pictured below satisfying that for all $s<t<r$, the sequence $x_{st}\rightarrowtail x_{sr}\twoheadrightarrow x_{tr}$ is exact. The morphisms are just isomorphisms of such diagrams (\cite{Waldhausen}).

\begin{center}
\begin{equation}\label{element in Sdot}
\begin{tikzpicture}[baseline=(current  bounding  box.center)]
\matrix (m) [matrix of math nodes,row sep=1em,column sep=2em]
{
x_{01}  & x_{02} & x_{03} & \cdots\vphantom{_*} & x_{0(n-1)} & x_{0n} \\
		& x_{12} & x_{13} & \cdots\vphantom{_*} & x_{1(n-1)} & x_{1n} \\
		&		 & x_{23} & \cdots\vphantom{_*} & x_{2(n-1)} & x_{2n} \\
& & & \ddots & \vdots & \vdots \\
& & & & x_{(n-2)(n-1)} & x_{(n-1)n} \\
& & & & & x_{(n-1)n} \\
};
\path[>->]
(m-1-1) edge (m-1-2)
(m-1-2) edge (m-1-3)
(m-1-3) edge (m-1-4)
(m-1-4) edge (m-1-5)
(m-1-5) edge (m-1-6)
(m-2-2) edge (m-2-3)
(m-2-3) edge (m-2-4)
(m-2-4) edge (m-2-5)
(m-2-5) edge (m-2-6)
(m-3-3) edge (m-3-4)
(m-3-4) edge (m-3-5)
(m-3-5) edge (m-3-6)
(m-5-5) edge (m-5-6)
;
\path[->>]
(m-1-2) edge (m-2-2)
(m-1-3) edge (m-2-3)
(m-1-5) edge (m-2-5)
(m-1-6) edge (m-2-6)
(m-2-3) edge (m-3-3)
(m-2-5) edge (m-3-5)
(m-2-6) edge (m-3-6)
(m-3-5) edge (m-4-5)
(m-3-6) edge (m-4-6)
(m-4-5) edge (m-5-5)
(m-4-6) edge (m-5-6)
(m-5-6) edge (m-6-6)
;
\end{tikzpicture}
\end{equation}
\end{center}

Write $S:=S_\bullet(\mathcal{C})^\simeq$ and let $\mathbf{S}\colon \mathcal{J}\rightarrow \mathcal{S}$ denote the induced functor defined in \Cref{simplicial space out of J-category}:
\begin{align*}
I_P\mapsto \prod_{p\in P} S(I_p^\pm).
\end{align*}
Writing $I_p=\{1<\ldots<n_p\}$ for $p\in P$, the groupoid $S(I_p^\pm)$ naturally identifies with $S_{n_p}(\mathcal{C})^\simeq$. By \Cref{monoid over J}, the bar construction of the partial algebraic K-theory space $K^\partial(\mathcal{C})$ identifies with the simplicial space
\begin{align*}
(n)^\pm \mapsto \big(\mathop{\operatorname{colim}}_{\mathcal{J}} \mathbf{S} \big)^n
\end{align*}
with structure maps induced by concatenation in $\mathcal{J}$ and inserting the object $\emptyset_\emptyset$ (specified explicitly in \Cref{monoid over J}). Recall that we may calculate a colimit in $\mathcal{S}$ using the \textit{simplicial replacement}.

\begin{notation}\label{simplicial replacement}
Given a diagram $X\colon I\rightarrow \mathcal{S}$, the colimit of $X$ may be calculated as the realisation of the simplicial space $\operatorname{srep}(X)$ with 
\begin{align*}
\operatorname{srep}(X)_n=\coprod_{i_0\leftarrow \cdots \leftarrow i_n} X(i_n)
\end{align*}
taking the coproduct over sequences of composable maps in $I$. The structure maps of $\operatorname{srep}(X)$ are as follows:
\begin{itemize}
\item For $j<n$, the face map $d_j\colon \operatorname{srep}(X)_n\rightarrow \operatorname{srep}(X)_{n-1}$ sends the component $X(i_n)$ indexed by the sequence $\sigma\colon i_0\leftarrow \cdots \leftarrow i_n$ to the identical component $X(i_n)$ indexed by the sequence $d_j(\sigma)$ given by applying the $j$'th face map in the nerve $N_\bullet(I)$.
\item The last face map $d_n\colon \operatorname{srep}(X)_n\rightarrow \operatorname{srep}(X)_{n-1}$ sends the component $X(i_n)$ indexed by the sequence $\sigma\colon i_0\leftarrow \cdots \leftarrow i_n$ to the component $X(i_{n-1})$ indexed by the sequence $d_n(\sigma)\colon i_0\leftarrow \cdots \leftarrow i_{n-1}$ via the map $X(i_n)\rightarrow X(i_{n-1})$ induced by the discarded map $i_n\rightarrow i_{n-1}$.
\item The degeneracy maps $s_j\colon \operatorname{srep}(X)_n\rightarrow \operatorname{srep}(X)_{n+1}$ send a copy of $X(i_n)$ indexed by a sequence $\sigma\colon i_0\leftarrow \cdots \leftarrow i_n$ to the identical component $X(i_n)$ indexed by the sequence $s_j(\sigma)$ obtained by inserting an identity map $i_j\rightarrow i_j$.
\end{itemize}
We refer to \cite{Dugger} for details.
\end{notation}

We will identify $K^\partial(\mathcal{C})$ with $|\mathscr{M}_{\operatorname{RBS}}(\mathcal{C})|$ by identifying the underlying spaces and verifying that these identifications are compatible with the structure maps of the bar constructions. We will work with the \textit{fat} monoidal $\operatorname{RBS}$-category, $\accentset{\sim}{\mathscr{M}}_{\operatorname{RBS}}(\mathcal{C})$, and we compare partial K-theory with the geometric realisation $|\accentset{\sim}{\mathscr{M}}_{\operatorname{RBS}}(\mathcal{C})|$ by passing through the twisted arrow category $\operatorname{Tw}(\accentset{\sim}{\mathscr{M}}_{\operatorname{RBS}}(\mathcal{C}))$. We've already used the twisted arrow category as our category $\mathcal{J}$ is one such but let's recall it here for good measure.

\begin{notation}
Recall the \textit{twisted arrow category} $\operatorname{Tw}(\mathcal{D})$ of a category $\mathcal{D}$: its objects are morphisms in $\mathcal{D}$ and a morphism from $f\colon x\rightarrow y$ to $g\colon x'\rightarrow y'$ is a pair of maps $a\colon x'\rightarrow x$, $b\colon y\rightarrow y'$ such that $g=bfa$. In other words, a morphism from $f$ to $g$ is a factorisation of $g$ through $f$. Recall that the projection functor
\begin{align*}
\operatorname{Tw}(\mathcal{D})^{\operatorname{op}}\rightarrow \mathcal{D},\quad (x\rightarrow y)\mapsto x
\end{align*}
is a $\varinjlim$-equivalence (\cite[Lemma 4.16]{ClausenOrsnesJansen}). It follows that there is a functorial equivalence $|\operatorname{Tw}(\mathcal{D})|\simeq |\mathcal{D}|$ (\cite[Remark 2.20 and Corollary 2.11]{ClausenOrsnesJansen}).
\end{notation}

The comparison is based on the simple observation that a simplex in the $S_\bullet$-construction gives rise to a morphism in $\accentset{\sim}{\mathscr{M}}_{\operatorname{RBS}}(\mathcal{C})$. More explicitly, an object in $\mathbf{S}(I_P)$ is essentially just an ordered list of filtrations together with a choice of associated gradeds; this is exactly the same as a morphism in $\accentset{\sim}{\mathscr{M}}_{\operatorname{RBS}}(\mathcal{C})$. The source of the resulting morphism is the (ordered list of) associated gradeds (the diagonals of the diagrams as in (\ref{element in Sdot})), the target is the (ordered list of) ambient objects in which the filtrations live (the top right corners of the diagrams as in (\ref{element in Sdot})). This observation is made precise in the following subsection.

With the technical input in place, the final identification becomes straightforward: we get an equivalence of simplicial groupoids,
\begin{align*}
\operatorname{srep}(\mathbf{S})_\bullet \longrightarrow N_\bullet i \operatorname{Tw}(\accentset{\sim}{\mathscr{M}}_{\operatorname{RBS}}(\mathcal{C})),
\end{align*}
where on the left we have the simplicial replacement of the functor $\mathbf{S}\colon \mathcal{J}\rightarrow \mathcal{S}$ (see \Cref{simplicial replacement}) and on the right we have the nerve of the twisted arrow category of the monoidal category enhanced to a simplicial groupoid as explained below.

\begin{notation}\label{maximal subgroupoids}
Given a $1$-category $C$, we can enhance its nerve $N_\bullet C$ to a simplicial groupoid $N_\bullet iC$, by considering for each $n\in \N$, the groupoid of sequences $c_0\rightarrow \cdots \rightarrow c_n$, with isomorphisms given by commutative diagrams
\begin{center}
\begin{tikzpicture}
\matrix (m) [matrix of math nodes,row sep=1em,column sep=1em]
{
c_0 & c_1 & \cdots & c_n \\
d_0 & d_1 & \cdots & d_n	\\
};
\path[-stealth]
(m-1-1) edge (m-1-2) edge node[right]{$\scriptstyle\cong$} (m-2-1)
(m-1-2) edge (m-1-3) edge node[right]{$\scriptstyle\cong$} (m-2-2)
(m-1-3) edge (m-1-4)
(m-1-4)  edge node[right]{$\scriptstyle\cong$} (m-2-4)
(m-2-1) edge (m-2-2)
(m-2-2) edge (m-2-3)
(m-2-3) edge (m-2-4)
;
\end{tikzpicture}
\end{center}
Equivalently, $N_\bullet iC$ is the horizontal nerve of the double category with horizontal morphisms given by morphisms in $C$ and vertical morphisms given by the isomorphisms in $C$. By Waldhausen's swallowing lemma, this does not change the homotopy type of the resulting geometric realisation: $|N_\bullet iC|\simeq |N_\bullet C| \simeq |C|$ (\cite[Lemma 1.6.5]{Waldhausen}).
\end{notation}

\begin{remark}
The identification could equally well be made using the smaller monoidal category $\mathscr{M}_{\operatorname{RBS}}(\mathcal{C})$ by simply restricting to the functor $\mathbf{S}^{\operatorname{f}}\colon \mathcal{I}_\gg\rightarrow \mathcal{S}$ picking out the non-degenerate simplices, i.e. the flags (see \Cref{examples: picking out the non-degenerate simplices}). It turns out that we will need the technical observations in their slightly more general form later on (\S \ref{homotopy commutativity}). Therefore we choose to work with the fluffed up monoidal category. See also \Cref{restricting to flags}.
\end{remark}

\subsection{Technical legwork}

In this section, we lay out the technical ingredients needed for the comparison. We present the relevant observations in a sequence of lemmas to make it clear how the different parts relate. First of all, we translate objects in $\mathbf{S}(I_P)$ into morphisms in the fat monoidal $\operatorname{RBS}$-category, $\accentset{\sim}{\mathscr{M}}_{\operatorname{RBS}}(\mathcal{C})$.

\begin{lemma}\label{morphism in monoidal category}
Given $I_P$ in $\mathcal{J}$, an object $x$ in $\mathbf{S}(I_P)$ defines a canonical morphism
\begin{align*}
\phi_x\colon \mathfrak{s}(x)\rightarrow \mathfrak{t}(x)
\end{align*}
in $\accentset{\sim}{\mathscr{M}}_{\operatorname{RBS}}(\mathcal{C})$. Conversely, for any morphism $\phi$ in $\accentset{\sim}{\mathscr{M}}_{\operatorname{RBS}}(\mathcal{C})$, there is a unique $I_P$ in $\mathcal{J}$ and an $x$ in $\mathbf{S}(I_P)$ such that $\phi=\phi_x$. Moreover, $x$ is unique up to unique isomorphism in $\mathbf{S}(I_P)$.
\end{lemma}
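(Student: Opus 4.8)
The plan is to prove the lemma by unwinding the two definitions and exhibiting the correspondence directly; after that, uniqueness is book-keeping. Recall from \Cref{simplicial space out of J-category} that an object $x$ of $\mathbf{S}(I_P)$ is a tuple $x=(x^{(p)})_{p\in P}$ with $x^{(p)}$ an object of $S(I_p^\pm)\simeq S_{|I_p|}(\mathcal{C})^\simeq$; writing $I_p=\{1<\cdots<n_p\}$, such an $x^{(p)}$ is a diagram of the form \eqref{element in Sdot}, which amounts to an $I_p$-indexed filtration of its total object $x^{(p)}_{0n_p}$ together with a coherent choice of all its subquotients $x^{(p)}_{st}$ --- in particular a choice of the associated-graded list $(x^{(p)}_{(i-1)i})_{i\in I_p}$. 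When $I_p=\emptyset$ we have $S(\emptyset^\pm)=S_0(\mathcal{C})^\simeq$, the trivial groupoid on the zero object, so $x^{(p)}=0$ with no choices involved.

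First I would build $\phi_x$ from $x$. Set $\mathfrak{t}(x):=(x^{(p)}_{0n_p})_{p\in P}$, the ordered list of total objects (with a zero object in position $p$ whenever $I_p=\emptyset$), and let $\mathfrak{s}(x):=(m_i)_{i\in I}$ be the concatenation, in the order of $P$, of the associated-graded lists of the $x^{(p)}$, so $m_i=x^{(p)}_{(i-1)i}$ for the unique $p$ with $i\in I_p$. Define $\phi_x\colon\mathfrak{s}(x)\to\mathfrak{t}(x)$ in $\accentset{\sim}{\mathscr{M}}_{\operatorname{RBS}}(\mathcal{C})$ by taking as order-preserving map the partitioning map $\theta\colon I\to P$ of $I_P$, and for each $p\in\theta(I)$ the $\theta^{-1}(p)=I_p$-indexed filtration of $x^{(p)}_{0n_p}$ supplied by $x^{(p)}$, with its identification of the associated graded with $(m_i)_{i\in I_p}$. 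The only condition to check from \Cref{monoidal category of filtrations and associated gradeds} is that $p\notin\theta(I)$ forces the $p$-th entry of $\mathfrak{t}(x)$ to vanish, and this holds since $I_p=\emptyset$ gives $x^{(p)}=0$. No choices enter, so $\phi_x$ is canonical.

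Next I would invert this. Given $\phi\colon(m_i)_{i\in I}\to(n_j)_{j\in J}$ in $\accentset{\sim}{\mathscr{M}}_{\operatorname{RBS}}(\mathcal{C})$, with its order-preserving $\theta\colon I\to J$ (and $n_j=0$ for $j\notin\theta(I)$) and its filtrations-with-associated-graded, let $I_P$ be the object $(I\xrightarrow{\theta}J)$ of $\mathcal{J}$; here the empty parts of the partition of $I_P$ match precisely the zero entries of the target, which is exactly the slack the \emph{fat} monoidal category (resp.\ $\mathcal{J}$ over $\mathcal{I}_\gg$) provides. For $j\in\theta(I)$ the prescribed $I_j$-indexed filtration of $n_j$ with its chosen associated graded determines an object $x^{(j)}$ of $S(I_j^\pm)$ --- either directly, reading the morphism-data of \cite[\S 7]{ClausenOrsnesJansen} as an $S_\bullet$-simplex, or via the standard fact that such data completes to a diagram \eqref{element in Sdot}, the remaining objects $x^{(j)}_{st}$ with $t-s\geq 2$ being forced cokernels and hence forming a contractible groupoid of choices (\cite{Waldhausen}). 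For $j\notin\theta(I)$ we are forced to take $x^{(j)}=0\in S(\emptyset^\pm)$. Then $x:=(x^{(j)})_{j\in J}$ lies in $\mathbf{S}(I_P)$ and, by the previous paragraph, $\phi_x=\phi$. For uniqueness: if $\phi_{x'}=\phi$ with $x'\in\mathbf{S}(I'_{P'})$, then $\phi$ recovers $I'=I$ (from the source list), $P'=J$ (from the target list) and its order-preserving map equals $\theta$ --- note this is where we use that $\theta$, unlike a bare refinement, is part of the data of $\phi$, cf.\ \Cref{refinement versus morphism} --- so $I'_{P'}=I_P$; and two objects of $\mathbf{S}(I_P)$ with the same $\phi_x$ have, factor by factor, equal filtrations and equal associated gradeds, hence are uniquely isomorphic by the same contractibility (over empty parts both factors are $0$, which has no nonidentity automorphisms).

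The step I expect to take the most care is the one used twice above: pinning down the precise dictionary between the morphism-data of $\accentset{\sim}{\mathscr{M}}_{\operatorname{RBS}}(\mathcal{C})$, constructed via the ``merging of flags'' of \cite[Construction 7.14, Definition 7.7]{ClausenOrsnesJansen}, and objects of $S_\bullet(\mathcal{C})^\simeq$, together with the attendant claim that an $S_\bullet$-simplex realizing a prescribed filtration with prescribed associated graded is unique up to unique isomorphism. Everything else --- the bookkeeping of $\emptyset$-parts versus zero objects, and the recovery of $I$, $P$, $\theta$ from $\phi$ --- is a routine unwinding.
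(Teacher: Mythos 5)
Your proposal is correct and follows essentially the same route as the paper: read off $\phi_x$ from the top row (filtration) and diagonal (associated graded) of each diagram \eqref{element in Sdot}, with the partitioning map $I\to P$ as the underlying order-preserving map, and conversely recover $I_P$ directly from the data of $\phi$ and fill in the missing subquotients, which are determined up to unique isomorphism. Your explicit attention to the empty-part/zero-object bookkeeping and to the fact that the order-preserving map is genuinely part of the data of $\phi$ (cf.\ \Cref{refinement versus morphism}) matches the paper's treatment.
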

\begin{proof}
If $I=\emptyset$, then $x=0$ and $\phi_x$ is the unique map
\begin{align*}
\phi_x\colon \emptyset \rightarrow (0)_{p\in P}
\end{align*}
given by the map $\emptyset\rightarrow P$. Assume that $I\neq \emptyset$. We consider first the case $P=\ast$ and write $I=\{1<\cdots <n\}$. Then the element $x$ is a diagram as in (\ref{element in Sdot}), and the resulting morphism
\begin{align*}
\phi_x\colon \mathfrak{s}(x):=(x_{01},x_{12},\ldots,x_{(n-1)n})\longrightarrow \mathfrak{t}(x):=(x_{0n})
\end{align*}
is given by the filtration
\begin{align*}
x_{01} \hookrightarrow x_{02} \hookrightarrow x_{03} \hookrightarrow \cdots \hookrightarrow x_{0(n-1)} \hookrightarrow x_{0n}
\end{align*}
defined by the upper horizontal sequence of this diagram together with the given identification of the associated graded as the sequence of elements lying on the diagonal.

For general $P$, the element $x=(x^p)_{p\in P}$ is a list of diagrams as above (or $0$) and the morphism $\phi_x\colon \mathfrak{s}(x)\rightarrow \mathfrak{t}(x)$ is simply the concatenation of the maps $\phi_{x^p}$ described above:
\begin{align*}
\phi_x:=\mathop{\circledast}_{p\in P} \phi_{x^p}\colon \mathop{\circledast}_{p\in P} \big(x^p_{01}, x^p_{12}, \ldots, x^p_{(n_p-1)n_p}\big)\longrightarrow \mathop{\circledast}_{p\in P} (x^p_{0n_p})
\end{align*}
where we have identified $I_p=\{1<\ldots<n_p\}$.

Suppose conversely that we have a morphism $\phi\colon (a_i)_{i\in I}\rightarrow (b_p)_{p\in P}$ in $\accentset{\sim}{\mathscr{M}}_{\operatorname{RBS}}(\mathcal{C})$. This is given by an order preserving map $\rho\colon I\rightarrow P$ and for each $p\in P$, the data of a $\rho^{-1}(p)$-indexed filtration $\mathcal{F}^p =(f^p_i)_{i\in \rho^{-1}(p)}$ in $b_p$ together with morphisms $f^p_i\twoheadrightarrow a_i$, $i\in \rho^{-1}(p)$, identifying $(a_i)_{i\in \rho^{-1}(p)}$ with the associated graded of $\mathcal{F}^p$. The map $\rho\colon I \rightarrow P$ defines an object $I_P$ in $\mathcal{J}$ and there is an object
\begin{align*}
x_\phi=(x_\phi^p)_{p\in P}\quad\text{ in }\quad\mathbf{S}(I_P)=\prod_{p\in P}S_{n_p}
\end{align*}
whose $p$'th factor $x_\phi^p$ has as its upper horizontal line (a representative of) the filtration $\mathcal{F}^p$ and whose diagonal elements are $x^p_{(s-1)s}=a_s$ for all $s$. The object $I_P$ is uniquely determined as it is simply part of the data, and the element $x_\phi$ is unique up to unique isomorphism in $\mathbf{S}(I_P)$ as we are just filling in the missing subquotients.
\end{proof}

We now observe what happens to isomorphisms under the association $x\mapsto \phi_x$ described above.

\begin{observation}\label{isomorphisms to isomorphisms}

An isomorphism $x\rightarrow y$ of diagrams as in (\ref{element in Sdot}) is uniquely determined by the isomorphism $x_{0n}\rightarrow y_{0n}$ of the upper right corner elements. Analogously, for general objects $x,y$ in $\mathbf{S}(I_P)$, a morphism $\beta\colon \phi_y\rightarrow \phi_x$ is uniquely determined by the map $\beta_t\colon \mathfrak{t}(y)\rightarrow \mathfrak{t}(x)$. From this we conclude that we have a bijection
\begin{equation*}
\{x\xrightarrow{\cong}y \text{ in } \mathbf{S}(I_P)\} \rightleftarrows \{\phi_y\xrightarrow{\cong} \phi_x \text{ in } \operatorname{Tw}(\accentset{\sim}{\mathscr{M}}_{\operatorname{RBS}}(\mathcal{C}))\}
\end{equation*}
and that this association, $\alpha\mapsto \bar{\alpha}$, is functorial.
\end{observation}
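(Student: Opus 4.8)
The plan is to reduce the claim to a single diagram of the shape (\ref{element in Sdot}), reconstruct such a diagram from its upper right corner, and then read off the bijection and its compatibility with composition from \Cref{morphism in monoidal category}. First I would set up the reduction. Since $\mathbf{S}(I_P)=\prod_{p\in P}S(I_p^\pm)$ with $S(I_p^\pm)\simeq S_{n_p}(\mathcal{C})^\simeq$ for $I_p=\{1<\cdots<n_p\}$, an isomorphism $\alpha\colon x\to y$ in $\mathbf{S}(I_P)$ is a tuple $(\alpha^p)_{p\in P}$ of isomorphisms of $S_{n_p}$-diagrams, and the induced isomorphism $\mathfrak{t}(\alpha)\colon\mathfrak{t}(x)\to\mathfrak{t}(y)$ is the concatenation of the corner isomorphisms $\alpha^p_{0n_p}$. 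Conversely, an isomorphism between two equal-length lists in $\accentset{\sim}{\mathscr{M}}_{\operatorname{RBS}}(\mathcal{C})$ is necessarily given by the identity order-bijection together with an isomorphism of each pair of corresponding entries (in particular $\operatorname{Aut}((0,\dots,0))=\{\operatorname{id}\}$), hence is precisely such a tuple of corner isomorphisms. Thus everything reduces to the case $P=\ast$: for fixed $n$ and $x,y\in S_n(\mathcal{C})^\simeq$, an isomorphism of diagrams $x\to y$ is determined by its component $x_{0n}\to y_{0n}$.

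For the single-diagram case I would argue directly from the exact-category structure. In (\ref{element in Sdot}) the top row $x_{01}\rightarrowtail\cdots\rightarrowtail x_{0n}$ is a filtration of $x_{0n}$, and exactness of $x_{0s}\rightarrowtail x_{0t}\twoheadrightarrow x_{st}$ identifies $x_{st}$ with $\operatorname{coker}(x_{0s}\rightarrowtail x_{0t})$. Given an isomorphism of diagrams $f$ with component $f_{0n}$, compatibility of $f$ with the monomorphism $x_{0t}\rightarrowtail x_{0n}$ forces $f_{0t}$ to be the unique factorisation of $f_{0n}\circ(x_{0t}\rightarrowtail x_{0n})$ through $y_{0t}\rightarrowtail y_{0n}$, and compatibility with the epimorphism $x_{0t}\twoheadrightarrow x_{st}$ forces $f_{st}$ to be the unique map induced on cokernels by $f_{0t}$; so every component of $f$ is determined by $f_{0n}$ (the case $I=\emptyset$, where $x=y=0$, is trivial). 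The corresponding statement for a morphism $\beta=(a,b)\colon\phi_y\to\phi_x$ in $\operatorname{Tw}(\accentset{\sim}{\mathscr{M}}_{\operatorname{RBS}}(\mathcal{C}))$ then follows because $\phi_y$ is left-cancellable among isomorphisms: if $(a,b)$ and $(a',b)$ have the same target component $b=\beta_t$, then $\phi_y\circ a=b^{-1}\circ\phi_x=\phi_y\circ a'$, and since the associated-graded identification is part of the data of a morphism in $\accentset{\sim}{\mathscr{M}}_{\operatorname{RBS}}(\mathcal{C})$, this forces $a=a'$.

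Finally I would assemble the bijection. For $\alpha\colon x\xrightarrow{\cong}y$ in $\mathbf{S}(I_P)$ one has $\mathfrak{t}(\alpha)\circ\phi_x=\phi_y\circ\mathfrak{s}(\alpha)$, so $\bar\alpha:=(\mathfrak{s}(\alpha),\mathfrak{t}(\alpha)^{-1})$ is an isomorphism $\phi_y\to\phi_x$ in $\operatorname{Tw}(\accentset{\sim}{\mathscr{M}}_{\operatorname{RBS}}(\mathcal{C}))$; injectivity of $\alpha\mapsto\bar\alpha$ is immediate from the previous paragraph, since $\bar\alpha$ recovers $\mathfrak{t}(\alpha)$ and hence $\alpha$. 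For surjectivity, given an isomorphism $\beta=(a,b)\colon\phi_y\to\phi_x$, i.e. $\phi_x=b\circ\phi_y\circ a$ with $a,b$ isomorphisms, I would unwind the blockwise description of $\phi_x,\phi_y$ from \Cref{morphism in monoidal category} together with the composition law of $\accentset{\sim}{\mathscr{M}}_{\operatorname{RBS}}(\mathcal{C})$ to see that this equation says exactly that $b^{-1}$ is a filtration-preserving isomorphism of the ambient-object lists intertwining the associated-graded identifications via $a$; by the reconstruction above it then extends uniquely to an isomorphism $\alpha\colon x\to y$ in $\mathbf{S}(I_P)$ with $\mathfrak{t}(\alpha)=b^{-1}$, and matching the graded identifications forces $\mathfrak{s}(\alpha)=a$, so $\bar\alpha=\beta$. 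Compatibility with composition, $\overline{\operatorname{id}_x}=\operatorname{id}_{\phi_x}$ and $\overline{\beta\circ\alpha}=\bar\alpha\circ\bar\beta$, then follows formally because $\mathfrak{s}(\alpha),\mathfrak{t}(\alpha)$ are read off componentwise from $\alpha$ and composition in $\operatorname{Tw}$ combines source components covariantly and target components contravariantly.

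The main obstacle I anticipate is precisely the bookkeeping in the surjectivity step: reading off from the identity $\phi_x=b\circ\phi_y\circ a$ in $\accentset{\sim}{\mathscr{M}}_{\operatorname{RBS}}(\mathcal{C})$ that $b^{-1}$ is filtration-preserving and intertwines the graded identifications requires carefully tracing the (not fully spelled out) composition in $\accentset{\sim}{\mathscr{M}}_{\operatorname{RBS}}(\mathcal{C})$ from \cite[\S 7]{ClausenOrsnesJansen}. Everything else — the reduction to $P=\ast$, the reconstruction of a diagram from its corner, and the verification of functoriality — is routine diagram chasing.
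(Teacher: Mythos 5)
Your argument is correct and follows exactly the route the paper intends: the statement is left as an Observation with no separate proof, resting on precisely the two determination claims you verify (an isomorphism of diagrams is pinned down by its upper-right corner via the admissible monos $x_{0t}\rightarrowtail x_{0n}$ and epis $x_{0t}\twoheadrightarrow x_{st}$, and a morphism in the twisted arrow category by its target component). Your reduction to $P=\ast$ using the functor to $\operatorname{Ord}$ and the reconstruction of $\alpha$ from $\beta=(a,b)$ in the surjectivity step supply the details the paper leaves implicit, and they check out.
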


We move on to translating the data of a morphism in $\mathcal{J}$ together with an object in the $S_\bullet$-construction into a morphism in the twisted arrow category. We do this in steps: we treat the case of collapse and splitting maps in $\mathcal{J}$ separately as this better reveals the finer details of the association (and makes for cleaner technical arguments). It will in both cases be a simple application of the following technical observation.

\begin{lemma}\label{factorising a morphism in MC}
Suppose given two order preserving maps $I\xrightarrow{\theta} J\xrightarrow{\rho} K$ and consider the following morphisms in $\mathcal{J}$:
\begin{align*}
\kappa:=(\theta, \operatorname{id}_K)\colon I_K\rightarrow J_K, \quad\text{and}\quad \sigma:=(\operatorname{id}_I, \rho)\colon I_K\rightarrow I_J.
\end{align*}
Let $x$ in $\mathbf{S}(I_K)$ and let $\kappa_*x$, respectively $\sigma_*x$, denote the image of $x$ under $\mathbf{S}(\kappa)$, respectively $\mathbf{S}(\sigma)$. Then the following diagram in $\accentset{\sim}{\mathscr{M}}_{\operatorname{RBS}}(\mathcal{C})$ commutes
\begin{center}
\begin{tikzpicture}
\matrix (m) [matrix of math nodes,row sep=2em,column sep=3em]
{
\mathfrak{t}(x) & \mathfrak{t}(\kappa_*x)  & \mathfrak{s}(\kappa_*x) \\
\mathfrak{s}(x) & \mathfrak{s}(\sigma_*x) & \mathfrak{t}(\sigma_*x)\\
};
\path[-stealth]
(m-2-1) edge node[left]{$\phi_x$} (m-1-1)
(m-2-2) edge node[below]{$\phi_{\sigma_*x}$} (m-2-3)
(m-1-3) edge node[above]{$\phi_{\kappa_*x}$} (m-1-2)
;
\path[-]
(m-1-1) edge[double equal sign distance] (m-1-2)
(m-2-1) edge[double equal sign distance] (m-2-2)
(m-2-3) edge[double equal sign distance] (m-1-3)
;
\end{tikzpicture}
\end{center}
In other words, $\phi_x=\phi_{\kappa_*x}\circ \phi_{\sigma_*x}$.
\end{lemma}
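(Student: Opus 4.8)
The plan is to grade everything over $K$, reduce to the one-part case, unravel $\sigma_\ast x$ and $\kappa_\ast x$ explicitly, and then identify the composite using the merging-of-flags composition in $\accentset{\sim}{\mathscr{M}}_{\operatorname{RBS}}(\mathcal{C})$. First, the square in the statement is naturally a $\circledast$-concatenation over $K$: by definition $\mathbf{S}(I_K)=\prod_{k\in K}S(I_k^\pm)$, the morphism $\phi_{(-)}$ of \Cref{morphism in monoidal category} is the $\circledast$-concatenation of its one-part constituents, and both $\kappa$ and $\sigma$ respect this splitting (for $k\in K$, the block $I_k=(\rho\theta)^{-1}(k)$ gets subdivided by $\theta$ into the consecutive intervals $\theta^{-1}(j)$, $j\in\rho^{-1}(k)$). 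So it suffices to treat $K=\ast$. I then write $I=\{1<\dots<n\}$, $J=\{1<\dots<m\}$, $I_j:=\theta^{-1}(j)$, and $b_j:=|I_1|+\dots+|I_j|$ (so $b_0=0$, $b_m=n$, and $I_j$ is the block of positions $b_{j-1}{+}1,\dots,b_j$, possibly empty if $\theta$ is not surjective --- this is exactly why we work in the \emph{fat} monoidal $\operatorname{RBS}$-category, which allows zero objects).

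Second, I unravel the two maps. Write $x$ as the diagram $(x_{st})_{0\le s<t\le n}$ of \eqref{element in Sdot}, with ambient object $x_{0n}$ and $i$-th graded piece $x_{(i-1)i}$. Unwinding the definition of $\zeta_\ast$ (item (2) of \Cref{J and monoids and Ord}), $\mathbf{S}(\sigma)$ is the map $S(I^\pm)\to\prod_{j\in J}S(I_j^\pm)$ whose $j$-th component is $S(\theta_j)$ for the interval-inclusion-with-truncation $\theta_j\colon I^\pm\to I_j^\pm$; concretely $(\sigma_\ast x)^j$ is the sub-diagram of $x$ on the positions $b_{j-1},\dots,b_j$, so that $\mathfrak{s}((\sigma_\ast x)^j)=(x_{(i-1)i})_{i\in I_j}$ and $\mathfrak{t}((\sigma_\ast x)^j)=(x_{b_{j-1}b_j})$ (equal to $(0)$ when $I_j=\emptyset$). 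Concatenating over $j$ gives $\mathfrak{s}(\sigma_\ast x)=(x_{(i-1)i})_{i\in I}=\mathfrak{s}(x)$ and $\mathfrak{t}(\sigma_\ast x)=(x_{b_{j-1}b_j})_{j\in J}$. Likewise $\mathbf{S}(\kappa)$ is $S(\theta^\pm)\colon S(I^\pm)\to S(J^\pm)$, the structure map of $S_\bullet(\mathcal{C})$ that coarsens the filtration by keeping only the sub-objects $x_{0b_1},\dots,x_{0b_m}$; thus $\kappa_\ast x$ has ambient object $x_{0n}$ and graded pieces $x_{b_{j-1}b_j}$, so $\mathfrak{s}(\kappa_\ast x)=(x_{b_{j-1}b_j})_{j\in J}$ and $\mathfrak{t}(\kappa_\ast x)=(x_{0n})$. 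This already establishes the three identities $\mathfrak{s}(x)=\mathfrak{s}(\sigma_\ast x)$, $\mathfrak{t}(\sigma_\ast x)=\mathfrak{s}(\kappa_\ast x)$ and $\mathfrak{t}(\kappa_\ast x)=\mathfrak{t}(x)$ recorded by the double lines in the diagram.

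Third, I compute $\phi_{\kappa_\ast x}\circ\phi_{\sigma_\ast x}$ via the composition rule of $\accentset{\sim}{\mathscr{M}}_{\operatorname{RBS}}(\mathcal{C})$, i.e. the merging of flags of \cite[Construction 7.14 and Definition 7.7]{ClausenOrsnesJansen}. Here $\phi_{\kappa_\ast x}$ is given by the filtration $x_{0b_1}\hookrightarrow\dots\hookrightarrow x_{0n}$ of $x_{0n}$ together with the canonical identification of its associated graded with $(x_{b_{j-1}b_j})_j$, while $\phi_{\sigma_\ast x}$ refines, inside the $j$-th subquotient $x_{b_{j-1}b_j}$, via the filtration $x_{b_{j-1}(b_{j-1}+1)}\hookrightarrow\dots\hookrightarrow x_{b_{j-1}b_j}$ read off from the $j$-th block of $x$, with associated graded $(x_{(i-1)i})_{i\in I_j}$. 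Merging --- the operation reconstructing a single filtration from a filtration together with a filtration of each of its subquotients, recalled in the Remark following \Cref{monoidal category of flags and associated gradeds} --- reproduces precisely the full filtration $x_{01}\hookrightarrow\dots\hookrightarrow x_{0n}$, with its tautological identification of associated graded with the diagonal $(x_{(i-1)i})_{i\in I}$, and with underlying order preserving map the unique map $I\to\ast$. By \Cref{morphism in monoidal category} this is exactly $\phi_x$, so $\phi_x=\phi_{\kappa_\ast x}\circ\phi_{\sigma_\ast x}$. The step that needs care, and which I expect to be the main obstacle, is this last identification: one must check that the merging construction is compatible with the \emph{chosen} subquotients built into the $S_\bullet$-diagram (so that the intermediate subquotients appearing along the way are the $x_{st}$ on the nose and the associated-graded identifications agree), which is a matter of tracking the coherence in \cite[Construction 7.14 and Definition 7.7]{ClausenOrsnesJansen}.
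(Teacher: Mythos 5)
Your proposal is correct and follows essentially the same route as the paper: reduce to $K=\ast$ by concatenation, explicitly identify $\kappa_*x$ and $\sigma_*x$ as subdiagrams of $x$ (your $b_j$ is the paper's $\bar{\theta}(j)$), read off the three boundary identifications, and verify $\phi_x=\phi_{\kappa_*x}\circ\phi_{\sigma_*x}$ via the merging-of-flags composition. The paper leaves that last verification as ``straightforward to check,'' so your explicit flagging of the compatibility with the chosen subquotients is a reasonable (and honest) account of where the remaining bookkeeping lives.
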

\begin{proof}
If $I=\emptyset$, then $x=0$ and $\kappa_*x$, respectively $\sigma_*x$ are filtrations $0\rightarrowtail \cdots \rightarrowtail 0$ indexed over $J$, respectively $K$. The maps $\phi_x$, $\phi_{\kappa_*x}$ and $\phi_{\sigma_*x}$ are thus uniquely determined by the underlying map of posets and the claim follows. Assume now that $I\neq \emptyset$. We'll assume for notational ease that $K=\ast$ (the general case follows by concatenation over $K$). Writing $I=\{1<\cdots <n\}$, the element $x$ is as described in diagram (\ref{element in Sdot}).

Write $J=\{1<\cdots <m\}$ and define
\begin{align*}
\bar{\theta}\colon \{0\}\cup J\rightarrow \{0\}\cup I,\qquad j\mapsto\begin{cases}
0 & j=0 \\
\mathop{\max}_{l\leq j} \theta^{-1}(l) & j\in J \\
\end{cases}
\end{align*}
where on both sides $0$ denotes an adjoined minimal element (see also \S \ref{Lawvere theory}). Then $\kappa_*x$ is given by the diagram obtained from (\ref{element in Sdot}) by picking out the elements $x_{\bar{\theta}(j)\bar{\theta}(k)}$ for all $j<k$ in $\{0\}\cup J$. By definition, we have $\bar{\theta}(m)=n$ and hence the element in the top right hand corner of the diagrams of $x$ and $\kappa_*x$ coincide, i.e. $\mathfrak{t}(x)=(x_{0n})=\mathfrak{t}(\kappa_*x)$. The morphism 
\begin{align*}
\phi_{\kappa_*x}\colon (x_{\bar{\theta}(0)\bar{\theta}(1)},x_{\bar{\theta}(1)\bar{\theta}(2)},\ldots,x_{\bar{\theta}(m-1)\bar{\theta}(m)})\longrightarrow (x_{0n})
\end{align*}
is given by the filtration
\begin{align*}
x_{0\bar{\theta}(1)}\hookrightarrow x_{0\bar{\theta}(2)}\hookrightarrow \ldots\hookrightarrow x_{0\bar{\theta}(m)}=x_{0n}
\end{align*}
with the identification of the associated graded given by the appropriate maps in diagram (\ref{element in Sdot}); in other words, it is the subfiltration of the upper horizontal line of $x$ picking out the elements indexed via $\bar{\theta}$ with associated graded given by the chosen subquotients.

The element $\sigma_*x$ in $\mathbf{S}(I_J)=\prod_{j\in J} S_{|\theta^{-1}(j)|}$ is given by the subdiagrams 
\begin{align*}
(\sigma_*x)^j_{st}=x_{st}, \quad  s<t\text{ in } \{\bar{\theta}(j-1)<\cdots < \bar{\theta}(j)\}.
\end{align*}
In particular, we see that
\begin{align*}
\mathfrak{s}(\sigma_*x)&=(x_{01},x_{12},\ldots,x_{(n-1)n})= \mathfrak{s}(x), \\
\mathfrak{t}(\sigma_*x)&=(x_{\bar{\theta}(0)\bar{\theta}(1)},x_{\bar{\theta}(1)\bar{\theta}(2)},\ldots,x_{\bar{\theta}(m-1)\bar{\theta}(m)})=\mathfrak{s}(\kappa_*x),
\end{align*}
and the morphism
\begin{align*}
\phi_{\sigma_*x}\colon (x_{01},x_{12},\ldots,x_{(n-1)n}) \longrightarrow (x_{\bar{\theta}(0)\bar{\theta}(1)},x_{\bar{\theta}(1)\bar{\theta}(2)},\ldots,x_{\bar{\theta}(m-1)\bar{\theta}(m)})
\end{align*}
is given by the filtrations
\begin{align*}
x_{\bar{\theta}(j-1)(\bar{\theta}(j-1)+1)}\hookrightarrow x_{\bar{\theta}(j-1)(\bar{\theta}(j)+2)} \hookrightarrow \cdots \hookrightarrow x_{\bar{\theta}(j-1)(\bar{\theta}(j)-1)} \hookrightarrow x_{\bar{\theta}(j-1)\bar{\theta}(j)}
\end{align*}
for $j=1,\ldots,m$ together with the identification of the associated gradeds given by the diagram (\ref{element in Sdot}). It is now straightforward to check that $\phi_x=\phi_{\kappa_*x}\circ \phi_{\sigma_*x}$.
\end{proof}

Let us write out a small example to illustrate the factorisation in the lemma above.

\begin{example}\label{example of factorisation}
Consider the maps
\begin{align*}
I=\{1<2<3<4<5<6\} \xrightarrow{\theta} J=\{1<3<6\} \rightarrow K=\ast
\end{align*}
given by  $\theta^{-1}(1)=\{1\}$, $\theta^{-1}(3)=\{2<3\}$, and $\theta^{-1}(6)=\{4<5<6\}$. Then the map $\bar{\theta}\colon \{0<1<3<6\}\rightarrow \{0<1<\cdots <6\}$ is just the inclusion $i\mapsto i$. Let $x$ be an element in $\mathbf{S}(I_\ast)=S_6$ as below. Using the notation of the previous lemma, the element $\kappa_*x$ is the subdiagram given by the red nodes in the diagram above and $\sigma_*x$ consists of the three subdiagrams singled out by the boxed nodes.

\begin{center}
\begin{tikzpicture}
\matrix (m) [matrix of math nodes,row sep=1em,column sep=2em]
{
\boxed{\color{red} x_{01}}  & x_{02} & \color{red} x_{03} & x_{04} & x_{05} & \color{red} x_{06} \\
		& \boxed{x_{12}} & \boxed{\color{red} x_{13}} & x_{14} & x_{15} & \color{red} x_{16} \\
		&		 & \boxed{x_{23}} & x_{24} & x_{25} & x_{26} \\
		&		 &		  & \boxed{x_{34}} & \boxed{x_{35}} & \boxed{\color{red} x_{36}} \\
		&		 &		  &		   & \boxed{x_{45}} & \boxed{x_{46}} \\
		&		 &		  &		   &	    & \boxed{x_{56}} \\
};
\path[>->]
(m-1-1) edge (m-1-2)
(m-1-2) edge (m-1-3)
(m-1-3) edge (m-1-4)
(m-1-4) edge (m-1-5)
(m-1-5) edge (m-1-6)
(m-2-2) edge (m-2-3)
(m-2-3) edge (m-2-4)
(m-2-4) edge (m-2-5)
(m-2-5) edge (m-2-6)
(m-3-3) edge (m-3-4)
(m-3-4) edge (m-3-5)
(m-3-5) edge (m-3-6)
(m-4-4) edge (m-4-5)
(m-4-5) edge (m-4-6)
(m-5-5) edge (m-5-6)
;
\path[->>]
(m-1-2) edge (m-2-2)
(m-1-3) edge (m-2-3)
(m-1-4) edge (m-2-4)
(m-1-5) edge (m-2-5)
(m-1-6) edge (m-2-6)
(m-2-3) edge (m-3-3)
(m-2-4) edge (m-3-4)
(m-2-5) edge (m-3-5)
(m-2-6) edge (m-3-6)
(m-3-4) edge (m-4-4)
(m-3-5) edge (m-4-5)
(m-3-6) edge (m-4-6)
(m-4-5) edge (m-5-5)
(m-4-6) edge (m-5-6)
(m-5-6) edge (m-6-6)
;
\end{tikzpicture}
\end{center}
Pulling the flags of $\sigma_*x$ up along the three epimorphisms
\begin{align*}
x_{01}\twoheadrightarrow x_{01},\qquad x_{03}\twoheadrightarrow x_{13},\qquad x_{06}\twoheadrightarrow x_{36},
\end{align*}
exactly recovers the upper horizontal line, that is, the filtration specified by the object $x$. This is the claimed commutativity: $\phi_x=\phi_{\kappa_*x}\circ \phi_{\sigma_*x}$.
\end{example}

We apply this to collapse maps in $\mathcal{J}$.

\begin{lemma}
Let $\kappa=(\theta,\operatorname{id})\colon I_P\rightarrow J_P$ be a collapse map in $\mathcal{J}$ and $x$ an object of $\mathbf{S}(I_P)$. Denote by $\kappa_*x$ the image of $x$ under the map $\mathbf{S}(\kappa)$. There exists a canonical morphism $\mathfrak{s}_{\kappa,x}\colon \mathfrak{s}(x) \rightarrow \mathfrak{s}(\kappa_*x)$ in $\accentset{\sim}{\mathscr{M}}_{\operatorname{RBS}}(\mathcal{C})$ such that the following diagram commutes.
\begin{center}
\begin{tikzpicture}
\matrix (m) [matrix of math nodes,row sep=2em,column sep=3em]
{
\mathfrak{t}(x) & \mathfrak{t}(\kappa_*x)  \\
\mathfrak{s}(x) & \mathfrak{s}(\kappa_*x) \\
};
\path[-stealth]
(m-2-1) edge node[left]{$\phi_x$} (m-1-1)
(m-2-2) edge node[right]{$\phi_{\kappa_*x}$} (m-1-2)
(m-2-1) edge node[below]{$\mathfrak{s}_{\kappa,x}$} (m-2-2)
;
\path[-]
(m-1-2) edge[double equal sign distance] (m-1-1)
;
\end{tikzpicture}
\end{center}
\end{lemma}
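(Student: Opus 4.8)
The plan is to obtain this as an essentially formal consequence of \Cref{factorising a morphism in MC}, which already contains all the geometric content. The point is that a collapse map carries with it exactly the factorisation data needed to invoke that lemma.

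First I would unpack what it means for $\kappa=(\theta,\operatorname{id}_P)\colon I_P\to J_P$ to be a morphism of $\mathcal{J}=\operatorname{Tw}(\operatorname{Ord})^{\operatorname{op}}$: writing $s\colon I\to P$ and $s'\colon J\to P$ for the partitioning maps of the two objects, the commuting square defining $\kappa$ says precisely that $s=s'\circ\theta$. This places us in the situation of \Cref{factorising a morphism in MC} with $K:=P$, with the role of $\rho$ there played by $s'$, and with the attendant splitting map $\sigma:=(\operatorname{id}_I,s')\colon I_P\to I_J$, where $I_J$ denotes the object $\theta\colon I\to J$ of $\mathcal{J}$ (the equality $s=s'\circ\theta$ is exactly what makes $\sigma$ a morphism of $\mathcal{J}$, so $\sigma_\ast x:=\mathbf{S}(\sigma)(x)$ is defined). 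I would then \emph{define} $\mathfrak{s}_{\kappa,x}:=\phi_{\sigma_\ast x}$. Since $s'$, $\sigma$, and hence $\mathfrak{s}_{\kappa,x}$ are determined by the data of $\kappa$ and $x$ alone, this is canonical, and it is functorial in $x$ with respect to isomorphisms of $\mathbf{S}(I_P)$ by \Cref{isomorphisms to isomorphisms}.

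It then remains only to check that $\mathfrak{s}_{\kappa,x}$ has the stated source and target and that the square commutes, and both fall straight out of \Cref{factorising a morphism in MC}: the proof of that lemma records the identities $\mathfrak{s}(\sigma_\ast x)=\mathfrak{s}(x)$, $\mathfrak{t}(\sigma_\ast x)=\mathfrak{s}(\kappa_\ast x)$ and $\mathfrak{t}(x)=\mathfrak{t}(\kappa_\ast x)$ (the last being the equality running along the top of the square), and it supplies the factorisation $\phi_x=\phi_{\kappa_\ast x}\circ\phi_{\sigma_\ast x}=\phi_{\kappa_\ast x}\circ\mathfrak{s}_{\kappa,x}$, which is precisely the asserted commutativity.

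There is no genuine obstacle here; the content is a bookkeeping translation of \Cref{factorising a morphism in MC}. The only points requiring care are matching the partitioning maps correctly — in particular noticing that the intermediate object $I_J$ must be equipped with the partitioning map $\theta$ so that $\sigma$ really is a morphism of $\mathcal{J}$ — and, exactly as in the proof of \Cref{factorising a morphism in MC}, handling the general case by reducing to $P=\ast$ and concatenating over $P$ via $\circledast$.
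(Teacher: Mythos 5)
Your proposal is correct and follows the paper's proof essentially verbatim: the paper likewise applies \Cref{factorising a morphism in MC} to the sequence $I\xrightarrow{\theta}J\xrightarrow{r}P$ (your $s'$ is the paper's $r$), defines $\sigma=(\operatorname{id}_I,r)\colon I_P\to I_J$, and sets $\mathfrak{s}_{\kappa,x}:=\phi_{\sigma_*x}$. The bookkeeping identifications of sources and targets you cite are exactly those recorded in the proof of \Cref{factorising a morphism in MC}.
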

\begin{proof}
Write out $s\colon I\rightarrow P$ and $r\colon J\rightarrow P$.
Consider the object $\theta\colon I\rightarrow J$ in $\mathcal{J}$ and the morphism $\sigma:=(\operatorname{id}_I, r)\colon I_P\rightarrow I_J$ in $\mathcal{J}$. The morphism
\begin{align*}
\mathfrak{s}_{\kappa,x} := \phi_{\sigma_*x}\colon \mathfrak{s}(x)\rightarrow \mathfrak{t}(\kappa_*x)
\end{align*}
fits into the diagram by \Cref{factorising a morphism in MC}.
\end{proof}

Now for the splitting morphisms.

\begin{lemma}
Let $\sigma=(\operatorname{id},\rho)\colon I_P\rightarrow I_Q$ be a splitting map in $\mathcal{J}$ and $x$ an object of $\mathbf{S}(I_P)$. Denote by $\sigma_*x$ the image of $x$ under $\mathbf{S}(\sigma)$. There is a canonical morphism $\mathfrak{t}_{\kappa,x}\colon \mathfrak{t}(\sigma_*x)\rightarrow \mathfrak{t}(x)$ in $\accentset{\sim}{\mathscr{M}}_{\operatorname{RBS}}(\mathcal{C})$ such that the following diagram commutes.
\begin{center}
\begin{tikzpicture}
\matrix (m) [matrix of math nodes,row sep=2em,column sep=3em]
{
\mathfrak{t}(x) & \mathfrak{t}(\sigma_*x)  \\
\mathfrak{s}(x) & \mathfrak{s}(\sigma_*x) \\
};
\path[-stealth]
(m-2-1) edge node[left]{$\phi_x$} (m-1-1)
(m-2-2) edge node[right]{$\phi_{\sigma_*x}$} (m-1-2)
(m-1-2) edge node[above]{$\mathfrak{t}_{\sigma,x}$} (m-1-1)
;
\path[-]
(m-2-2) edge[double equal sign distance] (m-2-1)
;
\end{tikzpicture}
\end{center}
\end{lemma}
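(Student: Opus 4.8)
The plan is to reduce this to \Cref{factorising a morphism in MC} in precisely the manner dual to the collapse-map lemma, using the \emph{other} of the two factors it produces. Write $s\colon I\to P$ and $s'\colon I\to Q$ for the partitioning maps of the source $I_P$ and target $I_Q$ of $\sigma$; since $\sigma=(\operatorname{id}_I,\rho)$ is a morphism in $\mathcal{J}$, the defining square commutes, so $s=\rho\circ s'$. The key observation is that the pair of order preserving maps $I\xrightarrow{s'}Q\xrightarrow{\rho}P$ is exactly of the shape fed into \Cref{factorising a morphism in MC} (with ``$\theta$''${}=s'$, ``$J$''${}=Q$, ``$K$''${}=P$): the splitting morphism it produces, $(\operatorname{id}_I,\rho)\colon I_P\to I_Q$, is literally our $\sigma$, while the associated collapse morphism is $\kappa:=(s',\operatorname{id}_P)\colon I_P\to Q_P$, where $Q_P$ denotes $Q$ equipped with the partition $\rho\colon Q\to P$. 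That $\kappa$ is a well-formed collapse map in $\mathcal{J}$ is precisely the identity $s=\rho\circ s'$.

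First I would unwind $\kappa_*x:=\mathbf{S}(\kappa)(x)\in\mathbf{S}(Q_P)$: tracing through the functor $\zeta$ of \Cref{J and monoids and Ord}, $\mathbf{S}(\kappa)$ restricts each diagram $x^p$ as in (\ref{element in Sdot}) to its subdiagram on the $Q$-block boundaries inside the $p$-th $P$-block, so $\kappa_*x$ is the coarsening of $x$ that remembers only the $Q$-indexed subobjects. I would then set
\begin{align*}
\mathfrak{t}_{\sigma,x}:=\phi_{\kappa_*x}\colon \mathfrak{s}(\kappa_*x)\rightarrow \mathfrak{t}(\kappa_*x).
\end{align*}
The source/target bookkeeping that falls out of the proof of \Cref{factorising a morphism in MC} gives $\mathfrak{s}(\kappa_*x)=\mathfrak{t}(\sigma_*x)$ and $\mathfrak{t}(\kappa_*x)=\mathfrak{t}(x)$, so $\mathfrak{t}_{\sigma,x}$ is indeed a morphism $\mathfrak{t}(\sigma_*x)\to\mathfrak{t}(x)$; similarly $\mathfrak{s}(\sigma_*x)=\mathfrak{s}(x)$, so the left-hand vertical edge of the square in the statement is a genuine equality.

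Finally, the commutativity asserted in the statement, namely $\phi_x=\mathfrak{t}_{\sigma,x}\circ\phi_{\sigma_*x}=\phi_{\kappa_*x}\circ\phi_{\sigma_*x}$, is exactly the conclusion of \Cref{factorising a morphism in MC} applied to the data $I\xrightarrow{s'}Q\xrightarrow{\rho}P$. I do not expect a real obstacle here; the content lies entirely in setting up the auxiliary factorisation, and the only point demanding a moment's care is verifying that the element ``$\sigma_*x$'' occurring in \Cref{factorising a morphism in MC} for this data agrees on the nose with the $\sigma_*x$ of the present statement --- which it does, since the splitting map in that lemma is literally $\sigma$ and both are computed as $\mathbf{S}(\sigma)(x)$. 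As in the collapse case, naturality of $\mathfrak{t}_{\sigma,x}$ in $x$ (if needed later) is immediate from the functoriality of $x\mapsto\phi_x$ recorded in \Cref{isomorphisms to isomorphisms}.
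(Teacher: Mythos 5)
Your proof is correct and is essentially identical to the paper's: both apply \Cref{factorising a morphism in MC} to the composite $I\xrightarrow{r}Q\xrightarrow{\rho}P$ (your $s'$ is the paper's $r$), observe that the resulting splitting map is literally $\sigma$ and the resulting collapse map is $\kappa=(r,\operatorname{id}_P)\colon I_P\to Q_P$, and set $\mathfrak{t}_{\sigma,x}:=\phi_{\kappa_*x}$. (Only a cosmetic slip: the equality $\mathfrak{s}(x)=\mathfrak{s}(\sigma_*x)$ is the bottom horizontal edge of the square, not a vertical one.)
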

\begin{proof}
Write out $s\colon I\rightarrow P$ and $r\colon I\rightarrow Q$. Consider the object $\rho\colon Q\rightarrow P$ in $\mathcal{I}$ and the morphism $\kappa:=(r,\operatorname{id}_P)\colon I_P\rightarrow Q_P$. The morphism
\begin{align*}
\mathfrak{t}_{\sigma,x}:=\phi_{\kappa_*x}\colon \mathfrak{t}(\sigma_*x)\rightarrow \mathfrak{t}(x)
\end{align*}
fits into the diagram by \Cref{factorising a morphism in MC}.
\end{proof}

\begin{corollary}\label{morphism in twisted arrow category}
Let $\tau=(\theta,\rho)\colon I_P\rightarrow J_Q$ be a morphism in $\mathcal{J}$ and $x$ an object of $\mathbf{S}(I_P)$. There is a canonical commutative diagram in $\accentset{\sim}{\mathscr{M}}_{\operatorname{RBS}}(\mathcal{C})$ as below where $\tau_*x$ denotes the image of $x$ under the map $\mathbf{S}(\tau)$.
\begin{center}
\begin{tikzpicture}
\matrix (m) [matrix of math nodes,row sep=2em,column sep=3em]
{
\mathfrak{t}(x) & \mathfrak{t}(\tau_*x)  \\
\mathfrak{s}(x) & \mathfrak{s}(\tau_*x) \\
};
\path[-stealth]
(m-2-1) edge node[left]{$\phi_x$} (m-1-1)
(m-2-2) edge node[right]{$\phi_{\tau_*x}$} (m-1-2)
(m-2-1) edge node[below]{$\mathfrak{s}_{\tau,x}$} (m-2-2)
(m-1-2) edge node[above]{$\mathfrak{t}_{\tau,x}$} (m-1-1)
;
\end{tikzpicture}
\end{center}
In other words, $\tau$ and $x$ define a canonical morphism $\hat{\tau}\colon \phi_{\tau_*x}\rightarrow\phi_x$ in the twisted arrow category $\operatorname{Tw}(\accentset{\sim}{\mathscr{M}}_{\operatorname{RBS}}(\mathcal{C}))$. Moreover, this association is functorial: if $\nu\colon J_Q\rightarrow K_W$ is another morphism in $\mathcal{J}$, then the following diagram commutes in $\operatorname{Tw}(\accentset{\sim}{\mathscr{M}}_{\operatorname{RBS}}(\mathcal{C}))$.
\begin{center}
\begin{tikzpicture}
\matrix (m) [matrix of math nodes,row sep=2em,column sep=3em]
{
\phi_x & \phi_{\tau_*x} \\
 & \phi_{\nu_*\tau_*x} \\
};
\path[-stealth]
(m-1-2) edge node[above]{$\hat{\tau}$} (m-1-1)
(m-2-2) edge node[below left]{$\widehat{\nu\circ \tau}$} (m-1-1) edge node[right]{$\hat{\nu}$} (m-1-2)
;
\end{tikzpicture}
\end{center}
\end{corollary}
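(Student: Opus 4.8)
The plan is to deduce the corollary from \Cref{factorising a morphism in MC} together with the two preceding lemmas (on collapse and splitting maps), by factoring $\tau$ as a collapse map followed by a splitting map. Writing $r\colon J\to Q$ for the partitioning map of $J_Q$, the morphism $\tau=(\theta,\rho)\colon I_P\to J_Q$ factors in $\mathcal{J}$ as $I_P\xrightarrow{\kappa}J_P\xrightarrow{\sigma}J_Q$, where $\kappa=(\theta,\operatorname{id}_P)$ is a collapse map, $\sigma=(\operatorname{id}_J,\rho)$ is a splitting map, and $J_P$ is equipped with the partitioning map $\rho\circ r$; that these are composable with composite $\tau$ is immediate from the composition rule in $\mathcal{J}$. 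Since $\mathbf{S}$ is a functor, $\tau_\ast x=\sigma_\ast(\kappa_\ast x)$.

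First I would apply the collapse-map lemma to $\kappa$ and $x$ to obtain a canonical morphism $\mathfrak{s}_{\kappa,x}\colon\mathfrak{s}(x)\to\mathfrak{s}(\kappa_\ast x)$ with $\mathfrak{t}(x)=\mathfrak{t}(\kappa_\ast x)$ and $\phi_x=\phi_{\kappa_\ast x}\circ\mathfrak{s}_{\kappa,x}$, and the splitting-map lemma to $\sigma$ and $\kappa_\ast x$ to obtain a canonical morphism $\mathfrak{t}_{\sigma,\kappa_\ast x}\colon\mathfrak{t}(\tau_\ast x)\to\mathfrak{t}(\kappa_\ast x)$ with $\mathfrak{s}(\tau_\ast x)=\mathfrak{s}(\kappa_\ast x)$ and $\phi_{\kappa_\ast x}=\mathfrak{t}_{\sigma,\kappa_\ast x}\circ\phi_{\tau_\ast x}$. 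Setting $\mathfrak{s}_{\tau,x}:=\mathfrak{s}_{\kappa,x}$ and $\mathfrak{t}_{\tau,x}:=\mathfrak{t}_{\sigma,\kappa_\ast x}$ and stacking the two commuting squares yields $\phi_x=\mathfrak{t}_{\tau,x}\circ\phi_{\tau_\ast x}\circ\mathfrak{s}_{\tau,x}$, which is precisely the asserted commutative square; it thus defines the morphism $\hat\tau\colon\phi_{\tau_\ast x}\to\phi_x$ in $\operatorname{Tw}(\accentset{\sim}{\mathscr{M}}_{\operatorname{RBS}}(\mathcal{C}))$. The degenerate case $I=\emptyset$ is already covered by the two lemmas.

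It then remains to check functoriality: for $\nu=(\theta',\rho')\colon J_Q\to K_W$ one wants $\widehat{\nu\circ\tau}=\hat\tau\circ\hat\nu$ as morphisms $\phi_{(\nu\circ\tau)_\ast x}\to\phi_x$. Since a morphism in the twisted arrow category of a $1$-category is nothing but a pair consisting of a source-leg and a target-leg, this amounts to the two identities $\mathfrak{s}_{\nu\circ\tau,\,x}=\mathfrak{s}_{\nu,\,\tau_\ast x}\circ\mathfrak{s}_{\tau,x}$ and $\mathfrak{t}_{\nu\circ\tau,\,x}=\mathfrak{t}_{\tau,x}\circ\mathfrak{t}_{\nu,\,\tau_\ast x}$. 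Unwinding the construction, each of the three maps in the first identity has the form $\phi_{(-)_\ast x}$ for a suitable splitting map out of $I_P$ (the one produced by the collapse-map lemma); rewriting the iterated pushforwards of $x$ via functoriality of $\mathbf{S}$ and the composition rule in $\mathcal{J}$ — so that the composite of the two splitting maps coming from $\tau$ and from $\nu$ is the single splitting map coming from $\nu\circ\tau$, and analogously for the collapse maps — reduces the identity to the instance $\phi_y=\phi_{\kappa_\ast y}\circ\phi_{\sigma_\ast y}$ of \Cref{factorising a morphism in MC} applied to the two-step composite $I\xrightarrow{\theta}J\xrightarrow{\theta'}K$. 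The second identity is entirely dual, using the collapse-map half of \Cref{factorising a morphism in MC}.

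All the genuinely geometric input — that a simplex of the $S_\bullet$-construction together with the combinatorics of a morphism of $\mathcal{J}$ produces compatible filtrations — is already absorbed into \Cref{factorising a morphism in MC}, so I expect the only real obstacle to be the bookkeeping in the functoriality step: carefully tracking which collapse and splitting maps, and which iterated pushforwards of $x$, appear, and verifying they match under composition in $\mathcal{J}$. This is purely formal but notationally heavy, and I would organise it by first recording the relevant composites in $\mathcal{J}$ explicitly and only then invoking \Cref{factorising a morphism in MC}.
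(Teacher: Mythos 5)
Your proposal is correct and follows essentially the same route as the paper: factor $\tau$ as the collapse map $(\theta,\operatorname{id}_P)$ followed by the splitting map $(\operatorname{id}_J,\rho)$, stack the two lemmas to get the square, and for functoriality reduce the identities $\mathfrak{s}_{\nu\circ\tau}=\mathfrak{s}_{\nu}\circ\mathfrak{s}_{\tau}$ and $\mathfrak{t}_{\nu\circ\tau}=\mathfrak{t}_{\tau}\circ\mathfrak{t}_{\nu}$ to instances of \Cref{factorising a morphism in MC} applied to $I\xrightarrow{\theta}J\xrightarrow{\theta'}K$ and (dually) $W\xrightarrow{\rho'}Q\xrightarrow{\rho}P$. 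The paper additionally remarks that the opposite factorisation (splitting then collapse) yields the same maps, but this is not needed for the statement itself.
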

\begin{proof}
To define $\hat{\tau}$, simply factorise $\tau$ as a collapse map followed by a splitting map, \begin{align*}
I_P\xrightarrow{(\theta,\operatorname{id})} J_P\xrightarrow{(\operatorname{id},\rho)} J_Q,
\end{align*}
and apply the two previous lemmas. Tracing through the definitions, it is not difficult to see that applying the two previous lemmas to the factorisation $I_P\xrightarrow{(\operatorname{id},\rho)}I_Q\xrightarrow{(\theta, \operatorname{id})}J_Q$ instead yields the same maps $\mathfrak{s}_{\tau,x}$ and $\mathfrak{t}_{\tau,x}$.

The claim about functoriality is a straightforward application of \Cref{factorising a morphism in MC}. Write $\tau=(\theta, \rho)$ and $\nu=(\theta',\rho')$ and write out the objects $s\colon I\rightarrow P$, $r\colon J\rightarrow Q$, and $t\colon K\rightarrow W$. We have to show that $\mathfrak{s}_{\nu\circ \tau}=\mathfrak{s}_{\nu}\circ \mathfrak{s}_{\tau}$ and $\mathfrak{t}_{\nu\circ\tau}= \mathfrak{t}_{\tau}\circ \mathfrak{t}_{\nu}$ (with the correct dependency on the elements $x$ and $\tau_*x$ that we omit for notational ease). We have
\begin{align*}
\mathfrak{s}_{\tau}=\phi_{\sigma_*x},\qquad \mathfrak{s}_{\nu}=\phi_{\sigma'_*\tau_*x}\qquad\text{and}\qquad \mathfrak{s}_{\nu\circ\tau}=\phi_{\sigma''_*x}
\end{align*}
for the splitting maps
\begin{align*}
\sigma=(\operatorname{id}_I,\rho\circ r)\colon I_P\rightarrow I_J,\quad\sigma'=(\operatorname{id}_J,\rho'\circ t)\colon J_Q\rightarrow J_K,\quad\sigma''=(\operatorname{id}_I, \rho\circ\rho'\circ t)\colon I_P\rightarrow I_K.
\end{align*}
Noting that
\begin{align*}
\sigma= (\operatorname{id}_I,\theta')\circ \sigma''\qquad\text{and}\qquad \sigma'\circ\tau=(\theta,\operatorname{id}_K)\circ \sigma'',
\end{align*}
the desired identification $\phi_{\sigma''_*x} =\phi_{\sigma'_*\tau_*x}\circ\phi_{\sigma_*x}$ follows by applying \Cref{factorising a morphism in MC} to the sequence of maps $I\xrightarrow{\theta}J\xrightarrow{\theta'} K$ and the element $y:=\sigma''_*x$. For the claim about the $\mathfrak{t}$-morphisms, we note that
\begin{align*}
\mathfrak{t}_{\tau}=\phi_{\kappa_*x},\qquad \mathfrak{t}_{\nu}=\phi_{\kappa'_*\tau_*x}\qquad\text{and}\qquad \mathfrak{t}_{\nu\circ \tau}=\phi_{\kappa''_*x}
\end{align*}
for the collapse maps
\begin{align*}
\kappa=(r\circ\theta, \operatorname{id}_P)\colon I_P\rightarrow Q_P,\quad\kappa'=(t\circ \theta',\operatorname{id}_Q)\colon J_Q\rightarrow W_Q,\quad\kappa''=(t\circ\theta'\circ\theta,\operatorname{id}_P)\colon I_P\rightarrow W_P.
\end{align*}
Then note that
\begin{align*}
\kappa=(\rho',\operatorname{id}_P)\circ \kappa'',\qquad\text{and}\qquad \kappa'\circ \tau=(\operatorname{id}_W, \rho)\circ \kappa''
\end{align*}
and apply \Cref{factorising a morphism in MC} to the sequence of maps $W\xrightarrow{\rho'}Q\xrightarrow{\rho} P$ and the element $z=\kappa''_*x$.
\end{proof}

\begin{remark}\label{factorisation diagrammatically}
Let's write out the factorisation $\phi_x=\mathfrak{t}_{\tau,x}\circ \phi_{\tau_*x}\circ \mathfrak{s}_{\tau,x}$ diagrammatically to give the reader a better picture of what's going on. Let $\tau$ be a morphism in $\mathcal{J}$ as below
\begin{center}
\begin{tikzpicture}
\matrix (m) [matrix of math nodes,row sep=2em,column sep=3em]
{
I & J \\
P=\ast & Q \\
};
\path[-stealth]
(m-1-1) edge node[above]{$\theta$} (m-1-2)
(m-1-1) edge node[left]{$s$} (m-2-1)
(m-1-2) edge node[right]{$r$} (m-2-2)
(m-2-2) edge node[below]{$\rho$} (m-2-1)
;
\end{tikzpicture}
\end{center}
and let $x\in \mathbf{S}(I_P)\simeq S_{|I|}$, i.e. $x$ is a diagram $x_{ij}$ for $i<j$ in $0\cup I$ as in (\ref{element in Sdot}). Tracing through the definitions, one sees that
\begin{itemize}
\item the morphism $\mathfrak{t}_{\tau,x}$ is given by the single subdiagram $x_{\overline{r\theta}(i)\overline{r\theta}(j)}$ with $i<j$ in $0\cup Q$;
\item the morphism $\phi_{\tau_*x}$ is given by the ordered collection of subdiagrams $x_{\bar{\theta}(i)\bar{\theta}(j)}$ with $i<j$ in $\{\bar{r}(q-1)<\cdots <\bar{r}(q)\}\subset J$, $q\in Q$;
\item the morphism $\mathfrak{s}_{\tau,x}$ is given by the ordered collection of subdiagrams $x_{ij}$ with $i<j$ in $\{\bar{\theta}(l-1)<\cdots <\bar{\theta}(l)\}\subset I$, $l\in J$.
\end{itemize}

Let's make this even more explicit: suppose $\tau$ is morphism
\begin{center}
\begin{tikzpicture}
\matrix (m) [matrix of math nodes,row sep=2em,column sep=3em]
{
I=\{1<2<3<4<5<6<7<8\} & J=\{2<5<6<8\} \\
P=\{8\} & Q=\{5<8\} \\
};
\path[-stealth]
(m-1-1) edge node[above]{$\theta$} (m-1-2)
(m-1-1) edge node[left]{$s$} (m-2-1)
(m-1-2) edge node[right]{$r$} (m-2-2)
(m-2-2) edge node[below]{$\rho$} (m-2-1)
;
\end{tikzpicture}
\end{center}
whose complementary diagram is given by the inclusions
\begin{center}
\begin{tikzpicture}
\matrix (m) [matrix of math nodes,row sep=2em,column sep=3em]
{
\{0<1<2<3<4<5<6<7<8\} & \{0<2<5<6<8\} \\
\{0<8\} & \{0<5<8\} \\
};
\path[right hook-stealth]
(m-2-1) edge node[left]{$\bar{s}$} (m-1-1)
(m-2-2) edge node[right]{$\bar{r}$} (m-1-2)
(m-2-1) edge node[below]{$\bar{\rho}$} (m-2-2)
;
\path[left hook-stealth]
(m-1-2) edge node[above]{$\bar{\theta}$} (m-1-1)
;
\end{tikzpicture}
\end{center}

Writing out the diagram $x$ as below, we see that
\begin{itemize}
\item the morphism $\mathfrak{t}_{\tau,x}$ is given by the subdiagram picked out by the blue nodes;
\item the morphism $\phi_{\tau_*x}$ is given by the two subdiagrams picked out by the boxed nodes;
\item the morphism $\mathfrak{s}_{\tau,x}$ is given by the four subdiagrams picked out by the red nodes.
\end{itemize}

\begin{center}
\begin{tikzpicture}
\matrix (m) [matrix of math nodes,row sep=1em,column sep=2em]
{
\color{red} x_{01} &  \boxed{ \color{red} x_{02}} & x_{03} & x_{04} & \boxed{ \color{blue}x_{05}} & x_{06} & x_{07} & \color{blue} x_{08} \\
		& \color{red} x_{12} & x_{13} & x_{14} & x_{15} & x_{16} & x_{17} & x_{18} \\
		&		 & \color{red} x_{23} & \color{red} x_{24} & \boxed{ \color{red} x_{25}} & x_{26} & x_{27} & x_{28} \\
		&		 &		  & \color{red} x_{34} & \color{red} x_{35} & x_{36} & x_{37} & x_{38} \\
		&		 &		  &		   & \color{red} x_{45} & x_{46} & x_{47} & x_{48} \\
		&		 &		  &		   &	    & \boxed{ \color{red} x_{56}} & x_{57} & \boxed{ \color{blue}x_{58} }\\
		&		 &		  &		   &	    &		 & \color{red} x_{67} &\boxed{ \color{red}  x_{68}} \\
		&		 &		  &		   &	    & 		 &		  &\color{red}  x_{78} \\
};
\path[>->]
(m-1-1) edge (m-1-2)
(m-1-2) edge (m-1-3)
(m-1-3) edge (m-1-4)
(m-1-4) edge (m-1-5)
(m-1-5) edge (m-1-6)
(m-1-6) edge (m-1-7)
(m-1-7) edge (m-1-8)
(m-2-2) edge (m-2-3)
(m-2-3) edge (m-2-4)
(m-2-4) edge (m-2-5)
(m-2-5) edge (m-2-6)
(m-2-6) edge (m-2-7)
(m-2-7) edge (m-2-8)
(m-3-3) edge (m-3-4)
(m-3-4) edge (m-3-5)
(m-3-5) edge (m-3-6)
(m-3-6) edge (m-3-7)
(m-3-7) edge (m-3-8)
(m-4-4) edge (m-4-5)
(m-4-5) edge (m-4-6)
(m-4-6) edge (m-4-7)
(m-4-7) edge (m-4-8)
(m-5-5) edge (m-5-6)
(m-5-6) edge (m-5-7)
(m-5-7) edge (m-5-8)
(m-6-6) edge (m-6-7)
(m-6-7) edge (m-6-8)
(m-7-7) edge (m-7-8)
;
\path[->>]
(m-1-2) edge (m-2-2)
(m-1-3) edge (m-2-3)
(m-1-4) edge (m-2-4)
(m-1-5) edge (m-2-5)
(m-1-6) edge (m-2-6)
(m-1-7) edge (m-2-7)
(m-1-8) edge (m-2-8)
(m-2-3) edge (m-3-3)
(m-2-4) edge (m-3-4)
(m-2-5) edge (m-3-5)
(m-2-6) edge (m-3-6)
(m-2-7) edge (m-3-7)
(m-2-8) edge (m-3-8)
(m-3-4) edge (m-4-4)
(m-3-5) edge (m-4-5)
(m-3-6) edge (m-4-6)
(m-3-7) edge (m-4-7)
(m-3-8) edge (m-4-8)
(m-4-5) edge (m-5-5)
(m-4-6) edge (m-5-6)
(m-4-7) edge (m-5-7)
(m-4-8) edge (m-5-8)
(m-5-6) edge (m-6-6)
(m-5-7) edge (m-6-7)
(m-5-8) edge (m-6-8)
(m-6-7) edge (m-7-7)
(m-6-8) edge (m-7-8)
(m-7-8) edge (m-8-8)
;
\end{tikzpicture}
\end{center}

Essentially, we're just decomposing the diagram $x$ as in \Cref{example of factorisation} but with one more level. The diagram $x$ is recovered by pulling the red diagrams up along the epimorphisms of the boxed diagrams and thereafter pulling the resulting diagrams up along the epimorphisms of the blue diagram --- this is exactly the equality $\phi_x=\mathfrak{t}_{\tau,x}\circ \phi_{\tau_*x}\circ \mathfrak{s}_{\tau,x}$.
\end{remark}

As a converse to \Cref{morphism in twisted arrow category}, we have the following lemma saying that any morphism in the twisted arrow category is uniquely isomorphic to a morphism of the above form.

\begin{lemma}\label{converse to morphism in twisted arrow category}
Let $I_P$, $J_Q$ in $\mathcal{J}$, $x$ in $\mathbf{S}(I_P)$ and $y$ in $\mathbf{S}(J_Q)$, and suppose that $\mu\colon \phi_y\rightarrow \phi_x$ is a morphism in $\operatorname{Tw}(\accentset{\sim}{\mathscr{M}}_{\operatorname{RBS}}(\mathcal{C}))$. There is a unique morphism $\tau\colon I_P\rightarrow J_Q$ in $\mathcal{J}$ and a unique isomorphism $\beta\colon \tau_*x \xrightarrow{\cong} y$ in $\mathbf{S}(J_Q)$ such that the following diagram in $\operatorname{Tw}(\accentset{\sim}{\mathscr{M}}_{\operatorname{RBS}}(\mathcal{C}))$ commutes.
\begin{center}
\begin{tikzpicture}
\matrix (m) [matrix of math nodes,row sep=2em,column sep=2em]
{
\phi_y &  & \phi_{\tau_*x}  \\
& \phi_x & \\
};
\path[-stealth]
(m-1-1) edge node[below left]{$\mu$} (m-2-2)
(m-1-3) edge node[below right]{$\hat{\tau}$} (m-2-2)
(m-1-1) edge node[above]{$\bar{\beta}$} node[below]{$\cong$} (m-1-3)
;
\end{tikzpicture}
\end{center}
\end{lemma}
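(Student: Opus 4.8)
The plan is to read the combinatorial datum $\tau$ off the factorisation encoded by $\mu$, and then to use the rigidity supplied by \Cref{morphism in monoidal category} to produce and pin down the isomorphism $\beta$.

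First I would unpack $\mu$ as a pair of morphisms $\mathfrak{s}_\mu\colon \mathfrak{s}(x)\rightarrow\mathfrak{s}(y)$ and $\mathfrak{t}_\mu\colon \mathfrak{t}(y)\rightarrow\mathfrak{t}(x)$ in $\accentset{\sim}{\mathscr{M}}_{\operatorname{RBS}}(\mathcal{C})$ with $\phi_x=\mathfrak{t}_\mu\circ\phi_y\circ\mathfrak{s}_\mu$. Since the lists $\mathfrak{s}(x),\mathfrak{t}(x),\mathfrak{s}(y),\mathfrak{t}(y)$ are indexed by $I,P,J,Q$ respectively, the underlying order preserving maps of $\mathfrak{s}_\mu$ and $\mathfrak{t}_\mu$ are maps $\theta\colon I\rightarrow J$ and $\rho\colon Q\rightarrow P$; writing $s\colon I\rightarrow P$ and $r\colon J\rightarrow Q$ for the partitioning maps of $I_P$ and $J_Q$, and taking underlying maps on both sides of $\phi_x=\mathfrak{t}_\mu\circ\phi_y\circ\mathfrak{s}_\mu$ (recall that $\phi_z$ has underlying map the partitioning map of the corresponding object of $\mathcal{J}$), one gets $s=\rho\circ r\circ\theta$. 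Thus $\tau:=(\theta,\rho)$ is a well-defined morphism $I_P\rightarrow J_Q$ in $\mathcal{J}$. Moreover this $\tau$ is forced: in any factorisation $\mu=\hat\tau'\circ\bar\beta'$ as in the statement, $\bar\beta'$ is an isomorphism and hence contributes only identities on the underlying index sets, so the underlying maps of $\mu$ agree with those of $\hat\tau'$, which by \Cref{morphism in twisted arrow category} (together with the description of $\mathfrak{s}_{\tau',x},\mathfrak{t}_{\tau',x}$ via \Cref{factorising a morphism in MC}) are exactly the two components of $\tau'$.

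Next I would form $\tau_*x\in\mathbf{S}(J_Q)$ and the morphism $\hat\tau\colon \phi_{\tau_*x}\rightarrow\phi_x$ of \Cref{morphism in twisted arrow category}, giving a second factorisation $\phi_x=\mathfrak{t}_{\tau,x}\circ\phi_{\tau_*x}\circ\mathfrak{s}_{\tau,x}$ whose outer factors again have underlying maps $\theta$ and $\rho$. By \Cref{factorising a morphism in MC}, $\mathfrak{s}_{\tau,x}$ and $\mathfrak{t}_{\tau,x}$ are of the form $\phi_{\sigma_*x}$ and $\phi_{\kappa_*x}$ for an explicit splitting map $\sigma\colon I_P\rightarrow I_J$ and collapse map $\kappa\colon I_P\rightarrow Q_P$; dually, $\mathfrak{s}_\mu$ and $\mathfrak{t}_\mu$ are of the form $\phi_{(-)}$ for data over $I_J$, respectively $Q_P$, by \Cref{morphism in monoidal category}. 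Comparing the two factorisations of $\phi_x$ and repeatedly invoking \Cref{factorising a morphism in MC} to recognise the relevant composites as $\phi_{(-)}$'s, the uniqueness clause of \Cref{morphism in monoidal category} should yield a unique isomorphism $\nu\colon \phi_y\xrightarrow{\ \cong\ }\phi_{\tau_*x}$ in $\operatorname{Tw}(\accentset{\sim}{\mathscr{M}}_{\operatorname{RBS}}(\mathcal{C}))$ with $\hat\tau\circ\nu=\mu$. Finally, \Cref{isomorphisms to isomorphisms} identifies $\nu$ with $\bar\beta$ for a unique isomorphism $\beta\colon \tau_*x\xrightarrow{\ \cong\ }y$ in $\mathbf{S}(J_Q)$; uniqueness of $\beta$ follows from that of $\nu$.

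The hard part will be the comparison in the previous paragraph, which is effectively a cancellation statement. One has to show that, with the source list fixed, a splitting-type morphism $\phi_{\sigma_*x}$ out of $\mathfrak{s}(x)$ is rigid, in the sense that its target list together with the recorded filtrations is determined up to unique isomorphism, and dually for the collapse-type morphism into $\mathfrak{t}(x)$; this is what allows the outer factors $\mathfrak{s}_\mu$ and $\mathfrak{s}_{\tau,x}$ (respectively $\mathfrak{t}_\mu$ and $\mathfrak{t}_{\tau,x}$) to be cancelled over the common $\phi_x$. Concretely this is the bookkeeping of \Cref{morphism in twisted arrow category} run in reverse, and, as in the proofs above, the degenerate case $I=\emptyset$ — where $\phi_x$ is the unique map $\emptyset\rightarrow(0)_{p\in P}$ and the whole statement is forced by the underlying maps — must be handled separately.
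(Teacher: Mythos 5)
Your proposal is correct and follows essentially the same route as the paper's proof: $\tau=(\theta,\rho)$ is read off directly from the two components of $\mu$ (with the compatibility $s=\rho\circ r\circ\theta$ coming from $\mathfrak{t}_\mu\circ\phi_y\circ\mathfrak{s}_\mu=\phi_x$), and $\beta$ is obtained by comparing this factorisation with $\mathfrak{t}_{\tau,x}\circ\phi_{\tau_*x}\circ\mathfrak{s}_{\tau,x}$ and invoking \Cref{isomorphisms to isomorphisms}. The filtration-level bookkeeping you flag as the hard part is exactly the step the paper also leaves to the reader, and your explicit uniqueness argument for $\tau$ (via the underlying index maps of the isomorphism) is a welcome addition.
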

\begin{proof}
Let's write out the desired diagram in $\accentset{\sim}{\mathscr{M}}_{\operatorname{RBS}}(\mathcal{C})$:
\begin{center}
\begin{tikzpicture}
\matrix (m) [matrix of math nodes,row sep=2em,column sep=3em]
{
\mathfrak{t}(x) & \mathfrak{t}(y) & \mathfrak{t}(\tau_*x)  \\
\mathfrak{s}(x) & \mathfrak{s}(y) & \mathfrak{s}(\tau_*x) \\
};
\path[-stealth]
(m-2-1) edge node[left]{$\phi_x$} (m-1-1)
(m-2-2) edge node[right]{$\phi_y$} (m-1-2)
(m-2-3) edge node[right]{$\phi_{\tau_*x}$} (m-1-3)
(m-2-1) edge node[below]{$s$} (m-2-2)
(m-1-2) edge node[above]{$t$} (m-1-1)
(m-2-1) edge[bend right] node[below]{$\mathfrak{s}_\tau$} (m-2-3)
(m-1-3) edge[bend right] node[above]{$\mathfrak{t}_\tau$} (m-1-1)
(m-2-3) edge[dashed] node[below]{$\scriptstyle\beta_s$} node[above]{$\scriptstyle\cong$} (m-2-2)
(m-1-2) edge[dashed] node[above]{$\scriptstyle\beta_t$} node[below]{$\scriptstyle\cong$} (m-1-3)
;
\end{tikzpicture}
\end{center}

Here $t$ and $s$ are the morphisms defining $\mu\colon \phi_y\rightarrow \phi_x$. First of all note that the morphism $\tau=(\theta,\rho)$ is simply part of the data defining $\mu$: indeed, $s$ is given by an order preserving map $\theta\colon I\rightarrow J$ together with a collection of filtrations, and likewise $t$ is given by an order preserving map $\rho\colon Q\rightarrow P$ together with a collection of filtrations. These commute with the partitioning maps (i.e. define a morphism $I_P\rightarrow J_Q$ in $\mathcal{J}$) because $t\circ \phi_y\circ s = \phi_x$.

The isomorphism $\bar{\beta}\colon \phi_y\rightarrow \phi_{\tau_*x}$ (and hence, in view of \Cref{isomorphisms to isomorphisms}, the isomorphism $\beta\colon\tau_*x\rightarrow y$) arises by spelling out the identification
\begin{align*}
t\circ \phi_y\circ s = \phi_x = \mathfrak{t}_\tau \circ \phi_{\tau_*x}\circ \mathfrak{s}_\tau
\end{align*}
in terms of filtrations with associated gradeds. We leave the details to the reader (\Cref{factorisation diagrammatically} might be helpful).
\end{proof}

\begin{observation}\label{compatibility: isos and morphisms}
Let $\tau\colon I_P\rightarrow J_Q$ be a morphism in $\mathcal{J}$ and let $\alpha\colon x\rightarrow y$ be an isomorphism in $\mathbf{S}(I_P)$. Writing $\tau_*\alpha:=\mathbf{S}(\tau)(\alpha)\colon \tau_*x\rightarrow \tau_*y$, the following diagram in $\operatorname{Tw}(\accentset{\sim}{\mathscr{M}}_{\operatorname{RBS}}(\mathcal{C}))$ commutes:
\begin{center}
\begin{tikzpicture}
\matrix (m) [matrix of math nodes,row sep=2em,column sep=2em]
{
\phi_{\tau_*x} & \phi_x  \\
\phi_{\tau_*y} & \phi_y\\
};
\path[-stealth]
(m-1-1) edge node[left]{$\overline{\tau_*\alpha}$} node[right]{$\scriptstyle\cong$} (m-2-1)
(m-1-1) edge node[above]{$\hat{\tau}$} (m-1-2)
(m-2-1) edge node[below]{$\hat{\tau}$} (m-2-2)
(m-1-2) edge node[right]{$\bar{\alpha}$} node[left]{$\scriptstyle\cong$} (m-2-2)
;
\end{tikzpicture}
\end{center}
This provides the compatibility of the operations $\bar{\scriptstyle(-)}$ and $\hat{\scriptstyle(-)}$ that we'll need.
\end{observation}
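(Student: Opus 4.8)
The plan is to reduce the statement to two diagram chases in $\accentset{\sim}{\mathscr{M}}_{\operatorname{RBS}}(\mathcal{C})$ and then deduce each from the naturality of the constructions built above. First I would use that a morphism in a twisted arrow category $\operatorname{Tw}(\mathcal{D})$ is nothing but a pair of morphisms of $\mathcal{D}$ — one on sources, one on targets — subject to a factorisation condition, with composition taken componentwise; hence a square in $\operatorname{Tw}(\accentset{\sim}{\mathscr{M}}_{\operatorname{RBS}}(\mathcal{C}))$ commutes if and only if its two ``component squares'' in $\accentset{\sim}{\mathscr{M}}_{\operatorname{RBS}}(\mathcal{C})$ do (the one on source objects read with its arrows reversed). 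This replaces the assertion by: the square with edges $\mathfrak{s}_{\tau,x}$, $\mathfrak{s}_{\tau,y}$ and the source components of the isomorphisms $\overline{\tau_*\alpha}$, $\bar{\alpha}$ of \Cref{isomorphisms to isomorphisms} commutes, and likewise the square on targets, with edges $\mathfrak{t}_{\tau,x}$, $\mathfrak{t}_{\tau,y}$.

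Next I would peel $\tau$ apart. Using the factorisation $\tau=(\operatorname{id},\rho)\circ(\theta,\operatorname{id})$ together with the functoriality of $\tau\mapsto\hat{\tau}$ from \Cref{morphism in twisted arrow category} — and functoriality of $\mathbf{S}$, which splits $\tau_*\alpha$ as $\sigma_*(\kappa_*\alpha)$ compatibly along this factorisation — it is enough to treat the cases where $\tau$ is a collapse map or a splitting map. In each of these cases the nontrivial edge is of the form $\phi_{\mu_*x}$ for $\mu_*=\mathbf{S}(\mu)$ with $\mu$ a fixed morphism of $\mathcal{J}$ (explicitly $\mathfrak{s}_{\tau,x}=\phi_{\sigma_*x}$, resp.\ $\mathfrak{t}_{\tau,x}=\phi_{\kappa_*x}$, as in the two lemmas preceding \Cref{morphism in twisted arrow category}), and the top and bottom rows of each component square differ only by replacing $x$ with $y$.

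Finally I would invoke naturality of the passage $z\mapsto\phi_z$ of \Cref{morphism in monoidal category} and of the bijection $\gamma\mapsto\bar{\gamma}$ of \Cref{isomorphisms to isomorphisms}: an isomorphism $\gamma\colon z\to w$ in $\mathbf{S}(L_R)$ is carried to an isomorphism of twisted arrows whose source and target components are exactly the isomorphisms that $\gamma$ induces on the associated gradeds and on the ambient objects, and this assignment respects composition. Applying this with $\gamma=\mu_*\alpha$ and unwinding the explicit description of $\mathfrak{s}_{\tau,x}$, $\mathfrak{t}_{\tau,x}$, $\phi_{\tau_*x}$ as subdiagrams of $x$ (the picture in \Cref{factorisation diagrammatically} being the cleanest way to see it) closes both squares, since $\alpha$ — being determined by its restriction to the top-right corners of the diagrams underlying $x$, again by \Cref{isomorphisms to isomorphisms} — acts compatibly on every subdiagram that occurs. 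I expect the only real work to be the bookkeeping: keeping straight the contravariance of the source projection of $\operatorname{Tw}$ and which subdiagram of $x$ feeds each of $\mathfrak{s}_{\tau,x}$, $\mathfrak{t}_{\tau,x}$, $\phi_{\tau_*x}$. This is purely notational, and no genuine computation beyond re-reading the relevant definitions is needed.
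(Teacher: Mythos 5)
The paper records this as an observation with no written proof, treating it as immediate from the constructions of \Cref{morphism in monoidal category}, \Cref{isomorphisms to isomorphisms} and \Cref{morphism in twisted arrow category}; your argument — checking the two component squares in $\accentset{\sim}{\mathscr{M}}_{\operatorname{RBS}}(\mathcal{C})$ separately, reducing along the collapse/splitting factorisation of $\tau$, and then using that $\alpha$ restricts compatibly to every subdiagram feeding $\mathfrak{s}_{\tau,x}$, $\phi_{\tau_*x}$ and $\mathfrak{t}_{\tau,x}$ — is correct and is exactly the definitional unwinding the paper leaves implicit. I see no gap.
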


\subsection{Comparison}

Having taken care of the technical input, the final identification becomes a relatively simple case of comparing explicitly given simplicial groupoids. Hopefully the main ideas of and intuition behind this identification stand out more clearly in having isolated the technical (and notationally heavier) details in the previous section.

\begin{theorem}\label{equivalence of E1-spaces}
For any exact category $\mathcal{C}$, there is an equivalence of $\mathbb{E}_1$-spaces
\begin{align*}
K^\partial(\mathcal{C})\simeq |\mathscr{M}_{\operatorname{RBS}}(\mathcal{C})|.
\end{align*}
\end{theorem}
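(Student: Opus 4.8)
The plan is to identify the two $\mathbb{E}_1$-spaces at the level of their bar constructions, using the explicit formula for $\mathbb{L}$ from \Cref{monoid over J} on one side and the preceding technical lemmas on the other. Recall that $\mathscr{M}_{\operatorname{RBS}}(\mathcal{C})$ is a strict monoidal $1$-category, hence a monoid object in $\operatorname{Cat}\subseteq\operatorname{Cat}_\infty$ with bar construction $(n)^\pm\mapsto\mathscr{M}_{\operatorname{RBS}}(\mathcal{C})^n$; since $|-|\colon\operatorname{Cat}_\infty\to\mathcal{S}$ preserves finite products, $|\mathscr{M}_{\operatorname{RBS}}(\mathcal{C})|$ is an $\mathbb{E}_1$-space with bar construction $(n)^\pm\mapsto|\mathscr{M}_{\operatorname{RBS}}(\mathcal{C})|^n$, face maps induced by $\circledast$ and degeneracies by insertion of the empty list. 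By \Cref{fluffed up model of monoidal category of flags and associated gradeds} --- whose right adjoint $\Gamma$, deleting zeros, is monoidal --- the inclusion $\Upsilon$ is an equivalence of $\mathbb{E}_1$-spaces $|\mathscr{M}_{\operatorname{RBS}}(\mathcal{C})|\simeq|\accentset{\sim}{\mathscr{M}}_{\operatorname{RBS}}(\mathcal{C})|$, so it suffices to produce a levelwise equivalence of simplicial spaces $\mathbb{B}K^\partial(\mathcal{C})\simeq\mathbb{B}|\accentset{\sim}{\mathscr{M}}_{\operatorname{RBS}}(\mathcal{C})|$ compatible with all structure maps; since $\operatorname{Mon}(\mathcal{S})\subset\operatorname{Fun}(\Delta^{\operatorname{op}},\mathcal{S})$ is a full subcategory and both sides are monoid objects, this is the same as an equivalence of $\mathbb{E}_1$-spaces. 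Writing $S:=S_\bullet(\mathcal{C})^\simeq$, \Cref{monoid over J} tells us that $\mathbb{B}K^\partial(\mathcal{C})=\mathbb{B}\mathbb{L}(S)$ is the simplicial space $(n)^\pm\mapsto\mathop{\operatorname{colim}}_{\mathcal{J}^n}\mathbf{S}_n\simeq\big(\mathop{\operatorname{colim}}_{\mathcal{J}}\mathbf{S}\big)^n$ with face maps induced by concatenation of adjacent factors of $\mathcal{J}^\bullet$ (and forgetting the outer factors) and degeneracies by insertion of $\emptyset_\emptyset$.

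Next I would evaluate these colimits by the simplicial replacement of \Cref{simplicial replacement}: $\mathop{\operatorname{colim}}_{\mathcal{J}}\mathbf{S}\simeq|\operatorname{srep}(\mathbf{S})_\bullet|$, where each level of $\operatorname{srep}(\mathbf{S})_\bullet$ is naturally a groupoid as $\mathbf{S}$ takes values in groupoids. The heart of the argument is to assemble the preceding lemmas into an equivalence of simplicial groupoids
\[
\operatorname{srep}(\mathbf{S})_\bullet\ \xrightarrow{\ \simeq\ }\ N_\bullet i\operatorname{Tw}\big(\accentset{\sim}{\mathscr{M}}_{\operatorname{RBS}}(\mathcal{C})\big).
\]
On the component $\mathbf{S}(I^n)$ of $\operatorname{srep}(\mathbf{S})_n$ indexed by a chain of morphisms $\tau_k\colon I^k\to I^{k-1}$ in $\mathcal{J}$, this functor sends an object $x$ to the chain $\phi_{x^{(0)}}\to\cdots\to\phi_{x^{(n)}}=\phi_x$ in $\operatorname{Tw}(\accentset{\sim}{\mathscr{M}}_{\operatorname{RBS}}(\mathcal{C}))$, where $x^{(k)}$ denotes the successive pushforwards of $x$ and the arrows are the morphisms $\widehat{\tau_k}$ of \Cref{morphism in twisted arrow category}; on morphisms it uses the assignment $\alpha\mapsto\bar\alpha$ of \Cref{isomorphisms to isomorphisms} and its compatibility with pushforward (\Cref{compatibility: isos and morphisms}). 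Compatibility of this functor with the face maps of $\operatorname{srep}$ and of the nerve --- in particular with composing two adjacent arrows --- is exactly the functoriality $\widehat{\nu\circ\tau}=\hat\tau\circ\hat\nu$ of \Cref{morphism in twisted arrow category}, while compatibility with degeneracies uses $\widehat{\operatorname{id}}=\operatorname{id}$. That it is a levelwise equivalence of groupoids follows from essential surjectivity, obtained by peeling an $n$-simplex of $N_\bullet i\operatorname{Tw}$ apart one morphism at a time with \Cref{morphism in monoidal category} and \Cref{converse to morphism in twisted arrow category}, together with full faithfulness, which is the bijection of \Cref{isomorphisms to isomorphisms} combined with \Cref{compatibility: isos and morphisms}. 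The same argument (or $n$-fold products, using $\operatorname{srep}(\mathbf{S}_n)\cong\operatorname{srep}(\mathbf{S})^{\times n}$ and $\operatorname{Tw}(\mathcal{D}^n)=\operatorname{Tw}(\mathcal{D})^n$) handles $\mathbf{S}_n$ and $\accentset{\sim}{\mathscr{M}}_{\operatorname{RBS}}(\mathcal{C})^n$.

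Realising in the remaining simplicial direction gives $\mathop{\operatorname{colim}}_{\mathcal{J}^n}\mathbf{S}_n\simeq|N_\bullet i\operatorname{Tw}(\accentset{\sim}{\mathscr{M}}_{\operatorname{RBS}}(\mathcal{C})^n)|$, which by Waldhausen's swallowing lemma (\Cref{maximal subgroupoids}) equals $|\operatorname{Tw}(\accentset{\sim}{\mathscr{M}}_{\operatorname{RBS}}(\mathcal{C})^n)|$, and by the $\varinjlim$-equivalence $\operatorname{Tw}(\mathcal{D})^{\operatorname{op}}\to\mathcal{D}$ this is $|\accentset{\sim}{\mathscr{M}}_{\operatorname{RBS}}(\mathcal{C})^n|\simeq|\accentset{\sim}{\mathscr{M}}_{\operatorname{RBS}}(\mathcal{C})|^n$. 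To see that the bar-degree structure maps match, note that concatenation $I_PJ_Q=(IJ)_{PQ}$ in $\mathcal{J}$ induces the identification $\mathbf{S}(I_PJ_Q)=\mathbf{S}(I_P)\times\mathbf{S}(J_Q)$ under which $\phi_{(x,y)}=\phi_x\circledast\phi_y$ (this is how $\phi_x$ is defined for a general partition, \Cref{morphism in monoidal category}), so the concatenation face maps correspond to the monoidal product $\circledast$ and the outer face maps to the outer face maps of the bar construction; and $\mathbf{S}(\emptyset_\emptyset)=\ast$ corresponds to the identity of the empty list, so insertion of $\emptyset_\emptyset$ corresponds to the unit degeneracy. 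Assembling, the equivalence of simplicial groupoids constructed above realises to a levelwise equivalence $\mathbb{B}K^\partial(\mathcal{C})\simeq\mathbb{B}|\accentset{\sim}{\mathscr{M}}_{\operatorname{RBS}}(\mathcal{C})|$ respecting all structure maps, hence an equivalence of monoid objects; composing with \Cref{fluffed up model of monoidal category of flags and associated gradeds}, we obtain $K^\partial(\mathcal{C})\simeq|\mathscr{M}_{\operatorname{RBS}}(\mathcal{C})|$ as $\mathbb{E}_1$-spaces.

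The main difficulty is organisational rather than conceptual: making the comparison $\operatorname{srep}(\mathbf{S})_\bullet\simeq N_\bullet i\operatorname{Tw}(\accentset{\sim}{\mathscr{M}}_{\operatorname{RBS}}(\mathcal{C}))$ genuinely functorial in both simplicial directions simultaneously --- the internal $\operatorname{srep}$/nerve direction, which rests on the coherence statements of \Cref{morphism in twisted arrow category} and \Cref{compatibility: isos and morphisms}, and the bar-construction direction, where one must track how concatenation of factors and insertion of $\emptyset_\emptyset$ interact with the dictionary --- and in carefully handling the empty-set and zero-object bookkeeping ($\emptyset_\emptyset$ versus $\emptyset_\ast$ in $\mathcal{J}$, lists of zeros in $\accentset{\sim}{\mathscr{M}}_{\operatorname{RBS}}(\mathcal{C})$). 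The latter is precisely why one passes through the fat category $\accentset{\sim}{\mathscr{M}}_{\operatorname{RBS}}(\mathcal{C})$ and must check that the auxiliary equivalences ($\Upsilon/\Gamma$, the twisted-arrow projection, the swallowing lemma) all respect the monoidal structure.
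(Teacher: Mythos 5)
Your proposal is correct and follows essentially the same route as the paper's own proof: pass to the fat monoidal $\operatorname{RBS}$-category, compute $\mathbb{B}\mathbb{L}(S_\bullet(\mathcal{C})^\simeq)$ via \Cref{monoid over J} and the simplicial replacement, and build the equivalence of simplicial groupoids $\operatorname{srep}(\mathbf{S})_\bullet\simeq N_\bullet i\operatorname{Tw}(\accentset{\sim}{\mathscr{M}}_{\operatorname{RBS}}(\mathcal{C}))$ from Lemmas \ref{morphism in monoidal category}, \ref{converse to morphism in twisted arrow category} and Observations \ref{isomorphisms to isomorphisms}, \ref{compatibility: isos and morphisms}, checking compatibility with the bar-direction structure maps exactly as the paper does. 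The only cosmetic difference is that you make explicit a few steps the paper leaves implicit (monoidality of $\Gamma$, the product decompositions $\operatorname{srep}(\mathbf{S}_n)\cong\operatorname{srep}(\mathbf{S})^{\times n}$ and $\operatorname{Tw}(\mathcal{D}^n)\cong\operatorname{Tw}(\mathcal{D})^n$), which is fine.
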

\begin{proof}
We will exhibit an equivalence of bar constructions $\mathbb{B}K^\partial (\mathcal{C})\xrightarrow{\simeq} \mathbb{B}|\mathscr{M}_{\operatorname{RBS}}(\mathcal{C})|$. To do this, we exploit the unravelling of $K^\partial$ as in \Cref{monoid over J} and use the simplicial replacement to exhibit equivalences
\begin{align*}
\big(\mathop{\operatorname{colim}}_{\mathcal{J}}\mathbf{S}\big)^n \xrightarrow{\ \simeq\ } |\accentset{\sim}{\mathscr{M}}_{\operatorname{RBS}}(\mathcal{C})|^n, \quad \text{for all } n\in \N,
\end{align*}
commuting with the relevant face and degeneracy maps. First of all we define an equivalence of simplicial groupoids
\begin{align*}
\Psi\colon \operatorname{srep}(\mathbf{S})_\bullet\xrightarrow{\ \simeq\ } N_\bullet i\operatorname{Tw}(\accentset{\sim}{\mathscr{M}}_{\operatorname{RBS}}(\mathcal{C}))
\end{align*}
where on the left hand side we have the simplicial replacement of the functor $\mathbf{S}\colon \mathcal{J}\rightarrow \mathcal{S}$ and on the right hand side we consider the nerve of the twisted arrow category of $\accentset{\sim}{\mathscr{M}}_{\operatorname{RBS}}(\mathcal{C})$, enhanced to a simplicial groupoid as described in \Cref{maximal subgroupoids}. We will omit the ordered sets indexing the partitions for notational ease: that is, we denote an object $I_P$ in $\mathcal{J}$ simply by $I$. For each $n\in \N$, define a functor
\begin{align*}
\Psi_n\colon \operatorname{srep}(\mathbf{S})_n\longrightarrow N_n i\operatorname{Tw}(\accentset{\sim}{\mathscr{M}}_{\operatorname{RBS}}(\mathcal{C}))
\end{align*}
as follows: send an object
\begin{align*}
\big(I_0\xleftarrow{\tau_1} I_1\xleftarrow{\tau_2} \cdots \xleftarrow{\tau_n} I_n, x_n\in \mathbf{S}(I_n)\big)
\end{align*}
to the following sequence in $\operatorname{Tw}(\accentset{\sim}{\mathscr{M}}_{\operatorname{RBS}}(\mathcal{C}))$:
\begin{align*}
\phi_{x_0}\xrightarrow{\hat{\tau}_1}\phi_{x_1}\xrightarrow{\hat{\tau}_2}\cdots \xrightarrow{\hat{\tau}_n} \phi_{x_n}
\end{align*}
where we inductively set $x_{i-1}:=(\tau_i)_*x_i$ and where $\phi_{x_i}$ and $\hat{\tau}_i$ are the canonical objects and morphisms of $\operatorname{Tw}(\accentset{\sim}{\mathscr{M}}_{\operatorname{RBS}}(\mathcal{C}))$ as described in \Cref{morphism in monoidal category} and \Cref{morphism in twisted arrow category}.

Given an isomorphism $\alpha_n\colon x_n\rightarrow y_n$ in the component $\mathbf{S}(I_n)$ associated to the sequence
\begin{align*}
I_0\xleftarrow{\tau_1} I_1\xleftarrow{\tau_2} \cdots \xleftarrow{\tau_n} I_n,
\end{align*}
we inductively set $\alpha_{i-1}:=\mathbf{S}(\tau_i)(\alpha_i)\colon x_{i-1}\rightarrow y_{i-1}$ in $\mathbf{S}(I_i)$. Applying Observations \ref{isomorphisms to isomorphisms} and \ref{compatibility: isos and morphisms}, send $\alpha$ to the induced diagram in $\operatorname{Tw}(\accentset{\sim}{\mathscr{M}}_{\operatorname{RBS}}(\mathcal{C}))$:
\begin{center}
\begin{tikzpicture}
\matrix (m) [matrix of math nodes,row sep=2em,column sep=2em]
{
\phi_{x_0} & \phi_{x_1} & \cdots{\vphantom{_{y_i}}} & \phi_{x_n}  \\
\phi_{y_0} & \phi_{y_1} & \cdots{\vphantom{_{y_i}}} & \phi_{y_n}  \\
};
\path[-stealth]
(m-2-1) edge node[left]{$\bar{\alpha}_0$} node[right]{$\scriptstyle\cong$} (m-1-1)
(m-2-2) edge node[left]{$\bar{\alpha}_1$} node[right]{$\scriptstyle\cong$} (m-1-2)
(m-2-4) edge node[left]{$\bar{\alpha}_n$} node[right]{$\scriptstyle\cong$} (m-1-4)
(m-1-3) edge (m-1-4)
(m-1-2) edge (m-1-3)
(m-1-1) edge (m-1-2)
(m-2-3) edge (m-2-4)
(m-2-2) edge (m-2-3)
(m-2-1) edge (m-2-2)
;
\end{tikzpicture}
\end{center}
This functor is an equivalence: it is essentially surjective by \Cref{converse to morphism in twisted arrow category} and it is fully faithful by \Cref{isomorphisms to isomorphisms}. It is straightforward to see that this commutes with the structure maps by the functoriality $\widehat{\nu\circ \kappa}=\hat{\kappa}\circ \hat{\nu}$ (\Cref{morphism in twisted arrow category}). This identifies the underlying spaces of the $\mathbb{E}_1$-monoids under consideration,
\begin{align*}
\Psi\colon \mathop{\operatorname{colim}}_{\mathcal{J}}\mathbf{S} \xrightarrow{\ \simeq\ } |\accentset{\sim}{\mathscr{M}}_{\operatorname{RBS}}(\mathcal{C})|.
\end{align*}
It remains to be seen that this equivalence respects the $\mathbb{E}_1$-structure. To this end we consider the bar constructions of the monoids in question. The bar construction of $K^\partial(\mathcal{C})=\mathbb{L}S$ can be represented by a bisimplical groupoid via the simplicial replacement:
\begin{align*}
(n)^\pm\mapsto \operatorname{srep}(\mathbf{S})^n.
\end{align*}
Tracing through the definitions, we see that the face maps
\begin{align*}
d_i\colon \operatorname{srep}(\mathbf{S})^n\longrightarrow \operatorname{srep}(\mathbf{S})^{n-1}, \quad 0\leq i\leq n,
\end{align*}
of this construction are as follows: for $0<i<n$, $d_i$ sends an object 
\begin{align*}
X=(X^j)_{j=1,\ldots,n}=(I^j_0\leftarrow \cdots \leftarrow I^j_\bullet, x^j\in \mathbf{S}^{\operatorname{f}}(I^j_\bullet))_{j=1,\ldots,n}
\end{align*}
to the tuple whose $j$'th factor is
\begin{align*}
d_i(X)^j=\begin{cases}
(I^j_0\leftarrow \cdots \leftarrow I^j_\bullet,\  x^j\in \mathbf{S}(I^j_\bullet)) & j<i \\
(I^i_0I^{i+1}_0\leftarrow \cdots \leftarrow I^i_\bullet I^{i+1}_\bullet,\ (x^i,x^{i+1})\in \mathbf{S}(I^i_\bullet I^{i+1}_\bullet)) & j=i \\
(I^{j+1}_0\leftarrow \cdots \leftarrow I^{j+1}_\bullet,\ x^{j+1}\in \mathbf{S}(I^{j+1}_\bullet)) & j>i. \\
\end{cases}
\end{align*}
The first and last face maps are: $d_0(X)=(X^j)_{j=2,\ldots,n}$ and $d_n(X)=(X^j)_{j=1,\ldots,n-1}$.

The degeneracy maps
\begin{align*}
s_i\colon \operatorname{srep}(\mathbf{S})^n\longrightarrow \operatorname{srep}(\mathbf{S})^{n+1}, \quad 0\leq i\leq n
\end{align*}
send an object $(I^j_0\leftarrow \cdots \leftarrow I^j_\bullet, x^j\in \mathbf{S}^{\operatorname{f}}(I^j_\bullet))_{i=1,\ldots,n}$ to the tuple whose $j$'th factor is
\begin{align*}
s_i(X)^j=\begin{cases}
(I^j_0\leftarrow \cdots \leftarrow I^j_\bullet,\ x^j\in \mathbf{S}(I^j_\bullet)) & j<i+1 \\
(\emptyset_\emptyset\leftarrow \cdots \leftarrow \emptyset_\emptyset,\ 0 \in \mathbf{S}(\emptyset_\emptyset)) & j=i+1 \\
(I^{j-1}_0\leftarrow \cdots \leftarrow I^{j-1}_\bullet,\ x^{j-1}\in \mathbf{S}(I^{j-1}_\bullet)) & j>i+1 \\
\end{cases}
\end{align*}

On the other side, we have the bar construction of the monoid $|\mathscr{M}_{\operatorname{RBS}}(\mathcal{C})|$ represented by a bisimplical groupoid
\begin{align*}
(n)^\pm\rightarrow (N_\bullet i\operatorname{Tw}(\accentset{\sim}{\mathscr{M}}_{\operatorname{RBS}}(\mathcal{C})))^n
\end{align*}
with face and degeneracy maps induced by the inherited monoidal structure of $\operatorname{Tw}(\accentset{\sim}{\mathscr{M}}_{\operatorname{RBS}}(\mathcal{C}))$. More explicitly, the face map $d_i$, $0<i<n$, sends a tuple $(\bar{\phi}^j)_{j=1,\ldots,n}$ of sequences
\begin{align*}
\bar{\phi}^j = (\phi^j_0\rightarrow \phi_1^j\rightarrow \cdots \rightarrow \phi_\bullet^j)
\end{align*}
to the tuple whose $j$'th factor is
\begin{align*}
d_i(\bar{\phi})^j=\begin{cases}
\bar{\phi}^j & j<i \\
\bar{\phi}^i\circledast \bar{\phi}^{i+1} & j=i \\
\bar{\phi}^{j+1} & j>i \\
\end{cases}
\end{align*}
where $\bar{\phi}^i\circledast \bar{\phi}^{i+1}$ is the sequence
\begin{align*}
\phi^i_0\circledast \phi^{i+1}_0\rightarrow \phi^i_1\circledast \phi^{i+1}_1\rightarrow \cdots \rightarrow \phi^i_\bullet\circledast \phi^{i+1}_\bullet.
\end{align*}
The first and last face maps are given by: $d_0(\bar{\phi}^j)=(\bar{\phi}^j)_{j=2,\ldots,n}$ and $d_n(\bar{\phi}^j)=(\bar{\phi}^j)_{j=1,\ldots,n-1}$.

The degeneracy map $s_i$, $0\leq i\leq n$, sends the tuple $(\bar{\phi}^j)_{j=1,\ldots,n}$ from above to the tuple whose $j$'th factor is
\begin{align*}
s_i(\bar{\phi})^j=\begin{cases}
\bar{\phi}^j & j<i+1 \\
0 & j=i+1 \\
\bar{\phi}^{j-1} & j>i+1 \\
\end{cases},
\end{align*}
where $0$ denotes the constant sequence on the monoidal unit
\begin{align*}
\operatorname{id}_\emptyset \rightarrow \operatorname{id}_\emptyset\rightarrow \cdots \rightarrow \operatorname{id}_\emptyset.
\end{align*}

The equivalence $\Psi$ defined above gives rise to levelwise equivalences
\begin{align*}
\Psi^n\colon \operatorname{srep}(\mathbf{S})^n\xrightarrow{\ \simeq\ } \left(N_\bullet i\operatorname{Tw}(\accentset{\sim}{\mathscr{M}}_{\operatorname{RBS}}(\mathcal{C}))\right)^n,\quad \text{for all }n\in \N,
\end{align*}
and it is straightforward to verify that these maps respect the structure maps identified above. In other words, the resulting maps
\begin{align*}
\big(\mathop{\operatorname{colim}}_{\mathcal{J}}\mathbf{S}\big)^n \xrightarrow{\ \simeq\ } |\accentset{\sim}{\mathscr{M}}_{\operatorname{RBS}}(\mathcal{C})|^n,\quad n\in \N,
\end{align*}
respect the face and degeneracy maps of the bar constructions and we have exhibited the desired equivalence, $K^\partial(\mathcal{C})\simeq |\mathscr{M}_{\operatorname{RBS}}(\mathcal{C})|$, of $\mathbb{E}_1$-spaces.
\end{proof}

\begin{observation}\label{restricting to flags}
The equivalence of simplicial groupoids
\begin{align*}
\Psi^\bullet\colon \operatorname{srep}(\mathbf{S})^\bullet\xrightarrow{\ \simeq\ } \left(N i\operatorname{Tw}(\accentset{\sim}{\mathscr{M}}_{\operatorname{RBS}}(\mathcal{C}))\right)^\bullet
\end{align*}
given in the proof above restricts to an equivalence
\begin{align*}
\Psi^\bullet\colon \operatorname{srep}(\mathbf{S}^{\operatorname{f}})^\bullet\xrightarrow{\ \simeq\ } \left(N i\operatorname{Tw}(\mathscr{M}_{\operatorname{RBS}}(\mathcal{C}))\right)^\bullet.
\end{align*}
On the left hand side, we have the simplicial replacement of the functor $\mathbf{S}^{\operatorname{f}}\colon \mathcal{I}_\gg\rightarrow \mathcal{S}$ picking out the subgroupoid of flags in Waldhausen's $S_\bullet$-construction (\Cref{examples: picking out the non-degenerate simplices}); on the right hand side we have the twisted arrow category of the (slim) monoidal $\operatorname{RBS}$-category. In view of \Cref{examples: picking out the non-degenerate simplices} and \Cref{fluffed up model of monoidal category of flags and associated gradeds}, we get the same $\mathbb{E}_1$-spaces after realisation.
\end{observation}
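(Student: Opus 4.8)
The plan is to observe that both sides are \emph{full} sub-simplicial-groupoids of the simplicial groupoids appearing in the proof of \Cref{equivalence of E1-spaces}, and then to check that the equivalence $\Psi^\bullet$ carries one onto the other by matching objects level by level. Concretely, by \Cref{examples: picking out the non-degenerate simplices} the functor $\mathbf{S}^{\operatorname{f}}\colon \mathcal{I}_\gg\to \mathcal{S}$ is a subfunctor of the restriction of $\mathbf{S}\colon \mathcal{J}\to \mathcal{S}$ along $\mathcal{I}_\gg\hookrightarrow \mathcal{J}$, and the flag components are \emph{full} subgroupoids of the $S_n(\mathcal{C})^\simeq$ (isomorphisms of diagrams preserve the non-vanishing of subquotients). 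Hence $\operatorname{srep}(\mathbf{S}^{\operatorname{f}})^\bullet$ is, levelwise, the full subgroupoid of $\operatorname{srep}(\mathbf{S})^\bullet$ spanned by those tuples whose underlying chain lies in $\mathcal{I}_\gg$ and whose chosen simplex is a flag. Dually, $Ni\operatorname{Tw}(\mathscr{M}_{\operatorname{RBS}}(\mathcal{C}))^\bullet$ is the full subgroupoid of $Ni\operatorname{Tw}(\accentset{\sim}{\mathscr{M}}_{\operatorname{RBS}}(\mathcal{C}))^\bullet$ spanned by sequences of morphisms all of whose sources and targets are lists of \emph{non-zero} objects. Since $\Psi^n$ is already an equivalence of groupoids for each $n$, it will restrict to an equivalence between these full subgroupoids as soon as the two defining conditions correspond under $\Psi^n$.

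First I would record the dictionary on objects. Given $I_P$ in $\mathcal{I}_\gg$ and a flag $x\in \mathbf{S}^{\operatorname{f}}(I_P)$, the list $\mathfrak{s}(x)$ is the associated graded and $\mathfrak{t}(x)$ is the list of ambient objects; these are all non-zero precisely because $x$ is a flag (every subquotient $x_{(i-1)i}$ is non-zero, forcing each ambient object to be non-zero as well). Thus $\phi_x$ is a morphism in the slim category $\mathscr{M}_{\operatorname{RBS}}(\mathcal{C})$, and the structure maps $\mathfrak{s}_{\tau,x}$, $\mathfrak{t}_{\tau,x}$ built in \Cref{morphism in twisted arrow category} are again of the form $\phi_{(-)_*x}$ for flags, hence slim. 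Therefore $\Psi^n$ sends $\operatorname{srep}(\mathbf{S}^{\operatorname{f}})^n$ into $N_ni\operatorname{Tw}(\mathscr{M}_{\operatorname{RBS}}(\mathcal{C}))^n$; here I would also note that the intermediate simplices $x_{i-1}=(\tau_i)_*x_i$ remain flags, which is automatic from functoriality of $\mathbf{S}^{\operatorname{f}}$ on $\mathcal{I}_\gg$.

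For the reverse inclusion I would invoke \Cref{morphism in monoidal category} and \Cref{converse to morphism in twisted arrow category}: a morphism in $\mathscr{M}_{\operatorname{RBS}}(\mathcal{C})$ is, by \Cref{monoidal category of flags and associated gradeds}, given by a \emph{surjective} order preserving map $\theta$ together with flags having non-zero associated graded, so the unique $(I_P,x)$ it determines has $I_P\in\mathcal{I}$ and $x\in\mathbf{S}^{\operatorname{f}}(I_P)$ a flag; likewise the surjectivity built into slim morphisms forces the underlying morphism $\tau=(\theta,\rho)$ of $\mathcal{J}$ to have $\theta$ surjective, i.e.\ $\tau\in\mathcal{I}_\gg$. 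Combining this with the bijection of \Cref{isomorphisms to isomorphisms} (which restricts to the slim setting, since isomorphisms between non-zero lists are the same whether computed in $\mathscr{M}_{\operatorname{RBS}}(\mathcal{C})$ or $\accentset{\sim}{\mathscr{M}}_{\operatorname{RBS}}(\mathcal{C})$) shows that every object and every isomorphism of $N_ni\operatorname{Tw}(\mathscr{M}_{\operatorname{RBS}}(\mathcal{C}))$ is hit. Hence $\Psi^n$ restricts to an essentially surjective, fully faithful functor onto the slim nerve, compatibly in $n$. The final sentence is then immediate: one has $\operatorname{colim}_{\mathcal{I}_\gg}\mathbf{S}^{\operatorname{f}}\simeq \operatorname{colim}_{\mathcal{J}}\mathbf{S}$ by \Cref{examples: picking out the non-degenerate simplices} (as $\mathbf{S}$ is the left Kan extension of $\mathbf{S}^{\operatorname{f}}$), while $|\mathscr{M}_{\operatorname{RBS}}(\mathcal{C})|\simeq |\accentset{\sim}{\mathscr{M}}_{\operatorname{RBS}}(\mathcal{C})|$ by \Cref{fluffed up model of monoidal category of flags and associated gradeds}, so both realisations recover the same $\mathbb{E}_1$-space.

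I expect the main obstacle to be purely bookkeeping: making the correspondence ``flags over $\mathcal{I}_\gg$ $\leftrightarrow$ slim morphisms'' precise at the level of \emph{both} objects and structure maps, and in particular checking that the surjectivity conditions ($\theta$ surjective, partitions surjective, all subquotients non-zero) on the two sides line up exactly, including the degenerate edge cases involving $\emptyset_\emptyset$ and the monoidal unit.
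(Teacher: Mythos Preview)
Your proposal is correct and matches the paper's intent: the paper states this as an \emph{Observation} with no accompanying proof, treating the restriction as evident once the dictionary of \Cref{morphism in monoidal category}--\Cref{converse to morphism in twisted arrow category} is in place. Your write-up spells out precisely the verification the paper leaves implicit, namely that ``flag over $\mathcal{I}_\gg$'' corresponds exactly to ``morphism between lists of non-zero objects via a surjective indexing map'', and that this correspondence is compatible with the structure maps $\hat{\tau}$ and the isomorphisms $\bar{\alpha}$; there is nothing to add.
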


The identification
\begin{align*}
K^\partial(\mathcal{C})\simeq |\mathscr{M}_{\operatorname{RBS}}(\mathcal{C})|.
\end{align*}
furnishes us with a very explicit model for partial algebraic K-theory (of exact categories, not in full generality!). This is otherwise defined in terms of a universal property, so this identification provides a more hands on object to study. In particular the combination of having a universal property on the one hand and an explicit tangible monoidal category on the other may prove useful. We have for example the following simple observation that partial K-theory inherits the decomposition into components of $\mathscr{M}_{\operatorname{RBS}}(\mathcal{C})$ (\Cref{decomposition of monoidal RBS}). In particular, the partial algebraic K-theory $K^\partial(A):=K^\partial(\mathcal{P}(A))$ of an associative ring $A$ decomposes into a disjoint union of reductive Borel--Serre categories. This recovers the decomposition of $K^\partial(\F_p)$ established in \cite[\S 5.2]{Yuan}.

\begin{corollary}
Let $\mathcal{C}$ be an exact category. Then we have an equivalence of spaces
\begin{align*}
K^\partial(\mathcal{C})\simeq \coprod_{[c]\in \pi_0(\mathcal{C}^\simeq)} |\operatorname{RBS}(c)|
\end{align*}
where $\operatorname{RBS}(c)$ is the full subcategory defined in \Cref{general RBS-categories}. In particular, for $\mathcal{C}=\mathcal{P}(A)$, we have an equivalence of spaces
\begin{align*}
K^\partial(A)\simeq \coprod_{M\in \mathcal{M}} |\operatorname{RBS}(M)|
\end{align*}
where $M$ runs over a set $\mathcal{M}$ of representatives of isomorphism classes of finitely generated projective $A$-modules.
\end{corollary}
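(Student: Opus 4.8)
The plan is to deduce this immediately from \Cref{equivalence of E1-spaces} together with the decomposition recorded in \Cref{decomposition of monoidal RBS}. First I would invoke \Cref{equivalence of E1-spaces} to obtain an equivalence $K^\partial(\mathcal{C})\simeq |\mathscr{M}_{\operatorname{RBS}}(\mathcal{C})|$ of $\mathbb{E}_1$-spaces; forgetting the $\mathbb{E}_1$-structure, this is in particular an equivalence of underlying spaces, which is all the corollary asserts. The entire content of the corollary is thus transported to a statement about the monoidal $\operatorname{RBS}$-category, which we already understand explicitly.

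Next I would apply \Cref{decomposition of monoidal RBS}: for the exact category $\mathcal{C}$, the monoidal $\operatorname{RBS}$-category splits as $\mathscr{M}_{\operatorname{RBS}}(\mathcal{C})\simeq \coprod_{c\in C}\operatorname{RBS}(c)$, where $C$ is a set of representatives of the isomorphism classes of objects of $\mathcal{C}$; such a set $C$ is canonically in bijection with $\pi_0(\mathcal{C}^\simeq)$. Since geometric realisation preserves coproducts (the realisation of a disjoint union of categories is the disjoint union of the realisations), this gives $|\mathscr{M}_{\operatorname{RBS}}(\mathcal{C})|\simeq \coprod_{[c]\in\pi_0(\mathcal{C}^\simeq)}|\operatorname{RBS}(c)|$, and composing with the equivalence of the previous paragraph yields the first claim.

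For the statement about rings, I would specialise to $\mathcal{C}=\mathcal{P}(A)$, the exact category of finitely generated projective $A$-modules: then $\pi_0(\mathcal{P}(A)^\simeq)$ is precisely the set of isomorphism classes of finitely generated projective $A$-modules, so a set $\mathcal{M}$ of representatives indexes the components, and by \Cref{compare flag and list version of RBS} (together with the conventions of \Cref{general RBS-categories}) the category $\operatorname{RBS}(M)$ appearing here agrees, up to equivalence, with the reductive Borel--Serre category $\operatorname{RBS}'(M)$ of \cite[\S 5]{ClausenOrsnesJansen}. Substituting gives $K^\partial(A)\simeq \coprod_{M\in\mathcal{M}}|\operatorname{RBS}(M)|$. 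There is no genuinely hard step: the substance is entirely in \Cref{equivalence of E1-spaces} and \Cref{decomposition of monoidal RBS}, both already proved, and the only points needing a word of care are the identification of the indexing set $C$ with $\pi_0(\mathcal{C}^\simeq)$ and the standard fact that $|{-}|$ commutes with disjoint unions.
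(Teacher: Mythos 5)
Your proposal is correct and is exactly the argument the paper intends: the corollary is presented as an immediate consequence of \Cref{equivalence of E1-spaces} combined with the decomposition of \Cref{decomposition of monoidal RBS}, with realisation commuting with disjoint unions. The only extra care you take (identifying $C$ with $\pi_0(\mathcal{C}^\simeq)$ and invoking \Cref{compare flag and list version of RBS} for the ring case) is consistent with the paper's conventions and adds nothing problematic.
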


The main motivation behind this comparison is the remarkable fact that even though partial algebraic K-theory has a universal property as an $\mathbb{E}_1$-space, it naturally admits the structure of an $\mathbb{E}_\infty$-space (\cite[Corollary 4.9.1]{Yuan}). The monoidal category $\mathscr{M}_{\operatorname{RBS}}(\mathcal{C})$ is \textit{only} monoidal --- not symmetric, not even braided --- and hence, its realisation is a priori only an $\mathbb{E}_1$-space. The above identification reveals, however, that it naturally admits the structure of an $\mathbb{E}_\infty$-space! We will establish this explicitly in the next section and explore some immediate consequences.

\section{Homotopy commutativity, explicitly}\label{homotopy commutativity}

The identification
\begin{align*}
K^\partial(\mathcal{C})\simeq |\mathscr{M}_{\operatorname{RBS}}(\mathcal{C})|
\end{align*}
revealed to us that the $\mathbb{E}_1$-space obtained as the realisation of the monoidal $\operatorname{RBS}$-category is in fact $\mathbb{E}_\infty$. As mentioned, this was in many ways the main motivation behind this comparison. In an attempt to better reveal and understand the underlying dynamics, we establish this homotopy commutativity explicitly in this section. The proof is heavily inspired by Yuan's proof that partial algebraic K-theory is $\mathbb{E}_\infty$ and by our general analysis of partial algebraic K-theory leading up to the identification of the previous section. Moreover, it turns out that we'll be needing the exact same technical ingredients that went into the identification above. We go on to explore what this extra structure can do for us in our investigation of the reductive Borel--Serre categories.

\subsection{\texorpdfstring{$\mathbb{E}_\infty$}{E-infinity}-structure: an Eckmann-Hilton argument}

In this section, we show that the assignment
\begin{align*}
\mathcal{C}\mapsto |\mathscr{M}_{\operatorname{RBS}}(\mathcal{C})|
\end{align*}
preserves finite products. As an immediate consequence, we see that if $\mathcal{C}$ is an exact category, then $|\mathscr{M}_{\operatorname{RBS}}(\mathcal{C})|$ admits an $\mathbb{E}_\infty$-structure that refines the $\mathbb{E}_1$-structure.

\begin{proposition}\label{product preserving}
The functor
\begin{align*}
\operatorname{Cat}^{\operatorname{filtr}}\rightarrow \operatorname{Mon}(\mathcal{S}),\quad \mathcal{C}\mapsto |\mathscr{M}_{\operatorname{RBS}}(\mathcal{C})|
\end{align*}
preserves finite products.
\end{proposition}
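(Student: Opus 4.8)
The plan is to show that the functor $\accentset{\sim}{\mathscr{M}}_{\operatorname{RBS}}\colon \operatorname{Cat}^{\operatorname{filtr}}\rightarrow \operatorname{Mon}(\operatorname{Cat})$ preserves finite products at the level of monoidal categories, and then invoke the fact that geometric realisation $|-|\colon \operatorname{Cat}\rightarrow \mathcal{S}$ preserves finite products, together with the equivalence $|\mathscr{M}_{\operatorname{RBS}}(\mathcal{C})|\simeq|\accentset{\sim}{\mathscr{M}}_{\operatorname{RBS}}(\mathcal{C})|$ of \Cref{fluffed up model of monoidal category of flags and associated gradeds}. Since the forgetful functor $\operatorname{Mon}(\mathcal{S})\rightarrow \mathcal{S}$ creates limits, it suffices to establish the product-preservation after passing to underlying spaces, so the crux is really an equivalence of categories $\accentset{\sim}{\mathscr{M}}_{\operatorname{RBS}}(\mathcal{C}\times \mathcal{D})\xrightarrow{\simeq}\accentset{\sim}{\mathscr{M}}_{\operatorname{RBS}}(\mathcal{C})\times \accentset{\sim}{\mathscr{M}}_{\operatorname{RBS}}(\mathcal{D})$ compatible with $\circledast$, plus the trivial observation that $\accentset{\sim}{\mathscr{M}}_{\operatorname{RBS}}$ of the terminal (zero) category is the terminal monoidal category.

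First I would check the empty-product case: the category with filtrations $\ast$ (one object, the zero object) has $\accentset{\sim}{\mathscr{M}}_{\operatorname{RBS}}(\ast)$ equivalent to the terminal category, since every list is a list of zeros and, by the last item of the remark following \Cref{fluffed up model of monoidal category of flags and associated gradeds}, all objects are canonically isomorphic; hence its realisation is contractible. Then for binary products I would define the comparison functor on objects by sending a list $((c_i,d_i))_{i\in I}$ of objects of $\mathcal{C}\times\mathcal{D}$ to the pair of lists $\big((c_i)_{i\in I},(d_i)_{i\in I}\big)$, and on morphisms by using that a morphism $(\theta,\{\text{filtrations}\})$ in $\accentset{\sim}{\mathscr{M}}_{\operatorname{RBS}}(\mathcal{C}\times\mathcal{D})$ is, componentwise, exactly a filtration in $\mathcal{C}\times\mathcal{D}$, which by definition of the product category (and the product structure with filtrations --- short exact sequences in $\mathcal{C}\times\mathcal{D}$ are pairs of short exact sequences) is precisely a pair consisting of a filtration in $\mathcal{C}$ and one in $\mathcal{D}$ over the \emph{same} underlying order-preserving map $\theta$. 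The subtlety to address is the clause ``if $j\notin\theta(I)$ then $n_j=0$'': in the product this reads $(n_j^{\mathcal{C}},n_j^{\mathcal{D}})=(0,0)$, which matches the condition imposed separately in each factor, so the definitions line up. This makes the comparison functor fully faithful and essentially surjective, hence an equivalence, and it manifestly intertwines concatenation $\circledast$ on both sides, so it is an equivalence of monoidal categories.

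Finally I would assemble the pieces: realisation of monoidal categories preserves finite products (the nerve preserves products and $|-|$ on simplicial sets preserves finite products, so the bar construction of a product monoidal category realises to the product of the realisations, compatibly with the Segal maps, as already used in \Cref{observations about free objects}), and combining this with the equivalence $\accentset{\sim}{\mathscr{M}}_{\operatorname{RBS}}(\mathcal{C}\times\mathcal{D})\simeq\accentset{\sim}{\mathscr{M}}_{\operatorname{RBS}}(\mathcal{C})\times\accentset{\sim}{\mathscr{M}}_{\operatorname{RBS}}(\mathcal{D})$ and \Cref{fluffed up model of monoidal category of flags and associated gradeds} yields
\begin{align*}
|\mathscr{M}_{\operatorname{RBS}}(\mathcal{C}\times\mathcal{D})|\simeq |\accentset{\sim}{\mathscr{M}}_{\operatorname{RBS}}(\mathcal{C}\times\mathcal{D})|\simeq |\accentset{\sim}{\mathscr{M}}_{\operatorname{RBS}}(\mathcal{C})|\times|\accentset{\sim}{\mathscr{M}}_{\operatorname{RBS}}(\mathcal{D})|\simeq |\mathscr{M}_{\operatorname{RBS}}(\mathcal{C})|\times|\mathscr{M}_{\operatorname{RBS}}(\mathcal{D})|
\end{align*}
as $\mathbb{E}_1$-spaces. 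The main obstacle --- though it is more bookkeeping than genuine difficulty --- is checking that the ``degenerate object'' conditions in $\accentset{\sim}{\mathscr{M}}_{\operatorname{RBS}}$ are exactly what is needed for the comparison functor to be well-defined on morphisms and faithful; this is precisely why the proof uses the fat model rather than $\mathscr{M}_{\operatorname{RBS}}$ itself, since $\mathscr{M}_{\operatorname{RBS}}(\mathcal{C}\times\mathcal{D})$ (lists of \emph{non-zero} pairs) does \emph{not} split as a product of list-categories --- a pair can be non-zero with one coordinate zero. I would remark on this explicitly to motivate the passage through $\accentset{\sim}{\mathscr{M}}_{\operatorname{RBS}}$.
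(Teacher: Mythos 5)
Your overall strategy --- pass to the fat model via \Cref{fluffed up model of monoidal category of flags and associated gradeds} and compare $\accentset{\sim}{\mathscr{M}}_{\operatorname{RBS}}(\mathcal{C}\times\mathcal{D})$ with the product --- matches the paper's, and your closing remark about why the slim model cannot work is correct. But there is a genuine gap at the crux: the comparison functor
\begin{align*}
\Phi\colon \accentset{\sim}{\mathscr{M}}_{\operatorname{RBS}}(\mathcal{C}\times \mathcal{D})\rightarrow \accentset{\sim}{\mathscr{M}}_{\operatorname{RBS}}(\mathcal{C})\times \accentset{\sim}{\mathscr{M}}_{\operatorname{RBS}}(\mathcal{D})
\end{align*}
is \emph{not} an equivalence of categories. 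An object of the target is a pair of lists indexed by two \emph{unrelated} finite linearly ordered sets, whereas every object in the image of $\Phi$ has both lists indexed by the same set. For instance $\big((c),\emptyset\big)$ with $c\neq 0$ is not in the essential image: a morphism into the empty list requires an order preserving map into $\emptyset$, so the empty list is isomorphic only to itself; any object isomorphic to $\big((c),\emptyset\big)$ therefore has second coordinate $\emptyset$, forcing the common index set to be empty, which is incompatible with the first coordinate. Likewise $\Phi$ is not full: a morphism in the product is a pair of morphisms whose underlying order preserving maps $\theta_{\mathcal{C}},\theta_{\mathcal{D}}\colon I\rightarrow J$ may differ (using zero objects one easily produces such pairs), while every morphism in the image of $\Phi$ has $\theta_{\mathcal{C}}=\theta_{\mathcal{D}}$. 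So ``fully faithful and essentially surjective'' fails on both counts, and the rest of your argument, which rests on this, collapses.

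What is true --- and what the paper proves --- is that $\Phi$ becomes an equivalence only after geometric realisation, and this is a genuinely homotopical fact rather than bookkeeping. The paper's proof passes to twisted arrow categories, maps the resulting comparison down to the diagonal $\mathcal{J}^{\operatorname{op}}\rightarrow\mathcal{J}^{\operatorname{op}}\times\mathcal{J}^{\operatorname{op}}$, identifies the left fibres of $\operatorname{Tw}(\Phi)$ with the right fibres of that diagonal using the technical lemmas of \S 7 (in particular \Cref{morphism in monoidal category}, \Cref{morphism in twisted arrow category} and \Cref{converse to morphism in twisted arrow category}), and then concludes by Quillen's Theorem A together with the siftedness of $\mathcal{J}$ (\Cref{J is sifted}). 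The situation is exactly analogous to the siftedness of $\Delta^{\operatorname{op}}$: the diagonal is nowhere near an equivalence of categories, but it is a $\varinjlim$-equivalence, and that cofinality is the real content. Any correct proof will need to supply a contractibility argument of this kind.
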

\begin{proof}
We will show that the natural map
\begin{align*}
\Phi=\Phi_{\mathcal{C},\mathcal{D}}\colon \accentset{\sim}{\mathscr{M}}_{\operatorname{RBS}}(\mathcal{C}\times \mathcal{D})\rightarrow \accentset{\sim}{\mathscr{M}}_{\operatorname{RBS}}(\mathcal{C})\times \accentset{\sim}{\mathscr{M}}_{\operatorname{RBS}}(\mathcal{D})
\end{align*}
induced by the projections from the product $\mathcal{C}\times \mathcal{D}$ is a homotopy equivalence on realisations. We will do this by showing that the induced map of twisted arrow categories induces a homotopy equivalence on realisations. For any $\mathcal{C}$, we have a functor $\accentset{\sim}{\mathscr{M}}_{\operatorname{RBS}}(\mathcal{C})\rightarrow \operatorname{Ord}$ sending an object $(c_i)_{i\in I}$ to the underlying linearly ordered set $I$, and a morphism $(c_i)_{i\in I}\rightarrow (d_j)_{j\in J}$ to the underlying map $I\rightarrow J$ of linearly ordered sets. Taking twisted arrow categories yields a commutative diagram.
\begin{center}
\begin{tikzpicture}
\matrix (m) [matrix of math nodes,row sep=2em,column sep=3em]
{
\operatorname{Tw}\big(\accentset{\sim}{\mathscr{M}}_{\operatorname{RBS}}(\mathcal{C}\times \mathcal{D})\big) & \operatorname{Tw}\big(\accentset{\sim}{\mathscr{M}}_{\operatorname{RBS}}(\mathcal{C})\big)\times \operatorname{Tw}\big(\accentset{\sim}{\mathscr{M}}_{\operatorname{RBS}}(\mathcal{D})\big) \\
\mathcal{J}^{\operatorname{op}} & \mathcal{J}^{\operatorname{op}} \times \mathcal{J}^{\operatorname{op}}  \\
};
\path[-stealth]
(m-1-1) edge node[above]{$\scriptstyle\operatorname{Tw}(\Phi)$} (m-1-2)
(m-2-1) edge node[below]{$\bigtriangleup$} (m-2-2)
(m-1-1) edge node[left]{$\rho$} (m-2-1)
(m-1-2) edge node[right]{$\rho\times\rho$} (m-2-2)
;
\end{tikzpicture}
\end{center}

We will show that the top horizontal map induces a homotopy equivalence on geometric realisations using Quillen's Theorem A; more precisely, we'll show that the left fibre of this map is equivalent to the left fibre of the lower horizontal map which we know to have contractible geometric realisation since $\mathcal{J}$ is sifted (\Cref{J is sifted}).

Let $\phi\colon (c_i)_{i\in I}\rightarrow ({c'}_{\! p})_{p\in P}$ be a morphism in $\accentset{\sim}{\mathscr{M}}_{\operatorname{RBS}}(\mathcal{C})$ and $\psi\colon (d_j)_{j\in J}\rightarrow ({d'}_{\! q})_{q\in Q}$ a morphism in $\accentset{\sim}{\mathscr{M}}_{\operatorname{RBS}}(\mathcal{D})$. Write $\Phi_{/(\phi,\psi)}$ for the left fibre of $\operatorname{Tw}(\Phi)$ over the object $(\phi,\psi)$, and write $\bigtriangleup_{(I_P,J_Q)/}$ for the right fibre of the diagonal map $\bigtriangleup\colon \mathcal{J}\rightarrow \mathcal{J}\times \mathcal{J}$ over the underlying partitioned linearly ordered sets $(I_P,J_Q)$. We will show that the induced map
\begin{align*}
\Pi\colon \Phi_{/(\phi,\psi)}\longrightarrow (\bigtriangleup_{(I_P,J_Q)/})^{\operatorname{op}}
\end{align*}
is an equivalence by defining an inverse functor
\begin{align*}
\Lambda\colon(\bigtriangleup_{(I_P,J_Q)/})^{\operatorname{op}}\longrightarrow \Phi_{/(\phi,\psi)}.
\end{align*}

By \Cref{morphism in monoidal category}, we may write $\phi=\phi_x$ and $\psi=\phi_y$ for some $x\in \mathbf{S}_{\mathcal{C}}(I_P)$ and $y\in \mathbf{S}_{\mathcal{D}}(J_Q)$ (we use the subscript to keep track of the ambient category with filtrations). We define $\Lambda$ as follows. It sends an object
\begin{align*}
I_P\xrightarrow{\tau} K_R\xleftarrow{\nu} J_Q
\end{align*}
to the following object
\begin{align*}
(\phi_{\tau_*x\times \nu_*y},\  
\phi_{\tau_*x}\xrightarrow{\hat{\tau}}\phi_x,\ \phi_{\nu_*y}\xrightarrow{\hat{\nu}}\phi_y)
\end{align*}
using the notation of \Cref{morphism in twisted arrow category}. Here, $\tau_*x\times \nu_*y\in \mathbf{S}_{\mathcal{C}\times \mathcal{D}}(K_R)$ is the object given by taking the product of the relevant diagrams in $S_{|K_r|}$ under the isomorphism
\begin{align*}
S_\bullet(\mathcal{C})\times S_\bullet(\mathcal{D})\xrightarrow{\cong} S_\bullet(\mathcal{C}\times \mathcal{D})
\end{align*}
for each $r\in R$ (the key here is that we can take the product of diagrams of the \textit{same} size). Consider a morphism in $\bigtriangleup_{(I_P,J_Q)/}$ as below.
\begin{center}
\begin{tikzpicture}
\matrix (m) [matrix of math nodes,row sep=1em,column sep=3em]
{
& K_R & \\
I_P & & J_Q \\
& L_W & \\
};
\path[-stealth]
(m-2-1) edge node[above left]{$\tau$} (m-1-2) edge node[below left]{$\tau'$} (m-3-2)
(m-2-3) edge node[above right]{$\nu$} (m-1-2) edge node[below right]{$\nu'$} (m-3-2)
(m-1-2) edge node[right]{$\gamma$} (m-3-2)
;
\end{tikzpicture}
\end{center}
The functor $\Lambda$ sends this to the morphism
\begin{align*}
(\phi_{\tau_*x\times \nu_*y},\  
\hat{\tau},\ \hat{\nu})\longrightarrow (\phi_{\tau'_*x\times \nu'_*y},\  
\hat{\tau}',\ \hat{\nu}')
\end{align*}
in $\Phi_{/(\phi,\psi)}$ given by the map
\begin{align*}
\hat{\gamma}\colon \phi_{\tau'_*x\times \nu'_*y}=\phi_{\gamma_*(\tau_*x\times \nu_*x)}\longrightarrow\phi_{\tau_*x\times \nu_*y}
\end{align*}
in $\accentset{\sim}{\mathscr{M}}_{\operatorname{RBS}}(\mathcal{C}\times \mathcal{D})$. This defines a map in $\Phi_{/(\phi,\psi)}$ by functoriality of the association $\hat{\scriptstyle(-)}$ (\Cref{morphism in twisted arrow category}). Clearly, $\Pi\circ \Lambda=\operatorname{id}$. Conversely, the unique isomorphisms
\begin{center}
\begin{tikzpicture}
\matrix (m) [matrix of math nodes,row sep=1em,column sep=3em]
{
& \phi_{z\times w} & \\
\phi_x & & \phi_y \\
& \phi_{\tau_*x\times \nu_*y} & \\
};
\path[-stealth]
(m-1-2) edge (m-2-1) edge (m-2-3) 
(m-3-2) edge node[below left]{$\hat{\tau}$} (m-2-1) edge node[below right]{$\hat{\nu}$} (m-2-3)
(m-3-2) edge node[right]{$\overline{\scriptstyle\alpha\times \beta}$} node[left]{$\scriptstyle\cong$} (m-1-2)
;
\end{tikzpicture}
\end{center}
supplied by \Cref{converse to morphism in twisted arrow category} exhibit a natural isomorphism $\Lambda\circ \Pi\simeq \operatorname{id}$.
\end{proof}

By what we may call an $\infty$-categorical Eckmann-Hilton argument, it follows immediately from this that the $\mathbb{E}_1$-space $|\mathscr{M}_{\operatorname{RBS}}(\mathcal{C})|$ is in fact $\mathbb{E}_\infty$ (recovering the corollary to the identification $K^\partial(\mathcal{C})\simeq |\mathscr{M}_{\operatorname{RBS}}(\mathcal{C})|$). 

\begin{corollary}\label{E-infinity structure}
For any exact category $\mathcal{C}$, the $\mathbb{E}_1$-space $|\mathscr{M}_{\operatorname{RBS}}(\mathcal{C})|$ naturally admits the structure of an $\mathbb{E}_\infty$-space refining the $\mathbb{E}_1$-structure.
\end{corollary}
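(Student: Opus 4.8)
The plan is to deduce \Cref{E-infinity structure} from \Cref{product preserving} by a standard Eckmann--Hilton/Dunn-additivity argument, which is why the corollary is stated immediately after the proposition. First I would record the key structural input: by \Cref{product preserving}, the functor $F\colon \operatorname{Cat}^{\operatorname{filtr}}\rightarrow \operatorname{Mon}(\mathcal{S})$, $\mathcal{C}\mapsto |\mathscr{M}_{\operatorname{RBS}}(\mathcal{C})|$, preserves finite products. Now observe that an exact category $\mathcal{C}$ is, in particular, a commutative monoid object in $\operatorname{Cat}^{\operatorname{filtr}}$ with respect to the cartesian product: the biproduct $\oplus\colon \mathcal{C}\times\mathcal{C}\rightarrow \mathcal{C}$ is an exact functor (hence a morphism of categories with filtrations) and it makes $\mathcal{C}$ into an $\mathbb{E}_\infty$-object, indeed a commutative monoid, in $(\operatorname{Cat}^{\operatorname{filtr}},\times)$ — this is just the usual fact that the biproduct is symmetric monoidal and the symmetry/associativity/unit coherences already hold in $\operatorname{Cat}$ (and a fortiori in $\operatorname{Cat}^{\operatorname{filtr}}$), with unit the zero object.

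Next I would push this structure through $F$. Since $F$ preserves finite products, it carries $\mathbb{E}_\infty$-monoid objects (equivalently, commutative monoid objects) in $(\operatorname{Cat}^{\operatorname{filtr}},\times)$ to $\mathbb{E}_\infty$-monoid objects in $(\operatorname{Mon}(\mathcal{S}),\times)$. Concretely, applying $F$ to the commutative-monoid structure maps $\mathcal{C}\times\mathcal{C}\rightarrow\mathcal{C}$, $\ast\rightarrow\mathcal{C}$ and using $F(\mathcal{C}\times\mathcal{C})\simeq F(\mathcal{C})\times F(\mathcal{C})$, $F(\ast)\simeq\ast$ endows $|\mathscr{M}_{\operatorname{RBS}}(\mathcal{C})|$ with the structure of a commutative monoid object in $\operatorname{Mon}(\mathcal{S})$. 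By the classical Dunn additivity / Eckmann--Hilton theorem (an $\mathbb{E}_1$-algebra in $\mathbb{E}_1$-spaces, or more generally in $\mathbb{E}_k$-spaces, with compatible commutative structure, is $\mathbb{E}_\infty$; see \cite[\S 5.1.2]{LurieHA} for $\mathbb{E}_1\otimes\mathbb{E}_\infty\simeq\mathbb{E}_\infty$, or simply that a commutative monoid object in $\operatorname{Mon}(\mathcal{S})=\operatorname{Mon}_{\mathbb{E}_1}(\mathcal{S})$ is an $\mathbb{E}_\infty$-space), a commutative monoid in $\operatorname{Mon}(\mathcal{S})$ is the same data as an $\mathbb{E}_\infty$-space whose underlying $\mathbb{E}_1$-space is the given one. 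This produces the desired $\mathbb{E}_\infty$-refinement, and naturality in $\mathcal{C}$ is automatic since every construction used ($F$, its product-preservation equivalences, and the monoid structure on $\mathcal{C}$) is natural in $\mathcal{C}$ as an exact functor — or one can just say that $F$ restricted to the $\infty$-category of exact categories underlies a functor to $\operatorname{CMon}(\operatorname{Mon}(\mathcal{S}))\simeq\operatorname{CMon}(\mathcal{S})$.

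I expect the only genuine subtlety to be bookkeeping: making precise the claim ``a product-preserving functor sends commutative monoid objects to commutative monoid objects'' at the $\infty$-categorical level, and matching ``commutative monoid object in $\operatorname{Mon}(\mathcal{S})$'' with ``$\mathbb{E}_\infty$-space refining the $\mathbb{E}_1$-structure.'' Both are standard — the first because commutative monoid objects are product-preserving functors out of $\mathrm{Fin}_*$ (or $\Gamma^{\mathrm{op}}$), which compose with product-preserving functors, and the second is Eckmann--Hilton as above — so in the write-up I would invoke them by citation rather than reprove them. A shorter alternative, which I might present instead, is to observe directly that $|\mathscr{M}_{\operatorname{RBS}}(-)|$ being product-preserving means it factors through $\operatorname{CMon}$ automatically on any cartesian target once the source object is commutative, and then simply cite \cite[Corollary 4.9.1]{Yuan} together with \Cref{equivalence of E1-spaces} as an independent confirmation; but since the point of this section is to give the argument \emph{independently} of that identification, I would lead with the Eckmann--Hilton argument sketched above and relegate the comparison with Yuan to a remark.

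\begin{proof}
This is an instance of an $\infty$-categorical Eckmann--Hilton argument. The biproduct $\oplus\colon \mathcal{C}\times\mathcal{C}\rightarrow\mathcal{C}$ is an exact functor and hence a morphism of categories with filtrations; together with the zero object $0\colon \ast\rightarrow \mathcal{C}$ and the usual associativity, unit, and symmetry isomorphisms, it exhibits $\mathcal{C}$ as a commutative monoid object in $(\operatorname{Cat}^{\operatorname{filtr}},\times)$ (all coherence data lives already in $\operatorname{Cat}$). By \Cref{product preserving}, the functor $\mathcal{C}\mapsto |\mathscr{M}_{\operatorname{RBS}}(\mathcal{C})|$ preserves finite products, so applying it to this commutative monoid structure and using $|\mathscr{M}_{\operatorname{RBS}}(\mathcal{C}\times\mathcal{C})|\simeq|\mathscr{M}_{\operatorname{RBS}}(\mathcal{C})|\times|\mathscr{M}_{\operatorname{RBS}}(\mathcal{C})|$ and $|\mathscr{M}_{\operatorname{RBS}}(\ast)|\simeq\ast$ endows $|\mathscr{M}_{\operatorname{RBS}}(\mathcal{C})|$ with the structure of a commutative monoid object in $\operatorname{Mon}(\mathcal{S})$. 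Since a commutative monoid object in $\operatorname{Mon}(\mathcal{S})=\operatorname{Mon}_{\mathbb{E}_1}(\mathcal{S})$ is the same data as an $\mathbb{E}_\infty$-space whose underlying $\mathbb{E}_1$-space is the given one (Dunn additivity; see \cite[\S 5.1.2]{LurieHA}), this is precisely an $\mathbb{E}_\infty$-structure refining the $\mathbb{E}_1$-structure on $|\mathscr{M}_{\operatorname{RBS}}(\mathcal{C})|$. All the data involved is natural in the exact category $\mathcal{C}$.
\end{proof}
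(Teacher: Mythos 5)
Your proof is correct and is essentially identical to the paper's: the paper likewise observes that direct sum exhibits $\mathcal{C}$ as an $\mathbb{E}_\infty$-algebra object in $\operatorname{Cat}^{\operatorname{filtr}}$, transports this structure along the product-preserving functor of \Cref{product preserving}, and invokes the Dunn Additivity Theorem (\cite[Theorem 5.1.2.2]{LurieHA}) to see that the resulting $\mathbb{E}_\infty$-structure refines the $\mathbb{E}_1$-structure. The additional bookkeeping you flag (product-preserving functors carry commutative monoid objects to commutative monoid objects; commutative monoids in $\operatorname{Mon}(\mathcal{S})$ are $\mathbb{E}_\infty$-spaces) is exactly what the paper also leaves to the cited references.
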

\begin{proof}
Indeed, direct sum exhibits $\mathcal{C}$ as an $\mathbb{E}_\infty$-algebra object in $\operatorname{Cat}^{\operatorname{filtr}}$ and we may use \Cref{product preserving} above to transport this to an $\mathbb{E}_\infty$-structure on $|\mathscr{M}_{\operatorname{RBS}}(\mathcal{C})|$. This refines the $\mathbb{E}_1$-structure by the Dunn Additivity Theorem (\cite[Theorem 5.1.2.2]{LurieHA}).
\end{proof}

\subsection{The group completion theorem}

Allow us to briefly share with the reader the situation we were in before having established the $\mathbb{E}_\infty$-structure on $|\mathscr{M}_{\operatorname{RBS}}(\mathcal{C})|$. In \cite{ClausenOrsnesJansen}, we show that the $\operatorname{RBS}$-categories stabilise to the algebraic K-theory space via group completion; more precisely, we show that for any exact category $\mathcal{C}$, we have a homotopy equivalence:
\begin{flalign*}
& & K(\mathcal{C})\simeq \Omega B|\mathscr{M}_{\operatorname{RBS}}(\mathcal{C})| \qquad\qquad  \text{(\cite[Theorem 7.38]{ClausenOrsnesJansen}).}
\end{flalign*}

For an associative ring $A$, the monoidal category decomposes into a disjoint union of reductive Borel--Serre categories:
\begin{align*}
\mathscr{M}_{\operatorname{RBS}}(A):=\mathscr{M}_{\operatorname{RBS}}(\mathcal{P}(A))\simeq \coprod_{M\in \mathcal{M}}\operatorname{RBS}(M)
\end{align*}
where $\mathcal{P}(A)$ is the exact category of finitely generated projective $A$-modules and $\mathcal{M}$ is a set of representatives of isomorphism classes of such. For any finitely generated projective $A$-module $M$, we have a stabilisation map
\begin{align*}
\operatorname{RBS}(M)\xrightarrow{\, (-,A)\, } \operatorname{RBS}(M\oplus A), \quad (M_1,\ldots,M_d)\mapsto (M_1,\ldots,M_d,A),
\end{align*}
and thus we can consider the more naive stabilisation procedure of taking the colimit along these stabilisation maps:
\begin{align*}
\operatorname{RBS}(M)\rightarrow \operatorname{RBS}(M\oplus A)\rightarrow \cdots \rightarrow  \operatorname{RBS}(M\oplus A^n) \rightarrow \operatorname{RBS}(M\oplus A^{n+1})\rightarrow \cdots
\end{align*}

The group completion theorem provides a general tool for comparing these two stabilisation procedures: group completion $\Omega B (-)$ on the one hand and the colimits along stabilisation maps on the other (\cite{McDuffSegal}, \cite{RandalWilliams}, \cite{Nikolaus}). To make use of it, however, requires some kind of commutativity, even if only a bare minimum (see e.g. \cite{Gritschacher}). Having a priori no homotopy commutativity in $|\mathscr{M}_{\operatorname{RBS}}(\mathcal{C})|$, we could not apply it to compare $K(A)\simeq \Omega B| \mathscr{M}_{\operatorname{RBS}}(A)|$ with the colimit
\begin{align*}
\operatorname{RBS}_\infty(A):=\mathop{\operatorname{colim}}_{n\rightarrow \infty}\operatorname{RBS}(A^n).
\end{align*}

Now that we have established that $\mathscr{M}_{\operatorname{RBS}}(\mathcal{C})$ admits a natural $\mathbb{E}_\infty$-structure, we \textit{can} apply the group completion theorem. In order to analyse the plus-constructions that appear, we recall that a finitely generated projective module $M$ is \textit{split Noetherian} if every increasing chain of splittable submodules of $M$ stabilises (\cite[Definition 1.1]{ClausenOrsnesJansen}). This is a technical condition that ensures that we have reasonable control over the poset of splittable flags and the action of the automorphism group on this poset (see \cite[Lemma 5.4]{ClausenOrsnesJansen}). Moreover note that if either:
\begin{enumerate}
\item $A$ is Noetherian, or
\item $A$ is commutative and $\operatorname{Spec}(A)$ has only finitely many connected components
\end{enumerate}
then every finitely generated projective $A$-module $M$ is split Noetherian (\cite[Lemma 5.5]{ClausenOrsnesJansen}).

\begin{corollary}\label{applying the group completion theorem}
For any associative ring $A$, we have
\begin{align*}
K(A)\simeq \Omega B|\mathscr{M}_{\operatorname{RBS}}(A)|\simeq K_0(A)\times |\operatorname{RBS}_\infty(A)|^+
\end{align*}
If $A$ is such that all finitely generated projective modules are split Noetherian, then $|\operatorname{RBS}_\infty(A)|$ is its own plus-construction and
\begin{align*}
|\operatorname{RBS}_\infty(A)|\simeq \operatorname{BGL}_\infty(A)^+.
\end{align*}
\end{corollary}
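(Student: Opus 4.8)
The first equivalence $K(A)\simeq \Omega B|\mathscr{M}_{\operatorname{RBS}}(A)|$ is \cite[Theorem 7.38]{ClausenOrsnesJansen}, so the real content is the identification of $\Omega B|\mathscr{M}_{\operatorname{RBS}}(A)|$ with $K_0(A)\times|\operatorname{RBS}_\infty(A)|^+$ and the split Noetherian addendum. For the first of these, the plan is to feed the $\mathbb{E}_\infty$-structure of \Cref{E-infinity structure} into the group completion theorem. The underlying commutative monoid $\pi_0|\mathscr{M}_{\operatorname{RBS}}(A)|$ is the monoid of isomorphism classes of finitely generated projective $A$-modules under direct sum, and since every such module is a direct summand of a free module, the element $[A]$ is cofinal in $\pi_0$; in particular localising homology at $\pi_0$ agrees with localising at $[A]$, i.e.\ with the mapping telescope of multiplication by $[A]$, which on the component of $0\in K_0(A)$ is $\operatorname{colim}_n|\operatorname{RBS}(A^n)|=|\operatorname{RBS}_\infty(A)|$. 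The group completion theorem (\cite{McDuffSegal}, \cite{RandalWilliams}, \cite{Nikolaus}) applies because the $\mathbb{E}_\infty$-structure makes $|\mathscr{M}_{\operatorname{RBS}}(A)|$ homotopy commutative, and it identifies $H_*(\Omega B|\mathscr{M}_{\operatorname{RBS}}(A)|)$ with this localisation.

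I would then translate this into a space-level statement. The space $\Omega B|\mathscr{M}_{\operatorname{RBS}}(A)|$ is a group-like $\mathbb{E}_\infty$-space, hence an infinite loop space, hence a simple space, with $\pi_0=K_0(A)$ and all components equivalent; write $Z$ for its basepoint component, so $\Omega B|\mathscr{M}_{\operatorname{RBS}}(A)|\simeq K_0(A)\times Z$. Using the decomposition $\mathscr{M}_{\operatorname{RBS}}(A)\simeq\coprod_M\operatorname{RBS}(M)$ and translating the component of $[A^n]$ back to the basepoint, the canonical maps assemble into a map $|\operatorname{RBS}_\infty(A)|\to Z$; by the homology computation of the previous paragraph, together with the cofinality of $\{[A^n]\}$ among stably free classes, this map is an isomorphism on homology with arbitrary local coefficients (the target being simple). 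A homology isomorphism onto a simple space exhibits the target as the plus-construction of the source, so $Z\simeq|\operatorname{RBS}_\infty(A)|^+$ and hence $\Omega B|\mathscr{M}_{\operatorname{RBS}}(A)|\simeq K_0(A)\times|\operatorname{RBS}_\infty(A)|^+$.

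For the split Noetherian addendum, I would first combine the above with Quillen's computation $K(A)\simeq K_0(A)\times\operatorname{BGL}_\infty(A)^+$ (\cite{Quillen73a}) to get $|\operatorname{RBS}_\infty(A)|^+\simeq\operatorname{BGL}_\infty(A)^+$; it then remains to show that $|\operatorname{RBS}_\infty(A)|$ is already its own plus-construction, i.e.\ that $\pi_1(|\operatorname{RBS}_\infty(A)|)$ has trivial maximal perfect subgroup, for which it suffices to show $\pi_1(|\operatorname{RBS}_\infty(A)|)$ is abelian. This is where the split Noetherian hypothesis enters, through \cite[Lemma 5.4]{ClausenOrsnesJansen}: the control it provides over the poset of splittable flags and the $\operatorname{GL}(M)$-action on it should suffice to show that $\pi_1(|\operatorname{RBS}(M)|)$ is generated by $\operatorname{Aut}_{\operatorname{RBS}(M)}((M))=\operatorname{GL}(M)$ (every loop can be pushed off the boundary $\partial\operatorname{RBS}(M)$) and that the image of the elementary subgroup is trivial there (a morphism from a longer list to $(M)$ identifies a block-triangular automorphism with the automorphism it induces on its associated graded, killing the unipotent radical). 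Passing to the colimit, $\pi_1(|\operatorname{RBS}_\infty(A)|)$ is then a quotient of $\operatorname{GL}_\infty(A)/E_\infty(A)=K_1(A)$, in particular abelian; an abelian group has trivial maximal perfect subgroup, so $|\operatorname{RBS}_\infty(A)|\to|\operatorname{RBS}_\infty(A)|^+$ is an equivalence and $|\operatorname{RBS}_\infty(A)|\simeq\operatorname{BGL}_\infty(A)^+$.

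The main obstacle is this last step: getting sufficiently precise hold of $\pi_1(|\operatorname{RBS}(M)|)$ — specifically, the surjectivity of $\operatorname{GL}(M)\to\pi_1(|\operatorname{RBS}(M)|)$ and the vanishing of the (stabilised) elementary subgroup in it — which is genuinely where the split Noetherian hypothesis is used, via the structure of the flag poset; it is plausible, as remarked after the corollary, that a finer analysis removes the hypothesis. A secondary point requiring some care is the bookkeeping of path components in the group completion theorem, namely the precise identification of the basepoint component of the homology telescope with $|\operatorname{RBS}_\infty(A)|$ using cofinality of the free modules among the stably free ones.
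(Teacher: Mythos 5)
Your proposal is correct and follows essentially the same route as the paper: the group completion theorem in the form of \cite[Corollary 1.2]{RandalWilliams} applied to the telescope along multiplication by $[A]$, followed by the observation that $\pi_1$ of the stable component is abelian. The only difference is that the $\pi_1$-analysis you flag as the main obstacle is not carried out from scratch in the paper but simply quoted from \cite[Theorem 5.9]{ClausenOrsnesJansen}, which already gives $\pi_1(\operatorname{RBS}(A^n))\cong \operatorname{GL}_n(A)/\operatorname{E}(A^n)$ under the split Noetherian hypothesis, whence $\pi_1(|\operatorname{RBS}_\infty(A)|)\cong K_1(A)$.
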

\begin{proof}
Applying \cite[Theorem 1.1]{RandalWilliams}, we can form the telescope along the stabilisation maps
\begin{align*}
|\mathscr{M}_{\operatorname{RBS}}(A)|_\infty :=\operatorname{colim}(|\mathscr{M}_{\operatorname{RBS}}(A)|\xrightarrow {\,\cdot A\,}|\mathscr{M}_{\operatorname{RBS}}(A)|\xrightarrow {\,\cdot A\,} |\mathscr{M}_{\operatorname{RBS}}(A)|\xrightarrow {\,\cdot A\,} \cdots\, )
\end{align*}
and by \cite[Corollary 1.2]{RandalWilliams}, we have an equivalence
\begin{align*}
\Omega B|\mathscr{M}_{\operatorname{RBS}}(A)| \simeq |\mathscr{M}_{\operatorname{RBS}}(A)|_\infty^+
\end{align*}
where $|\mathscr{M}_{\operatorname{RBS}}(A)|_\infty^+$ is the result of applying Quillen's +-construction to each component of $|\mathscr{M}_{\operatorname{RBS}}(A)|_\infty$ with respect to the commutator subgroup of its fundamental group (not to be confused with the unitalisation functor of \S \ref{non-unital monoids}).

If $A$ is such that all finitely generated projective modules are split Noetherian, then we know from \cite[Theorem 5.9]{ClausenOrsnesJansen} that the fundamental group of the $\operatorname{RBS}$-category is
\begin{align*}
\pi_1(\operatorname{RBS}(A^n))\cong \operatorname{GL}_n(A)/\operatorname{E}(A^n),\quad n\geq 1,
\end{align*}
where $\operatorname{E}(A^n)\subset \operatorname{GL}_n(A)$ is the subgroup generated by those automorphisms of $A^n$ which induce the identity on the associated graded of \textit{some} splittable flag of submodules (see \cite[\S 1]{ClausenOrsnesJansen}). Since elements in $\operatorname{E}(A^n)$ map to zero in $K_1(A)$ and the subgroup $\operatorname{E}_n(A)$ generated by elementary matrices is always contained in $\operatorname{E}(A^n)$, it follows that the fundamental group of the component $|\operatorname{RBS}_\infty(A)|$ is the first K-group,
\begin{align*}
\pi_1(|\operatorname{RBS}_\infty(A)|) \cong K_1(A).
\end{align*}
As such it is abelian and the component $|\operatorname{RBS}_\infty(A)|$ is its own $+$-construction.
\end{proof}

\begin{remark}\label{get rid of split Noetherian hypothesis}
It is somewhat annoying that the technical hypothesis that finitely generated projective modules should be split Noetherian shows up here. For an arbitrary associative ring $A$, the only thing missing in order to deduce that
\begin{align*}
\pi_1(|\operatorname{RBS}_\infty(A)|) \cong K_1(A).
\end{align*}
and hence that $|\operatorname{RBS}_\infty(A)|$ is its own plus-construction, is that the map
\begin{align*}
\operatorname{GL}_\infty(A) \rightarrow \pi_1(|\operatorname{RBS}_\infty(A)|)
\end{align*}
should be surjective (see the proof of \cite[Theorem 5.9]{ClausenOrsnesJansen}). It does not seem completely unreasonable to believe that this might be true in full generality (even unstably). The proof of \cite[Theorem 5.9]{ClausenOrsnesJansen} uses the split Noetherian hypothesis for an inductive argument, so one would have to prove it more directly.
\end{remark}

\section{The monoidal rank filtration}\label{monoidal rank filtration}

The rank function on finitely generated projective modules (when well-defined) gives rise to a natural filtration on the monoidal $\operatorname{RBS}$-category. This is analogous to Quillen's rank filtration of the K-theory space via the Q-construction (\cite{Quillen73a}, see also \Cref{rank filtration for commutative ring}). One can interpret the filtration of the monoidal $\operatorname{RBS}$-category as somewhat more fundamental than Quillen's for the following two reasons:
\begin{enumerate}
\item we recover Quillen's rank filtration via group completion (in fact, our filtration deloops to Quillen's filtration at the level of the $Q$-construction (\cite[Theorem 7.38]{ClausenOrsnesJansen}));
\item it is more highly structured as it is a filtration of \textit{monoidal} categories.
\end{enumerate}

In this section, we study this monoidal rank filtration. We show that its associated graded is extremely simple: it is given by free monoidal categories on the (non-monoidal) $\operatorname{RBS}$-categories modulo their boundary. We unravel this even further by introducing Tits complexes of finitely generated projective modules, a direct generalisation of the usual Tits buildings.

\subsection{The filtration}\label{filtration}

Let $\mathcal{C}$ be an exact category and suppose we have a \textit{grading}, that is, a monoidal functor
\begin{align*}
r\colon \mathcal{C}\rightarrow \N
\end{align*}
where $\mathcal{C}$ is equipped with the monoidal structure given by the biproduct. For any $N\geq 0$, let $\mathcal{C}_{\leq N}\subseteq \mathcal{C}$ denote the full subcategory spanned by objects $c$ of rank at most $N$, $r(c)\leq N$. This is no longer an exact category as it is not additive, but it is still a category with filtrations and as such, we can still construct a monoidal $\operatorname{RBS}$-category
\begin{align*}
\mathscr{M}_{\operatorname{RBS}}(\mathcal{C}_{\leq N}), \quad N\geq 0.
\end{align*}
This, indeed, is the main reason for defining $\mathscr{M}_{\operatorname{RBS}}(-)$ in this generality.

\begin{definition}
In the situation described above, the resulting filtration,
\begin{align*}
\mathscr{M}_{\operatorname{RBS}}(\mathcal{C}_{\leq 0})\subset \mathscr{M}_{\operatorname{RBS}}(\mathcal{C}_{\leq 1}) \subset\cdots \subset \mathscr{M}_{\operatorname{RBS}}(\mathcal{C}_{\leq N})\subset \mathscr{M}_{\operatorname{RBS}}(\mathcal{C}_{\leq N+1}) \subset \cdots
 \subset \mathscr{M}_{\operatorname{RBS}}(\mathcal{C}),
\end{align*}
is called the \textit{(monoidal) rank filtration} of $\mathscr{M}_{\operatorname{RBS}}(\mathcal{C})$.

Analogously,
\begin{align*}
\mathscr{M}_{\operatorname{RBS}}^{\operatorname{nu}}(\mathcal{C}_{\leq 0})\subset \mathscr{M}_{\operatorname{RBS}}^{\operatorname{nu}}(\mathcal{C}_{\leq 1}) \subset\cdots \subset \mathscr{M}_{\operatorname{RBS}}^{\operatorname{nu}}(\mathcal{C}_{\leq N})\subset \mathscr{M}_{\operatorname{RBS}}^{\operatorname{nu}}(\mathcal{C}_{\leq N+1}) \subset \cdots
 \subset \mathscr{M}_{\operatorname{RBS}}^{\operatorname{nu}}(\mathcal{C})
\end{align*}
is called the \textit{non-unital (monoidal) rank filtration} of $\mathscr{M}_{\operatorname{RBS}}^{\operatorname{nu}}(\mathcal{C})$.
\end{definition}

\begin{example}\label{rank filtration for commutative ring}
Let $R$ be a commutative ring whose spectrum is connected. Then the rank of a finitely generated projective $R$-module is well-defined and gives a grading
\begin{align*}
\mathcal{P}(R)\rightarrow \N, \quad M\mapsto \operatorname{rk}(M).
\end{align*}
By \cite[Theorem 7.38]{ClausenOrsnesJansen}, we have an identification
\begin{align*}
B|\mathscr{M}_{\operatorname{RBS}}(\mathcal{P}(R)_{\leq N})|\simeq |Q(\mathcal{P}(R)_{\leq N})|
\end{align*}
of the classifying space of the $\mathbb{E}_1$-space $|\mathscr{M}_{\operatorname{RBS}}(\mathcal{P}(R)_{\leq N})|$ with Quillen's Q-construction. It follows that after group completion, we recover Quillen's rank filtration of $K(R)$ (\cite{Quillen73a}). Upon geometric realisation, we thus have a filtration of $\mathbb{E}_1$-spaces,
\begin{align*}
\cdots \subset |\mathscr{M}_{\operatorname{RBS}}(\mathcal{P}(R)_{\leq N})|\subset |\mathscr{M}_{\operatorname{RBS}}(\mathcal{P}(R)_{\leq N+1})| \subset \cdots
 \subset |\mathscr{M}_{\operatorname{RBS}}(R)|,
\end{align*}
that group completes to Quillen's rank filtration.

Banús has recently proved that this filtration satisfies homological stability, an analogue of Quillen's result on the rank filtration of the Q-construction (\cite[Corollary 4.2.3]{Damia}, \cite[Corollary to Theorem 3]{Quillen73a}): more precisely, the map
\begin{align*}
H_d(|\mathscr{M}_{\operatorname{RBS}}(\mathcal{P}(R)_{\leq N})|;\Z)\longrightarrow H_d(|\mathscr{M}_{\operatorname{RBS}}(\mathcal{P}(R)_{\leq N+1})|; \Z)
\end{align*}
is an isomorphism for $d\leq N-2$ and an epimorphism for $d\leq N-1$ (see also \Cref{questions}).

Recall that Rognes introduces spectrum-level rank filtrations of algebraic K-theory (\cite{Rognes}): this generalises Quillen's rank filtration to rank filtrations of deloopings of the K-theory space. We can interpret the rank filtration of the monoidal $\operatorname{RBS}$-category as a generalisation in the other direction giving a rank filtration that deloops to Quillen's.
\end{example}

\begin{example}\label{free rank filtration}
Suppose $A$ is an associative ring and consider the category $\mathcal{F}(A)$ of finitely generated free $A$-modules. This inherits the structure of an exact category from $\mathcal{P}(A)$. More precisely, an admissible epimorphism is a projection $A^n\twoheadrightarrow A^m$ whose kernel is also \textit{free}, and an admissible monomorphism is an inclusion $A^n\hookrightarrow A^m$ that splits off a \textit{free} module.

This gives rise to a monoidal category $\mathscr{M}_{\operatorname{RBS}}(\mathcal{F}(A))$. If $A$ has invariant basis number, then the rank defines a grading on $\mathcal{F}(A)$ and we can consider the resulting rank filtration of $\mathscr{M}_{\operatorname{RBS}}(\mathcal{F}(A))$.
\end{example}

\begin{notation}\label{RBS-pieces}
Let $\mathcal{C}$ be an exact category with a grading $r\colon \mathcal{C}\rightarrow \N$. We can define an induced grading of the monoidal $\operatorname{RBS}$-category
\begin{align*}
\rho\colon \mathscr{M}_{\operatorname{RBS}}(\mathcal{C})\rightarrow \N, \qquad c=(c_j)_{j\in J}\,\mapsto\, \rho(c):=r(\oplus_{j\in J} c_j)=\sum_{j\in J} r(c_j).
\end{align*}
We call $\rho(c)$ the \textit{total rank} of the object $c$. This is a monoidal functor and for each $N$, we write 
\begin{align*}
\operatorname{RBS}_N(\mathcal{C}):=\rho^{-1}(N)\subset \mathscr{M}_{\operatorname{RBS}}(\mathcal{C})
\end{align*}
for the full subcategory spanned by the objects of total rank $N$. We denote by
\begin{align*}
\partial \operatorname{RBS}_N(\mathcal{C})\subset \operatorname{RBS}_N(\mathcal{C})
\end{align*}
the further full subcategory spanned by the objects $(c_j)_{j\in J}$ with $|J|>1$; in other words, we remove the single object lists on rank $N$ objects in $\mathcal{C}$.
\end{notation}

\begin{observation}\label{RBSN vs reductive borel-serre categories}
Let $\mathcal{C}$ be an exact category equipped with a grading. The unital and non-unital monoidal $\operatorname{RBS}$-categories decompose as disjoint unions
\begin{align*}
\mathscr{M}_{\operatorname{RBS}}(\mathcal{C}) \simeq \coprod_{N\geq 0} \operatorname{RBS}_N(\mathcal{C}),\qquad \mathscr{M}_{\operatorname{RBS}}^{\operatorname{nu}}(\mathcal{C}) \simeq \coprod_{N>0} \operatorname{RBS}_N(\mathcal{C}).
\end{align*}
We also have decompositions
\begin{align*}
\operatorname{RBS}_N(\mathcal{C})\ \simeq \coprod_{c\in C_N} \operatorname{RBS}(c),\qquad \partial\operatorname{RBS}_N(\mathcal{C})\ \simeq \coprod_{c\in C_N} \partial\operatorname{RBS}(c),
\end{align*}
where $c$ runs through a set $C_N$ of representatives of isomorphism classes of rank $N$ objects in $\mathcal{C}$, and where $\operatorname{RBS}(c)$ is the \textit{reductive Borel--Serre category} of $c$ (see \Cref{general RBS-categories}).
\end{observation}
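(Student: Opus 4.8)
The plan is to deduce everything from the component decomposition already recorded in \Cref{decomposition of monoidal RBS}, together with the observation that the total rank $\rho$ of \Cref{RBS-pieces} is locally constant. First I would recall that \Cref{decomposition of monoidal RBS} gives equivalences $\mathscr{M}_{\operatorname{RBS}}(\mathcal{C})\simeq\coprod_{c\in C}\operatorname{RBS}(c)$ and $\mathscr{M}_{\operatorname{RBS}}^{\operatorname{nu}}(\mathcal{C})\simeq\coprod_{c\in C\smallsetminus 0}\operatorname{RBS}(c)$, where $C$ is a set of representatives of isomorphism classes of objects of $\mathcal{C}$; concretely, the component of a list $(c_i)_{i\in I}$ is the one containing the single-object list $(\bigoplus_i c_i)$, to which it maps via the evident split flag. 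The next step is to note that $\rho$ is constant on the piece $\operatorname{RBS}(c)$ with value $r(c)$: by definition every object $(d_i)_{i\in I}$ of $\operatorname{RBS}(c)$ admits a morphism to $(c)$, which exhibits the list $(d_i)_{i\in I}$ as the associated graded of a flag in $c$, so $\rho\big((d_i)_{i\in I}\big)=\sum_i r(d_i)=r(c)$ by additivity of the grading along that flag. Regrouping the pieces $\operatorname{RBS}(c)$ according to the value $N=r(c)$ then yields $\operatorname{RBS}_N(\mathcal{C})\simeq\coprod_{c\in C_N}\operatorname{RBS}(c)$ and hence $\mathscr{M}_{\operatorname{RBS}}(\mathcal{C})\simeq\coprod_{N\geq 0}\operatorname{RBS}_N(\mathcal{C})$; the non-unital version is obtained by discarding the monoidal unit, which is the empty list and has total rank $0$ (and is, for the gradings considered in \Cref{rank filtration for commutative ring} and \Cref{free rank filtration}, the only object of total rank $0$, since there $r(c)=0$ forces $c=0$), leaving exactly the components of positive total rank.

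For the boundary statement I would just unwind \Cref{general RBS-categories} and \Cref{RBS-pieces}. Recall that $\partial\operatorname{RBS}_N(\mathcal{C})\subset\operatorname{RBS}_N(\mathcal{C})$ is the full subcategory on lists $(c_j)_{j\in J}$ with $|J|>1$, while $\partial\operatorname{RBS}(c)=\operatorname{RBS}(c)\smallsetminus\operatorname{BGL}(c)$ and $\operatorname{BGL}(c)$ is the maximal subgroupoid on objects isomorphic to the one-object list $(c)$. An isomorphism in $\mathscr{M}_{\operatorname{RBS}}^{\operatorname{nu}}(\mathcal{C})$ is simply an isomorphism of lists (the underlying order-preserving map of indexing sets is a bijection and all flags involved have length one), so $\operatorname{BGL}(c)$ consists exactly of the single-object lists $(c')$ with $c'\cong c$; conversely, any single-object list lying in $\operatorname{RBS}(c)$ maps to $(c)$ by a length-one flag and is therefore isomorphic to $(c)$. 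Hence $\partial\operatorname{RBS}(c)$ is precisely the full subcategory of $\operatorname{RBS}(c)$ on lists of length strictly greater than one, and intersecting the decomposition $\operatorname{RBS}_N(\mathcal{C})\simeq\coprod_{c\in C_N}\operatorname{RBS}(c)$ with this length condition gives $\partial\operatorname{RBS}_N(\mathcal{C})\simeq\coprod_{c\in C_N}\partial\operatorname{RBS}(c)$.

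There is no genuine obstacle here; this is a bookkeeping exercise once \Cref{decomposition of monoidal RBS} is in hand. The only point that warrants a word of care is the claim that $\rho$ is a well-defined, locally constant invariant, i.e. that every morphism of $\mathscr{M}_{\operatorname{RBS}}(\mathcal{C})$ preserves total rank; this amounts to additivity of the grading $r$ along the flags appearing in such morphisms, which is automatic in the split exact setting (all flags split, so $r$ is additive by monoidality alone) and in particular covers the grading examples we actually use, namely the rank of finitely generated projective modules over a commutative ring with connected spectrum, respectively the rank of finitely generated free modules over a ring with invariant basis number.
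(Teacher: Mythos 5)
Your argument is correct and is essentially the intended one: the paper states this as an unproved Observation, and the evident route is exactly what you do, namely combine the component decomposition of \Cref{decomposition of monoidal RBS} with the fact that the total rank $\rho$ is constant on each component $\operatorname{RBS}(c)$ and then match the length-$>1$ condition defining $\partial\operatorname{RBS}_N(\mathcal{C})$ with the definition of $\partial\operatorname{RBS}(c)$. Your extra caveats (additivity of $r$ along the flags, and non-zero objects having positive rank so that $\operatorname{RBS}_0(\mathcal{C})$ is just the empty list) are points the paper glosses over but which do hold in the settings it actually uses.
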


\begin{remark}
To avoid confusion, let us briefly remark that the monoidal rank filtration cannot be recovered from the above grading as the inclusion
\begin{align*}
\rho^{-1}(\N_{\leq N})\subsetneq \mathscr{M}_{\operatorname{RBS}}(\mathcal{C}_{\leq N})
\end{align*}
is proper. Indeed, the monoidal subcategory $\mathscr{M}_{\operatorname{RBS}}(\mathcal{C}_{\leq N})\subset \mathscr{M}_{\operatorname{RBS}}(\mathcal{C})$ contains objects of arbitrarily large total rank; we only require that the individual objects in $\mathcal{C}$ making up the lists have rank less than $N$. For example, considering the exact category of finitely generated $k$-vector spaces, the monoidal category $\mathscr{M}_{\operatorname{RBS}}(\mathcal{P}(k)_{\leq 1})$ has as objects finite (but arbitrarily long) lists of rank $1$ vector spaces:
\begin{align*}
(k,k,\ldots,k),
\end{align*}
whereas $\rho^{-1}(\N_{\leq 1})$ is the full subcategory spanned by $\emptyset$ and $(k)$.

Let's compare the monoidal rank filtration with the natural (non-monoidal) filtration of $\mathcal{C}^\simeq$, the category of objects in $\mathcal{C}$ and isomorphisms between them. Consider for example the exact category $\mathcal{P}(R)$ of finitely generated projective modules over a commutative ring $R$ with connected spectrum. Then
\begin{align*}
\mathcal{P}(R)^\simeq \simeq \coprod_{M\in \mathcal{M}} \operatorname{BGL}(M)
\end{align*}
inherits a grading $\rho\colon \mathcal{P}(R)^\simeq\rightarrow \N$ and can be filtered by
\begin{align*}
\mathcal{P}(R)^\simeq_{\leq N}:=\rho^{-1}(\N_{\leq N})\ \simeq \coprod_{M\in \mathcal{M}_{\leq N}} \operatorname{BGL}(M).
\end{align*}
Here the $N$'th piece of the filtration only captures information about modules of rank $\leq N$. The $N$'th piece of the monoidal rank filtration of $\mathscr{M}_{\operatorname{RBS}}(\mathcal{P}(R))$, however, captures information about modules of arbitrarily large rank since any object $(c_j)_{j\in J}$ admits a canonical map
\begin{align*}
(c_j)_{j\in J}\rightarrow (\oplus_{j\in J}c_j).
\end{align*}
More precisely, the category $\mathscr{M}_{\operatorname{RBS}}(\mathcal{P}(R)_{\leq N})$ captures the splittings of finitely generated projective modules into pieces of rank less than $N$. For example, $\mathscr{M}_{\operatorname{RBS}}(\mathcal{P}(k)_{\leq 1})$ captures all \textit{complete flags} in $k^n$ for varying $n$, that is, flags that cannot be further refined.
\end{remark}

\subsection{The associated graded}\label{associated graded}

Let $\mathcal{C}$ be an exact category equipped with a grading $r\colon \mathcal{C}\rightarrow \N$ and consider the monoidal rank filtration of $\mathscr{M}_{\operatorname{RBS}}(\mathcal{C})$ as defined in the previous section. For notational ease, we write
\begin{align*}
\mathscr{M}:=\mathscr{M}_{\operatorname{RBS}}(\mathcal{C}) \quad\text{and}\quad \mathscr{M}_{\leq N}:=\mathscr{M}_{\operatorname{RBS}}(\mathcal{C}_{\leq N})\quad\text{for any }N\geq 0,
\end{align*}
for the monoidal $\operatorname{RBS}$-categories, and analogously for the non-unital versions:
\begin{align*}
\mathscr{M}^{\operatorname{nu}}:=\mathscr{M}_{\operatorname{RBS}}^{\operatorname{nu}}(\mathcal{C}) \quad\text{and}\quad \mathscr{M}^{\operatorname{nu}}_{\leq N}:=\mathscr{M}_{\operatorname{RBS}}^{\operatorname{nu}}(\mathcal{C}_{\leq N})\quad\text{for any }N\geq 0,
\end{align*}
Likewise, for any $N\geq 0$, we write
\begin{align*}
\operatorname{RBS}_N:=\operatorname{RBS}_N(\mathcal{C}),\quad\text{and}\quad \partial\operatorname{RBS}_N:=\partial\operatorname{RBS}_N(\mathcal{C})
\end{align*}
for the subcategories of \Cref{RBS-pieces}. The aim of this section is to prove the following result.

\begin{theorem}\label{E1-pushout diagram}
For any $N\geq 0$, the following diagram is a pushout in $\operatorname{Mon}(\operatorname{Cat}_\infty)$:
\begin{center}
\begin{tikzpicture}
\matrix (m) [matrix of math nodes,row sep=2em,column sep=2em,nodes={anchor=center}]
{
\operatorname{Free}(\partial \operatorname{RBS}_N) & \operatorname{Free}(\operatorname{RBS}_N) \\
(\mathscr{M}_{\leq N-1})^\otimes & (\mathscr{M}_{\leq N})^\otimes \\
};
\path[-stealth]
(m-1-1) edge (m-1-2) edge (m-2-1)
(m-2-1) edge (m-2-2)
(m-1-2) edge (m-2-2)
;
\end{tikzpicture}
\end{center}
In particular, we have an equivalence
\begin{align*}
(\mathscr{M}_{\leq N})^\otimes/(\mathscr{M}_{\leq N-1})^\otimes \xleftarrow{\ \simeq\ } \operatorname{Free}(\operatorname{RBS}_N/\partial \operatorname{RBS}_N)
\end{align*}
identifying the pushout in $\operatorname{Mon}(\operatorname{Cat}_\infty)$ on the left with the free monoidal $\infty$-category on the pushout $\operatorname{RBS}_N/\partial \operatorname{RBS}_N$ in $\operatorname{Cat}_\infty$.
\end{theorem}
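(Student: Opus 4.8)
The plan is to prove the pushout statement of \Cref{E1-pushout diagram} by working at the level of the bar constructions, using the explicit colimit formula for $\mathbb{L}_{\operatorname{Cat}_\infty}$ from \Cref{monoid over Igg} together with the model $\accentset{\sim}{\mathscr{M}}_{\operatorname{RBS}}$ and its twisted arrow category. First I would recall that via \Cref{fluffed up model of monoidal category of flags and associated gradeds} and \Cref{restricting to flags}, each of the four monoidal $\infty$-categories in the square has a bar construction computed by a simplicial replacement of a functor $\mathbf{S}^{\operatorname{f}}$ (or its fat analogue) out of $\mathcal{I}_\gg$ (resp. $\mathcal{J}$), restricted to the appropriate rank-bounded subcategory. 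Since colimits in $\operatorname{Mon}(\operatorname{Cat}_\infty)$ are computed via the universal monoid functor $\mathbb{L}=\mathbb{L}_{\operatorname{Cat}_\infty}$ applied to colimits of the underlying semisimplicial objects (the inclusion $\mathbb{B}$ preserves limits and filtered colimits but not pushouts, so one must pass through $\mathbb{L}$), the key reduction is: a square of simplicial objects that is a levelwise pushout in $\operatorname{Cat}_\infty$, after applying $\mathbb{L}$, gives a pushout in $\operatorname{Mon}(\operatorname{Cat}_\infty)$ provided the left vertical map is suitably cofibration-like. More precisely, $\mathbb{L}$ is a left adjoint, hence preserves pushouts, so it suffices to exhibit the square as $\mathbb{L}$ applied to an honest pushout square of simplicial (or semisimplicial) objects.

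The heart of the argument is therefore to identify the relevant pushout square of semisimplicial objects. The point is that the rank filtration on $\mathscr{M}_{\operatorname{RBS}}(\mathcal{C})$ corresponds to the filtration of $\mathbf{S}^{\operatorname{f}}$ by total rank, and the $N$-th graded piece is built from exactly those flags whose total rank is $N$ and which are genuinely new, i.e.\ not already present in rank $\leq N-1$ — these are the flags where some subquotient has rank exactly $N$, equivalently a one-step flag $0 \hookrightarrow c$ with $r(c)=N$. So at the level of the underlying object (the value on $(1)^\pm$), one expects
\begin{align*}
\mathop{\operatorname{colim}}_{\mathcal{I}_\gg^\circ}\mathbf{S}^{\operatorname{f}}_{\leq N} \ \simeq\ \mathop{\operatorname{colim}}_{\mathcal{I}_\gg^\circ}\mathbf{S}^{\operatorname{f}}_{\leq N-1}\ \bigsqcup_{\,?\,}\ (\text{contribution of rank-}N\text{ pieces}),
\end{align*}
and the combinatorial content is that the ``new'' part, as a semisimplicial object, is freely generated (in the sense of \Cref{examples: picking out the non-degenerate simplices} and the recognition principle there) by the datum of a rank-$N$ object $c$ together with a flag in it — which is precisely $\operatorname{RBS}_N$ — with the boundary flags (those that only use rank $<N$ subquotients at the top) corresponding to $\partial\operatorname{RBS}_N$. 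Concretely I would: (i) show the square of semisimplicial objects $\mathbf{S}^{\operatorname{f}}$-valued, obtained by taking, in each simplicial degree and each slot, the subobject of diagrams all of whose relevant subquotients have rank $\leq N-1$ versus $\leq N$, is a levelwise pushout along the inclusion of $\partial\operatorname{RBS}_N$-type data into $\operatorname{RBS}_N$-type data; (ii) apply $\operatorname{Free}$, i.e.\ left Kan extension from $(1)^\pm$, which by \Cref{observations about free objects} commutes with everything and with the inclusion $\operatorname{Cat}\hookrightarrow\operatorname{Cat}_\infty$; and (iii) apply $\mathbb{L}$ and invoke \Cref{monoid over Igg} to read off the four corners of the square as the claimed monoidal categories. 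The final clause of the theorem is then immediate: $\operatorname{Free}$ is a left adjoint, so $\operatorname{Free}(\operatorname{RBS}_N)/\operatorname{Free}(\partial\operatorname{RBS}_N) \simeq \operatorname{Free}(\operatorname{RBS}_N/\partial\operatorname{RBS}_N)$, and the pushout square identifies this quotient with $(\mathscr{M}_{\leq N})^\otimes/(\mathscr{M}_{\leq N-1})^\otimes$.

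The main obstacle I anticipate is step (i): precisely pinning down the levelwise pushout of semisimplicial objects and verifying that the ``new'' part is the free semisimplicial object on $\operatorname{RBS}_N$ rel $\partial\operatorname{RBS}_N$ — this requires a careful analysis of which diagrams in Waldhausen's $S_\bullet$-construction become available when one raises the rank bound from $N-1$ to $N$, and checking that a diagram of total rank $N$ that is \emph{not} already of rank $\leq N-1$ decomposes uniquely (up to the appropriate degeneracies, cf.\ the recognition principle before \Cref{examples: picking out the non-degenerate simplices}) into a top piece which is a single rank-$N$ object together with a flag of it. One must be careful that the relevant category is $\mathcal{I}_\gg$ (or $\mathcal{J}$), not $\Delta_i^{\operatorname{op}}$, so ``degeneracy'' here means the collapse/splitting combinatorics of \S\ref{partitioned linearly ordered sets}, and the matching between the subquotient structure of $S_\bullet$-diagrams and the list-of-objects structure of $\accentset{\sim}{\mathscr{M}}_{\operatorname{RBS}}(\mathcal{C})$ is governed by the technical lemmas of \S\ref{partial k-theory} (especially \Cref{morphism in monoidal category} and \Cref{converse to morphism in twisted arrow category}). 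Once the levelwise pushout is established, the rest is formal, using only that $\operatorname{Free}$ and $\mathbb{L}$ are left adjoints and the explicit formulas already proved.
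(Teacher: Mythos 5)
Your overall scaffolding is the right one and matches the paper's: reduce to a pushout of (semi)simplicial objects and push it back into monoids with the left adjoint $\mathbb{L}$, then get the final clause formally from $\operatorname{Free}$ preserving pushouts. But step (i) — the claim that the square becomes a \emph{levelwise} pushout of underlying semisimplicial objects, after which the fourth corner can be ``read off'' as $(\mathscr{M}_{\leq N})^\otimes$ — is where the actual content lives, and as stated it is false. Already in simplicial degree $1$ the levelwise pushout of $\mathbb{B}$ applied to the first three corners is (by descent for left closed covers) the \emph{union} inside $\mathscr{M}_{\leq N}$ of $\mathscr{M}_{\leq N-1}$ and the image of $\coprod_{k}(\operatorname{RBS}_N)^k$ under concatenation; this misses every list that mixes a rank-$N$ entry with entries of smaller rank, e.g.\ $(c_1,c_2)$ with $r(c_1)=N$ and $r(c_2)=1$. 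Such objects only appear after applying $\mathbb{L}$, which freely multiplies the two subcategories back together. So the square is $\mathbb{L}$ of an honest pushout square whose fourth corner $P$ is strictly smaller than $\mathbb{B}(\mathscr{M}_{\leq N})^\otimes$, and the theorem is equivalent to the assertion that the comparison map $\mathbb{L}P\to(\mathscr{M}_{\leq N})^\otimes$ is an equivalence. That assertion is not formal: in the paper it is proved by writing $\mathbb{L}P$ as $\operatorname{colim}_{\mathcal{I}_\gg^\circ}\mathbf{P}$ via \Cref{non-unital monoid over Igg} and running proper descent over the cocone to $\mathscr{M}^{\operatorname{nu}}_{\leq N}$, which requires (a) properness of the concatenation functors (\Cref{concatenation functor is proper}) and (b) for each fixed object $c$, contractibility of the category of \emph{$N$-admissible partitions} of $c$, i.e.\ of all ways of cutting the list $c$ into consecutive blocks each lying in $\mathscr{M}^{\operatorname{nu}}_{\leq N-1}$ or in $\operatorname{Free}^{\operatorname{nu}}(\operatorname{RBS}_N)$ (\Cref{observation about partition categories wrt c}).

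Your proposed resolution of the obstacle — checking that a diagram of total rank $N$ not already of rank $\leq N-1$ decomposes uniquely into a rank-$N$ top piece with a flag — addresses only the total-rank-$N$ graded piece and, more importantly, uniqueness is the wrong target: the decompositions are genuinely non-unique (a block of total rank $N$ all of whose entries have rank $<N$ lies in both subcategories), and what one must show is that the category of such decompositions is contractible, not a point. Two smaller remarks: the paper first proves the non-unital statement (\Cref{non-unital E1-pushout diagram}) and deduces the unital one by applying the unitalisation left adjoint $(-)^+$, which sidesteps some bookkeeping with the empty list; and the detour through $\mathbf{S}^{\operatorname{f}}$, Waldhausen diagrams and the twisted arrow category that you propose is unnecessary here — the argument runs entirely inside the monoidal $\operatorname{RBS}$-categories and their finite products.
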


In fact, we will prove the non-unital version of this theorem; the theorem above then follows from the fact that the unitalisation functor $(-)^+$ is a left adjoint. The unital version can be proved directly using the exact same strategy, but we'll use the non-unital version in the subsequent section, so we opt for settling them in this manner. Thus we set out to prove the following theorem.

\begin{theorem}\label{non-unital E1-pushout diagram}
For any $N> 0$, the following diagram is a pushout in $\operatorname{Mon}^{\operatorname{nu}}(\operatorname{Cat}_\infty)$:
\begin{center}
\begin{tikzpicture}
\matrix (m) [matrix of math nodes,row sep=2em,column sep=2em,nodes={anchor=center}]
{
\operatorname{Free}^{\operatorname{nu}}(\partial \operatorname{RBS}_N) & \operatorname{Free}^{\operatorname{nu}}(\operatorname{RBS}_N) \\
(\mathscr{M}_{\leq N-1}^{\operatorname{nu}})^\otimes & (\mathscr{M}_{\leq N}^{\operatorname{nu}})^\otimes \\
};
\path[-stealth]
(m-1-1) edge (m-1-2) edge (m-2-1)
(m-2-1) edge (m-2-2)
(m-1-2) edge (m-2-2)
;
\end{tikzpicture}
\end{center}
\end{theorem}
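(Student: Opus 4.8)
Since $\operatorname{Mon}^{\operatorname{nu}}(\operatorname{Cat}_\infty)$ is presentable the pushout exists, so the plan is to identify it by checking the universal property. Using the adjunction $\operatorname{Free}^{\operatorname{nu}}\dashv u^{\operatorname{nu}}$ of \S\ref{non-unital monoids}, for a non-unital monoidal $\infty$-category $\mathcal{N}$ the statement to prove is that applying $\operatorname{Map}_{\operatorname{Mon}^{\operatorname{nu}}(\operatorname{Cat}_\infty)}(-,\mathcal{N})$ to the square yields an equivalence
\begin{align*}
\operatorname{Map}\big((\mathscr{M}_{\leq N}^{\operatorname{nu}})^\otimes,\mathcal{N}\big) \simeq \operatorname{Map}\big((\mathscr{M}_{\leq N-1}^{\operatorname{nu}})^\otimes,\mathcal{N}\big) \mathop{\times}_{\operatorname{Map}_{\operatorname{Cat}_\infty}(\partial\operatorname{RBS}_N, u^{\operatorname{nu}}\mathcal{N})}\operatorname{Map}_{\operatorname{Cat}_\infty}(\operatorname{RBS}_N, u^{\operatorname{nu}}\mathcal{N}),
\end{align*}
i.e.\ that giving a non-unital monoidal functor out of $(\mathscr{M}_{\leq N}^{\operatorname{nu}})^\otimes$ is the same as giving one out of $(\mathscr{M}_{\leq N-1}^{\operatorname{nu}})^\otimes$ together with an extension of its restriction to $\partial\operatorname{RBS}_N$ over $\operatorname{RBS}_N$. (One might hope to reduce instead to a levelwise statement on bar constructions, but the pushout of monoid objects genuinely uses the monoidal structure — the new $\operatorname{RBS}_N$-generators get $\circledast$-multiplied with $\mathscr{M}_{\leq N-1}^{\operatorname{nu}}$-objects — so this does not suffice.)

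The key structural input is a canonical decomposition of the objects and morphisms of $\mathscr{M}_{\leq N}^{\operatorname{nu}}$. Every object $(c_i)_{i\in I}$ has a unique factorisation under $\circledast$ into \emph{blocks}: the entries with $r(c_i)=N$ are one-element blocks lying in $\operatorname{RBS}_N\smallsetminus\partial\operatorname{RBS}_N$, and they cut $I$ into the maximal runs of entries of rank $\le N-1$, each of which is a block lying in the full subcategory $\mathscr{M}_{\leq N-1}^{\operatorname{nu}}\subseteq\mathscr{M}_{\leq N}^{\operatorname{nu}}$ (here $N>0$ is used, so that a rank-$N$ entry is automatically isolated among rank-$\le N-1$ entries). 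Two further facts govern morphisms. First, total rank is constant on connected components (it is additive across the split subquotients appearing in the flags), so by \Cref{RBSN vs reductive borel-serre categories} the inclusion $\partial\operatorname{RBS}_N\hookrightarrow\mathscr{M}_{\leq N-1}^{\operatorname{nu}}$ is the inclusion of the summand consisting of the total-rank-$N$ components. Second, inside $\operatorname{RBS}_N$ the inclusion $\partial\operatorname{RBS}_N\hookrightarrow\operatorname{RBS}_N$ is a sieve — the source of any morphism whose target has list-length $>1$ again has list-length $>1$ — whose complementary cosieve is $\coprod_{c\in C_N}\operatorname{BGL}(c)$. Hence for a morphism $(c_i)_I\to(d_j)_J$ with underlying surjection $\theta$, each $j$ with $r(d_j)=N$ has $\theta^{-1}(j)$ equal either to a single rank-$N$ entry or to a consecutive run of rank-$\le N-1$ entries; so the morphism is assembled from isomorphisms, morphisms internal to $\mathscr{M}_{\leq N-1}^{\operatorname{nu}}$, and "merge a list of rank-$\le N-1$ objects into a rank-$N$ object" morphisms of $\operatorname{RBS}_N$, the last agreeing on their source with the inclusion $\partial\operatorname{RBS}_N\hookrightarrow\operatorname{RBS}_N$ via the compatibility datum.

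Granting this, a non-unital monoidal functor $(\mathscr{M}_{\leq N-1}^{\operatorname{nu}})^\otimes\to\mathcal{N}$ together with a compatible $\operatorname{RBS}_N\to u^{\operatorname{nu}}\mathcal{N}$ determines a functor on $(\mathscr{M}_{\leq N}^{\operatorname{nu}})^\otimes$ by routing the block decomposition of each object through the appropriate leg and then $\circledast$-ing, and on morphisms by the assembly above; conversely any non-unital monoidal functor on $(\mathscr{M}_{\leq N}^{\operatorname{nu}})^\otimes$ restricts to such data, and the two constructions are mutually inverse. To make this rigorous at the $\infty$-level I would strictify: the four corners are the nerves of strict non-unital monoidal $1$-categories — the two free ones are the strict free monoidal categories (the non-unital analogue of \Cref{observations about free objects}(2)) and the maps are strict monoidal functors — and $\operatorname{Free}^{\operatorname{nu}}(\partial\operatorname{RBS}_N)\hookrightarrow\operatorname{Free}^{\operatorname{nu}}(\operatorname{RBS}_N)$, being injective on objects, is a cofibration between cofibrant objects in the model structure presenting $\operatorname{Mon}^{\operatorname{nu}}(\operatorname{Cat}_\infty)$; so it is enough to identify the strict pushout, for which the block decomposition gives an explicit description of the objects and a generators-and-relations presentation of the morphisms. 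The unital statement \Cref{E1-pushout diagram} then follows by applying the left adjoint $(-)^+$ of \S\ref{non-unital monoids} (using $\operatorname{Free}=(-)^+\operatorname{Free}^{\operatorname{nu}}$ and $\mathscr{M}_{\operatorname{RBS}}(\mathcal{C}_{\leq N})=(\mathscr{M}_{\operatorname{RBS}}^{\operatorname{nu}}(\mathcal{C}_{\leq N}))^+$), and the displayed consequences follow since $\operatorname{Free}$ and $\operatorname{Free}^{\operatorname{nu}}$, being left adjoints, preserve pushouts and hence cofibres.

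The main obstacle I anticipate is precisely the bookkeeping in the middle step: checking that the assembly of a morphism from its pieces is well defined, functorial, and monoidal, independently of the chosen block decompositions of source and target and of the factorisation of the morphism, and coherently so. This is where the sieve/cosieve structure of $\partial\operatorname{RBS}_N\hookrightarrow\operatorname{RBS}_N$ and the clopen-ness of $\partial\operatorname{RBS}_N$ in $\mathscr{M}_{\leq N-1}^{\operatorname{nu}}$ must be used with care — they guarantee that the "merging" morphisms never interact with one another across blocks — and where passing to strict monoidal $1$-categories, so that $\circledast$ is strictly associative, keeps the argument manageable.
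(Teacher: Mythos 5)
Your combinatorial observations are sound and closely parallel the paper's: the block decomposition into rank-$N$ singletons and maximal runs of lower rank, the fact that $\partial\operatorname{RBS}_N\hookrightarrow\operatorname{RBS}_N$ is a sieve, and the clopen-ness of $\partial\operatorname{RBS}_N$ in $\mathscr{M}_{\leq N-1}^{\operatorname{nu}}$ are exactly the "left closed full subcategory" facts the paper uses. But the route you take from there has a genuine gap, and ironically you dismiss the route that works. The paper \emph{does} reduce to a levelwise statement: it computes the pushout of the square after applying $\mathbb{B}^{\operatorname{nu}}$, i.e.\ in $\operatorname{Fun}(\Delta_i^{\operatorname{op}},\operatorname{Cat}_\infty)$, where pushouts are levelwise and are identified by descent for left closed covers as unions inside $(\mathscr{M}_{\leq N}^{\operatorname{nu}})^n$. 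It then applies the left adjoint $\mathbb{L}^{\operatorname{nu}}$ (which preserves pushouts and satisfies $\mathbb{L}^{\operatorname{nu}}\mathbb{B}^{\operatorname{nu}}\simeq\operatorname{id}$) to obtain a pushout in $\operatorname{Mon}^{\operatorname{nu}}(\operatorname{Cat}_\infty)$ with fourth corner $\mathbb{L}^{\operatorname{nu}}P$, and the whole content becomes identifying the comparison map $\mathbb{L}^{\operatorname{nu}}P\to(\mathscr{M}_{\leq N}^{\operatorname{nu}})^\otimes$. This is where the explicit colimit formula of \Cref{non-unital monoid over Igg} pays off: the map becomes $\operatorname{colim}_{\mathcal{I}_\gg^\circ}\mathbf{P}\to\mathscr{M}_{\leq N}^{\operatorname{nu}}$, which is checked to be an equivalence by proper descent (\Cref{concatenation functor is proper} plus left-closedness give properness) together with the contractibility of the category $\widetilde{\mathcal{I}}_\gg^N(c)$ of $N$-admissible decompositions of each object $c$ (\Cref{observation about partition categories wrt c}). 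Your "unique maximal block decomposition" is precisely a terminal object in (a deformation retract of) that category; what the argument actually needs is contractibility of the whole space of decompositions, functorially in $c$, and that is what replaces your "bookkeeping in the middle step."

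The gap in your version is that this middle step is never closed, only named. Defining the candidate functor on objects via the maximal block decomposition is fine, but a morphism in $\mathscr{M}_{\leq N}^{\operatorname{nu}}$ typically splits a maximal $\mathscr{M}_{\leq N-1}^{\operatorname{nu}}$-run of its source into finer pieces, some of which are merged into rank-$N$ objects through $\operatorname{RBS}_N$; making the assembled value independent of these choices, functorial under composition, and monoidal is a homotopy-coherence problem, not a condition you can check morphism by morphism. Your proposed fix --- strictify and compute the $1$-categorical pushout of strict monoidal categories --- rests on several unverified claims: that a model structure on strict (non-unital) monoidal categories presents $\operatorname{Mon}^{\operatorname{nu}}(\operatorname{Cat}_\infty)$, that the strict pushout along your cofibration computes the homotopy pushout (this needs left properness of that transferred model structure or cofibrancy of $(\mathscr{M}_{\leq N-1}^{\operatorname{nu}})^\otimes$ as an algebra, neither of which is clear), and that the amalgamated free product of monoidal categories admits the "generators and relations" description you assert. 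That last identification is the theorem; asserting that the block decomposition "gives an explicit description" of it is circular until you exhibit the comparison and prove it is an equivalence, which is exactly what the paper's proper-descent argument accomplishes.
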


We will need some decorated variants of the categories of partitioned linearly ordered sets introduced in \S \ref{partitioned linearly ordered sets}. Given an object in $\mathscr{M}^{\operatorname{nu}}_{\leq N}$, we want to keep track of the different ways in which this object can be partitioned such that each ``substring'' belongs to either $\mathscr{M}^{\operatorname{nu}}_{\leq N-1}$ or $\operatorname{Free}^{\operatorname{nu}}(\operatorname{RBS}_N)$. In fact, we need to keep track of such partitions but working with all finite products $(\mathscr{M}^{\operatorname{nu}}_{\leq N})^n$ as specified in the definition below.

\begin{notation}
In the following, we will as usual denote an object in $\mathscr{M}^{\operatorname{nu}}$ by $c=(c_i)_{i\in I}$. If on the other hand, we are considering an $I$-tuple of objects in $\mathscr{M}^{\operatorname{nu}}$, we write $(c^i)_{i\in I}\in (\mathscr{M}^{\operatorname{nu}})^I$, in which case each $c^i$ is of the form $c^i=(c^i_j)_{j\in J_i}\in \mathscr{M}^{\operatorname{nu}}$.
\end{notation}

Recall that $\mathcal{I}_\gg^\circ=\mathcal{I}_\gg\smallsetminus \emptyset_\emptyset$ is the result of removing the isolated object.

\begin{definition}
Let $N>0$ and let $c=(c_j)_{j\in J}$ be an object in $\mathscr{M}^{\operatorname{nu}}_{\leq N}$.
\begin{enumerate}
\item Let $s\colon I\rightarrow P$ be an object in $\mathcal{I}^\circ_\gg$ and let $\pi\colon J\rightarrow I$ be a surjective order preserving map. We say that the sequence $J\xrightarrow{\pi}I\xrightarrow{s} P$ is an \textit{$N$-admissible partition of $c$ (over $I_P=(I_p)_{p\in P}$)} if the following holds: for each $p\in P$, the tuple
\begin{align*}
((c_j)_{j\in \pi^{-1}(i)})_{i\in I_p} \in (\mathscr{M}^{\operatorname{nu}}_{\leq N})^{I_p}
\end{align*}
belongs to either $(\mathscr{M}^{\operatorname{nu}}_{\leq N-1})^{I_p}$ or $\operatorname{Free}^{\operatorname{nu}}(\operatorname{RBS}_N)^{I_p}$. We will often denote such a sequence by $J\rightarrow I_P$.
\item Let $s\colon I\rightarrow P$ be an object in $\mathcal{I}_\gg^\circ$. We say that $I_P$ \textit{admits an $N$-admissible extension with respect to $c$} if there exists a $\pi\colon J\rightarrow I$ such that the resulting sequence $J\rightarrow I\rightarrow P$ is an $N$-admissible partition of $c$. We denote the set of $N$-admissible partitions of $c$ over $I_P$ by
\begin{align*}
\operatorname{ext}_c^N(I_P):=\{J\rightarrow I_P \ N\text{-admissible partition of }c\}.
\end{align*}
\item Let $\mathcal{I}^N_\gg(c)\subseteq \mathcal{I}^\circ_\gg$ denote the full subcategory of objects $I_P$ admitting an $N$-admissible extension with respect to $c$.
\item Let $\widetilde{\mathcal{I}}^N_\gg(c)$ denote the category whose objects are the $N$-admissible partitions of $c$ and where a morphism
\begin{align*}
(J\xrightarrow{\pi} I_P)\rightarrow (J\xrightarrow{\pi'} I'_{P'})
\end{align*}
is given by a morphism $I_P\rightarrow I'_{P'}$ in $\mathcal{I}_\gg$ such that the defining map $\theta\colon I\rightarrow I'$ satisfies $\theta\circ \pi=\pi'$.
\item Let $\Upsilon\colon \widetilde{\mathcal{I}}^N_\gg(c)\rightarrow\mathcal{I}^N_\gg(c)$ denote the projection functor $(J\rightarrow I_P)\mapsto I_P$.
\item Finally, let $\mathcal{I}_\gg^{J,N}(c)\subseteq \mathcal{I}^N_\gg(c)$, respectively $\widetilde{\mathcal{I}}_\gg^{J,N}(c)\subseteq \widetilde{\mathcal{I}}^N_\gg(c)$, denote the full subcategories on objects of the form $J_P$, respectively $J\rightarrow J_P$; that is, requiring $I=J$.\qedhere
\end{enumerate}
\end{definition}

The above definitions give rise to a commutative diagram of $1$-categories
\begin{center}
\begin{tikzpicture}
\matrix (m) [matrix of math nodes,row sep=2em,column sep=2em,nodes={anchor=center}]
{
 & \widetilde{\mathcal{I}}_\gg^{J,N}(c) & \widetilde{\mathcal{I}}^N_\gg(c) & \\
\{J_J\}  & \mathcal{I}_\gg^{J,N}(c) & \mathcal{I}^N_\gg(c) & \mathcal{I}^\circ_\gg\\
};
\path[right hook-stealth]
(m-1-2) edge (m-1-3)
(m-2-1) edge (m-2-2)
(m-2-2) edge (m-2-3)
(m-2-3) edge (m-2-4)
;
\path[-stealth]
(m-1-2) edge (m-2-2)
(m-1-3) edge node[right]{$\Upsilon$} (m-2-3)
;
\end{tikzpicture}
\end{center}

We make some simple observations.

\begin{observation}\label{observation about partition categories wrt c}
Let $N\geq 0$ and let $c=(c_j)_{j\in J}$ be an object in $\mathscr{M}^{\operatorname{nu}}_{\leq N}$.
\begin{enumerate}
\item The object $J_J$ is terminal in $\mathcal{I}_\gg^{J,N}(c)$. In particular, the inclusion $\{J_J\}\hookrightarrow \mathcal{I}_\gg^{J,N}(c)$ is a $\varinjlim$-equivalence.
\item The functor $\Upsilon$ restricts to an equivalence $\widetilde{\mathcal{I}}_\gg^{J,N}(c)\xrightarrow{\simeq} \mathcal{I}_\gg^{J,N}(c)$.
\item The inclusion $\widetilde{\mathcal{I}}_\gg^{J,N}(c) \hookrightarrow \widetilde{\mathcal{I}}^N_\gg(c)$ admits a right adjoint
\begin{align*}
(J\xrightarrow{\pi} I\xrightarrow{s} P)\mapsto (J\xrightarrow{\operatorname{id}} J\xrightarrow{s\circ \pi}P).
\end{align*}
\item Combining the above, we see that $|\widetilde{\mathcal{I}}^N_\gg(c)|\simeq \ast$.\qedhere
\end{enumerate}
\end{observation}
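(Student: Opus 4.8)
The plan is to establish items (1)--(4) in turn, each being a short unwinding of the definitions of the decorated partition categories, and then to read off the contractibility statement (4) as a formal consequence of the first three.

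For (1): the objects of $\mathcal{I}_\gg^{J,N}(c)$ are the surjective order preserving maps $s\colon J\to P$ for which the tautological sequence $J\xrightarrow{\operatorname{id}}J\xrightarrow{s}P$ is $N$-admissible with respect to $c$. Since $c=(c_j)_{j\in J}$ lies in $\mathscr{M}^{\operatorname{nu}}_{\leq N}$, every $c_j$ has rank at most $N$, so the one-object list $(c_j)$ lies in $\mathscr{M}^{\operatorname{nu}}_{\leq N-1}$ when $r(c_j)<N$ and is an object of $\operatorname{RBS}_N\subseteq\operatorname{Free}^{\operatorname{nu}}(\operatorname{RBS}_N)$ when $r(c_j)=N$; hence the finest partition $J_J$ is itself an object of $\mathcal{I}_\gg^{J,N}(c)$. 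A morphism $J_P\to J_{P'}$ in $\mathcal{I}_\gg$ has as underlying map a surjection $\theta\colon J\to J$, necessarily the identity, so it is exactly a coarsening $\rho\colon P'\to P$ compatible with the partitioning maps, and there is a unique such map from any $P$ to the finest partition $J$. Thus $J_J$ is terminal; in particular the inclusion $\{J_J\}\hookrightarrow\mathcal{I}_\gg^{J,N}(c)$ admits a left adjoint (the constant functor) and so is a $\varinjlim$-equivalence (as in \Cref{Igg in Jgg cofinal}).

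For (2): an object of $\widetilde{\mathcal{I}}^N_\gg(c)$ lying over a given $J_P\in\mathcal{I}_\gg^{J,N}(c)$ is the datum of a surjective order preserving map $J\to J$, i.e.\ $\operatorname{id}_J$, so $\Upsilon$ restricts to a bijection on objects, and a morphism $(J\xrightarrow{\operatorname{id}}J_P)\to(J\xrightarrow{\operatorname{id}}J_{P'})$ is precisely a morphism $J_P\to J_{P'}$ in $\mathcal{I}_\gg$, the compatibility $\theta\circ\operatorname{id}=\operatorname{id}$ being vacuous; hence this restriction is an isomorphism of categories, in particular an equivalence. For (3) — the step requiring genuine care — I would verify three things. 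Well-definedness: if $J\xrightarrow{\pi}I\xrightarrow{s}P$ is $N$-admissible then so is $J\xrightarrow{\operatorname{id}}J\xrightarrow{s\circ\pi}P$; the point is that passing to the latter leaves the composite $J\to P$, and hence the sub-list of $c$ cut out over each $p\in P$, unchanged, so one checks from the definition that $N$-admissibility is insensitive to refining the intermediate partition in this way. Functoriality: a morphism in $\widetilde{\mathcal{I}}^N_\gg(c)$ is given by a map $\theta\colon I\to I'$ and a coarsening $\rho\colon P'\to P$ with $\theta\circ\pi=\pi'$, and the assignment $(J\xrightarrow{\pi}I\xrightarrow{s}P)\mapsto(J\xrightarrow{\operatorname{id}}J\xrightarrow{s\pi}P)$ is manifestly compatible with this. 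Adjunction: the composite $\widetilde{\mathcal{I}}_\gg^{J,N}(c)\hookrightarrow\widetilde{\mathcal{I}}^N_\gg(c)\to\widetilde{\mathcal{I}}_\gg^{J,N}(c)$ is the identity, and the morphism $(J\xrightarrow{\operatorname{id}}J\xrightarrow{s\pi}P)\to(J\xrightarrow{\pi}I\xrightarrow{s}P)$ with underlying data $\pi\colon J\to I$ and $\operatorname{id}_P$ defines a counit whose triangle identities are a direct diagram chase; this exhibits the stated right adjoint, and as in \Cref{fluffed up model of monoidal category of flags and associated gradeds} the inclusion then induces a homotopy equivalence on geometric realisations.

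Finally (4) follows by combining the three: $|\mathcal{I}_\gg^{J,N}(c)|\simeq\ast$ by (1), $|\widetilde{\mathcal{I}}_\gg^{J,N}(c)|\simeq|\mathcal{I}_\gg^{J,N}(c)|$ by (2), and $|\widetilde{\mathcal{I}}^N_\gg(c)|\simeq|\widetilde{\mathcal{I}}_\gg^{J,N}(c)|$ by (3), whence $|\widetilde{\mathcal{I}}^N_\gg(c)|\simeq\ast$. I expect the only real obstacle to be the bookkeeping in (3): keeping the two-step partitions and their extension data straight, and in particular confirming that refining the extension datum to the identity map preserves $N$-admissibility; once this is in place the rest is entirely formal.
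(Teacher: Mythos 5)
Your handling of (1) and (2) is correct and complete, and your identification of the well-definedness of the right adjoint in (3) as the one delicate point is exactly right --- but that is where the argument breaks, and your claim that ``$N$-admissibility is insensitive to refining the intermediate partition in this way'' is false. Take $N=2$ and $c=(c_1,c_2,c_3)$ with $r(c_1)=2$ and $r(c_2)=r(c_3)=1$ (e.g.\ $(K^2,K,K)$ over a field). The sequence $J\to\ast\to\ast$ is a $2$-admissible partition of $c$: the relevant tuple is the one-entry tuple $\big((c_1,c_2,c_3)\big)$, and $(c_1,c_2,c_3)$ lies in $\operatorname{Free}^{\operatorname{nu}}(\operatorname{RBS}_2)\subseteq\mathscr{M}^{\operatorname{nu}}_{\leq 2}$ via the block decomposition $(c_1)\circledast(c_2,c_3)$. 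Its proposed image $J\xrightarrow{\operatorname{id}}J\to\ast$ is \emph{not} $2$-admissible: the refined tuple $\big((c_1),(c_2),(c_3)\big)$ has first entry in $\operatorname{Free}^{\operatorname{nu}}(\operatorname{RBS}_2)$ but not in $\mathscr{M}^{\operatorname{nu}}_{\leq 1}$, and second and third entries in $\mathscr{M}^{\operatorname{nu}}_{\leq 1}$ but not in $\operatorname{Free}^{\operatorname{nu}}(\operatorname{RBS}_2)$ (a singleton of rank $1$ admits no decomposition into blocks of total rank $2$), so it lies in neither $(\mathscr{M}^{\operatorname{nu}}_{\leq 1})^{J}$ nor $\operatorname{Free}^{\operatorname{nu}}(\operatorname{RBS}_2)^{J}$. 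The underlying issue is that admissibility over a block $I_p$ asks the whole tuple to lie in one of the two \emph{product} subcategories $X^{I_p}$ or $Y^{I_p}$, and while $X=\mathscr{M}^{\operatorname{nu}}_{\leq N-1}$ is stable under cutting a list into consecutive sublists, $Y=\operatorname{Free}^{\operatorname{nu}}(\operatorname{RBS}_N)$ is not; refining can land you in $(X\cup Y)^{I_p}\smallsetminus(X^{I_p}\cup Y^{I_p})$ or worse. Moreover this cannot be repaired by choosing a different formula: any morphism from an object $(J\xrightarrow{\operatorname{id}}J\xrightarrow{t}Q)$ of $\widetilde{\mathcal{I}}_\gg^{J,N}(c)$ to the object $J\to\ast\to\ast$ forces $t$ to be constant as well as surjective, hence $Q=\ast$, and $(J\xrightarrow{\operatorname{id}}J\to\ast)$ is not admissible; so the relevant comma category is empty and the inclusion admits no right adjoint at all.

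To be fair, you are reproducing verbatim a claim the paper states without proof, and as far as I can tell item (3) of the Observation is false as written. What the proof of the pushout theorem actually uses is only item (4), the contractibility of $|\widetilde{\mathcal{I}}^N_\gg(c)|$, and that conclusion appears to survive --- in the example above the category $\widetilde{\mathcal{I}}^2_\gg(c)$ has five objects arranged in a zigzag whose nerve is a tree --- but it cannot be obtained by the route $(1)+(2)+(3)$, since the objects with $I\neq J$ are not captured by a retraction onto $\widetilde{\mathcal{I}}_\gg^{J,N}(c)$. A correct argument would have to treat those objects by other means, for instance a zigzag of natural transformations in the style of the paper's proofs that $\mathcal{J}_\gg\hookrightarrow\mathcal{J}$ and $G_S$ are $\varinjlim$-equivalences, or a direct combinatorial analysis of the poset of admissible two-step partitions. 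I would flag this to the author rather than attempt to patch (3) as stated.
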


We will be applying proper descent to analyse colimits in the proof of \Cref{E1-pushout diagram} (see \cite[Theorem 2.29]{ClausenOrsnesJansen}). To do this, we'll need the following lemma.

\begin{lemma}\label{concatenation functor is proper}
For any surjective order preserving map $\theta\colon I\rightarrow J$, $J\neq \emptyset$, the concatenation functor
\begin{align*}
(\mathscr{M}^{\operatorname{nu}}_{\leq N})^I\rightarrow (\mathscr{M}^{\operatorname{nu}}_{\leq N})^J, \quad (c^i)_{i\in I}\mapsto (\circledast_{i\in \theta^{-1}(j)}c^i)_{j\in J},
\end{align*}
is proper.
\end{lemma}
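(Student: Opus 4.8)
The plan is to unwind the definition of "proper" (a functor $F\colon \mathcal{A}\to\mathcal{B}$ is proper if for every morphism $b\to b'$ in $\mathcal{B}$ and every lift $a'$ of $b'$, the induced functor on comma categories $\mathcal{A}_{/a'}\times_{\mathcal{B}_{/b'}}\mathcal{B}_{/b}\to \mathcal{A}_{/a'}$ — or rather its opposite, depending on the orientation fixed in \cite[\S 2]{ClausenOrsnesJansen} — is a $\varinjlim$-equivalence) and check it directly for the concatenation functor $\Gamma_\theta\colon (\mathscr{M}^{\operatorname{nu}}_{\leq N})^I\to (\mathscr{M}^{\operatorname{nu}}_{\leq N})^J$. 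First I would reduce to the case where $J=\ast$ and $\theta\colon I\to\ast$ is the unique map: concatenation over a general surjection $\theta$ is, factorwise over $j\in J$, the concatenation $(\mathscr{M}^{\operatorname{nu}}_{\leq N})^{\theta^{-1}(j)}\to \mathscr{M}^{\operatorname{nu}}_{\leq N}$, and a product of proper functors is proper (the relevant comma categories split as products, and $\varinjlim$-equivalences are closed under finite products since $\mathcal{S}$ is cartesian closed, exactly as used repeatedly in \S\ref{partitioned linearly ordered sets}). So the heart of the matter is the single functor $\circledast\colon (\mathscr{M}^{\operatorname{nu}}_{\leq N})^I\to \mathscr{M}^{\operatorname{nu}}_{\leq N}$, $(c^i)_{i\in I}\mapsto \circledast_{i\in I}c^i$, concatenating a finite ordered list of lists into one list.

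Next I would analyse the relevant comma category explicitly. Fix a morphism $\phi\colon c\to d$ in $\mathscr{M}^{\operatorname{nu}}_{\leq N}$ and a lift $(e^i)_{i\in I}\in (\mathscr{M}^{\operatorname{nu}}_{\leq N})^I$ of $d$, i.e.\ $\circledast_{i\in I}e^i = d$. The fibre product comma category has objects consisting of a tuple $(c^i)_{i\in I}$, a morphism $\psi^i\colon c^i\to e^i$ for each $i$, together with a factorisation of $\phi$ through $\circledast_i\psi^i\colon \circledast_i c^i\to d$; concretely, since a morphism in $\mathscr{M}^{\operatorname{nu}}_{\operatorname{RBS}}$ out of a list $c$ is the data of a partition of the source compatible with a filtration of the target, the extra factorisation datum amounts to: a refinement of the partition of $\operatorname{src}(\phi)$ underlying $\phi$ that is subordinate to the decomposition $d=\circledast_i e^i$ of the target, plus the induced sub-filtrations. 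The key structural fact I want is that this comma category admits a terminal (or initial, depending on variance) object: namely the "canonical" refinement obtained by intersecting the given partition/filtration of $\operatorname{src}(\phi)$ with the block decomposition $d=\circledast_i e^i$. Equivalently, because $\mathscr{M}^{\operatorname{nu}}_{\operatorname{RBS}}(\mathcal{C})$ is the twisted-arrow-type category built from $S_\bullet(\mathcal{C})$ (cf.\ Lemmas \ref{morphism in monoidal category}--\ref{converse to morphism in twisted arrow category}), a morphism $\phi\colon c\to d$ together with an ordered decomposition of $d$ pulls back to a \emph{unique} (up to unique isomorphism) ordered decomposition of $c$ making $\phi$ into a concatenation of morphisms; this is precisely the content of \Cref{factorising a morphism in MC} and \Cref{converse to morphism in twisted arrow category}. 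That uniqueness is what produces the terminal object of the comma category, and a category with a terminal object has contractible realisation, hence the projection to $\mathcal{A}_{/a'}$ is a $\varinjlim$-equivalence on all fibres — which is one of the standard criteria for properness recorded in \cite[\S 2]{ClausenOrsnesJansen}.

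Concretely the argument runs: (i) recall the definition of proper and the criterion "all relevant fibres have contractible realisation"; (ii) reduce to $J=\ast$ using the product decomposition and closure of $\varinjlim$-equivalences under finite products; (iii) for a fixed $\phi\colon c\to d$ and a lift $(e^i)_{i\in I}$ of $d$, write down the comma category and use \Cref{converse to morphism in twisted arrow category} (applied in the monoidal category $\mathscr{M}^{\operatorname{nu}}_{\operatorname{RBS}}(\mathcal{C})\hookrightarrow\accentset{\sim}{\mathscr{M}}_{\operatorname{RBS}}(\mathcal{C})$, or directly) to exhibit a terminal object given by the pulled-back decomposition of $c$; (iv) conclude the comma category has contractible realisation and hence $\Gamma_\theta$ is proper. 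The main obstacle I expect is step (iii): being careful that the "pulled-back decomposition" is genuinely terminal and not merely weakly terminal — one must check that for \emph{any} object of the comma category there is a \emph{unique} morphism to the canonical one, which forces the refinement maps on the partitioned ordered sets to be uniquely determined. This is exactly the subtlety flagged in \Cref{refinement versus morphism} and \Cref{eta not initial} (a refinement of partitions does not canonically give a morphism in $\mathcal{J}$), so I would need to verify that here the ambient filtration data rigidifies the choice — which it does, because the target decomposition $d=\circledast_i e^i$ together with the filtration underlying $\phi$ determines the splitting points uniquely. I would also double-check the variance conventions (left fibre vs.\ right fibre, $\operatorname{Tw}$ vs.\ $\operatorname{Tw}^{\operatorname{op}}$) against \cite[Theorem 2.29 and \S 2]{ClausenOrsnesJansen} so that "terminal" lands on the correct side; if it comes out as "initial" instead, the same proof works with the co-slice comma category and an initial object.
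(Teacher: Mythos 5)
Your proposal is correct and follows essentially the same route as the paper: reduce to $J=\ast$ using the product decomposition, then use the fact that a morphism in $\mathscr{M}^{\operatorname{nu}}_{\operatorname{RBS}}$ together with a block decomposition of its coarser end pulls back, via the underlying surjection of index sets, to a unique compatible block decomposition of its finer end. The paper packages this as a right adjoint to the inclusion of the fibre $F_d$ into the right fibre $F_{d/}$ --- whose objects are morphisms $\phi\colon d\to\circledast_i c^i$ \emph{out of} $d$, so when you fix your variance note that the decomposed, varying object is the source $d$ of $\phi$ --- which is exactly your ``terminal object in every comma category'' statement made functorial, and your observation that the filtration data rigidifies the splitting (so the subtlety of \Cref{refinement versus morphism} does not arise) is precisely why it works.
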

\begin{proof}
Since the functor splits as a product over $J$, it suffices to show that for each $j\in J$, the concatenation functor
\begin{align*}
(\mathscr{M}_{\leq N}^{\operatorname{nu}})^{\theta^{-1}(j)}\rightarrow (\mathscr{M}_{\leq N}^{\operatorname{nu}})^{\{j\}}
\end{align*}
is proper. Hence, we may assume that $J=\ast$ and thus $\theta\colon I\rightarrow \ast$ is the unique map to the terminal object. Fix an object $d=(d_l)_{l\in L}$ in $\mathscr{M}_{\leq N}^{\operatorname{nu}}$ and consider the inclusion of the fibre into the right fibre:
\begin{align*}
F_d\rightarrow F_{d/}.
\end{align*}
We claim that this functor admits a right adjoint. This implies that it is a $\varprojlim$-equivalence (\cite[Example 2.21]{ClausenOrsnesJansen}) and hence that the concatenation functor is proper (\cite[Definition 2.22]{ClausenOrsnesJansen}). Given an object $((c^i)_{i\in I}, \phi \colon d\rightarrow \circledast_{i\in I}c^i)$ in $F_{d/}$ with $c^i=(c^i_h)_{h\in H^i}$, write $H:=\circledast_{i\in I}H^i$ and let $\pi\colon L\rightarrow H$ be the order preserving map given by the morphism $\phi$. The right adjoint sends this object to
\begin{align*}
((d^i)_{i\in I}, \operatorname{id} \colon d\rightarrow \circledast_{i\in I}d^i)\qquad \text{with} \quad d^i=(d_l)_{l\in \pi^{-1}(H^i)}.
\end{align*}
In other words, we just split the object $d$ into a tuple over $I$ as specified by the data given by the morphism $\phi$. We leave the remaining details to the reader.
\end{proof}

We are now ready to prove \Cref{E1-pushout diagram}: we will prove that the given diagram is a pushout in $\operatorname{Mon}^{\operatorname{nu}}(\operatorname{Cat}_\infty)$ by determining an explicit formula for the pushout in question and then proving that the induced comparison map is an equivalence. We obtain the explicit formula by  considering the diagram in $\operatorname{Fun}(\Delta^{\operatorname{op}}_i, \operatorname{Cat}_\infty)$ via the inclusion
\begin{align*}
\mathbb{B}^{\operatorname{nu}}\colon \operatorname{Mon}^{\operatorname{nu}}(\operatorname{Cat}_\infty)\hookrightarrow \operatorname{Fun}(\Delta^{\operatorname{op}}_i, \operatorname{Cat}_\infty)
\end{align*}
and then mapping it back into $\operatorname{Mon}^{\operatorname{nu}}(\operatorname{Cat}_\infty)$ via the left adjoint $\mathbb{L}^{\operatorname{nu}}$ to $\mathbb{B}^{\operatorname{nu}}$ for which we have a concrete computable expression (\Cref{non-unital monoid over Igg}). 

\begin{proof}[Proof of \Cref{E1-pushout diagram}]
We begin by determining the pushout in $\operatorname{Fun}(\Delta^{\operatorname{op}}_i, \operatorname{Cat}_\infty)$.
Note first of all that the concatenation functor
\begin{align*}
\coprod_{k>0}(\operatorname{RBS}_N)^k\rightarrow \mathscr{M}_{\leq N}^{\operatorname{nu}}
\end{align*}
identifies the underlying category of $\operatorname{Free}^{\operatorname{nu}}(\operatorname{RBS}_N)$ as a left closed full subcategory of $\mathscr{M}_{\leq N}^{\operatorname{nu}}$ (since a partition of an object into substrings of total rank $N$ must necessarily be unique). Likewise, $\coprod_{k> 0}(\partial\operatorname{RBS}_N)^k$ and $\mathscr{M}_{\leq N-1}^{\operatorname{nu}}$ are left closed full subcategories of $\mathscr{M}_{\leq N}^{\operatorname{nu}}$. Let $n\geq 0$, consider the $n$'fold products of these categories and the resulting pushout in $\operatorname{Cat}_\infty$:
\begin{center}
\begin{tikzpicture}
\matrix (m) [matrix of math nodes,row sep=2em,column sep=2em,nodes={anchor=center}]
{
\big(\coprod_{k>0}(\partial\operatorname{RBS}_N)^k \big)^n & \big(\coprod_{k> 0}(\operatorname{RBS}_N)^k \big)^n \\
(\mathscr{M}_{\leq N-1}^{\operatorname{nu}})^n & P(n) \\
};
\path[-stealth]
(m-1-1) edge (m-1-2) edge (m-2-1)
(m-2-1) edge (m-2-2)
(m-1-2) edge (m-2-2)
;
\end{tikzpicture}
\end{center}
It follows by descent for left closed covers (\cite[Corollary 2.33]{ClausenOrsnesJansen}) that this pushout identifies with the union of these categories in $(\mathscr{M}_{\leq N}^{\operatorname{nu}})^n$. Let $P$ denote the resulting simplicial $\infty$-category. Then the following diagram is a pushout diagram in $\operatorname{Fun}(\Delta^{\operatorname{op}}_i, \operatorname{Cat}_\infty)$.
\begin{center}
\begin{tikzpicture}
\matrix (m) [matrix of math nodes,row sep=2em,column sep=2em,nodes={anchor=center}]
{
\mathbb{B}^{\operatorname{nu}}\operatorname{Free}^{\operatorname{nu}}(\partial \operatorname{RBS}_N) & \mathbb{B^{\operatorname{nu}}}\operatorname{Free}^{\operatorname{nu}}(\operatorname{RBS}_N) \\
\mathbb{B}^{\operatorname{nu}}(\mathscr{M}_{\leq N-1}^{\operatorname{nu}})^\otimes & P \\
};
\path[-stealth]
(m-1-1) edge (m-1-2) edge (m-2-1)
(m-2-1) edge (m-2-2)
(m-1-2) edge (m-2-2)
;
\end{tikzpicture}
\end{center}

Applying $\mathbb{L}^{\operatorname{nu}}$ to this yields a pushout diagram in $\operatorname{Mon}^{\operatorname{nu}}(\operatorname{Cat}_\infty)$ as below since $\mathbb{L}^{\operatorname{nu}}$ is left adjoint and $\mathbb{L}^{\operatorname{nu}}\mathbb{B}^{\operatorname{nu}}\simeq \operatorname{id}$.
\begin{center}
\begin{tikzpicture}
\matrix (m) [matrix of math nodes,row sep=2em,column sep=2em,nodes={anchor=center}]
{
\operatorname{Free}^{\operatorname{nu}}(\partial \operatorname{RBS}_N) & \operatorname{Free}^{\operatorname{nu}}(\operatorname{RBS}_N) \\
(\mathscr{M}_{\leq N-1}^{\operatorname{nu}})^\otimes & \mathbb{L}^{\operatorname{nu}}P \\
};
\path[-stealth]
(m-1-1) edge (m-1-2) edge (m-2-1)
(m-2-1) edge (m-2-2)
(m-1-2) edge (m-2-2)
;
\end{tikzpicture}
\end{center}

To show that the induced comparison map of monoidal $\infty$-categories,
\begin{align*}
\mathbb{L}^{\operatorname{nu}}P\longrightarrow (\mathscr{M}_{\leq N}^{\operatorname{nu}})^\otimes,
\end{align*}
is an equivalence, we send it back into $\operatorname{Fun}(\Delta^{\operatorname{op}}_i, \operatorname{Cat}_\infty)$ via the inclusion $\mathbb{B}^{\operatorname{nu}}$. By \Cref{non-unital monoid over Igg}, the comparison map $\mathbb{B}^{\operatorname{nu}}\mathbb{L}^{\operatorname{nu}}P\rightarrow \mathbb{B}^{\operatorname{nu}}(\mathscr{M}_{\leq N}^{\operatorname{nu}})^\otimes$ is determined by the map
\begin{align*}
\mathop{\operatorname{colim}}_{\mathcal{I}_\gg^\circ} \mathbf{P}\rightarrow \mathscr{M}_{\leq N}^{\operatorname{nu}}
\end{align*}
of underlying $\infty$-categories induced by the concatenation functors
\begin{align*}
\mathbf{P}(I_P)=\prod_{p\in P} P(I_p^\pm) \longrightarrow \mathscr{M}_{\leq N}^{\operatorname{nu}},\quad \left((c^i)_{i\in I_p}\right)_{p\in P}\mapsto \circledast_{p\in P}\circledast_{i \in I_p} c^i.
\end{align*}
We will show that this is an equivalence using proper descent (\cite[Theorem 2.29]{ClausenOrsnesJansen}). To this end consider the cocone diagram $(\mathcal{I}^\circ_\gg)^\triangleright\rightarrow\operatorname{Cat}_\infty$ given by $\mathbf{P}$ with cocone point $\mathscr{M}_{\leq N}^{\operatorname{nu}}$. First of all note that for any morphism $\tau=(\theta,\rho)\colon I_P\rightarrow J_Q$, we have a commutative diagram as below, where the lower horizontal map is the concatenation functor induced by the map $\theta\colon I \rightarrow J$. The lower horizontal concatenation functor is proper by \Cref{concatenation functor is proper}.
\begin{center}
\begin{tikzpicture}
\matrix (m) [matrix of math nodes,row sep=2em,column sep=2em,nodes={anchor=center}]
{
\mathbf{P}(I_P) & \mathbf{P}(J_Q) \\
\prod_{p\in P} P(I_p^\pm) & \prod_{q\in Q} P(J_q^\pm) \\
(\mathscr{M}_{\leq N}^{\operatorname{nu}})^I & (\mathscr{M}_{\leq N}^{\operatorname{nu}})^J \\
};
\path[-stealth]
(m-1-1) edge (m-1-2)
(m-2-1) edge (m-2-2)
(m-1-2) edge (m-2-2)
(m-3-1) edge (m-3-2)
;
\path[-]
(m-1-1) edge[double equal sign distance] (m-2-1)
(m-1-2) edge[double equal sign distance] (m-2-2)
;
\path[right hook-stealth]
(m-2-1) edge (m-3-1)
(m-2-2) edge (m-3-2)
;
\end{tikzpicture}
\end{center}
Since $\mathbf{P}(I_P)\subseteq (\mathscr{M}_{\leq N}^{\operatorname{nu}})^I$ is a left closed full subcategory, the inclusion is proper (\cite[Example 2.24 (3)]{ClausenOrsnesJansen}). Hence, the composite $\mathbf{P}(I_P)\rightarrow (\mathscr{M}_{\leq N}^{\operatorname{nu}})^J$ is also proper, and since $\mathbf{P}(J_Q)\subseteq (\mathscr{M}_{\leq N}^{\operatorname{nu}})^J$ is a full subcategory, it follows directly that the functor $\mathbf{P}(I_P) \rightarrow \mathbf{P}(J_Q)$ is proper. Likewise, we see that for any $I_P$ in $\mathcal{I}_\gg$, the concatenation functor
\begin{align*}
\mathbf{P}(I_P)\rightarrow \mathscr{M}_{\leq N}^{\operatorname{nu}}
\end{align*}
to the cocone point is also proper.

It remains to be seen that for any object $c$ in $\mathscr{M}_{\leq N}^{\operatorname{nu}}$, the pullback
\begin{align*}
(\mathcal{I}^\circ_\gg)^\triangleright \rightarrow \operatorname{Cat}_\infty,\qquad I_P\mapsto \mathbf{P}(I_P)\mathop{\times}_{\mathscr{M}_{\leq N}^{\operatorname{nu}}} \{c\},\quad \triangleright \mapsto \{c\}
\end{align*}
is a colimit diagram. Fix $c=(c_j)_{j\in J}$ in $\mathscr{M}_{\leq N}^{\operatorname{nu}}$. It is straightforward to verify that the resulting diagram $\mathcal{I}^\circ_\gg\rightarrow \operatorname{Cat}_\infty$ sends $I_P$ to the set of $N$-admissible partitions of $c$ over $I_P$:
\begin{align*}
I_P\ \mapsto\ \operatorname{ext}_c^N(I_P):=\{J\rightarrow I_P \ N\text{-admissible partition of }c\}.
\end{align*}
This in turn identifies with the left Kan extension of the terminal functor $\ast\colon \widetilde{\mathcal{I}}^N_\gg(c)\rightarrow \operatorname{Cat}_\infty$ along the composite
\begin{align*}
\widetilde{\mathcal{I}}^N_\gg(c)\xrightarrow{\Upsilon} \mathcal{I}^N_\gg(c)\hookrightarrow \mathcal{I}^\circ_\gg.
\end{align*}
Indeed, the comma category over $I_P$ is empty if $I_P$ does not belong to $\mathcal{I}^N_\gg(c)$ in which case the functor evaluates to the empty category $\emptyset$. If $I_P$ does belong to $\mathcal{I}^N_\gg(c)$, then the left Kan extension evaluates to
\begin{align*}
\mathop{\operatorname{colim}}_{\Upsilon_{/I_P}}\ast \simeq |\Upsilon_{/I_P}|\simeq \operatorname{ext}_c^N(I_P)
\end{align*} 
where the first equivalence is a general fact (see e.g. \cite[Corollary 2.10 (4)]{ClausenOrsnesJansen}) and the second follows by observing that each component in $\Upsilon_{/I_P}$ has a terminal object and that these terminal objects are exactly the $N$-admissible partitions of $c$ over $I_P$. It follows that
\begin{align*}
\mathop{\operatorname{colim}}_{I_P\in \mathcal{I}^\circ_\gg} \big(\mathbf{P}(I_P)\mathop{\times}_{\mathscr{M}_{\leq N}^{\operatorname{nu}}} \{c\}\big)\simeq \mathop{\operatorname{colim}}_{I_P\in \mathcal{I}^\circ_\gg}\operatorname{ext}_c^N(I_P) \simeq \mathop{\operatorname{colim}}_{\widetilde{\mathcal{I}}^N_\gg(c)}\ast \simeq |\widetilde{\mathcal{I}}^N_\gg(c)|
\end{align*}
and by \Cref{observation about partition categories wrt c}, we have $|\widetilde{\mathcal{I}}^N_\gg(c)|\simeq \ast$ as desired.

We conclude by proper descent (\cite[Theorem 2.29]{ClausenOrsnesJansen}), that the map
\begin{align*}
\mathop{\operatorname{colim}}_{\mathcal{I}^\circ_\gg} \mathbf{P}\rightarrow \mathscr{M}_{\leq N}^{\operatorname{nu}}
\end{align*}
is indeed an equivalence, and hence that the map of monoidal $\infty$-categories
\begin{align*}
\mathbb{L}^{\operatorname{nu}}P\longrightarrow (\mathscr{M}_{\leq N}^{\operatorname{nu}})^\otimes
\end{align*}
is an equivalence identifying $(\mathscr{M}_{\leq N}^{\operatorname{nu}})^\otimes$ as the desired pushout.
\end{proof}

As an immediate corollary, we get an analogous pushout squares of monoids in spaces. We will state the non-unital version, but the unital variant reads analogously.

\begin{corollary}\label{pushout of non-unital E1-spaces}
For any $N\geq 0$, the following diagram is a pushout in $\operatorname{Mon}^{\operatorname{nu}}(\mathcal{S})$:
\begin{center}
\begin{tikzpicture}
\matrix (m) [matrix of math nodes,row sep=2em,column sep=2em,nodes={anchor=center}]
{
\operatorname{Free}^{\operatorname{nu}}(\vert\partial \operatorname{RBS}_N\vert) & \operatorname{Free}^{\operatorname{nu}}(\vert\operatorname{RBS}_N\vert) \\
\vert\mathscr{M}^{\operatorname{nu}}_{\leq N-1}\vert & \vert\mathscr{M}^{\operatorname{nu}}_{\leq N}\vert \\
};
\path[-stealth]
(m-1-1) edge (m-1-2) edge (m-2-1)
(m-2-1) edge (m-2-2)
(m-1-2) edge (m-2-2)
;
\end{tikzpicture}
\end{center}
In particular, we have an equivalence
\begin{align*}
|\mathscr{M}^{\operatorname{nu}}_{\leq N}|/|\mathscr{M}^{\operatorname{nu}}_{\leq N-1}| \xleftarrow{\ \simeq\ } \operatorname{Free}^{\operatorname{nu}}(|\operatorname{RBS}_N|/|\partial \operatorname{RBS}_N|)
\end{align*}
identifying the pushout in $\operatorname{Mon}^{\operatorname{nu}}(\mathcal{S})$ on the left with the free (non-unital) $\mathbb{E}_1$-space on the homotopy quotient $|\operatorname{RBS}_N|/|\partial \operatorname{RBS}_N|$.
\end{corollary}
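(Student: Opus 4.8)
The plan is to obtain this as a formal consequence of \Cref{non-unital E1-pushout diagram} by applying geometric realisation. Recall from \S\ref{Free monoid objects} that $|-|\colon \operatorname{Cat}_\infty\to \mathcal{S}$ is a left adjoint which moreover preserves finite products and disjoint unions; applying it levelwise therefore sends (non-unital) monoid objects to (non-unital) monoid objects and yields a functor $|-|\colon \operatorname{Mon}^{\operatorname{nu}}(\operatorname{Cat}_\infty)\to \operatorname{Mon}^{\operatorname{nu}}(\mathcal{S})$ which is again a left adjoint, with right adjoint the levelwise inclusion $\mathcal{S}\hookrightarrow\operatorname{Cat}_\infty$ (the Segal conditions are preserved on both sides since $|-|$ preserves finite products). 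In particular it preserves pushouts. The first step is thus simply to apply this functor to the pushout square of \Cref{non-unital E1-pushout diagram}, producing a pushout square in $\operatorname{Mon}^{\operatorname{nu}}(\mathcal{S})$.

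The second step is to identify the four corners. The bottom corners are tautological: $|-|$ applied to the monoid object $(\mathscr{M}^{\operatorname{nu}}_{\leq M})^\otimes$ is by definition the $\mathbb{E}_1$-space $|\mathscr{M}^{\operatorname{nu}}_{\leq M}|$, for $M=N-1,N$. For the top corners, $|-|$ commutes with the evaluation functors $u^{\operatorname{nu}}$ (realisation is computed levelwise), and $\operatorname{Free}^{\operatorname{nu}}$ is left adjoint to $u^{\operatorname{nu}}$ in both $\operatorname{Cat}_\infty$ and $\mathcal{S}$; by uniqueness of left adjoints we get $|\operatorname{Free}^{\operatorname{nu}}(-)|\simeq \operatorname{Free}^{\operatorname{nu}}(|-|)$, the non-unital analogue of \Cref{observations about free objects}(1). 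Hence the top corners become $\operatorname{Free}^{\operatorname{nu}}(|\partial\operatorname{RBS}_N|)$ and $\operatorname{Free}^{\operatorname{nu}}(|\operatorname{RBS}_N|)$, where one may further rewrite $|\operatorname{RBS}_N|\simeq \coprod_c |\operatorname{RBS}(c)|$ and $|\partial\operatorname{RBS}_N|\simeq \coprod_c |\partial\operatorname{RBS}(c)|$ using the decompositions of \Cref{RBSN vs reductive borel-serre categories}. This establishes the pushout square.

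Finally, the ``in particular'' clause follows by exactly the same formal pasting argument that deduces the corresponding clauses of \Cref{E1-pushout diagram} and \Cref{non-unital E1-pushout diagram}: one pastes the square just obtained against the defining pushout square of the quotient $|\mathscr{M}^{\operatorname{nu}}_{\leq N}|/|\mathscr{M}^{\operatorname{nu}}_{\leq N-1}|$, and uses that $\operatorname{Free}^{\operatorname{nu}}$, being a left adjoint, commutes with the formation of $|\operatorname{RBS}_N|/|\partial\operatorname{RBS}_N|$ in $\mathcal{S}$. I do not expect a real obstacle here: the entire content of the corollary is that the pushout square of \Cref{non-unital E1-pushout diagram} is compatible with geometric realisation, and the only point requiring minor attention is checking that $|-|$ genuinely descends to a colimit-preserving functor on non-unital monoid objects and commutes with $\operatorname{Free}^{\operatorname{nu}}$, which is precisely the adjunction bookkeeping already recorded in \S\ref{Free monoid objects}.
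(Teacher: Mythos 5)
Your proposal is correct and follows essentially the same route as the paper, whose entire proof is the observation that geometric realisation is a left adjoint (hence preserves pushouts) and commutes with taking free monoid objects (\Cref{observations about free objects}). The additional bookkeeping you supply — identifying the right adjoint, checking the corners, and the pasting argument for the ``in particular'' clause — is all consistent with what the paper leaves implicit.
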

\begin{proof}
Taking geometric realisation is a left adjoint and it commutes with taking free monoidal objects (\Cref{observations about free objects}).
\end{proof}

\subsection{Tits complexes}

The associated graded of the monoidal rank filtration as identified in the previous section can be unravelled even further when we consider the exact category of finitely generated projective modules over a ring. To this end we introduce the Tits complex of a finitely generated projective module.

\begin{definition}\label{Tits complex}
Let $A$ be an associative ring and let $M$ be a projective $A$-module and let $\mathcal{P}(M)$ denote the poset of non-zero splittable submodules of $M$. We define the \textit{Tits complex} of $M$ to be the realisation of the poset of proper non-zero splittable submodules of $M$
\begin{align*}
\mathcal{T}(M):=|\mathcal{P}(M)\smallsetminus M|
\end{align*}
with the natural action of $\operatorname{GL(M)}$. We write $\mathcal{T}_A(M)$ if we need to stress the underlying ring.
\end{definition}

\begin{observation}\label{flags or submodules}
Let $A$ be an associative ring and let $M$ be a projective $A$-module and let $\mathcal{F}(M)$ denote the poset of splittable flags in $M$. Note that the Tits complex $\mathcal{T}(M)$ is equivalent to the realisation of the poset of non-empty splittable flags in $M$:
\begin{align*}
\mathcal{T}(M)\simeq |\mathcal{F}(M)\smallsetminus (0\subsetneq M)|.
\end{align*}
Indeed, $\mathcal{F}(M)$ is simply the subdivision of $\mathcal{P}(M)$, so in terms of realisations, passing to flags means passing to the barycentric subdivision.
\end{observation}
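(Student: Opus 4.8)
The plan is to realise the poset of non-empty splittable flags in $M$ as the barycentric subdivision of the poset $\mathcal{P}(M)\smallsetminus M$ of proper non-zero splittable submodules, and then invoke the classical fact that a poset has the same realisation as its subdivision, equivariantly.

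First I would observe that a finite non-empty chain $M_1\subsetneq\cdots\subsetneq M_{d-1}$ in $\mathcal{P}(M)\smallsetminus M$ is precisely the same datum as a non-empty splittable flag in $M$. The only thing to check is that the successive subquotients $M_i/M_{i-1}$ (with $M_0=0$ and $M_d=M$) are non-zero and projective: non-zero is immediate since the inclusions are strict, and projectivity amounts to the statement that a subquotient of $M$ by two nested splittable submodules is again splittable. This is part of the basic yoga of splittable submodules established in \cite{ClausenOrsnesJansen}, and can in any case be checked directly: writing $M=M_{i-1}\oplus M_{i-1}'$, the modular law gives $M_i=M_{i-1}\oplus(M_i\cap M_{i-1}')$, whence $M_i/M_{i-1}\cong M_i\cap M_{i-1}'$, and the latter is a direct summand of the projective module $M_{i-1}'$ because $M_{i-1}'/(M_i\cap M_{i-1}')\cong M/M_i$ is projective. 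Consequently the poset $\mathcal{F}(M)\smallsetminus(0\subsetneq M)$ of non-empty splittable flags, ordered by refinement, is exactly the poset of finite non-empty chains of $\mathcal{P}(M)\smallsetminus M$, i.e. the subdivision $\operatorname{sd}(\mathcal{P}(M)\smallsetminus M)$, up to reversing the order (which does not affect the realisation).

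Then I would recall that for any poset $Q$ the last-vertex (or barycentre) map induces a homeomorphism $|\operatorname{sd}(Q)|\xrightarrow{\ \cong\ }|Q|$, natural in $Q$ along poset maps; since $\operatorname{GL}(M)$ acts on $\mathcal{P}(M)\smallsetminus M$ by poset automorphisms (acting submodule-wise), this homeomorphism is $\operatorname{GL}(M)$-equivariant. Chaining the two identifications gives the asserted $\operatorname{GL}(M)$-equivariant equivalence
\[
|\mathcal{F}(M)\smallsetminus(0\subsetneq M)|\;\cong\;|\operatorname{sd}(\mathcal{P}(M)\smallsetminus M)|\;\cong\;|\mathcal{P}(M)\smallsetminus M|\;=\;\mathcal{T}(M).
\]
There is no serious obstacle; the only points deserving a word of care are the identification of chains of splittable submodules with splittable flags (the subquotient claim above) and checking that the subdivision homeomorphism can be taken $\operatorname{GL}(M)$-equivariantly, both of which are routine.
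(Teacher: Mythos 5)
Your proposal is correct and follows exactly the route the paper intends: the paper's justification is precisely the one-line assertion that $\mathcal{F}(M)$ is the (barycentric) subdivision of $\mathcal{P}(M)$, and you have simply filled in the two routine verifications — that chains of proper non-zero splittable submodules are the same as non-empty splittable flags (via the modular-law computation of subquotients) and that the subdivision homeomorphism is $\operatorname{GL}(M)$-equivariant. Nothing further is needed.
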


\begin{remark}\label{Scalamandre}
In \cite{Scalamandre}, Scalamandre introduces a slightly smaller generalisation of Tits buildings for a general commutative ring $R$ by considering direct summands $V$ of $R^n$ such that both $V$ and $R^n/V$ are free and proves a Solomon-Tits theorem for rings of stable rank 2 and for Dedekind domains of arithmetic type: in these cases his Tits complex is $(n-2)$-spherical. We will write $\mathcal{T}^f(W)$ for this complex where $W$ is a finitely generated free module over a commutative ring $R$ and we'll call it the \textit{free Tits complex}. We want to work with the slightly larger version $\mathcal{T}(M)$ of all direct summands as it seems to fit better into our general setup.
\end{remark}

\begin{example}\label{examples}\ 
\begin{enumerate}
\item If $R$ is such that finitely generated projective modules are free then the Tits complex agrees with the free Tits complex, e.g. if $R$ is a PID or a local commutative ring.
\item Let $K$ be a field and $V$ a $K$-vector space of rank $n$. Then $\mathcal{T}(V)=\mathcal{T}^f(V)=T_n(K)$ is the usual Tits building over $K$.
\item Let $R$ be a Dedekind domain with field of fractions $K$, and let $M$ be a finitely generated projective $R$-module. Extension by scalars $-\otimes_RK$ defines an isomorphism
\begin{align*}
\mathcal{T}_R(M)\xrightarrow{\cong} \mathcal{T}_K(M\otimes_R K)
\end{align*}
with inverse given by intersecting with $M\cong M\otimes_R R\subseteq M\otimes_R K$ --- this is due to the fact that over a Dedekind domain a finitely generated module is projective if and only if it is torsion free (see also \cite[Remark 2.8]{MillerPatztWilsonYasaki}).\qedhere 

\end{enumerate}
\end{example}

\begin{lemma}\label{cofibre as homotopy quotient of tits complex}
Let $A$ be an associative ring and let $M$ be a finitely generated projective $A$-module. Then the cofibre of the inclusion
\begin{align*}
|\partial \operatorname{RBS}(M)|\hookrightarrow |\operatorname{RBS}(M)|
\end{align*}
identifies with the (homotopy) quotient of the once-suspended Tits complex of $M$:
\begin{align*}
|\operatorname{RBS}(M)|/|\partial\operatorname{RBS}(M)|\simeq \big(\Sigma|\mathcal{T}(M)| \big)/\!\!/\operatorname{GL}(M).
\end{align*}
\end{lemma}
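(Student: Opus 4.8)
The plan is to work in the flag incarnation $\operatorname{RBS}'(M)$ of \Cref{reductive borel-serre categories} and to exploit that $\partial\operatorname{RBS}(M)$ sits inside $\operatorname{RBS}(M)$ as a sieve. Via the equivalence $\operatorname{RBS}'(M)\xrightarrow{\simeq}\operatorname{RBS}(M)$ of \Cref{compare flag and list version of RBS}, the full subcategory $\operatorname{BGL}(M)$ corresponds to the full subcategory on the trivial flag $\hat 1:=(0\subsetneq M)$, which is $B\operatorname{GL}(M)$ (its sole object has automorphism group $\operatorname{GL}(M)=\operatorname{GL}(M)/\mathcal U_{\hat 1}$), and $\partial\operatorname{RBS}(M)$ corresponds to the full subcategory $\partial\operatorname{RBS}'(M)$ on the non-trivial splittable flags. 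The first thing I would record is that $\partial\operatorname{RBS}'(M)$ is a \emph{sieve} (left closed full subcategory) in $\operatorname{RBS}'(M)$ with complementary \emph{cosieve} $\operatorname{BGL}(M)$: any morphism $\mathcal F\to\mathcal G$ in $\operatorname{RBS}'(M)$ is represented by some $g\in\operatorname{GL}(M)$ with $g\mathcal F$ refining $\mathcal G$, so a non-trivial target forces a non-trivial source; equivalently, assigning to a flag its triviality defines a functor $\operatorname{RBS}'(M)\to\{0<1\}$ with $\partial\operatorname{RBS}'(M)=r^{-1}(0)$ and $\operatorname{BGL}(M)=r^{-1}(1)$.

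\textbf{Main steps.} (1) Apply the standard gluing lemma for a category split into a sieve $\mathcal A$ and complementary cosieve $\mathcal B$: $|\mathcal C|$ is the homotopy pushout of $|\mathcal A|\leftarrow|\mathcal A\!\downarrow\!\mathcal B|\to|\mathcal B|$, where $\mathcal A\!\downarrow\!\mathcal B$ has objects the morphisms $a\to b$ ($a\in\mathcal A$, $b\in\mathcal B$) and morphisms the commuting squares (this is either citable or proved directly by filtering $N_\bullet\mathcal C$ by the number of vertices in $\mathcal B$). Collapsing $|\mathcal A|$ gives $|\mathcal C|/|\mathcal A|\simeq\operatorname{cofib}(|\mathcal A\!\downarrow\!\mathcal B|\to|\mathcal B|)$. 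Taking $\mathcal A=\partial\operatorname{RBS}'(M)$, $\mathcal B=\operatorname{BGL}(M)$ and replacing $\operatorname{BGL}(M)$ by its skeleton $B\operatorname{GL}(M)$ (one object $\hat 1$, automorphisms $\operatorname{GL}(M)$), the comma category $\mathcal A\!\downarrow\!\mathcal B$ has realisation the Borel construction $|D|/\!\!/\operatorname{GL}(M)$, where $D$ is the subcategory of pairs $(\mathcal F,\phi)$ with $\mathcal F$ non-trivial, $\phi\colon\mathcal F\to\hat 1$ in $\operatorname{RBS}'(M)$, and strictly commuting triangles, and $\operatorname{GL}(M)=\operatorname{Aut}(\hat 1)$ acts by post-composition; the map $|\mathcal A\!\downarrow\!\mathcal B|\to|\mathcal B|$ is the structure map $|D|/\!\!/\operatorname{GL}(M)\to B\operatorname{GL}(M)$. (2) Identify $D$: a morphism $\mathcal F\to\hat 1$ is a coset in $\operatorname{GL}(M)/\mathcal U_{\mathcal F}$, and the commuting-triangle condition pins such a coset down, so $D$ is a poset and $(\mathcal F,g\mathcal U_{\mathcal F})\mapsto g\mathcal F$ is a $\operatorname{GL}(M)$-equivariant equivalence of $D$ with the poset of non-trivial splittable flags of $M$ ordered by refinement (essentially surjective; fully faithful since in both a morphism exists precisely when the first flag refines the second). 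By \Cref{flags or submodules} this poset is, equivariantly, the barycentric subdivision of $|\mathcal P(M)\smallsetminus M|$, so $|D|\simeq|\mathcal T(M)|$ equivariantly. (3) Conclude: $|\operatorname{RBS}(M)|/|\partial\operatorname{RBS}(M)|\simeq\operatorname{cofib}\bigl(|\mathcal T(M)|/\!\!/\operatorname{GL}(M)\to B\operatorname{GL}(M)\bigr)$, and since the Borel construction $(-)\times_{\operatorname{GL}(M)}E\operatorname{GL}(M)$ preserves homotopy pushouts and the unreduced suspension $\Sigma|\mathcal T(M)|$ is the homotopy pushout of two cones on $|\mathcal T(M)|$, this cofibre is the (pointed) homotopy quotient $(\Sigma|\mathcal T(M)|)/\!\!/\operatorname{GL}(M)$.

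\textbf{The hard part.} The only genuinely non-formal step is the equivariant identification of the interface/comma category with the Tits complex in step (2): one must unwind morphisms in $\operatorname{RBS}'(M)$ carefully enough to see that $D$ collapses onto the honest flag poset (absorbing the $\mathcal U_{\mathcal F}$-ambiguity), and that the resulting equivalence is $\operatorname{GL}(M)$-equivariant. Subsidiary care is needed to apply the gluing lemma of step (1) in the right variance and, in step (3), to handle the basepoint conventions identifying $\operatorname{cofib}(Y/\!\!/G\to BG)$ with $(\Sigma Y)/\!\!/G$ (the unreduced-versus-fibrewise-suspension bookkeeping); both are routine but easy to fumble. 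As a sanity check, for $M=K^n$ this recovers, via the Solomon--Tits theorem, the isomorphism $\widetilde H_d\bigl(|\operatorname{RBS}(K^n)|/|\partial\operatorname{RBS}(K^n)|\bigr)\cong H_{d-n+1}\bigl(\operatorname{GL}_n(K);\operatorname{St}_n(K)\bigr)$ that underlies \Cref{E1-homology for general ring}.
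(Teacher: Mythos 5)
Your proof is correct, and it takes a genuinely more self-contained route than the paper. The paper's own proof is essentially a citation: it imports from \cite[Corollary 5.8]{ClausenOrsnesJansen} the pushout square relating $|\partial\operatorname{RBS}(M)|\to|\operatorname{RBS}(M)|$ to $|\mathcal F(M)\smallsetminus\hat 1|/\!\!/\operatorname{GL}(M)\to|\mathcal F(M)|/\!\!/\operatorname{GL}(M)$, takes horizontal cofibres, uses that $|\mathcal F(M)|$ is $\operatorname{GL}(M)$-equivariantly a cone on $|\mathcal F(M)\smallsetminus\hat 1|$ (the trivial flag is an initial, $\operatorname{GL}(M)$-fixed object), and finishes with \Cref{flags or submodules}. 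What you do instead is re-derive that pushout square from scratch: the sieve/cosieve decomposition of $\operatorname{RBS}'(M)$ into $\partial\operatorname{RBS}'(M)$ and $\operatorname{BGL}(M)$, the mapping-cylinder gluing lemma for a category over $[1]$, and the identification of the interface comma category with the action category of $\operatorname{GL}(M)$ on the flag poset. Your square is literally equivalent to the cited one, since $|\mathcal F(M)|/\!\!/\operatorname{GL}(M)\simeq B\operatorname{GL}(M)$ and $|\mathcal F(M)\smallsetminus\hat1|/\!\!/\operatorname{GL}(M)\simeq|\mathcal T(M)|/\!\!/\operatorname{GL}(M)$; your step (2), which you correctly single out as the real content, is exactly the work that the citation outsources (and your computation that a morphism $(\mathcal F,g\mathcal U_{\mathcal F})\to(\mathcal F',g'\mathcal U_{\mathcal F'})$ exists and is unique precisely when $g\mathcal F\leq g'\mathcal F'$ is right, including equivariance). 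The one place to be slightly more careful than your wording suggests is step (3): ``Borel preserves pushouts and $\Sigma Y=CY\cup_Y CY$'' literally yields $B G\cup_{Y/\!\!/G}BG$, whereas the cofibre of $Y/\!\!/G\to BG$ is $BG\cup_{Y/\!\!/G}\ast$; the two differ by collapsing one copy of $BG$ (compare the rank-one case, where $\mathcal T(M)=\emptyset$ and the cofibre is $B\operatorname{GL}(M)_+$, not $B\operatorname{GL}(M)\sqcup B\operatorname{GL}(M)$). So $(\Sigma|\mathcal T(M)|)/\!\!/\operatorname{GL}(M)$ must be read as the reduced Borel construction of the pointed $G$-space $\Sigma|\mathcal T(M)|$ — which is the convention the paper itself is tacitly using in its chain of identifications — and your parenthetical ``(pointed)'' together with the explicit flag in your last paragraph shows you saw this; just make it explicit in a final write-up.
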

\begin{proof}
This is done for a finite vector space over a finite field in the course of the proof of \cite[Theorem 5.18]{ClausenOrsnesJansen},  but the identification is completely general: by \cite[Corollary 5.8]{ClausenOrsnesJansen}, we have a pushout square in $\mathcal{S}$ as below (note that the poset of flags is denoted by $\mathcal{P}$ there and that the split Noetherian hypothesis on $M$ is not actually needed for this part of the result).
\begin{center}
\begin{tikzpicture}
\matrix (m) [matrix of math nodes,row sep=2em,column sep=2em,nodes={anchor=center}]
{
\vert\mathcal{F}(M)\smallsetminus (0\subsetneq M)\vert /\!\!/\operatorname{GL}(M) & \vert\mathcal{F}(M)\vert /\!\!/\operatorname{GL}(M) \\
\vert\partial \operatorname{RBS}(M)\vert & \vert\operatorname{RBS}(M)\vert \\
};
\path[-stealth]
(m-1-1) edge (m-1-2) edge (m-2-1)
(m-2-1) edge (m-2-2)
(m-1-2) edge (m-2-2)
;
\end{tikzpicture}
\end{center}
Thus we have identifications
\begin{align*}
|\operatorname{RBS}(M)|/|\partial\operatorname{RBS}(M)|&\simeq (|\mathcal{F}(M)|/|\mathcal{F}(M)\smallsetminus (0\subsetneq M)|)/\!\!/\operatorname{GL}(M) \\
&\simeq \Sigma|\mathcal{F}(M)\smallsetminus (0\subsetneq M)|/\!\!/\operatorname{GL}(M)
\end{align*}
which by \Cref{flags or submodules} gives us the desired identification.
\end{proof}

Combined with \Cref{pushout of non-unital E1-spaces}, we see that for a commutative ring $R$ with connected spectrum, the cofibre of the map
\begin{align*}
|\mathscr{M}_{\operatorname{RBS}}(\mathcal{P}(R)_{\leq N-1})|\longrightarrow |\mathscr{M}_{\operatorname{RBS}}(\mathcal{P}(R)_{\leq N})|
\end{align*}
in $\operatorname{Mon}(\mathcal{S})$ identifies with the free $\mathbb{E}_1$-space
\begin{align*}
\operatorname{Free}\bigg(\coprod_{M\in \mathcal{M}_N}\big(\Sigma|\mathcal{T}(M)| \big)/\!\!/\operatorname{GL}(M)\bigg)
\end{align*}
on the homotopy quotients of the once-suspended Tits complexes, where $M$ runs through a set of representatives of finitely generated projective $R$-modules of rank $N$. Likewise for the non-unital versions. We will use this identification in the following section where we approach the question of homological stability: we'll restrict our attention to rings $R$ for which the homology of the Tits complexes is concentrated in a single degree in which case we get an analogue of the usual Steinberg module.

\begin{definition}\label{standard connectivity estimate}
Let $R$ be a commutative ring with connected spectrum. 
\begin{enumerate}
\item  We say that $R$ satisfies the \textit{standard connectivity estimate with respect to the Tits complex} if for any finitely generated projective $R$-module $M$ of rank $n$, the reduced homology of the Tits complex $\mathcal{T}(M)$ is concentrated in degree $n-2$.
\item In this case, we define the \textit{Steinberg module (with coefficients in $\Bbbk$)} of a finitely generated projective $R$-module $M$ of rank $n$ to be the homology group
\begin{align*}
\operatorname{St}(M;\Bbbk):=\tilde{H}_{n-2}(\mathcal{T}(M); \Bbbk)
\end{align*}
with the induced $\operatorname{GL}(M)$-action.\qedhere
\end{enumerate}
\end{definition}

\begin{example} \ 
\begin{itemize}
\item If $R$ is a field, then it satifies the standard connectivity estimate with respect to the Tits complex by the Solomon-Tits Theorem, which says that the usual Tits building over field $T_n(K)=\mathcal{T}(K^n)$ is homotopy equivalent to a wedge of $(n-2)$-spheres (see e.g. \cite{Quillen72}).
\item If $R$ is a Dedekind domain (e.g. a PID), then it satifies the standard connectivity estimate with respect to the Tits complex by combining the observations of \Cref{examples} with the Solomon-Tits Theorem.
\item If $R$ is a local commutative ring, then it satisfies the standard connectivity estimate with respect to the Tits complex by combining the observations of \Cref{examples} with Scalamandre's Solomon-Tits Theorem (\cite[Theorem A]{Scalamandre}).\qedhere
\end{itemize}
\end{example}

\section{\texorpdfstring{$\mathbb{E}_1$}{E1}-homology and homological stability}\label{homological stability}

In order to investigate the homological stability behaviour of the $\operatorname{RBS}$-categories, we calculate its $\mathbb{E}_1$-homology. This is easily done given the analysis of the monoidal rank filtration of the previous section. Given the existence of $\mathbb{E}_\infty$-structure on $|\mathscr{M}_{\operatorname{RBS}}(R)|$ (in fact, we'll only be using $\mathbb{E}_2$) and the calculation of its $\mathbb{E}_1$-homology, we can appeal to the framework of cellular $\mathbb{E}_k$-algebras established by Galatius--Kupers--Randall-Williams (\cite{GalatiusKupersRandalWilliams}). We'll pick out these results at the end of this section. Even this somewhat limited investigation reveals that the $\operatorname{RBS}$-categories satisfy a priori better homological stability properties than the general linear groups.

\subsection{Square zero extension and indecomposables}

We recall the invariants in question. First of all, we make some very general observations. Let $\mathcal{C}$ be a symmetric monoidal $\infty$-category and let $\mathcal{O}^\otimes$ be an $\infty$-operad whose underlying space is a contractible Kan complex, we can identify $\mathcal{O}$-algebra objects in $\mathcal{C}$ with $\mathcal{O}$-algebra objects in the associated $\mathcal{O}$-monoidal $\infty$-category given by the fibre product $\mathcal{D}^\otimes:=\mathcal{C}^\otimes \times_{\operatorname{Fin}_\ast}\mathcal{O}^\otimes$:
\begin{align*}
\operatorname{Alg}_{\mathcal{O}}(\mathcal{C})\simeq \operatorname{Alg}_{/\mathcal{O}}(\mathcal{D}).
\end{align*}
Hence, the forgetful functor
\begin{align*}
u\colon \operatorname{Alg}_{\mathcal{O}}(\mathcal{C})\rightarrow \mathcal{C}
\end{align*}
admits a left adjoint $f$ (\cite[Example 3.1.3.6]{LurieHA}) and this adjunction is monadic by the Monadicity Theorem (\cite[Theorem 4.7.3.5]{LurieHA}) combined with \cite[Lemma 3.2.2.6 and Proposition 3.2.3.1]{LurieHA}.

\begin{notation}\ 
\begin{enumerate}
\item Let $\mathcal{S}^\N:=\operatorname{Fun}(\N,\mathcal{S})$ denote the symmetric monoidal $\infty$-category given by equipping the functor $\infty$-category with the Day convolution structure (\cite[Example 2.2.6.17]{LurieHA}).
\item For $k\geq 1$, consider the non-unitary $\mathbb{E}_k$-operad $\mathbb{E}_k^{\operatorname{nu}}$ (\cite[Definition 5.4.4.1]{LurieHA}) and its associated monad $T_{\mathbb{E}_k^{\operatorname{nu}}}$ on $\mathcal{S}^\N$.
\item Note that the adjunction $\mathcal{S}^\N_*\leftrightarrows \mathcal{S}^\N$ with left adjoint given by by adding a disjoint basepoint at each level is also monadic (\cite[Theorem 4.7.3.5]{LurieHA}) and denote by $(-)_+$ the associated monad on $\mathcal{S}^\N$, $X(n)\mapsto X(n)_+$, $n\in \N$. (See also \cite[p.825]{LurieHA}).\qedhere
\end{enumerate}
\end{notation}

\begin{remark}
We follow the convention of \cite{GalatiusKupersRandalWilliams} and \cite{HorelKrannichKupers}, calling an $\infty$-operad \textit{non-unitary} if it factors over $\operatorname{Surj}_*\hookrightarrow \operatorname{Fin}_*$ (the subcategory containing all objects but only the surjective maps). This does not align with \cite{LurieHA} where such an $\infty$-operad is called \textit{non-unital}, but we hope no confusion will occur. See also \cite[Remark 1.2]{HorelKrannichKupers}.
\end{remark}

There is a canonical map of monads $T_{\mathbb{E}_k^{\operatorname{nu}}}\rightarrow (-)_+$ sending all $n$-ary operations, $n\geq 2$, to the basepoint (\cite{GalatiusKupersRandalWilliams}). The resulting change-of-monad adjunction reads
\begin{align*}
\operatorname{Alg}_{\mathbb{E}_k}^{\operatorname{nu}}(\mathcal{S}^\N)\mathop{\rightleftarrows}^{Q^k}_{Z^k} \mathcal{S}_*^\N
\end{align*}
(see \cite[Proposition 4.6.2.17]{LurieHA}). We call $Q^k$ the $\mathbb{E}_k$-\textit{indecomposables functor}.

\begin{remark}
One should interpret the right adjoint $Z^k$ as a trivial algebra functor. Informally, $Z^k$ equips an $\N$-graded pointed space $X$ with the algebra structure such that all $n$-ary operations, $n>1$, factor through the basepoints.
\end{remark}

We have the following definition (see \cite[Definition 10.7]{GalatiusKupersRandalWilliams}).

\begin{definition}
Let $X$ be a non-unitary $\mathbb{E}_k$-algebra in $\mathcal{S}^\N$. The $\mathbb{E}_k$-homology of $X$ with coefficients in a commutative ring $\Bbbk$ is the bigraded ring
\begin{align*}
H^{\mathbb{E}_k}_{n,d}:=\tilde{H}_{n,d}(Q^kX;\Bbbk)=\tilde{H}_d(Q^kX(n);\Bbbk)
\end{align*}
obtained by taking the degreewise reduced singular homology of the $\mathbb{E}_k$-indecomposables $Q^kX$ in $\mathcal{S}_\ast^\N$.
\end{definition}

It is easy to see what the indecomposables of a free non-unitary $\N$-graded $\mathbb{E}_k$-algebra is.

\begin{observation}\label{Indecomposables of free things}
We have a diagram of adjunctions where the lower line is the forgetful functor with left adjoint given by disjoint basepoint.
\begin{center}
\begin{tikzpicture}
\matrix (m) [matrix of math nodes,row sep=3em,column sep=3em,nodes={anchor=center}]
{
 & \operatorname{Alg}_{\mathbb{E}_k}^{\operatorname{nu}}(\mathcal{S}^\N) \\
 \mathcal{S}^\N_\ast & \mathcal{S}^\N \\
};
\path[-stealth]
(m-2-1) edge node[below]{$\scriptstyle Z^k$} (m-1-2)
(m-1-2.201) edge node[above]{$\scriptstyle Q^k$} node[right, rotate=-53]{$\scriptstyle\!\!\dashv$} (m-2-1.82)
(m-2-1.20) edge (m-2-2.160)
(m-2-2.200) edge node[below]{$\scriptstyle{(-)}_+$} node[right, rotate=90]{$\scriptstyle\!\!\dashv$} (m-2-1.340)
(m-1-2.250) edge node[left]{$\scriptstyle u$} node[right]{$\scriptstyle\!\!\dashv$} (m-2-2.115)
(m-2-2.65) edge node[right]{$\scriptstyle \operatorname{Free}^{\operatorname{nu},k}$}(m-1-2.291)
;
\end{tikzpicture}
\end{center}

Since the right adjoints clearly commute, we must have
\begin{align*}
Q^k\circ \operatorname{Free}^{\operatorname{nu},k}\simeq (-)_+.
\end{align*}

It follows directly from this that the $\mathbb{E}_k$-homology of a free non-unitary $\N$-graded $\mathbb{E}_k$-space is
\begin{equation*}
H^{\mathbb{E}_k}_{n,d}(\operatorname{Free}^{\operatorname{nu},k}(X);\Bbbk)\cong H_{n,d}(X; \Bbbk).\qedhere
\end{equation*}
\end{observation}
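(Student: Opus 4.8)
The plan is to reduce the homology identity to the functor equivalence $Q^k\circ \operatorname{Free}^{\operatorname{nu},k}\simeq (-)_+$, and to obtain the latter by a uniqueness-of-adjoints argument; the only real content will be identifying a composite of two right adjoints.

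First I would assemble the adjunctions at hand. The free--forgetful adjunction $\operatorname{Free}^{\operatorname{nu},k}\dashv u$ between non-unitary $\mathbb{E}_k$-algebras in $\mathcal{S}^\N$ and $\mathcal{S}^\N$ is the monadic one recalled above; and $Q^k\dashv Z^k$ is the change-of-monad adjunction along the canonical map of monads $T_{\mathbb{E}_k^{\operatorname{nu}}}\to (-)_+$, where we identify $\mathcal{S}^\N_\ast$ with the $\infty$-category of algebras over the monad $(-)_+$. Composing, $Q^k\circ \operatorname{Free}^{\operatorname{nu},k}$ is left adjoint to $u\circ Z^k\colon \mathcal{S}^\N_\ast\to \mathcal{S}^\N$.

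The key step is to identify $u\circ Z^k$ with the functor $\mathcal{S}^\N_\ast\to \mathcal{S}^\N$ forgetting the basepoint. Indeed, for a map of monads $\phi\colon T\to T'$, the right adjoint $\phi^\ast$ of the resulting change-of-monad adjunction does not alter underlying objects: the forgetful functor from $T$-algebras precomposed with $\phi^\ast$ is the forgetful functor from $T'$-algebras. Applied to $T_{\mathbb{E}_k^{\operatorname{nu}}}\to (-)_+$, this says exactly that $u\circ Z^k$ is the forgetful functor $\operatorname{Alg}_{(-)_+}(\mathcal{S}^\N)\simeq \mathcal{S}^\N_\ast\to \mathcal{S}^\N$; concretely, $Z^k$ equips a pointed graded space with the trivial $\mathbb{E}_k$-structure and $u$ discards that structure, leaving the underlying unpointed graded space. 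Since this forgetful functor is right adjoint to $(-)_+$, uniqueness of left adjoints gives $Q^k\circ \operatorname{Free}^{\operatorname{nu},k}\simeq (-)_+$.

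Finally I would unwind the definition of $\mathbb{E}_k$-homology. For $X$ in $\mathcal{S}^\N$ and a commutative ring $\Bbbk$,
\begin{align*}
H^{\mathbb{E}_k}_{n,d}\big(\operatorname{Free}^{\operatorname{nu},k}(X);\Bbbk\big)
&=\tilde H_d\big(Q^k\operatorname{Free}^{\operatorname{nu},k}(X)(n);\Bbbk\big)\\
&\cong \tilde H_d\big(X(n)_+;\Bbbk\big)\cong H_d\big(X(n);\Bbbk\big),
\end{align*}
where the first isomorphism is the equivalence just established (evaluated in degree $n$) and the second is the standard identification of the reduced homology of a space with a disjoint basepoint added with the unreduced homology of that space. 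This is the claimed $H^{\mathbb{E}_k}_{n,d}(\operatorname{Free}^{\operatorname{nu},k}(X);\Bbbk)\cong H_{n,d}(X;\Bbbk)$. There is no genuine obstacle here: the entire argument is the bookkeeping in the previous paragraph --- keeping straight what $Z^k$ does and recognising $u\circ Z^k$ as a functor already known to be right adjoint to $(-)_+$ --- together with uniqueness of adjoints and a one-line homology computation.
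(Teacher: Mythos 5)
Your argument is correct and is essentially the paper's own: the paper also observes that the right adjoints $u\circ Z^k$ and the forgetful functor $\mathcal{S}^\N_\ast\to\mathcal{S}^\N$ agree, deduces $Q^k\circ\operatorname{Free}^{\operatorname{nu},k}\simeq (-)_+$ by uniqueness of left adjoints, and reads off the homology identity. You merely spell out the change-of-monads bookkeeping that the paper compresses into ``the right adjoints clearly commute.''
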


The following spectral sequence is just the usual spectral sequence associated to a filtered space (see e.g. \cite{HatcherSS}) and it follows directly from the fact that $Q^k$ preserves colimits. This is essentially just Theorem 10.15 in \cite{GalatiusKupersRandalWilliams}.

\begin{proposition}\label{general spectral sequence}
Let $X$ be an $\N$-graded non-unitary $\mathbb{E}_k$-space and $\Bbbk$ a commutative ring. Suppose we have a sequence of $\N$-graded non-unitary $\mathbb{E}_k$-spaces
\begin{align*}
\emptyset=X_{-1}\rightarrow X_0\rightarrow X_1 \rightarrow \cdots \rightarrow X_i\rightarrow X_{i+1}\rightarrow \cdots
\end{align*}
with $X\simeq \operatorname{colim} X_i$. Then for every $n\in \N$, there is a spectral sequence
\begin{align*}
E^1_{pq}(n)=H_{n,p+q}(Q^kX_p,Q^kX_{p-1};\Bbbk), \quad p,q\in \Z,\ p\geq 0,
\end{align*}
whose limit term, if it exists, is $H_{n,p+q}^{\mathbb{E}_k}(X;\Bbbk)$.
\end{proposition}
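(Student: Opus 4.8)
\textbf{Proof plan for \Cref{general spectral sequence}.}
The plan is to reduce the statement to the standard spectral sequence of a filtered object in a stable or pointed setting. First I would apply the $\mathbb{E}_k$-indecomposables functor $Q^k$ levelwise to the given filtration of $\N$-graded non-unitary $\mathbb{E}_k$-algebras. Since $Q^k$ is a left adjoint (it is the left adjoint in the change-of-monad adjunction $Q^k\dashv Z^k$ recalled just before \Cref{general spectral sequence}), it preserves all colimits; in particular it preserves the sequential colimit $X\simeq \operatorname{colim}_i X_i$ and sends the (homotopy) pushouts along $X_{p-1}\to X_p$ to the corresponding pushouts in $\mathcal{S}_\ast^\N$. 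Thus we obtain a filtered object
\begin{align*}
\ast = Q^kX_{-1}\to Q^kX_0\to Q^kX_1\to \cdots \to Q^kX_i\to Q^kX_{i+1}\to \cdots
\end{align*}
in $\mathcal{S}_\ast^\N$ with $\operatorname{colim}_i Q^kX_i \simeq Q^kX$, where I interpret $Q^kX_{-1}=Q^k(\emptyset)$ as the initial object of $\operatorname{Alg}^{\operatorname{nu}}_{\mathbb{E}_k}(\mathcal{S}^\N)$, whose indecomposables form the point in each grading.

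Next I would fix a grading $n\in\N$ and evaluate the filtered pointed space at $n$, obtaining a sequence $\ast = Q^kX_{-1}(n)\to Q^kX_0(n)\to \cdots$ of pointed spaces with colimit $Q^kX(n)$. Applying reduced singular homology with $\Bbbk$-coefficients to this filtered pointed space yields, by the usual construction of the spectral sequence of a filtration (as in \cite{HatcherSS}), a spectral sequence with $E^1$-page the relative homology groups
\begin{align*}
E^1_{pq}(n) = \tilde{H}_{p+q}\big(Q^kX_p(n),\, Q^kX_{p-1}(n);\Bbbk\big) = H_{n,p+q}(Q^kX_p, Q^kX_{p-1};\Bbbk), \quad p\geq 0,
\end{align*}
using the notation $H_{n,\ast}(-,-;\Bbbk)$ for the degreewise relative homology in grading $n$. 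The differentials have the standard bidegree and the spectral sequence converges to the associated graded of the filtration on $\tilde{H}_{p+q}(Q^kX(n);\Bbbk) = H^{\mathbb{E}_k}_{n,p+q}(X;\Bbbk)$ whenever the filtration is exhaustive and the convergence criterion (e.g. eventual stabilisation in each total degree, or a $\varlimsup$-condition on the filtration) is met — which is exactly the hypothesis ``if it exists'' in the statement. This identifies the limit term as $H^{\mathbb{E}_k}_{n,p+q}(X;\Bbbk)$, as claimed.

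The only genuinely substantive point is the interchange of $Q^k$ with the colimits defining the filtration and with the total colimit $X\simeq\operatorname{colim}X_i$; everything else is the formal machinery of the spectral sequence of a filtered (pointed) space, which is entirely standard. I do not expect a serious obstacle here: the colimit-preservation of $Q^k$ is immediate from its being a left adjoint, and the homological spectral sequence of a filtration is classical. The one place to be slightly careful is the bottom of the filtration — one must note that $Q^k(\emptyset)$ is the point in every grading so that $E^1_{0q}(n)=H_{n,q}(Q^kX_0;\Bbbk)$ is the absolute (reduced) homology — but this is routine. Since this is, as the paper notes, essentially \cite[Theorem 10.15]{GalatiusKupersRandalWilliams}, I would keep the proof brief, citing that result and \cite{HatcherSS} for the spectral sequence of a filtration and emphasising only the colimit-preservation of $Q^k$.
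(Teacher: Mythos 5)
Your proposal matches the paper's argument exactly: the paper also observes that the spectral sequence is the usual one of a filtered (pointed) space, obtained by applying $Q^k$ levelwise and using that $Q^k$, being a left adjoint, preserves colimits, with a citation to \cite{HatcherSS} and \cite[Theorem 10.15]{GalatiusKupersRandalWilliams}. Your write-up simply spells out the same steps in more detail, including the correct handling of the bottom stage $Q^k(\emptyset)$.
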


This spectral sequence will be particularly useful when we can replace the cofibre of each $X_i\rightarrow X_{i+1}$ by a cofibre of free non-unitary $\mathbb{E}_k$-spaces. In the case of the monoidal $\operatorname{RBS}$-category, we'll invoke an $\N$-graded version of \Cref{pushout of non-unital E1-spaces}.

\begin{observation}\label{monoidal grothendieck}
By \cite{Ramzi}, the usual straightening-unstraightening equivalence can be lifted to an equivalence of symmetric monoidal $\infty$-categories:
\begin{align*}
\mathcal{S}^\N\simeq \mathcal{S}_{/\N}
\end{align*}
where the comma category on the right hand side is equipped with the symmetric monoidal structure exhibited in \cite[\S 2.2.2]{LurieHA}. It follows that for any $\infty$-operad $\mathcal{O}$, we have an equivalence
\begin{align*}
\operatorname{Alg}_{\mathcal{O}}(\mathcal{S}^{\N})\simeq \operatorname{Alg}_{\mathcal{O}}(\mathcal{S}_{/\N})\xrightarrow{\simeq} \operatorname{Alg}_{\mathcal{O}}(\mathcal{S})_{/\N}
\end{align*}
where the second equivalence follows from \cite[Lemma 12.2]{AntolinCamarenaBarthel} restricting $\N$ to an $\mathcal{O}$-algebra object in $\mathcal{S}$. Hence, for any $k\geq 0$, the adjunction $Q^k\dashv Z^k$ takes the form
\begin{align*}
\operatorname{Alg}_{\mathbb{E}_k}^{\operatorname{nu}}(\mathcal{S})_{/\N}\ \mathop{\rightleftarrows} \ \mathcal{S}_\ast^{\N}.
\end{align*}
In particular, for $k=1$, it reads
\begin{equation*}
\operatorname{Mon}^{\operatorname{nu}}(\mathcal{S})_{/\N}\ \mathop{\rightleftarrows} \ \mathcal{S}_\ast^{\N}.\qedhere
\end{equation*}
\end{observation}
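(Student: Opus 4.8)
The plan is to transport the change-of-monad adjunction $Q^k \dashv Z^k$ constructed above along a symmetric monoidal identification of $\mathcal{S}^\N$ with the slice $\mathcal{S}_{/\N}$. The first and only substantive step is the symmetric monoidal refinement of straightening/unstraightening. Since $\N$ is a discrete set, straightening over $\N$ is just the equivalence $\operatorname{Fun}(\N,\mathcal{S}) \simeq \mathcal{S}_{/\N}$ (a functor out of the space $\N$ is the same datum as a map of spaces to $\N$). Viewing $\N$ as the discrete symmetric monoidal category under addition makes it a commutative algebra object of $(\mathcal{S},\times)$, and both sides then acquire symmetric monoidal structures built from this addition: the Day convolution structure on $\operatorname{Fun}(\N,\mathcal{S})$ used above, and the slice structure on $\mathcal{S}_{/\N}$ recalled in the statement. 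That these agree under straightening is exactly the monoidal Grothendieck construction of \cite{Ramzi}, which I would invoke wholesale to obtain a symmetric monoidal equivalence $\mathcal{S}^\N \simeq \mathcal{S}_{/\N}$.

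Next, applying $\operatorname{Alg}_\mathcal{O}(-)$, which carries symmetric monoidal equivalences to equivalences of $\infty$-categories, gives $\operatorname{Alg}_\mathcal{O}(\mathcal{S}^\N) \simeq \operatorname{Alg}_\mathcal{O}(\mathcal{S}_{/\N})$ for every $\infty$-operad $\mathcal{O}$. Restricting the commutative algebra $\N$ to an $\mathcal{O}$-algebra object of $\mathcal{S}$, \cite[Lemma 12.2]{AntolinCamarenaBarthel} then identifies $\operatorname{Alg}_\mathcal{O}(\mathcal{S}_{/\N}) \simeq \operatorname{Alg}_\mathcal{O}(\mathcal{S})_{/\N}$; composing yields $\operatorname{Alg}_\mathcal{O}(\mathcal{S}^\N) \simeq \operatorname{Alg}_\mathcal{O}(\mathcal{S})_{/\N}$.

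Finally, specialize to $\mathcal{O} = \mathbb{E}_k^{\operatorname{nu}}$. The adjunction $Q^k \dashv Z^k$ between $\operatorname{Alg}_{\mathbb{E}_k}^{\operatorname{nu}}(\mathcal{S}^\N)$ and $\mathcal{S}_\ast^\N$ was produced above as the change-of-monad adjunction along $T_{\mathbb{E}_k^{\operatorname{nu}}} \to (-)_+$; transporting its source along the equivalence of the previous paragraph, and leaving the target $\mathcal{S}_\ast^\N$ untouched, gives the adjunction $\operatorname{Alg}_{\mathbb{E}_k}^{\operatorname{nu}}(\mathcal{S})_{/\N} \rightleftarrows \mathcal{S}_\ast^\N$, reading $\operatorname{Mon}^{\operatorname{nu}}(\mathcal{S})_{/\N} \rightleftarrows \mathcal{S}_\ast^\N$ for $k = 1$. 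The main obstacle is bookkeeping rather than mathematics: one should check that the non-unitary operad $\mathbb{E}_k^{\operatorname{nu}}$ causes no trouble in the middle step --- i.e. that $\N$ genuinely defines an $\mathbb{E}_k^{\operatorname{nu}}$-algebra, which is immediate since every $\mathbb{E}_\infty$-algebra restricts to one --- and that the monoidal units on the two sides of \cite{Ramzi}'s equivalence match up so that \cite[Lemma 12.2]{AntolinCamarenaBarthel} applies as stated.
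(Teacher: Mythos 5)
Your proposal is correct and follows essentially the same route as the paper: invoke the monoidal Grothendieck construction of \cite{Ramzi} to upgrade straightening over the discrete commutative monoid $\N$ to a symmetric monoidal equivalence $\mathcal{S}^\N\simeq\mathcal{S}_{/\N}$, apply $\operatorname{Alg}_{\mathcal{O}}(-)$, pass to the slice of algebras via \cite[Lemma 12.2]{AntolinCamarenaBarthel}, and transport the change-of-monad adjunction $Q^k\dashv Z^k$ along the resulting identification. Your closing sanity checks (that $\N$ restricts to an $\mathbb{E}_k^{\operatorname{nu}}$-algebra and that units match) are exactly the bookkeeping the paper leaves implicit.
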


\subsection{\texorpdfstring{$\mathbb{E}_1$}{E1}-homology of the monoidal \texorpdfstring{$\operatorname{RBS}$}{RBS}-categories}

We now restrict our attention to $\mathbb{E}_1$-spaces and consider the indecomposables functor
\begin{align*}
Q=Q^1\colon \operatorname{Mon}^{\operatorname{nu}}(\mathcal{S})_{/\N}\rightarrow \mathcal{S}^\N_\ast
\end{align*}
as recalled in the previous section. We will calculate the $\mathbb{E}_1$-homology of the monoidal $\operatorname{RBS}$-categories. First of all we exhibit their natural grading over $\N$.

\begin{notation}
Let $\mathcal{C}$ be an exact category equipped with a grading, $\mathcal{C}\rightarrow \N$. The (non-unital) monoidal $\operatorname{RBS}$-category equipped with the induced rank function
\begin{align*}
\rho\colon \mathscr{M}_{\operatorname{RBS}}^{\operatorname{nu}}(\mathcal{C})\rightarrow \N
\end{align*}
defines a non-unitary $\mathbb{E}_1$-space over $\N$. By \Cref{monoidal grothendieck}, we may equivalently view it as a (non-unitary) $\mathbb{E}_1$-algebra object in $\mathcal{S}^\N$:
\begin{align*}
n\mapsto \rho^{-1}(n)=|\operatorname{RBS}_n(\mathcal{C})|,\quad n>0,\qquad 0\mapsto \emptyset.
\end{align*}

By simply restricting the rank function, the monoidal rank filtration
\begin{align*}
\mathscr{M}_{\operatorname{RBS}}^{\operatorname{nu}}(\mathcal{C}_{\leq 0})\subset \mathscr{M}_{\operatorname{RBS}}^{\operatorname{nu}}(\mathcal{C}_{\leq 1}) \subset\cdots \subset \mathscr{M}_{\operatorname{RBS}}^{\operatorname{nu}}(\mathcal{C}_{\leq N})\subset \mathscr{M}_{\operatorname{RBS}}^{\operatorname{nu}}(\mathcal{C}_{\leq N+1}) \subset \cdots
 \subset \mathscr{M}_{\operatorname{RBS}}^{\operatorname{nu}}(\mathcal{C})
\end{align*}
gives rise to a filtration of $|\mathscr{M}_{\operatorname{RBS}}^{\operatorname{nu}}(\mathcal{C})|$ in $\operatorname{Alg}_{\mathbb{E}_1}^{\operatorname{nu}}(\mathcal{S}^\N)\simeq \operatorname{Mon}^{\operatorname{nu}}(\mathcal{S})_{/\N}$. For any $N>0$, we also consider $|\operatorname{RBS}_N(\mathcal{C})|$ as an $\N$-graded space by concentrating it in degree $N$ and likewise for its boundary. From now on, we view all these as $\N$-graded objects, but omit the morphisms to $\N$ for notational ease.
\end{notation}

We can lift the identification of the associated graded of the monoidal rank filtration to the $\N$-graded setting (as in \S \ref{associated graded}, we'll omit the $\mathcal{C}$ to ease notation).

\begin{proposition}\label{pushout in graded E1-spaces}
For any $N\geq 0$, the following diagram is a pushout in $\operatorname{Alg}_{\mathbb{E}_1}^{\operatorname{nu}}(\mathcal{S}^\N)$:
\begin{center}
\begin{tikzpicture}
\matrix (m) [matrix of math nodes,row sep=2em,column sep=2em,nodes={anchor=center}]
{
\operatorname{Free}^{\operatorname{nu}}(\vert\partial \operatorname{RBS}_N\vert) & \operatorname{Free}^{\operatorname{nu}}(\vert\operatorname{RBS}_N\vert) \\
\vert\mathscr{M}^{\operatorname{nu}}_{\leq N-1}\vert & \vert\mathscr{M}^{\operatorname{nu}}_{\leq N}\vert \\
};
\path[-stealth]
(m-1-1) edge (m-1-2) edge (m-2-1)
(m-2-1) edge (m-2-2)
(m-1-2) edge (m-2-2)
;
\end{tikzpicture}
\end{center}
In particular, we have an equivalence
\begin{align*}
|\mathscr{M}^{\operatorname{nu}}_{\leq N}|/|\mathscr{M}^{\operatorname{nu}}_{\leq N-1}| \xleftarrow{\simeq} \operatorname{Free}^{\operatorname{nu}}(|\operatorname{RBS}_N|/|\partial \operatorname{RBS}_N|)
\end{align*}
identifying the pushout in $\operatorname{Alg}_{\mathbb{E}_1}^{\operatorname{nu}}(\mathcal{S}^\N)$ on the left with the free $\mathbb{E}_1$-space on the homotopy quotient $|\operatorname{RBS}_N|/|\partial \operatorname{RBS}_N|$ concentrated in degree $N$.
\end{proposition}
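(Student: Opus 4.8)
The plan is to bootstrap the pushout diagram from its non-graded counterpart \Cref{pushout of non-unital E1-spaces} by promoting the relevant functors and objects along the equivalence $\operatorname{Alg}_{\mathbb{E}_1}^{\operatorname{nu}}(\mathcal{S}^{\N})\simeq \operatorname{Mon}^{\operatorname{nu}}(\mathcal{S})_{/\N}$ of \Cref{monoidal grothendieck}. The essential point is that the forgetful functor $\operatorname{Mon}^{\operatorname{nu}}(\mathcal{S})_{/\N}\rightarrow \operatorname{Mon}^{\operatorname{nu}}(\mathcal{S})$ (forgetting the map to $\N$) is conservative and preserves (in fact creates) colimits, since it is the pullback along $\{*\}\rightarrow \N$ up to the evident identifications, or more directly because a square in a slice category is a pushout if and only if its image in the ambient category is. Hence it suffices to check two things: (i) that the four objects in the proposed square are the correct graded refinements of those appearing in \Cref{pushout of non-unital E1-spaces}, and (ii) that the underlying (ungraded) square is exactly the pushout square already established there.

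\textbf{Key steps.} First I would fix the gradings: the rank function $\rho\colon \mathscr{M}_{\operatorname{RBS}}^{\operatorname{nu}}(\mathcal{C})\rightarrow \N$ restricts to a grading on each $\mathscr{M}_{\leq N}^{\operatorname{nu}}$, so $|\mathscr{M}_{\leq N-1}^{\operatorname{nu}}|$ and $|\mathscr{M}_{\leq N}^{\operatorname{nu}}|$ are naturally objects of $\operatorname{Alg}_{\mathbb{E}_1}^{\operatorname{nu}}(\mathcal{S}^{\N})$, and the inclusion between them is a map of such. Next, $|\operatorname{RBS}_N|$ and $|\partial\operatorname{RBS}_N|$ are concentrated in degree $N$ (by definition of $\operatorname{RBS}_N=\rho^{-1}(N)$ as a subcategory of $\mathscr{M}_{\operatorname{RBS}}^{\operatorname{nu}}(\mathcal{C})$ of total rank $N$), and taking $\operatorname{Free}^{\operatorname{nu}}$ of a space concentrated in degree $N$ lands in $\operatorname{Alg}_{\mathbb{E}_1}^{\operatorname{nu}}(\mathcal{S}^{\N})$ with the grading inherited from word length times $N$ — this is just $\operatorname{Free}^{\operatorname{nu}}$ computed in $\mathcal{S}^{\N}$ via the explicit formula of \Cref{free non-unital monoid objects}, where the $n$-ary part $X^{k_1}\times\cdots\times X^{k_n}$ sits in degree $N(k_1+\cdots+k_n)$. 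Since $\operatorname{Free}^{\operatorname{nu}}\colon \mathcal{S}^{\N}\rightarrow \operatorname{Alg}_{\mathbb{E}_1}^{\operatorname{nu}}(\mathcal{S}^{\N})$ is a left adjoint, it preserves the pushout, and it is compatible with the ungraded $\operatorname{Free}^{\operatorname{nu}}$ under the forgetful functor (again by uniqueness of left adjoints, cf. \Cref{observations about free objects}). Finally, I would observe that applying the conservative colimit-preserving forgetful functor to the proposed graded square recovers precisely the square of \Cref{pushout of non-unital E1-spaces}, which is a pushout; therefore the graded square is a pushout in $\operatorname{Alg}_{\mathbb{E}_1}^{\operatorname{nu}}(\mathcal{S}^{\N})$. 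The ``in particular'' statement then follows formally: the pushout of $\operatorname{Free}^{\operatorname{nu}}(|\partial\operatorname{RBS}_N|)\rightarrow\operatorname{Free}^{\operatorname{nu}}(|\operatorname{RBS}_N|)$ along the map to $|\mathscr{M}_{\leq N-1}^{\operatorname{nu}}|$, quotiented by (the image of) $|\mathscr{M}_{\leq N-1}^{\operatorname{nu}}|$, is $\operatorname{Free}^{\operatorname{nu}}$ of the cofibre $|\operatorname{RBS}_N|/|\partial\operatorname{RBS}_N|$, using once more that $\operatorname{Free}^{\operatorname{nu}}$ is a left adjoint and hence sends the pushout defining the cofibre in $\mathcal{S}^{\N}_*$ to the corresponding pushout of $\mathbb{E}_1$-algebras.

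\textbf{Main obstacle.} The only real content is bookkeeping with the slice-category equivalence of \Cref{monoidal grothendieck}: one must be sure that ``pushout in $\operatorname{Alg}_{\mathbb{E}_1}^{\operatorname{nu}}(\mathcal{S}^{\N})$'' is detected on underlying ungraded $\mathbb{E}_1$-spaces, and that the grading data assembles coherently (i.e. that all four maps in the square are genuinely maps over $\N$, not merely maps whose underlying $\mathbb{E}_1$-maps happen to exist). This is straightforward since every map in sight is either an inclusion of a rank-filtration stage (manifestly grading-preserving) or obtained by applying the colimit-preserving functor $\operatorname{Free}^{\operatorname{nu}}$ in the graded category to a grading-concentrated space; there is no subtlety beyond carefully invoking that a left adjoint between presentable $\infty$-categories preserves pushouts. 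I expect the proof to be short, essentially a reference to \Cref{pushout of non-unital E1-spaces}, \Cref{monoidal grothendieck}, and \Cref{observations about free objects}, with the bulk of the work being the remark that the forgetful functor to $\operatorname{Mon}^{\operatorname{nu}}(\mathcal{S})$ is conservative and preserves colimits.
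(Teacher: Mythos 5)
Your proposal is correct and follows essentially the same route as the paper: the paper's proof simply invokes the equivalence $\operatorname{Alg}_{\mathbb{E}_1}^{\operatorname{nu}}(\mathcal{S}^\N)\simeq \operatorname{Mon}^{\operatorname{nu}}(\mathcal{S})_{/\N}$ of \Cref{monoidal grothendieck} and the fact that the projection $\operatorname{Mon}^{\operatorname{nu}}(\mathcal{S})_{/\N}\rightarrow \operatorname{Mon}^{\operatorname{nu}}(\mathcal{S})$ reflects colimits, reducing to \Cref{pushout of non-unital E1-spaces}. Your additional bookkeeping about the gradings and the behaviour of $\operatorname{Free}^{\operatorname{nu}}$ in the graded setting is sound but left implicit in the paper.
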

\begin{proof}
As observed in \Cref{monoidal grothendieck}, $\operatorname{Alg}_{\mathbb{E}_1}^{\operatorname{nu}}(\mathcal{S}^\N)\simeq \operatorname{Mon}^{\operatorname{nu}}(\mathcal{S})_{/\N}$. Since the projection functor $ \operatorname{Mon}^{\operatorname{nu}}(\mathcal{S})_{/\N}\rightarrow  \operatorname{Mon}^{\operatorname{nu}}(\mathcal{S})$ reflects colimits, the result follows directly from \Cref{pushout of non-unital E1-spaces}.
\end{proof}

As an immediate consequence of this identification, we get an expression for the $\mathbb{E}_1$-homology of $|\mathscr{M}_{\operatorname{RBS}}^{\operatorname{nu}}(\mathcal{C})|$.

\begin{theorem}\label{E1-homology for general exact categories}
Let $\mathcal{C}$ be an exact category equipped with a grading and let $\Bbbk$ be a commutative ring. Then the $\mathbb{E}_1$-homology of the (non-unital) monoidal $\operatorname{RBS}$-category in $\mathcal{C}$ is
\begin{align*}
H_{n,d}^{\mathbb{E}_1}(|\mathscr{M}_{\operatorname{RBS}}^{\operatorname{nu}}(\mathcal{C})|;\Bbbk)\cong \tilde{H}_d(|\operatorname{RBS}_n|/|\partial \operatorname{RBS}_n|; \Bbbk) \cong
\bigoplus_{c\in C_n} \tilde{H}_d(|\operatorname{RBS}(c)|/|\partial \operatorname{RBS}(c)|;\Bbbk)
\end{align*}
where $c$ runs through a set $C_n$ of representatives of isomorphism classes of rank $n$ objects in $\mathcal{C}$ and $\operatorname{RBS}(c)$ is the $\operatorname{RBS}$-category of $c$.
\end{theorem}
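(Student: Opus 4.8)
The plan is to feed the monoidal rank filtration into the spectral sequence of \Cref{general spectral sequence} with $k=1$ and observe that it collapses at once. First I would record that $\mathscr{M}_{\operatorname{RBS}}^{\operatorname{nu}}(\mathcal{C})$ is the filtered union of the subcategories $\mathscr{M}_{\operatorname{RBS}}^{\operatorname{nu}}(\mathcal{C}_{\leq N})$: every object $(c_i)_{i\in I}$ and every morphism of the $\operatorname{RBS}$-category involves only finitely many objects of $\mathcal{C}$, hence lies in some $\mathcal{C}_{\leq N}$. Passing to geometric realisations and invoking $\operatorname{Alg}_{\mathbb{E}_1}^{\operatorname{nu}}(\mathcal{S}^{\N})\simeq \operatorname{Mon}^{\operatorname{nu}}(\mathcal{S})_{/\N}$ (\Cref{monoidal grothendieck}), we get $|\mathscr{M}_{\operatorname{RBS}}^{\operatorname{nu}}(\mathcal{C})|\simeq \operatorname{colim}_N X_N$ with $X_N:=|\mathscr{M}_{\operatorname{RBS}}^{\operatorname{nu}}(\mathcal{C}_{\leq N})|$ and $X_{-1}:=\emptyset$. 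Thus \Cref{general spectral sequence} applies: for each total rank $n$ there is a spectral sequence with $E^1_{pq}(n)=H_{n,p+q}(QX_p,QX_{p-1};\Bbbk)$ converging to $H^{\mathbb{E}_1}_{n,p+q}(|\mathscr{M}_{\operatorname{RBS}}^{\operatorname{nu}}(\mathcal{C})|;\Bbbk)$.

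The crux is identifying the relative terms. Since $Q$ preserves colimits, applying it to the pushout square of \Cref{pushout in graded E1-spaces} and using $Q\circ\operatorname{Free}^{\operatorname{nu}}\simeq (-)_+$ (\Cref{Indecomposables of free things}) produces a pushout square in $\mathcal{S}_\ast^{\N}$ exhibiting $QX_N$ as $QX_{N-1}$ with $|\operatorname{RBS}_N|_+$ glued in along $|\partial\operatorname{RBS}_N|_+$. By pasting of pushouts in pointed spaces, the cofibre of $QX_{N-1}\to QX_N$ is therefore the cofibre of $|\partial\operatorname{RBS}_N|_+\to|\operatorname{RBS}_N|_+$, namely the pointed quotient $|\operatorname{RBS}_N|/|\partial\operatorname{RBS}_N|$ — and crucially this is concentrated in $\N$-degree $N$, even though $X_N/X_{N-1}$ itself (a free algebra) is not. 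Hence $E^1_{pq}(n)$ vanishes unless $p=n$, in which case $E^1_{nq}(n)\cong \tilde H_{n+q}\big(|\operatorname{RBS}_n|/|\partial\operatorname{RBS}_n|;\Bbbk\big)$.

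Being concentrated in the single column $p=n$, the spectral sequence degenerates at $E^1$ and converges strongly, so $H^{\mathbb{E}_1}_{n,d}(|\mathscr{M}_{\operatorname{RBS}}^{\operatorname{nu}}(\mathcal{C})|;\Bbbk)\cong E^1_{n,d-n}(n)\cong \tilde H_d\big(|\operatorname{RBS}_n|/|\partial\operatorname{RBS}_n|;\Bbbk\big)$. Finally, the direct sum decomposition is immediate from \Cref{RBSN vs reductive borel-serre categories}: the splittings $\operatorname{RBS}_n\simeq\coprod_{c\in C_n}\operatorname{RBS}(c)$ and $\partial\operatorname{RBS}_n\simeq\coprod_{c\in C_n}\partial\operatorname{RBS}(c)$ exhibit $|\operatorname{RBS}_n|/|\partial\operatorname{RBS}_n|$ as the wedge $\bigvee_{c\in C_n}|\operatorname{RBS}(c)|/|\partial\operatorname{RBS}(c)|$, and reduced homology turns wedges into direct sums.

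The only genuinely delicate point I anticipate is the bookkeeping around the pointed, $\N$-graded structures: verifying that $Q$ sends the free-algebra pushouts of \Cref{pushout in graded E1-spaces} to honest pushouts in $\mathcal{S}_\ast^{\N}$ and that the resulting relative term is concentrated in a single grading, which is exactly what forces the spectral sequence to collapse. Everything else is a formal consequence of results already in place.
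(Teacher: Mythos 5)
Your proposal is correct and follows essentially the same route as the paper: plug the monoidal rank filtration into the spectral sequence of \Cref{general spectral sequence}, identify the relative terms via \Cref{pushout in graded E1-spaces} together with $Q\circ\operatorname{Free}^{\operatorname{nu}}\simeq(-)_+$, and observe that the $E^1$-page is concentrated in the single column $p=n$, so it collapses. Your extra care about the gluing being along $|\partial\operatorname{RBS}_N|_+\to|\operatorname{RBS}_N|_+$ and the relative term being concentrated in $\N$-degree $N$ is exactly the (implicit) content of the paper's identification of the $E^1$-page.
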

\begin{proof}
Plugging the monoidal rank filtration into \Cref{general spectral sequence}, we have for every $n\in \N$, a spectral sequence
\begin{align*}
E^1_{pq}(n)=\begin{cases}
\bigoplus_{M\in \mathcal{M}_p} \tilde{H}_{p+q}(|\operatorname{RBS}_n|/|\partial \operatorname{RBS}_n|;\Bbbk) & p=n \\
0 & p\neq n \\
\end{cases}
\end{align*}
abutting to $H_{n,p+q}^{\mathbb{E}_1}(|\mathscr{M}_{\operatorname{RBS}}^{\operatorname{nu}}(\mathcal{C})|;\Bbbk)$. As the $E_1$-page is concentrated in a single column, we can read off the first isomorphism. The second isomorphism follows from the decomposition $\operatorname{RBS}_N \simeq \coprod_{c\in C_N} \operatorname{RBS}(c)$ (\Cref{RBSN vs reductive borel-serre categories}).
\end{proof}

\begin{theorem}\label{E1-homology for general ring}
Let $R$ be a commutative ring with connected spectrum and let $\Bbbk$ be a commutative ring. Then
\begin{align*}
H_{n,d}^{\mathbb{E}_1}(|\mathscr{M}_{\operatorname{RBS}}^{\operatorname{nu}}(R)|;\Bbbk)\cong \bigoplus_{M\in \mathcal{M}_n} \tilde{H}_{d-1}(|\mathcal{T}(M)|/\!\!/\operatorname{GL}(M);\Bbbk).
\end{align*}
If $R$ satisfies the standard connectivity estimate, then
\begin{align*}
H^{\mathbb{E}_1}_{n,d}(|\mathscr{M}_{\operatorname{RBS}}^{\operatorname{nu}}(R)|;\Bbbk)\cong \bigoplus_{M\in \mathcal{M}_n}H_{d-n+1}(GL(M); \operatorname{St}(M;\Bbbk));
\end{align*}
in particular, $H^{\mathbb{E}_1}_{n,d}(|\mathscr{M}_{\operatorname{RBS}}^{\operatorname{nu}}(R)|;\Bbbk)=0$ 
for all $d< n-1$.
\end{theorem}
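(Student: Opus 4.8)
The plan is to deduce the statement from the general computation of $\mathbb{E}_1$-homology for graded exact categories, \Cref{E1-homology for general exact categories}, applied to $\mathcal{C}=\mathcal{P}(R)$ equipped with the rank grading $\mathcal{P}(R)\to\N$, which is well defined precisely because $\operatorname{Spec}(R)$ is connected. That theorem immediately gives
\begin{align*}
H^{\mathbb{E}_1}_{n,d}\big(|\mathscr{M}_{\operatorname{RBS}}^{\operatorname{nu}}(R)|;\Bbbk\big)\cong\bigoplus_{M\in\mathcal{M}_n}\tilde H_d\big(|\operatorname{RBS}(M)|/|\partial\operatorname{RBS}(M)|;\Bbbk\big),
\end{align*}
so the whole task reduces to identifying, for each finitely generated projective $M$ of rank $n$, the reduced homology of the cofibre $|\operatorname{RBS}(M)|/|\partial\operatorname{RBS}(M)|$.

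Next I would invoke \Cref{cofibre as homotopy quotient of tits complex}, which identifies $|\operatorname{RBS}(M)|/|\partial\operatorname{RBS}(M)|\simeq(\Sigma|\mathcal{T}(M)|)/\!\!/\operatorname{GL}(M)$, the $\operatorname{GL}(M)$-action being trivial on the suspension coordinate. The content of the first formula is then that passing to $\operatorname{GL}(M)$-homotopy orbits of this suspension shifts reduced homology by one. Concretely, using the pushout square of \cite[Corollary 5.8]{ClausenOrsnesJansen} this cofibre is the cofibre of $|\mathcal{T}(M)|/\!\!/\operatorname{GL}(M)\xrightarrow{\pi}B\operatorname{GL}(M)$, which at the level of complexes of $\Bbbk[\operatorname{GL}(M)]$-modules is $\Sigma\,\tilde C_*(|\mathcal{T}(M)|;\Bbbk)\otimes^{\mathbf L}_{\Bbbk[\operatorname{GL}(M)]}\Bbbk$ (one may split off the augmentation, which here is $\operatorname{GL}(M)$-equivariantly split by a cone point). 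Hence $\tilde H_d\big(|\operatorname{RBS}(M)|/|\partial\operatorname{RBS}(M)|;\Bbbk\big)$ is naturally identified with $\tilde H_{d-1}$ of the homotopy orbits of $|\mathcal{T}(M)|$, which is the first displayed isomorphism. This is also the step where one must be careful to pin down in which sense $\tilde H_{d-1}(|\mathcal{T}(M)|/\!\!/\operatorname{GL}(M))$ is meant, namely reduced Borel homology (relative to $B\operatorname{GL}(M)$), so as not to pick up a spurious $H_*(B\operatorname{GL}(M))$-summand.

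For the second formula, assume $R$ satisfies the standard connectivity estimate, so that $\tilde H_q(|\mathcal{T}(M)|;\Bbbk)=0$ for $q\neq n-2$ and $\tilde H_{n-2}(|\mathcal{T}(M)|;\Bbbk)=\operatorname{St}(M;\Bbbk)$ (\Cref{standard connectivity estimate}). Then $\tilde C_*(|\mathcal{T}(M)|;\Bbbk)$ is quasi-isomorphic, as a complex of $\Bbbk[\operatorname{GL}(M)]$-modules, to $\operatorname{St}(M;\Bbbk)$ placed in degree $n-2$; substituting this into the chain-level description above (equivalently, running the Serre spectral sequence of $|\mathcal{T}(M)|\to|\mathcal{T}(M)|/\!\!/\operatorname{GL}(M)\to B\operatorname{GL}(M)$, which now has a single nonzero positive fibre row) yields
\begin{align*}
\tilde H_d\big(|\operatorname{RBS}(M)|/|\partial\operatorname{RBS}(M)|;\Bbbk\big)\cong H_{(d-1)-(n-2)}\big(\operatorname{GL}(M);\operatorname{St}(M;\Bbbk)\big)=H_{d-n+1}\big(\operatorname{GL}(M);\operatorname{St}(M;\Bbbk)\big).
\end{align*}
Summing over $M\in\mathcal{M}_n$ gives the second isomorphism, and the final vanishing is then immediate: for $d<n-1$ the index $d-n+1$ is negative, so each summand $H_{d-n+1}(\operatorname{GL}(M);\operatorname{St}(M;\Bbbk))$ vanishes.

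The \textbf{main obstacle} I expect is the bookkeeping in the third paragraph: making precise the reduced–homology conventions for the homotopy orbit construction and verifying that the suspension in \Cref{cofibre as homotopy quotient of tits complex} interacts with $\operatorname{GL}(M)$-homotopy orbits exactly by a degree shift, with no leftover $B\operatorname{GL}(M)$-contribution and no surviving spectral sequence differential. Once that identification is in place, the remaining ingredients are a direct appeal to \Cref{E1-homology for general exact categories} and a routine hyperhomology / spectral sequence argument.
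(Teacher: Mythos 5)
Your proposal is correct and follows essentially the same route as the paper: apply \Cref{E1-homology for general exact categories} to $\mathcal{P}(R)$ with the rank grading, identify the cofibre via \Cref{cofibre as homotopy quotient of tits complex}, and finish with the homotopy orbit (Serre/Borel) spectral sequence, which under the standard connectivity estimate collapses to a single fibre row. The extra care you take with reduced-homology conventions for the suspended homotopy quotient is exactly the bookkeeping the paper leaves implicit.
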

\begin{proof}
For the first statement, combine \Cref{E1-homology for general exact categories} and \Cref{cofibre as homotopy quotient of tits complex}. The second follows by combining this with the isomorphisms
\begin{align*}
\tilde{H}_{d-1}(|\mathcal{T}(M)|/\!\!/\operatorname{GL}(M);\Bbbk)\cong H_{d-n+1}(\operatorname{GL}(M);\operatorname{St}(M;\Bbbk)).
\end{align*}
arising from the homotopy orbit spectral sequence.
\end{proof}

\begin{remark}
These results should be compared with the analogous calculation of the $\mathbb{E}_1$-homology of $\coprod \operatorname{BGL}(M)$ in \cite[\S 17.2]{GalatiusKupersRandalWilliams}:
\begin{align*}
H^{\mathbb{E}_1}_{n,d}\bigg(\coprod_{M\in \mathcal{M}_n} \operatorname{BGL}(M);\Bbbk\bigg)\cong \bigoplus_{M\in \mathcal{M}_N}H_{d-n+1}(\operatorname{GL}(M); \operatorname{St}^{\operatorname{split}}(M;\Bbbk))
\end{align*}
where $\operatorname{St}^{\operatorname{split}}(M;\Bbbk)$ is the $\mathbb{E}_1$-Steinberg module, that is, the homology of the $\mathbb{E}_1$-splitting complex. For $R$ a Dedekind domain, this agrees with the Charney's \textit{split} building (\cite{Charney80}, \cite[\S 18.2]{GalatiusKupersRandalWilliams}).

One advantage to working with the $\operatorname{RBS}$-categories rather than the general linear groups in this setting is that the Tits building seem to be a more natural object to study than the \textit{split} Tits building. Indeed, in terms of concrete practical advantages, the Tits buildings have been studied intensely for decades and there are many results on their connectivity, on the Steinberg modules and their coinvariants etc. (e.g. \cite{LeeSzczarba,AshRudolph,ChurchFarbPutman}).
\end{remark}

\subsection{Homological stability for \texorpdfstring{$\operatorname{RBS}$}{RBS}-categories}

In this section, we explore the question of homological stability of the $\operatorname{RBS}$-categories. We know now that the monoidal $\operatorname{RBS}$-category gives rise to an $\mathbb{E}_\infty$-space and we have, moreover, calculated its $\mathbb{E}_1$-homology; we review some immediate consequences of this. There's much more to be done in this direction; for now, however, we simply pick out the results that follow free of charge as it were.

Let $R$ be a commutative ring with connected spectrum. In view of \Cref{applying the group completion theorem}, we already know what the homology groups must stabilise to:
\begin{align*}
H_*(\operatorname{RBS}_\infty(R),\Z)\cong H_*(\operatorname{GL}_\infty(R),\Z).
\end{align*}
Our hope is that the $\operatorname{RBS}$-categories satisfy better slopes of homological stability than the general linear groups. We already knew this to be the case for finite fields (\cite[Theorem 1.4]{ClausenOrsnesJansen}). The results of this section support this naive hope although the difference is not as striking as for finite fields (see \Cref{compare with GL}).

\medskip

First of all, we apply the generic homological stability result of \cite{GalatiusKupersRandalWilliams}.

\begin{theorem}\label{PID homological stability}
Let $R$ be a commutative ring with connected spectrum over which finitely generated projective modules are free and which satisfies the standard connectivity estimate, e.g. $R$ a PID or a local commutative ring. Then the map
\begin{align*}
H_d(\operatorname{RBS}(R^{n-1}); \Z)\longrightarrow H_d(\operatorname{RBS}(R^n); \Z)
\end{align*}
is surjective for $2d\leq n-1$ and an isomorphism for $2d< n-1$.
\end{theorem}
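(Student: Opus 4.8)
The plan is to deduce this as a direct application of the generic homological stability theorem for $\mathbb{E}_k$-algebras of Galatius--Kupers--Randal-Williams (\cite[Theorem 13.2 or the $E_k$-cells version in \S 13--17]{GalatiusKupersRandalWilliams}), using the $\mathbb{E}_2$-structure on $|\mathscr{M}_{\operatorname{RBS}}(R)|$ supplied by \Cref{E-infinity structure} (via Dunn additivity, the $\mathbb{E}_\infty$-structure restricts to an $\mathbb{E}_2$-structure refining the monoidal $\mathbb{E}_1$-structure) together with the $\mathbb{E}_1$-homology computation of \Cref{E1-homology for general ring}. First I would set up the input: since finitely generated projective $R$-modules are free, $\mathcal{M}_n$ has the single representative $R^n$, so $\mathscr{M}_{\operatorname{RBS}}^{\operatorname{nu}}(R) \simeq \coprod_{n>0}\operatorname{RBS}(R^n)$ as an $\N$-graded non-unitary $\mathbb{E}_2$-algebra in $\mathcal{S}$, and the rank grading is precisely the grading needed to run the GKRW machine with the stabilising class living in degree (rank) $1$ coming from the object $(R)$. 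The standard connectivity estimate hypothesis gives, via \Cref{E1-homology for general ring}, that $H^{\mathbb{E}_1}_{n,d}(|\mathscr{M}_{\operatorname{RBS}}^{\operatorname{nu}}(R)|;\Z)$ vanishes for $d < n-1$; equivalently, the $\mathbb{E}_1$-homology is supported in bidegrees with $d \geq n-1$, i.e. on and above the line of slope $1$ through the origin.

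The second step is to translate this vanishing into the hypotheses of the GKRW stability theorem. In their framework (see \cite[\S 17.2]{GalatiusKupersRandalWilliams} where the analogous computation for $\coprod\operatorname{BGL}_n$ is carried out), an $\mathbb{E}_2$-algebra whose $\mathbb{E}_1$-homology $H^{\mathbb{E}_1}_{n,d}$ vanishes for $d < n-1$ admits a CW-approximation by $\mathbb{E}_2$-cells in bidegrees $(n,d)$ with $d \geq n-1$; the class $\sigma \in H_{1,0}$ represented by $(R) \in \operatorname{RBS}(R)$ serves as the stabilising class, and one checks the standard connectivity/vanishing condition on the $\mathbb{E}_2$-cells relative to multiplication by $\sigma$. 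Feeding a cell structure concentrated in the region $d \geq n-1$ into their generic homological stability theorem yields stability for the sequence $H_d(\operatorname{RBS}(R^n);\Z)$ with slope determined by the dimension of the cells versus their rank: cells of bidegree $(n, d)$ with $d \geq n-1$ (equivalently, of ``slope'' $\geq 1$, or in their $(g,d)$-coordinates with $d \geq g-1$) give homological stability with the range $2d \leq n-1$ for surjectivity and $2d < n-1$ for isomorphism — this is exactly the $\tfrac{1}{2}$-slope output of the GKRW machine applied to a slope-$1$ $\mathbb{E}_2$-cell structure (cf. the proof for $\operatorname{GL}_n$ of a field in \cite[\S 17.2]{GalatiusKupersRandalWilliams}, which gives the same range). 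One also needs that the stabilisation map under consideration is indeed the map $(-,R)\colon \operatorname{RBS}(R^{n-1}) \to \operatorname{RBS}(R^n)$ induced by multiplication by $\sigma$ in the $\mathbb{E}_1$-structure, which is immediate from \Cref{general RBS-categories} and the definition of the monoidal product $\circledast$.

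The main obstacle I anticipate is not the homotopy-theoretic bookkeeping but rather verifying that the hypotheses of the GKRW theorem are met cleanly in this non-unital, $\N$-graded setting — specifically, checking that $|\mathscr{M}_{\operatorname{RBS}}^{\operatorname{nu}}(R)|$ is ``generated in rank $1$'' in the appropriate sense (so that the $\mathbb{E}_1$-homology bound alone controls the $\mathbb{E}_2$-cell structure and there is no additional connectivity loss), and that the relevant path components are eventually connected and the fundamental groups behave so that ordinary homology of $\operatorname{RBS}(R^n)$ is what the machine computes. These are the points where I would need to be careful to cite the exact form of the GKRW results (the bar construction / $E_k$-cells formalism of \cite[\S 13--17]{GalatiusKupersRandalWilliams}) rather than wave hands. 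Once the cell structure in the region $d \geq n-1$ is in place, the stability range $2d \leq n-1$ (surjectivity), $2d < n-1$ (isomorphism) should drop out of their standard connectivity estimate with no further work. I would also remark that the identification of the stable value with $H_*(\operatorname{GL}_\infty(R);\Z)$ — needed to know we are stabilising to the ``right'' thing — is already recorded in \Cref{applying the group completion theorem}, so it does not need to be reproven here.
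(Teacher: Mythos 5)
Your proposal is correct and follows essentially the same route as the paper: establish the vanishing line $H^{\mathbb{E}_1}_{n,d}=0$ for $d<n-1$ from \Cref{E1-homology for general ring}, transfer it to $\mathbb{E}_2$-homology, note that $H_{\ast,0}\cong\Z[\sigma]$ with $\sigma$ the class of $(R)$ in bidegree $(1,0)$, and feed this into the generic Galatius--Kupers--Randal-Williams stability machine (the paper invokes the packaged form, \cite[Theorem 18.3]{GalatiusKupersRandalWilliams}, by analogy with their Theorem 18.1 for $\operatorname{GL}_n$, rather than building the $\mathbb{E}_2$-cell structure by hand). The only discrepancy is in the precise citation within \cite{GalatiusKupersRandalWilliams}; the relevant statement is Theorem 18.3, not Theorem 13.2.
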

\begin{proof}
The proof is a direct application of \cite[Theorem 18.3]{GalatiusKupersRandalWilliams} and it is completely analogous to the proof of Theorem 18.1 loc.cit. The notation and tools here differ slightly from the rest of the present paper --- we try to stick to that of \cite{GalatiusKupersRandalWilliams} to make the application clearer. As in \cite[\S 17.1]{GalatiusKupersRandalWilliams}, one constructs a non-unitary $\mathbb{E}_2$-algebra $\mathbf{R}$ in $\operatorname{sSet}^\N$ whose underlying functor is
\begin{align*}
n\mapsto \begin{cases}
\emptyset & n=0 \\
|\operatorname{RBS}(R^n)| & n>0 \\
\end{cases}
\end{align*}

Consider the associated $\mathbb{E}_2$-algebra in $\N$-graded free abelian groups $\mathbf{R}_\Z:=\Z\mathbf{R}$. We have
\begin{align*}
H_{n,d}^{\mathbb{E}_1}(\mathbf{R}_\Z)=H_{n,d}^{\mathbb{E}_1}(|\mathscr{M}_{\operatorname{RBS}}^{\operatorname{nu}}(R)|;\Z)=0,\quad d< n-1,
\end{align*}
and by \cite[Theorem 14.4]{GalatiusKupersRandalWilliams}, we can transfer this vanishing line from $\mathbb{E}_1$- to $\mathbb{E}_2$-homology. Moreover, we have
\begin{align*}
H_{\ast,0}(\mathbf{R}_\Z^+)\cong H_{\ast,0}(|\mathscr{M}_{\operatorname{RBS}}(R)|;\Z)\cong \Z[\sigma]
\end{align*}
where $\sigma$ is the unique class in bidegree $(1,0)$ (corresponding to the object $(R)$ in $|\mathscr{M}_{\operatorname{RBS}}(R)|$). Then Theorem 18.3 applies to show that
\begin{align*}
H_{n,d}(\overline{\mathbf{R}}_\Z/\sigma)=0,\quad \text{for }2d\leq n-1
\end{align*}
where  $\overline{\mathbf{R}}_\Z/\sigma$ is a certain left $\overline{\mathbf{R}}_\Z$-module whose underlying homotopy type is essentially that of the homotopy cofibre of the stabilisation map
\begin{align*}
|\mathscr{M}_{\operatorname{RBS}}(R)|[-1]\xrightarrow{\,(-,R)\,} |\mathscr{M}_{\operatorname{RBS}}(R)|
\end{align*}
where the first instance of $|\mathscr{M}_{\operatorname{RBS}}(R)|$ is shifted one degree down. Hence,
\begin{align*}
H_d(|\operatorname{RBS}(R^n)|,|\operatorname{RBS}(R^{n-1})|;\Z)\cong H_{n,d}(\overline{\mathbf{R}}_\Z/\sigma)=0 ,\quad \text{for }2d\leq n-1
\end{align*}
which finishes the proof.
\end{proof}

\begin{lemma}\label{H1 of RBS}
Let $A$ be an associative ring, $M$ a split Noetherian finitely generated projective module, and $\Bbbk$ a commutative ring. Then
\begin{align*}
H_1(\operatorname{RBS}(M); \Bbbk)\cong (\operatorname{GL}(M)/\operatorname{E}(M))^{\operatorname{ab}}\otimes \Bbbk
\end{align*}
where $\operatorname{E}(M)\subset \operatorname{GL}(M)$ is the subgroup generated by automorphisms of $M$ inducing the identity on the associated graded of \textit{some} splittable flag in $M$ (see \cite[Theorem 5.9]{ClausenOrsnesJansen}).
\end{lemma}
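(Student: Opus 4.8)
The plan is to compute $H_1(\operatorname{RBS}(M);\Bbbk)$ via the Hurewicz theorem after identifying $\pi_1$. By \cite[Theorem 5.9]{ClausenOrsnesJansen} (which applies precisely because $M$ is split Noetherian), we already have
\begin{align*}
\pi_1(\operatorname{RBS}(M))\cong \operatorname{GL}(M)/\operatorname{E}(M).
\end{align*}
Here one must be a little careful: $\operatorname{RBS}(M)$ need not be connected as a category, but its geometric realisation is connected (the one-object list $(M)$ admits maps from every object, so $|\operatorname{RBS}(M)|$ is weakly contractible onto that component — concretely, every object of $\operatorname{RBS}(M)$ receives or maps to $(M)$, making the realisation connected). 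Thus $|\operatorname{RBS}(M)|$ is a connected space with fundamental group $\operatorname{GL}(M)/\operatorname{E}(M)$.

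Once $\pi_1$ is pinned down, the first step is the standard identification $H_1(X;\Z)\cong \pi_1(X)^{\operatorname{ab}}$ for any connected space $X$ (Hurewicz), giving
\begin{align*}
H_1(\operatorname{RBS}(M);\Z)\cong (\operatorname{GL}(M)/\operatorname{E}(M))^{\operatorname{ab}}.
\end{align*}
The second step is to pass to $\Bbbk$-coefficients. Since $H_0(\operatorname{RBS}(M);\Z)\cong \Z$ is free, the universal coefficient theorem gives a (split) short exact sequence whose $\operatorname{Tor}$-term vanishes, hence
\begin{align*}
H_1(\operatorname{RBS}(M);\Bbbk)\cong H_1(\operatorname{RBS}(M);\Z)\otimes_{\Z}\Bbbk\cong (\operatorname{GL}(M)/\operatorname{E}(M))^{\operatorname{ab}}\otimes_{\Z}\Bbbk,
\end{align*}
which is the claimed formula.

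I do not expect a serious obstacle here; this is essentially a bookkeeping lemma assembling the fundamental group computation of \cite[Theorem 5.9]{ClausenOrsnesJansen} with Hurewicz and universal coefficients. The only point requiring minor care is confirming connectivity of $|\operatorname{RBS}(M)|$ and checking that the cited $\pi_1$-computation is stated for the correct category (the list-incarnation of \Cref{general RBS-categories}, which by \Cref{compare flag and list version of RBS} agrees with the flag-incarnation used in \cite{ClausenOrsnesJansen}); both are immediate. If one wishes to avoid invoking Hurewicz for a possibly non-$\operatorname{CW}$ realisation, one can instead note that $|\operatorname{RBS}(M)|$ is homotopy equivalent to a $\operatorname{CW}$ complex (geometric realisations of categories are), so no difficulty arises.
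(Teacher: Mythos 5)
Your proof is correct and is essentially the paper's own argument: the paper likewise deduces the lemma directly from the computation $\pi_1(|\operatorname{RBS}(M)|)\cong \operatorname{GL}(M)/\operatorname{E}(M)$ of \cite[Theorem 5.9]{ClausenOrsnesJansen} together with the universal coefficient theorem. Your additional remarks on connectivity and the Hurewicz/abelianisation step are just the implicit details of that same route spelled out.
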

\begin{proof}
This follows directly from the calculation $\pi_1(|\operatorname{RBS}(M)|)\cong \operatorname{GL}(M)/\operatorname{E}(M)$ (\cite[Theorem 5.9]{ClausenOrsnesJansen}) and the universal coefficient theorem.
\end{proof}

In certain situations, the calculation above can be used to improve the slope of stability established in \Cref{PID homological stability} by comparing the subgroup $\operatorname{E}(R^2)$ with the special linear group $\operatorname{SL}_2(R)\subseteq \operatorname{GL}_2(R)$ as in the following proposition.

\begin{proposition}\label{improve stability to 2/3}
Let $\Bbbk$ be a commutative ring. If in the situation of \Cref{PID homological stability}, the map $R^\times =\operatorname{GL}_1(R)\rightarrow \operatorname{GL}_2(R)$ induces a surjection
\begin{align*}
R^\times\otimes \Bbbk \rightarrow \big(\operatorname{GL}_2(R)/\operatorname{E}(R^2)\big)^{\operatorname{ab}}\otimes \Bbbk,
\end{align*}
then the map
\begin{align*}
H_d(\operatorname{RBS}(R^{n-1}); \Bbbk)\longrightarrow H_d(\operatorname{RBS}(R^n); \Bbbk)
\end{align*}
is surjective for $3d\leq 2n-1$ and an isomorphism for $3d< 2n-1$.

In particular, if $\operatorname{SL}_2(R)=\operatorname{E}(R^2)[\operatorname{GL}_2(R),\operatorname{GL}_2(R)]$, then the map
\begin{align*}
H_d(\operatorname{RBS}(R^{n-1}); \Z)\longrightarrow H_d(\operatorname{RBS}(R^n); \Z)
\end{align*}
is surjective for $3d\leq 2n-1$ and an isomorphism for $3d< 2n-1$.
\end{proposition}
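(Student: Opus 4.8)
The plan is to invoke the machinery of \cite{GalatiusKupersRandalWilliams} in the same spirit as the proof of \Cref{PID homological stability}, but now exploiting the improved vanishing line provided by the hypothesis. As in \Cref{PID homological stability}, I would work with the non-unitary $\mathbb{E}_2$-algebra $\mathbf{R}$ in $\operatorname{sSet}^\N$ with underlying functor $n\mapsto |\operatorname{RBS}(R^n)|$ (for $n>0$) and the associated $\mathbb{E}_2$-algebra $\mathbf{R}_\Bbbk=\Bbbk\mathbf{R}$ in $\N$-graded $\Bbbk$-modules. The first step is to record what we already know: by \Cref{E1-homology for general ring}, $H^{\mathbb{E}_1}_{n,d}(\mathbf{R}_\Bbbk)=0$ for $d<n-1$, and the class $\sigma$ in bidegree $(1,0)$ corresponds to the object $(R)$; transfer the $\mathbb{E}_1$-vanishing line to $\mathbb{E}_2$ via \cite[Theorem 14.4]{GalatiusKupersRandalWilliams}. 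The point of the new hypothesis is to kill the lowest nontrivial $\mathbb{E}_1$-homology group in the module $\overline{\mathbf{R}}_\Bbbk/\sigma$: since $H^{\mathbb{E}_1}_{1,0}$ is just $\tilde H_0$ of a point, and $H^{\mathbb{E}_1}_{2,2}(\mathbf{R}_\Bbbk)\cong H_0(\operatorname{GL}_2(R);\operatorname{St}(R^2;\Bbbk))$ is the Steinberg coinvariants, the relevant first obstruction class to improving from slope $1/2$ to slope $2/3$ lives in bidegree $(2,1)$. Concretely, $H_1(\operatorname{RBS}(R^2);\Bbbk)\cong(\operatorname{GL}_2(R)/\operatorname{E}(R^2))^{\mathrm{ab}}\otimes\Bbbk$ by \Cref{H1 of RBS} (using that $R^2$ is split Noetherian, which holds since $R$ is in particular Noetherian or has connected spectrum with finitely many components), and the hypothesised surjectivity of $R^\times\otimes\Bbbk\to(\operatorname{GL}_2(R)/\operatorname{E}(R^2))^{\mathrm{ab}}\otimes\Bbbk$ says precisely that this $H_1$ is generated by the image of the degree-$1$ part, i.e. that the corresponding $\mathbb{E}_1$-homology class $H^{\mathbb{E}_1}_{2,1}(\mathbf{R}_\Bbbk)$ vanishes.

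The second step is to feed this extra vanishing into the standard ``generic homological stability'' theorem \cite[Theorem 18.3]{GalatiusKupersRandalWilliams} (or the variant used in \cite[\S 18]{GalatiusKupersRandalWilliams} to pass from slope $1/2$ to slope $2/3$), together with the computation $H_{*,0}(\mathbf{R}_\Bbbk^+)\cong\Bbbk[\sigma]$ as in \Cref{PID homological stability}. The hypothesis that the $\mathbb{E}_1$-homology vanishes for $d<n-1$ \emph{and} in bidegree $(2,1)$ is exactly the input needed to conclude that $H_{n,d}(\overline{\mathbf{R}}_\Bbbk/\sigma)=0$ for $3d\le 2n-1$, which unwinds to $H_d(|\operatorname{RBS}(R^n)|,|\operatorname{RBS}(R^{n-1})|;\Bbbk)=0$ in the same range, giving the claimed surjectivity for $3d\le 2n-1$ and isomorphism for $3d<2n-1$. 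I would lift the relevant statements of \cite{GalatiusKupersRandalWilliams} verbatim rather than reprove them, exactly as done in \Cref{PID homological stability}.

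For the ``in particular'' clause: assume $\operatorname{SL}_2(R)=\operatorname{E}(R^2)\,[\operatorname{GL}_2(R),\operatorname{GL}_2(R)]$ and work with $\Bbbk=\Z$. Then the quotient $\operatorname{GL}_2(R)/\operatorname{E}(R^2)$ has abelianization receiving a surjection from $\operatorname{GL}_2(R)/\operatorname{SL}_2(R)\cong R^\times$ (via the determinant, or rather its inverse): indeed the composite $\operatorname{GL}_2(R)\to(\operatorname{GL}_2(R)/\operatorname{E}(R^2))^{\mathrm{ab}}$ kills both $\operatorname{E}(R^2)$ and all commutators, hence factors through $\operatorname{GL}_2(R)/(\operatorname{E}(R^2)[\operatorname{GL}_2(R),\operatorname{GL}_2(R)])=\operatorname{GL}_2(R)/\operatorname{SL}_2(R)\cong R^\times$, and conversely $R^\times\to(\operatorname{GL}_2(R)/\operatorname{E}(R^2))^{\mathrm{ab}}$ is surjective since the target is generated by images of all of $\operatorname{GL}_2(R)$, which in turn are all congruent modulo $\operatorname{E}(R^2)$ and commutators to elements of $\operatorname{GL}_1(R)$. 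Tensoring with $\Z$ changes nothing, so the surjectivity hypothesis of the main statement is satisfied, and the integral conclusion follows from the previous paragraph with $\Bbbk=\Z$.

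\textbf{Main obstacle.} The genuinely delicate point is the translation between the group-theoretic hypothesis on $\operatorname{GL}_2(R)/\operatorname{E}(R^2)$ and the vanishing of $H^{\mathbb{E}_1}_{2,1}(\mathbf{R}_\Bbbk)$, and then checking that this single extra vanishing class is precisely what \cite[Theorem 18.3]{GalatiusKupersRandalWilliams} needs in order to upgrade the slope from $1/2$ to $2/3$ (as opposed to needing a whole extra line of vanishing). This requires being careful about the indexing conventions in \cite{GalatiusKupersRandalWilliams} — in particular that their theorem is of the form ``$\mathbb{E}_1$-homology vanishing below a line of slope $\lambda$ plus one more class forces homological stability of slope $\lambda/(\lambda+1)$'' or the appropriate precise statement — and about the fact that $\overline{\mathbf{R}}_\Bbbk/\sigma$ has the homotopy type of the (shifted) cofibre of the stabilisation map, exactly as spelled out in the proof of \Cref{PID homological stability}. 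Everything else is a routine invocation of results already used in that proof.
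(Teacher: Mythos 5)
Your overall architecture is the same as the paper's: identify $H_1(\operatorname{RBS}(R^2);\Bbbk)$ via \Cref{H1 of RBS}, interpret the hypothesis as surjectivity of the first stabilisation map on $H_1$, and invoke the additional statement of \cite[Theorem 18.3]{GalatiusKupersRandalWilliams} on top of the setup already assembled in the proof of \Cref{PID homological stability}; the paper's proof is exactly this, in two sentences. However, there is a genuine error in your intermediate step. You assert that the surjectivity of $R^\times\otimes\Bbbk\rightarrow(\operatorname{GL}_2(R)/\operatorname{E}(R^2))^{\operatorname{ab}}\otimes\Bbbk$ means ``i.e.'' that $H^{\mathbb{E}_1}_{2,1}(\mathbf{R}_\Bbbk)=0$, and you then declare that vanishing to be the input fed to \cite[Theorem 18.3]{GalatiusKupersRandalWilliams}. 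By \Cref{E1-homology for general ring}, $H^{\mathbb{E}_1}_{2,1}(\mathbf{R}_\Bbbk)\cong H_0(\operatorname{GL}_2(R);\operatorname{St}(R^2;\Bbbk))$, the Steinberg coinvariants, and this is a strictly stronger condition: its vanishing implies surjectivity of the first stabilisation map on $H_1$ (this is the content of \Cref{rings of integers and E2}(6), via \cite{JansenMiller}), but not conversely. Concretely, for $\mathcal{O}$ the ring of integers of $\Q(\sqrt{-19})$ --- one of the main intended applications, via \Cref{rings of integers and E2}(4) and \Cref{rings of integers 2/3} --- the hypothesis holds integrally by Swan's presentation, while the integral coinvariants $H_0(\operatorname{GL}_2(\mathcal{O});\operatorname{St}_2(\mathcal{O}))$ are nonzero (this non-vanishing is precisely the subject of \cite{MillerPatztWilsonYasaki}). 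So the implication you rely on fails exactly in a case the proposition is designed to cover.

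The fix is small: the additional statement of \cite[Theorem 18.3]{GalatiusKupersRandalWilliams} does not require $H^{\mathbb{E}_1}_{2,1}=0$; it requires $H_{2,1}(\overline{\mathbf{R}}_\Bbbk/\sigma)=0$, i.e.\ surjectivity of $H_1(\operatorname{RBS}(R);\Bbbk)\rightarrow H_1(\operatorname{RBS}(R^2);\Bbbk)$, which is exactly what \Cref{H1 of RBS} extracts from the hypothesis (in rank $1$ one has $\operatorname{E}(R)=1$, so $H_1(\operatorname{RBS}(R);\Bbbk)\cong R^\times\otimes\Bbbk$). Delete the ``i.e.'' clause and feed the surjectivity in directly. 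Your treatment of the ``in particular'' clause is correct, since $(\operatorname{GL}_2(R)/\operatorname{E}(R^2))^{\operatorname{ab}}=\operatorname{GL}_2(R)/\big(\operatorname{E}(R^2)[\operatorname{GL}_2(R),\operatorname{GL}_2(R)]\big)=\operatorname{GL}_2(R)/\operatorname{SL}_2(R)\cong R^\times$, onto which $\operatorname{GL}_1(R)$ maps isomorphically. (There is also a small indexing slip: the Steinberg coinvariants occur as $H^{\mathbb{E}_1}_{2,1}$, not $H^{\mathbb{E}_1}_{2,2}$.)
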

\begin{proof}
Indeed, by \Cref{H1 of RBS}, this exactly corresponds to the map $\operatorname{RBS}_1(R)\rightarrow \operatorname{RBS}_2(R)$ inducing a surjection on the first homology group. The claim then follows from the additional statement of \cite[Theorem 18.3]{GalatiusKupersRandalWilliams}.
\end{proof}

Recall that for a general ring $A$, $\operatorname{E}_n(A)\subseteq \operatorname{GL}_n(A)$ denotes the subgroup generated by elementary matrices and that we have an inclusion $\operatorname{E}_n(A)\subseteq \operatorname{E}(A^n)$ (which is strict in general, see \cite[\S 1.1]{ClausenOrsnesJansen}).

\begin{theorem}\label{Euclidean domain 2/3}
For $R$ a local commutative ring, the map
\begin{align*}
H_d(\operatorname{RBS}(R^{n-1}); \Z)\longrightarrow H_d(\operatorname{RBS}(R^n); \Z)
\end{align*}
is surjective for $3d\leq 2n-1$ and an isomorphism for $3d< 2n-1$. 
\end{theorem}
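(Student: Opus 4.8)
The plan is to deduce \Cref{Euclidean domain 2/3} from \Cref{improve stability to 2/3} applied with $\Bbbk=\Z$. Concretely, for $R$ a local commutative ring I first record that \Cref{PID homological stability} already applies: finitely generated projective modules over $R$ are free, and $R$ satisfies the standard connectivity estimate with respect to the Tits complex by Scalamandre's Solomon--Tits theorem (\cite[Theorem A]{Scalamandre}), as noted in the examples following \Cref{standard connectivity estimate}. So the only thing to verify is the hypothesis of \Cref{improve stability to 2/3} with integer coefficients, namely that the inclusion $R^\times=\operatorname{GL}_1(R)\hookrightarrow \operatorname{GL}_2(R)$ induces a surjection $R^\times \to \big(\operatorname{GL}_2(R)/\operatorname{E}(R^2)\big)^{\operatorname{ab}}$; equivalently, by the last sentence of \Cref{improve stability to 2/3}, it suffices to show $\operatorname{SL}_2(R)=\operatorname{E}(R^2)\,[\operatorname{GL}_2(R),\operatorname{GL}_2(R)]$.

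The key step is therefore the group-theoretic identity $\operatorname{SL}_2(R)\subseteq \operatorname{E}(R^2)$ (which is even stronger than what is needed). For a local commutative ring, every element of $\operatorname{SL}_2(R)$ is a product of elementary matrices: this is the standard fact that local rings have stable rank $1$, so $\operatorname{SL}_2(R)=\operatorname{E}_2(R)$, and one has the general inclusion $\operatorname{E}_n(A)\subseteq \operatorname{E}(A^n)$ recalled just before the statement of \Cref{Euclidean domain 2/3}. Hence $\operatorname{SL}_2(R)=\operatorname{E}_2(R)\subseteq \operatorname{E}(R^2)$, and in particular $\operatorname{SL}_2(R)=\operatorname{E}(R^2)\,[\operatorname{GL}_2(R),\operatorname{GL}_2(R)]$ trivially. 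I would spell out the stable-rank-$1$ argument briefly: given $\begin{psmallmatrix} a & b \\ c & d\end{psmallmatrix}\in \operatorname{SL}_2(R)$, either $a$ or $a+b$ (say) is a unit since $R$ is local and $\gcd$-type row operations over a ring of stable rank $1$ let one clear the first column, after which the matrix is upper triangular with unit determinant and one finishes by elementary operations — all of this is classical and I would cite it rather than reproduce it.

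With that identity in hand, \Cref{improve stability to 2/3} applies verbatim with $\Bbbk=\Z$ and yields exactly the asserted ranges: the stabilisation map $H_d(\operatorname{RBS}(R^{n-1});\Z)\to H_d(\operatorname{RBS}(R^n);\Z)$ is surjective for $3d\le 2n-1$ and an isomorphism for $3d<2n-1$. I expect essentially no obstacle here; the only mild subtlety is making sure the hypotheses of \Cref{PID homological stability} and \Cref{improve stability to 2/3} are genuinely met for \emph{all} local commutative rings (including non-Noetherian ones), but both the freeness of projectives and Scalamandre's Solomon--Tits theorem are stated in that generality, and stable rank $1$ holds for every local ring, so the argument goes through without extra hypotheses. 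The proof is thus a short reduction: cite \Cref{PID homological stability}, verify $\operatorname{SL}_2(R)=\operatorname{E}_2(R)\subseteq\operatorname{E}(R^2)$, and invoke \Cref{improve stability to 2/3}.
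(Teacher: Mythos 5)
Your proposal is correct and follows essentially the same route as the paper: the paper's proof also reduces to the identity $\operatorname{E}_2(R)=\operatorname{SL}_2(R)$ for local commutative $R$ (cited via $\operatorname{GL}_2(R)=\operatorname{E}_2(R)\operatorname{GL}_1(R)$ from Weibel, which is the same stable-rank-$1$ fact you sketch), combined with the inclusion $\operatorname{E}_2(R)\subseteq\operatorname{E}(R^2)$ and \Cref{improve stability to 2/3}. Your explicit verification of the hypotheses of \Cref{PID homological stability} (freeness of projectives, Scalamandre's Solomon--Tits theorem) is a reasonable addition that the paper leaves implicit.
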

\begin{proof}
We have $E_2(R)=\operatorname{SL}_2(R)$; this follows from the equality $\operatorname{GL}_2(R)=\operatorname{E}_2(R)\operatorname{GL}_1(R)$ (see the proof of \cite[Lemma 1.4, Chapter III]{Weibel99}; see also \cite[Lemma 5.17]{ClausenOrsnesJansen})
\end{proof}

\begin{example}\label{2/3 euclidean}
If $R$ is a Euclidean domain, then we also have $E_2(R)=\operatorname{SL}_2(R)$ (\cite[Exercise 1.5, Chapter III]{Weibel99}) and thus homological stability of slope $\sfrac{2}{3}$ as in the proposition. We'll establish slope $1$ stability in just a moment, however, so we don't state it here.
\end{example}

\begin{example}\label{rings of integers and E2}
Let $\mathcal{O}$ be the ring of integers in an algebraic number field $K$.
\begin{enumerate}
\item If $K$ is not an imaginary quadratic extension, then $\operatorname{SL}_2(\mathcal{O})=\operatorname{E}_2(\mathcal{O})$ (\cite{Vaserstein}, see also \cite{Nica}).
\item For $K=\Q(\sqrt{-d})$ an imaginary quadratic extension, Cohn shows that
\begin{align*}
\operatorname{SL}_2(\mathcal{O})\neq\operatorname{E}_2(\mathcal{O})
\end{align*}
unless $\mathcal{O}$ is Euclidean (\cite{Cohn66}).
\item Note that for $K=\Q(\sqrt{-d})$ an imaginary quadratic extension, the ring of integers $\mathcal{O}$ is a PID if and only if $d=1,2,3,7,11,19,43,67,163$; this is the so-called Heegner Theorem (\cite{Stark}). Only the first five of these are Euclidean domains, i.e. $d=1,2,3,7,11$ (\cite{Motzkin}).
\item As observed in item (2) above, for $\mathcal{O}$ the ring of integers in $\Q(\sqrt{-19})$, the special linear group $\operatorname{SL}_2(\mathcal{O})$ is \textit{not} generated by elementary matrices, but we \textit{do} in fact have
\begin{align*}
\operatorname{SL}_2(\mathcal{O})=\operatorname{E}_2(\mathcal{O})[\operatorname{GL}_2(\mathcal{O}),\operatorname{GL}_2(\mathcal{O})]
\end{align*}
by \cite[\S 16]{Swan71}. Swan sets up a general algorithm for determining a presentation for certain special linear groups by identifying a fundamental domain for its action on the upper half plane.
\item For the remaining three values of $d=43,67,163$ for which the ring of integers $\mathcal{O}$ in $\Q(\sqrt{-d})$ is a PID, we have not been able to determine whether we can apply \Cref{improve stability to 2/3} \textit{integrally}. It's possible that Swan's algorithm can be used to show that also for these number rings we have $\operatorname{SL}_2(\mathcal{O})=\operatorname{E}_2(\mathcal{O})[\operatorname{GL}_2(\mathcal{O}),\operatorname{GL}_2(\mathcal{O})]$, but as Swan observes: ``the length of the calculation increases so rapidly with the discriminant
that machine computation seems to be the only reasonable approach.'' (\cite[\S 17]{Swan71}).
\item For $d=43, 67, 163$, we can still do something though: Indeed, we have
\begin{align*}
(\operatorname{St}_2(\mathcal{O})\otimes \Z[\sfrac{1}{6}])_{\operatorname{GL}_2(\mathcal{O})}=0
\end{align*}
by \cite[Corollary 5.6]{MillerPatztWilsonYasaki}. In other words,
\begin{align*}
H_{2,1}^{\mathbb{E}_1}(|\mathscr{M}_{\operatorname{RBS}}(\mathcal{O})|; \Z[\sfrac{1}{6}])=0.
\end{align*}
Arguing as in the proof of \cite[Theorem 1]{JansenMiller}, we conclude that the first stabilisation map $\operatorname{RBS}(\mathcal{O})\rightarrow \operatorname{RBS}(\mathcal{O}^2)$ induces a surjection on $H_1(-; \Z[\sfrac{1}{6}])$. \qedhere
\end{enumerate}
\end{example}

\begin{theorem}\label{rings of integers 2/3}
Let $\mathcal{O}$ be the ring of integers in an algebraic number field $K$. Suppose $\mathcal{O}$ has class number one and that it is \textit{not} the ring of integers in the imaginary quadratic extension $\Q(\sqrt{-d})$ for $d=43, 67, 163$. Then the map
\begin{align*}
H_d(\operatorname{RBS}(\mathcal{O}^{n-1}); \Z)\longrightarrow H_d(\operatorname{RBS}(\mathcal{O}^n); \Z)
\end{align*}
is surjective for $3d\leq 2n-1$ and an isomorphism for $3d< 2n-1$. 
\end{theorem}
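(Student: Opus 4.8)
The plan is to follow exactly the strategy already laid out in the proofs of \Cref{PID homological stability} and \Cref{Euclidean domain 2/3}, applying \Cref{improve stability to 2/3} with $\Bbbk = \Z$. Since $\mathcal{O}$ is a Dedekind domain of class number one, it is a PID, so finitely generated projective $\mathcal{O}$-modules are free and $\mathcal{O}$ satisfies the standard connectivity estimate with respect to the Tits complex (by the combination of \Cref{examples} and the Solomon--Tits theorem, as noted in the example following \Cref{standard connectivity estimate}). Hence \Cref{PID homological stability} already gives slope $\sfrac{1}{2}$ stability, and by \Cref{improve stability to 2/3} it suffices to check that the map $\mathcal{O}^\times = \operatorname{GL}_1(\mathcal{O}) \rightarrow \operatorname{GL}_2(\mathcal{O})$ induces a surjection
\begin{align*}
\mathcal{O}^\times \longrightarrow \big(\operatorname{GL}_2(\mathcal{O})/\operatorname{E}(\mathcal{O}^2)\big)^{\operatorname{ab}}.
\end{align*}
By the final statement of \Cref{improve stability to 2/3}, this holds whenever $\operatorname{SL}_2(\mathcal{O}) = \operatorname{E}(\mathcal{O}^2)[\operatorname{GL}_2(\mathcal{O}),\operatorname{GL}_2(\mathcal{O})]$, and since $\operatorname{E}_2(\mathcal{O}) \subseteq \operatorname{E}(\mathcal{O}^2)$ it suffices to verify $\operatorname{SL}_2(\mathcal{O}) = \operatorname{E}_2(\mathcal{O})[\operatorname{GL}_2(\mathcal{O}),\operatorname{GL}_2(\mathcal{O})]$.

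So the first step is to dispose of this group-theoretic condition by a case analysis on $K$, exactly as assembled in \Cref{rings of integers and E2}. If $K$ is not an imaginary quadratic extension, then $\operatorname{SL}_2(\mathcal{O}) = \operatorname{E}_2(\mathcal{O})$ by Vaserstein, so the condition holds trivially. If $K = \Q(\sqrt{-d})$ is imaginary quadratic with $\mathcal{O}$ a PID, then (by the Heegner theorem) $d \in \{1,2,3,7,11,19\}$ under our hypothesis excluding $d = 43, 67, 163$. For $d \in \{1,2,3,7,11\}$ the ring $\mathcal{O}$ is Euclidean, so $\operatorname{SL}_2(\mathcal{O}) = \operatorname{E}_2(\mathcal{O})$ again. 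For $d = 19$, Swan's computation (\cite[\S 16]{Swan71}, recalled in \Cref{rings of integers and E2}(4)) gives precisely $\operatorname{SL}_2(\mathcal{O}) = \operatorname{E}_2(\mathcal{O})[\operatorname{GL}_2(\mathcal{O}),\operatorname{GL}_2(\mathcal{O})]$. In all cases the hypothesis of \Cref{improve stability to 2/3} is satisfied.

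The second step is then purely formal: feed this into \Cref{improve stability to 2/3} (taking $\Bbbk = \Z$), which immediately yields that $H_d(\operatorname{RBS}(\mathcal{O}^{n-1});\Z) \rightarrow H_d(\operatorname{RBS}(\mathcal{O}^n);\Z)$ is surjective for $3d \leq 2n-1$ and an isomorphism for $3d < 2n-1$. I expect essentially no obstacle here: the real content is entirely imported, namely the generic homological stability machinery of Galatius--Kupers--Randal-Williams (already invoked in \Cref{PID homological stability,improve stability to 2/3}), the $\mathbb{E}_1$-homology vanishing line of \Cref{E1-homology for general ring}, and the arithmetic inputs on $\operatorname{SL}_2$ versus $\operatorname{E}_2$ for rings of integers. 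The only mildly delicate point is the bookkeeping of which values of $d$ are covered --- one must be careful that the three excluded discriminants $43, 67, 163$ are exactly the ones where neither the Euclidean argument nor Vaserstein's theorem nor Swan's explicit presentation is available, which is why they are handled separately (with $\Z[\sfrac{1}{6}]$-coefficients) in \Cref{rings of integers 2/3 mod 6}.
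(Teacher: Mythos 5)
Your proposal is correct and follows exactly the route the paper intends: the theorem is stated without a separate proof precisely because it is the combination of \Cref{improve stability to 2/3} (with $\Bbbk=\Z$, via the criterion $\operatorname{SL}_2(\mathcal{O})=\operatorname{E}_2(\mathcal{O})[\operatorname{GL}_2(\mathcal{O}),\operatorname{GL}_2(\mathcal{O})]$ and the inclusion $\operatorname{E}_2(\mathcal{O})\subseteq\operatorname{E}(\mathcal{O}^2)$) with the case analysis of \Cref{rings of integers and E2} --- Vaserstein for non-imaginary-quadratic $K$, Euclideanness for $d=1,2,3,7,11$, and Swan's presentation for $d=19$ --- together with the Heegner theorem and the class-number-one hypothesis guaranteeing that \Cref{PID homological stability} applies. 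Your bookkeeping of which discriminants are excluded, and why, matches the paper's.
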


The remaining rings of integers also have slope $\sfrac{2}{3}$ stability after inverting $2$ and $3$ by observation (7) in \Cref{rings of integers and E2} above.

\begin{theorem}\label{rings of integers 2/3 mod 6}
Let $\mathcal{O}$ be the ring of integers in the imaginary quadratic extension $\Q(\sqrt{-d})$ for $d=43, 67, 163$. Then the map
\begin{align*}
H_d(\operatorname{RBS}(\mathcal{O}^{n-1}); \Z[\sfrac{1}{6}])\longrightarrow H_d(\operatorname{RBS}(\mathcal{O}^n); \Z[\sfrac{1}{6}])
\end{align*}
is surjective for $3d\leq 2n-1$ and an isomorphism for $3d< 2n-1$. 
\end{theorem}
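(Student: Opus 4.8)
The plan is to follow the exact same template as the proof of \Cref{Euclidean domain 2/3} and \Cref{rings of integers 2/3}, but carrying $\Z[\sfrac{1}{6}]$-coefficients throughout. First I would recall that $\mathcal{O}$ has class number one and hence is a PID, so finitely generated projective modules are free and $\mathcal{O}$ satisfies the standard connectivity estimate with respect to the Tits complex (being a Dedekind domain); thus \Cref{PID homological stability} applies, giving slope $\sfrac{1}{2}$ stability integrally, and a fortiori with $\Z[\sfrac{1}{6}]$-coefficients. The point is then to upgrade this to slope $\sfrac{2}{3}$ with $\Z[\sfrac{1}{6}]$-coefficients by verifying the hypothesis of \Cref{improve stability to 2/3} with $\Bbbk=\Z[\sfrac{1}{6}]$, namely that the stabilisation $\operatorname{GL}_1(\mathcal{O})\to \operatorname{GL}_2(\mathcal{O})$ induces a surjection
\begin{align*}
\mathcal{O}^\times\otimes \Z[\tfrac{1}{6}] \longrightarrow \big(\operatorname{GL}_2(\mathcal{O})/\operatorname{E}(\mathcal{O}^2)\big)^{\operatorname{ab}}\otimes \Z[\tfrac{1}{6}].
\end{align*}

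The key step is the translation between this surjectivity and a Steinberg-coinvariants vanishing statement. By \Cref{H1 of RBS}, the target of the displayed map computes $H_1(\operatorname{RBS}(\mathcal{O}^2);\Z[\sfrac{1}{6}])$ and the source computes $H_1(\operatorname{RBS}(\mathcal{O});\Z[\sfrac{1}{6}])$, so the claim is precisely that $\operatorname{RBS}_1(\mathcal{O})\to \operatorname{RBS}_2(\mathcal{O})$ is surjective on $H_1(-;\Z[\sfrac{1}{6}])$. As in observation (7) of \Cref{rings of integers and E2}, the obstruction to this surjectivity is the $\mathbb{E}_1$-homology group $H^{\mathbb{E}_1}_{2,1}(|\mathscr{M}_{\operatorname{RBS}}(\mathcal{O})|;\Z[\sfrac{1}{6}])$, which by \Cref{E1-homology for general ring} equals $H_{0}(\operatorname{GL}_2(\mathcal{O});\operatorname{St}_2(\mathcal{O};\Z[\sfrac{1}{6}])) = (\operatorname{St}_2(\mathcal{O})\otimes\Z[\sfrac{1}{6}])_{\operatorname{GL}_2(\mathcal{O})}$. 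For $d=43,67,163$ this group vanishes by \cite[Corollary 5.6]{MillerPatztWilsonYasaki}. Then, arguing exactly as in the proof of \cite[Theorem 1]{JansenMiller} — transferring the vanishing line from $\mathbb{E}_1$- to $\mathbb{E}_2$-homology via \cite[Theorem 14.4]{GalatiusKupersRandalWilliams} and feeding it, together with this degree $(2,1)$ vanishing, into the additional statement of \cite[Theorem 18.3]{GalatiusKupersRandalWilliams} — one obtains the desired surjection on $H_1$, hence the hypothesis of \Cref{improve stability to 2/3} with $\Bbbk=\Z[\sfrac{1}{6}]$.

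Finally I would simply invoke \Cref{improve stability to 2/3} to conclude that
\begin{align*}
H_d(\operatorname{RBS}(\mathcal{O}^{n-1}); \Z[\tfrac{1}{6}])\longrightarrow H_d(\operatorname{RBS}(\mathcal{O}^n); \Z[\tfrac{1}{6}])
\end{align*}
is surjective for $3d\leq 2n-1$ and an isomorphism for $3d< 2n-1$. I do not expect any serious obstacle here: the entire argument is a recombination of ingredients already assembled in the excerpt, and the only genuinely external input is the coinvariants computation of Miller--Patzt--Wilson--Yasaki, which is cited. The one point requiring mild care is bookkeeping with the $\Z[\sfrac{1}{6}]$-coefficients: one must check that the standard connectivity estimate and hence the vanishing line $H^{\mathbb{E}_1}_{n,d}=0$ for $d<n-1$ is available with these coefficients (it is, since it holds integrally and localisation is exact), and that \cite[Theorem 14.4]{GalatiusKupersRandalWilliams} and \cite[Theorem 18.3]{GalatiusKupersRandalWilliams} are applied to the $\mathbb{E}_2$-algebra $\Z[\sfrac{1}{6}]\otimes\mathbf{R}$ in $\N$-graded $\Z[\sfrac{1}{6}]$-modules rather than to $\Z\mathbf{R}$, with $\sigma$ again the class in bidegree $(1,0)$; the closing remark that "we do not know at this point whether this range extends to integer coefficients" reflects precisely that \cite[Corollary 5.6]{MillerPatztWilsonYasaki} is only known after inverting $6$.
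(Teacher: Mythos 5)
Your proposal is correct and follows essentially the same route as the paper: the paper derives this theorem from \Cref{improve stability to 2/3} together with item (6) of \Cref{rings of integers and E2}, where the vanishing $(\operatorname{St}_2(\mathcal{O})\otimes\Z[\sfrac{1}{6}])_{\operatorname{GL}_2(\mathcal{O})}=0$ of Miller--Patzt--Wilson--Yasaki is translated via \Cref{E1-homology for general ring} into $H^{\mathbb{E}_1}_{2,1}=0$ and then, arguing as in \cite[Theorem 1]{JansenMiller}, into surjectivity of the first stabilisation map on $H_1(-;\Z[\sfrac{1}{6}])$. Your bookkeeping remarks about carrying $\Z[\sfrac{1}{6}]$-coefficients through the cellular $\mathbb{E}_k$-algebra machinery are exactly the mild care the argument requires.
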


The following proposition tells us that vanishing coinvariants of the Steinberg module in all degrees yields slope $1$ homological stability.

\begin{proposition}\label{improved homological stability}
Let $\Bbbk$ be a commutative ring. If in the situation of \Cref{PID homological stability}, we have vanishing coinvariants
\begin{align*}
\operatorname{St}(R^n; \Bbbk)_{\operatorname{GL}_n(R)}=0\qquad\text{for all }n>1,
\end{align*}
then the map
\begin{align*}
H_d(\operatorname{RBS}(R^{n-1}); \Bbbk)\longrightarrow H_d(\operatorname{RBS}(R^n); \Bbbk)
\end{align*}
is surjective for $d\leq n-1$ and an isomorphism for $d< n-1$.
\end{proposition}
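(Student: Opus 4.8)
The plan is to mimic the proof of \Cref{PID homological stability} but to feed in the stronger vanishing hypothesis. As in that proof, one builds a non-unitary $\mathbb{E}_2$-algebra $\mathbf{R}$ in $\operatorname{sSet}^\N$ with underlying functor $n\mapsto |\operatorname{RBS}(R^n)|$ for $n>0$ and $0\mapsto \emptyset$, and passes to the $\mathbb{E}_2$-algebra $\mathbf{R}_\Bbbk:=\Bbbk\mathbf{R}$ in $\N$-graded free $\Bbbk$-modules. By \Cref{E1-homology for general ring} (together with \Cref{PID homological stability}'s standing hypotheses on $R$) we have
\begin{align*}
H_{n,d}^{\mathbb{E}_1}(\mathbf{R}_\Bbbk)\cong \bigoplus_{M\in\mathcal{M}_n} H_{d-n+1}\big(\operatorname{GL}(M);\operatorname{St}(M;\Bbbk)\big),
\end{align*}
which vanishes for $d<n-1$; and the hypothesis $\operatorname{St}(R^n;\Bbbk)_{\operatorname{GL}_n(R)}=0$ for all $n>1$ says exactly that the bottom homology group $H_0(\operatorname{GL}_n(R);\operatorname{St}(R^n;\Bbbk))=\operatorname{St}(R^n;\Bbbk)_{\operatorname{GL}_n(R)}$ also vanishes, so in fact $H_{n,n-1}^{\mathbb{E}_1}(\mathbf{R}_\Bbbk)=0$ as well. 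Thus $H_{n,d}^{\mathbb{E}_1}(\mathbf{R}_\Bbbk)=0$ for all $d\leq n-1$, i.e. $\mathbf{R}_\Bbbk$ has $\mathbb{E}_1$-homology vanishing below the line of slope $1$.

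Next I would transfer this vanishing line from $\mathbb{E}_1$- to $\mathbb{E}_2$-homology using \cite[Theorem 14.4]{GalatiusKupersRandalWilliams}, exactly as in \Cref{PID homological stability}. As before, $H_{*,0}(\mathbf{R}_\Bbbk^+)\cong \Bbbk[\sigma]$ with $\sigma$ the unique class in bidegree $(1,0)$ coming from the object $(R)$. Now the key input is \cite[Theorem 18.3]{GalatiusKupersRandalWilliams} (the additional ``$e$-stability'' statement loc.cit.): a vanishing line of slope $1$ for $\mathbb{E}_2$-homology, combined with the standard $H_{*,0}$-computation, yields $H_{n,d}(\overline{\mathbf{R}}_\Bbbk/\sigma)=0$ for $d\leq n-1$, where $\overline{\mathbf{R}}_\Bbbk/\sigma$ is the left $\overline{\mathbf{R}}_\Bbbk$-module whose underlying homotopy type is that of the cofibre of the stabilisation map $|\mathscr{M}_{\operatorname{RBS}}(R)|[-1]\xrightarrow{(-,R)}|\mathscr{M}_{\operatorname{RBS}}(R)|$. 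Translating back, $H_d(|\operatorname{RBS}(R^n)|,|\operatorname{RBS}(R^{n-1})|;\Bbbk)\cong H_{n,d}(\overline{\mathbf{R}}_\Bbbk/\sigma)=0$ for $d\leq n-1$, giving surjectivity for $d\leq n-1$ and isomorphism for $d<n-1$ by the long exact sequence of the pair.

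The only genuinely new point over \Cref{PID homological stability} is the bookkeeping of which exact slope one extracts from \cite[Theorem 18.3]{GalatiusKupersRandalWilliams} given the improved vanishing range: in \Cref{PID homological stability} the $\mathbb{E}_1$-homology only vanishes for $d<n-1$, producing slope $\sfrac{1}{2}$, whereas here the extra vanishing at $d=n-1$ pushes the generic stability slope up to $1$. So the main thing to verify carefully is that the quantitative statement of \cite[Theorem 18.3]{GalatiusKupersRandalWilliams}, applied with a slope-$1$ vanishing line for $\mathbb{E}_2$-homology and the class $\sigma$ in bidegree $(1,0)$, indeed outputs the range $d\leq n-1$ (surjective) and $d<n-1$ (iso); this is the analogue of how \Cref{improve stability to 2/3} upgrades slope $\sfrac{1}{2}$ to $\sfrac{2}{3}$ from a single extra surjectivity, and I expect it to be a direct citation rather than a real obstacle. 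Everything else --- the construction of $\mathbf{R}$, the $\mathbb{E}_1\Rightarrow\mathbb{E}_2$ transfer, the $H_{*,0}$-computation --- is identical to \Cref{PID homological stability}, so I would simply refer to that proof for those steps.
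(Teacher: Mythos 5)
Your reduction of the hypothesis to a vanishing line for $\mathbb{E}_1$-homology is correct and matches the paper's setup: by \Cref{E1-homology for general ring}, $H^{\mathbb{E}_1}_{n,d}(\mathbf{R}_\Bbbk)=0$ for $d<n-1$, and the coinvariance hypothesis kills the remaining group $H^{\mathbb{E}_1}_{n,n-1}(\mathbf{R}_\Bbbk)\cong\bigoplus_M H_0(\operatorname{GL}(M);\operatorname{St}(M;\Bbbk))$, so the $\mathbb{E}_1$-homology vanishes for all $d\leq n-1$. The gap is in the final step, which you flag as ``the main thing to verify'' and expect to be a direct citation: it is not. The conclusion of \cite[Theorem 18.3]{GalatiusKupersRandalWilliams} is capped at slope $\sfrac{1}{2}$ (slope $\sfrac{2}{3}$ with the additional low-degree surjectivity hypothesis, as in \Cref{improve stability to 2/3}); it does not accept a slope-$1$ vanishing line as input and return slope-$1$ stability. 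The obstruction is not bookkeeping: even the free $\mathbb{E}_k$-algebra on the single generator $\sigma$ in bidegree $(1,0)$ --- which has \emph{no} $\mathbb{E}_1$-homology above bidegree $(1,0)$ at all --- fails slope-$1$ stability integrally (its components are classifying spaces of symmetric groups, whose stabilisation maps are only isomorphisms in roughly the range $2d\leq n$ with $\F_2$-coefficients). So no vanishing line on higher cells can force slope $1$ through the generic GKRW theorem; the classes obstructing slope $1$ are generated by $\sigma$ itself via Dyer--Lashof-type operations, and killing their contribution requires a genuinely different argument.

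That argument is exactly the content of the references the paper invokes: the proof in the paper is a one-line citation of \cite[Theorem 1]{JansenMiller} (the ``criterion for slope $1$ homological stability''), with \cite[Proposition 5.1]{KupersMillerPatzt} sufficing when $2$ is invertible in $\Bbbk$ --- the $2$-invertibility in the latter being needed precisely to control the symmetric-group contributions described above. Your proposal correctly assembles the hypotheses of those theorems but replaces their (nontrivial) conclusions with an appeal to a theorem that does not deliver them.
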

\begin{proof}
This is a direct application of \cite[Theorem 1]{JansenMiller} (if $2$ is a unit in $\Bbbk$, then \cite[Proposition 5.1]{KupersMillerPatzt} is enough).
\end{proof}

It is a result due to Lee-Szczarba that for a Euclidean domain $R$, we have vanishing coinvariants: $\operatorname{St}(R^n)_{\operatorname{GL}_n(R)}=0$ for all $n>1$ (\cite[Theorem 1.3]{LeeSzczarba}).

\begin{theorem}\label{Euclidean domain homological stability}
For $R$ a Euclidean domain, the map
\begin{align*}
H_d(\operatorname{RBS}(R^{n-1}); \Z)\longrightarrow H_d(\operatorname{RBS}(R^n); \Z)
\end{align*}
is surjective for $d\leq n-1$ and an isomorphism for $d< n-1$.
\end{theorem}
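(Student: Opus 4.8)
\textbf{Proof strategy for \Cref{Euclidean domain homological stability}.}
The plan is to deduce this immediately from \Cref{improved homological stability} by verifying its hypotheses for a Euclidean domain $R$. There are two things to check. First, $R$ must be a commutative ring with connected spectrum over which finitely generated projective modules are free and which satisfies the standard connectivity estimate. Since a Euclidean domain is in particular a PID, this is already covered: PIDs are integral domains (so $\operatorname{Spec}(R)$ is irreducible, hence connected), finitely generated projective modules over a PID are free, and PIDs satisfy the standard connectivity estimate with respect to the Tits complex (as recorded in the examples following \Cref{standard connectivity estimate}, using \Cref{examples} together with the Solomon--Tits theorem, since over a Dedekind domain the Tits complex agrees with the free Tits complex and is identified via extension of scalars with the Tits building over the fraction field). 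Second, I need the vanishing of the Steinberg coinvariants $\operatorname{St}(R^n;\Z)_{\operatorname{GL}_n(R)}=0$ for all $n>1$; this is precisely the theorem of Lee--Szczarba (\cite[Theorem 1.3]{LeeSzczarba}) cited just before the statement, valid for any Euclidean domain.

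With both hypotheses in hand, \Cref{improved homological stability} with $\Bbbk=\Z$ gives exactly the asserted conclusion: the stabilisation map
\[
H_d(\operatorname{RBS}(R^{n-1});\Z)\longrightarrow H_d(\operatorname{RBS}(R^n);\Z)
\]
is surjective for $d\leq n-1$ and an isomorphism for $d<n-1$. So the proof is essentially a one-line reference once the two inputs are assembled.

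\textbf{Remark on what is doing the work.} The genuine content is entirely upstream and need not be reproved here: the $\mathbb{E}_\infty$-structure on $|\mathscr{M}_{\operatorname{RBS}}(R)|$ (\Cref{E-infinity structure}, via \Cref{product preserving}), the identification of the associated graded of the monoidal rank filtration (\Cref{E1-pushout diagram} and \Cref{cofibre as homotopy quotient of tits complex}), and the resulting computation of $\mathbb{E}_1$-homology $H^{\mathbb{E}_1}_{n,d}(|\mathscr{M}_{\operatorname{RBS}}^{\operatorname{nu}}(R)|;\Z)\cong\bigoplus_{M}H_{d-n+1}(\operatorname{GL}(M);\operatorname{St}(M;\Z))$ from \Cref{E1-homology for general ring}, which in particular vanishes for $d<n-1$ and whose degree $d=n-1$ part is governed by the Steinberg coinvariants. \Cref{improved homological stability}, in turn, packages the relevant $\mathbb{E}_k$-cellular argument of \cite{JansenMiller} (building on \cite{GalatiusKupersRandalWilliams}, \cite{KupersMillerPatzt}), which upgrades the slope-$1/2$ bound of \Cref{PID homological stability} to slope $1$ under the Steinberg-coinvariant vanishing. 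The only genuinely new verification needed is therefore the bookkeeping that a Euclidean domain meets both bullet-point hypotheses of \Cref{improved homological stability}; there is no real obstacle, and the "hard part" is purely that of correctly citing Lee--Szczarba and the PID case of the standard connectivity estimate.
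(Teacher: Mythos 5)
Your proposal is correct and matches the paper's own argument: the theorem is stated as an immediate consequence of \Cref{improved homological stability} once the Lee--Szczarba vanishing $\operatorname{St}(R^n)_{\operatorname{GL}_n(R)}=0$ for $n>1$ is invoked, with the remaining hypotheses covered by the PID case of \Cref{PID homological stability} and the standard connectivity estimate. Your bookkeeping of why a Euclidean domain satisfies those hypotheses is exactly what the paper leaves implicit.
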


In recent work, Bernard--Miller--Sroka vastly improve the slope of homological stability for general linear groups over a certain class of rings by analysing the so-called complex of partial bases $B_n(R)$ (\cite{BernardMillerSroka}). In the setting of \Cref{PID homological stability}, we may exploit parts of their results to get analogous homological stability results for the $\operatorname{RBS}$-categories. We refer to \cite{BernardMillerSroka} for details on the complex of partial bases (note that they refer to the Tits complex introduced by Scalamandre, i.e. considering only free submodules, but as observed earlier, this coincides with our Tits complex under the assumption of \Cref{PID homological stability} that finitely generated projective modules are free).

\begin{theorem}\label{PID and partial bases}
Let $R$ be a commutative ring with connected spectrum over which finitely generated projective modules are free and assume that the complex of partial bases $B_n(R)$ is Cohen–Macaulay for all $n\geq 0$, e.g. $R$ local commutative. Then the map
\begin{align*}
H_d(\operatorname{RBS}(R^{n-1}); \Z[\sfrac{1}{2}])\longrightarrow H_d(\operatorname{RBS}(R^n); \Z[\sfrac{1}{2}])
\end{align*}
is surjective for $d\leq n-1$ and an isomorphism for $d< n-1$.
\end{theorem}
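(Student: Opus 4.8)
The plan is to deduce this from \Cref{improved homological stability} applied with $\Bbbk=\Z[\sfrac{1}{2}]$. That proposition gives precisely the asserted slope-$1$ homological stability for $\operatorname{RBS}(R^n)$ with $\Bbbk$-coefficients, provided (i) $R$ is in the situation of \Cref{PID homological stability} — connected spectrum, finitely generated projectives free, and the standard connectivity estimate with respect to the Tits complex — and (ii) the Steinberg coinvariants $\operatorname{St}(R^n;\Bbbk)_{\operatorname{GL}_n(R)}$ vanish for all $n>1$. So the whole proof reduces to verifying (i) and (ii) for $\Bbbk=\Z[\sfrac{1}{2}]$, and the only new input is (ii).

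For (i): connectedness of $\operatorname{Spec}(R)$ and freeness of finitely generated projectives are among the hypotheses. The standard connectivity estimate — that $\mathcal{T}(R^n)=\mathcal{T}^f(R^n)$ (equal here by \Cref{examples}) is $(n-2)$-spherical — holds for $R$ local by Scalamandre's Solomon--Tits theorem (\cite[Theorem A]{Scalamandre}), since a local ring has stable rank $1$; in general I would record that Cohen--Macaulayness of the complex of partial bases $B_n(R)$ forces this sphericality, which is part of the input used in \cite{BernardMillerSroka} (if a literal citation is unavailable, it follows from the Cohen--Macaulay property by the usual Solomon--Tits/apartment argument, or can simply be added as an explicit hypothesis alongside the Cohen--Macaulay one). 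Throughout, $\operatorname{St}(R^n;\Z[\sfrac{1}{2}])\cong \operatorname{St}(R^n)\otimes\Z[\sfrac{1}{2}]$ since the Steinberg module is concentrated in a single homological degree, so $\Z[\sfrac{1}{2}]$-coefficients commute with forming homology and coinvariants.

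For (ii): this is exactly what Bernard--Miller--Sroka extract from the Cohen--Macaulay complex of partial bases $B_n(R)$. Their analysis — an apartment-class argument generalising earlier vanishing theorems for Steinberg coinvariants (e.g. $\operatorname{St}_n(\Z)_{\operatorname{GL}_n(\Z)}\otimes\Z[\sfrac{1}{2}]=0$) — yields $\operatorname{St}(R^n)_{\operatorname{GL}_n(R)}\otimes\Z[\sfrac{1}{2}]=0$, equivalently $\operatorname{St}(R^n;\Z[\sfrac{1}{2}])_{\operatorname{GL}_n(R)}=0$, for all $n\geq 2$. I expect the main obstacle to be purely bookkeeping: locating the precise statement in \cite{BernardMillerSroka}, matching their conventions for the (free) Tits complex and the degree in which the Steinberg module lives with \Cref{standard connectivity estimate}, and confirming that their Cohen--Macaulay hypothesis is the one assumed here. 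With (i) and (ii) established, \Cref{improved homological stability} — via \cite[Proposition 5.1]{KupersMillerPatzt}, since $2$ is a unit in $\Z[\sfrac{1}{2}]$ — immediately gives that $H_d(\operatorname{RBS}(R^{n-1});\Z[\sfrac{1}{2}])\to H_d(\operatorname{RBS}(R^n);\Z[\sfrac{1}{2}])$ is surjective for $d\leq n-1$ and an isomorphism for $d<n-1$.
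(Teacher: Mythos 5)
Your proposal matches the paper's proof: the paper deduces the result by citing \cite[Corollary 4.2]{BernardMillerSroka} for the vanishing of $(\operatorname{St}(R^n)\otimes\Z[\sfrac{1}{2}])_{\operatorname{GL}_n(R)}$ for all $n>1$ and then invoking \Cref{improved homological stability}, exactly as you do. Your additional care in verifying that the standard connectivity estimate holds under the stated hypotheses is a point the paper's one-line proof leaves implicit, but it is not a different approach.
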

\begin{proof}
By \cite[Corollary 4.2]{BernardMillerSroka}, we have vanishing coinvariants $(
\operatorname{St}(R^n)\otimes \Z[\sfrac{1}{2}])_{\operatorname{GL}_n(R)}=0$ for all $n>1$ so \Cref{improved homological stability} applies.
\end{proof}

\begin{remark}\label{compare with GL}
Let's compare our results to the established results for the general linear groups.
\begin{enumerate}
\item If in the situation of \Cref{PID homological stability}, $R$ satisfies that the complex of partial bases partial bases $B_n(R)$ is Cohen–Macaulay for all $n\geq 0$, then the map
\begin{align*}
H_d(\operatorname{GL}_{n-1}(R); \Z[\sfrac{1}{2}])\longrightarrow H_d(\operatorname{GL}_n(R); \Z[\sfrac{1}{2}])
\end{align*}
is surjective for $d\leq n-1$ and an isomorphism for $d< n-1$ (\cite[Theorem 1.3]{BernardMillerSroka}).
\item The above class of rings includes Euclidean domains. For the $\operatorname{RBS}$-categories, we observe a priori better homological stability as we establish the same stability range but with \textit{integral} coefficients (\Cref{Euclidean domain homological stability}). In \Cref{small degrees} below we see that this is strictly better by explicitly comparing low degree homology groups.
\item To the best of the author's knowledge, the established stability range for the $\operatorname{RBS}$-category over the ring of integers $\mathcal{O}$ in $\Q(\sqrt{-19})$ is also better than the best known range for the general linear groups, namely slope $\sfrac{1}{2}$ (\cite[Theorem 4.11]{vanderKallen}, see also \cite[Theorem 18.1]{GalatiusKupersRandalWilliams}). Note that in the $\operatorname{RBS}$ case we used that the first homology group of $\operatorname{RBS}(\mathcal{O}^2)$ is a very explicit non-trivial quotient of $\operatorname{GL}_2(\mathcal{O})$ (\Cref{improve stability to 2/3} and \Cref{rings of integers and E2} (4)). Note that for $\operatorname{GL}_2(\mathcal{O})$ we have
\begin{align*}
H_1(\operatorname{GL}_2(\mathcal{O}))\cong\operatorname{GL}_2(\mathcal{O})/[\operatorname{GL}_2(\mathcal{O}),\operatorname{GL}_2(\mathcal{O})]\cong (\Z/2\Z)^2
\end{align*}
(\cite[Corollary 16.3]{Swan71}), but the units of $\mathcal{O}$ are $\pm 1$ . Hence we can apply the additional statement of \cite[Theorem 18.1]{GalatiusKupersRandalWilliams} with $\Z[\sfrac{1}{2}]$ coefficients but \textit{not} integrally.
\item For $\mathcal{O}$ the ring of integers in $\Q(\sqrt{-d})$ for $d=43,67, 163$, there does not seem to be a homological stability result for general linear groups analogous to that of \Cref{rings of integers 2/3 mod 6} in the literature, i.e. slope $\sfrac{2}{3}$ stability with $\Z[\sfrac{1}{6}]$-coefficients. We do not know, however, if the additional statement of \cite[Theorem 18.1]{GalatiusKupersRandalWilliams} applies with $2$ and/or $3$ inverted as for $d=19$ above (see also \cite[Example 18.2]{GalatiusKupersRandalWilliams}). Swan observes in \cite[\S 17]{Swan71} that in all the examined cases the abelianisation of $\operatorname{GL}_2(\mathcal{O})$ is an elementary abelian $2$-group, but he does not go beyond the case $d=19$.
\item We believe that the remaining results simply recover the same range of stability as the general linear groups.\qedhere
\end{enumerate}
\end{remark}

As remarked above, the difference between the $\operatorname{GL}$ and $\operatorname{RBS}$ cases over Euclidean domains lies in the $\F_2$-coefficients, and likewise for $\mathcal{O}$ the ring of integers in $\Q(\sqrt{-19})$. Let's make an explicit comparison.

\begin{remark}\label{small degrees}
If $R$ is a ring with a homomorphism to $\F_2$ (e.g. $\Z$), then it is observed in \cite[Remark 6.6]{GalatiusKupersRandalWilliams18b} that the maps
\begin{align*}
H_1(\operatorname{GL}_1(R);\F_2)\rightarrow H_1(\operatorname{GL}_2(R);\F_2) \quad \text{and}\quad H_2(\operatorname{GL}_3(R);\F_2)\rightarrow H_2(\operatorname{GL}_4(R);\F_2)
\end{align*}
are \textit{not} surjective. It follows that the stability range for $R$ established in \cite{KupersMillerPatzt} cannot be extended to $\Z$-coefficients without at the very least introducing a constant correction factor.

If $R$ is a Euclidean domain, then in the $\operatorname{RBS}$-case, the maps
\begin{align*}
H_1(\operatorname{RBS}(R);\Z)\rightarrow H_1(\operatorname{RBS}(R^2);\Z) \quad \text{and}\quad H_2(\operatorname{RBS}(R^3);\Z)\rightarrow H_2(\operatorname{RBS}(R^4);\Z)
\end{align*}
are isomorphisms (integrally): the first follows by combining \Cref{H1 of RBS} with the observation that $E_2(R)=\operatorname{SL}_2(R)$, the second follows from \ref{Euclidean domain homological stability} (in fact \Cref{2/3 euclidean} will do; this also uses that $E_2(R)=\operatorname{SL}_2(R)$).
\end{remark}

We finish this section with a final remark on why we restrict our attention to PIDs and local rings and a reference to recent work of Randal-Williams going beyond this situation.

\begin{remark}\label{improved stability}
The reason why we only consider rings over which finitely generated projective modules are free is that \cite[Theorem 18.3]{GalatiusKupersRandalWilliams} requires $H_{\ast,0}(\mathbf{R}_\Bbbk)\cong \Bbbk [\sigma]$ for an object $\sigma$ of bidegree $(1,0)$; in our setting $\sigma$ would be the object $(R)$ and the assumption needed is that each $|\operatorname{RBS}_n|$ is connected. The underlying reason for making this assumption is the following: the free $\mathbb{E}_k$-algebra on one generator has stability and adding $\mathbb{E}_k$-cells of high enough degree does not interfere with this stability. The free $\mathbb{E}_k$-algebra on more than one generator does \textit{not} have stability, so one cannot get started with the same proof strategy.\footnote{We thank Oscar Randal-Williams for explaining this (and many other things in this section) to us.}

Another way around this would be to restrict to the exact category $\mathcal{F}(R)$ of finitely generated free modules. For this, we would be replacing the $\operatorname{RBS}$-categories introduced in \cite{ClausenOrsnesJansen} with their \textit{free} analogues and the Tits complex with Scalamandre's version (\cite{Scalamandre}). See \Cref{compare flag and list version of RBS} and \Cref{Scalamandre}.
\end{remark}

\begin{remark}\label{general Dedekind}
In recent work, Randal-Williams treats the question of homological stability in situations where we have several stabilisation maps. For $R$ a general Dedekind domain, each rank $1$ projective module $L$ gives rise to a stabilisation map
\begin{align*}
\operatorname{RBS}_n(\mathcal{P}(R))\rightarrow \operatorname{RBS}_{n+1}(\mathcal{P}(R)),\quad (M_1,\ldots,M_p)\mapsto (M_1,\ldots,M_p,L),
\end{align*}
and analogously for the general linear groups; there's a priori no particular reason to favour the rank $1$ free module $R$ as the object by which to stabilise.

Randal-Williams shows that for $\mathcal{O}$ a Dedekind domain, $[L]\in \operatorname{Pic}(\mathcal{O})$ and $M$ a finitely generated projective $\mathcal{O}$-module, the map
\begin{align*}
(-,L)^*\colon H_d(|\operatorname{RBS}(M)|,\Z)\rightarrow H_d(|\operatorname{RBS}(M\oplus L)|,\Z)
\end{align*}
is an isomorphism for $2d<\operatorname{rk}(M)-2$ and an epimorphism for $2d<\operatorname{rk}(M)$ (and analogously for the stabilisation maps $\operatorname{GL}(M)\rightarrow \operatorname{GL}(M\oplus L)$). In fact, he proves a more general and somewhat more conceptual homological stability result in this setting, which says that
\begin{align*}
H_{\ast,d}(|\mathscr{M}_{\operatorname{RBS}}(\mathcal{O})|)
\end{align*}
is generated as an $H_{\ast,0}(|\mathscr{M}_{\operatorname{RBS}}(\mathcal{O})|)$-module in degrees $\leq 2d$ and presented in degrees $\leq 2d+1$. See Theorems 1.1 and 1.2 and \S 1.4 in \cite{RandalWilliams24}.

To apply the generic result (\cite[Theorem 1.5]{RandalWilliams24}), one needs to check that certain relative $\mathbb{E}_1$-homology groups vanish, which in the $\operatorname{RBS}$-case boils down to surjectivity of a certain map out of the coinvariants of the Steinberg module; this, as it turns out, has already been established by Church--Farb--Putman (\cite[\S 5.2]{ChurchFarbPutman}).
\end{remark}

\section{Questions and future directions}\label{questions}

To round off, we state a small selection of questions that should or could be addressed in future investigations of the $\operatorname{RBS}$-categories.

\medskip

\textbf{Improved homological stability.} The canonical questions apply:
\begin{enumerate}
\item Can we improve the slope of stability?
\item Can we prove homological stability for a larger class of rings?
\end{enumerate}

Some perhaps more concrete questions include:

\medskip

\textit{Can we establish slope $\sfrac{2}{3}$ stability for a larger class of rings?}

As seen in \Cref{improve stability to 2/3}, slope $\sfrac{1}{2}$ stability can be improved to slope $\sfrac{2}{3}$ if the first stabilisation map induces a surjection on first homology groups, for example if we have that
\begin{align*}
SL_2(R)= E(R^2)[GL_2(R),GL_2(R)].
\end{align*}
One should investigate whether this is the case for e.g. the ring of integers $\mathcal{O}$ in the imaginary quadratic extensions $\Q(\sqrt{-d})$, $d=43,67,163$. To this end, one could attempt to apply Swan's algorithm for determining a presentation for $\operatorname{SL}_2(\mathcal{O})$ (\cite{Swan71}).

\medskip

\textit{How do the $\operatorname{RBS}$-categories compare with general linear groups?}

It would be interesting to have a more explicit comparison with the general linear groups and a better understanding of how, why and when they differ.

\medskip

\textit{What about twisted coefficients?}

Kupers--Miller--Patzt establish homological stability with polynomial coefficients for general linear groups over the integers, the Gaussian integers, and the Eisenstein integers (\cite[Theorem D]{KupersMillerPatzt}). It's possible that the same strategy applies to the $\operatorname{RBS}$-categories for general Euclidean domains.

\medskip

\textbf{Homotopical stability.} This question is in some ways an extension of the previous question on homological stability. If we can prove homological stability with local coefficients, then we can conclude that in fact the $\operatorname{RBS}$-categories satisfy homotopical stability: if
\begin{align*}
H_d(\operatorname{RBS}(R^{n-1}); G)\longrightarrow H_d(\operatorname{RBS}(R^{n}); G)
\end{align*}
is an isomorphism for $d\ll n$ and any $\pi_1(|\operatorname{RBS}(R^{n})|)$-module $G$, then the stabilisation map
\begin{align*}
|\operatorname{RBS}(R^{n-1})|\longrightarrow |\operatorname{RBS}(R^{n})|
\end{align*}
is a homotopy equivalence for $d\ll n$ (see e.g. \cite[Lemma 1.6, Chapter IV]{Weibel99}).

Alternatively, note that for a vector space $V$ over a field, $|\operatorname{RBS}(V)|$ is a simple space (\cite[Theorem 1.3 and Theorem 1.4]{ClausenOrsnesJansen}). Does this hold more generally? That is, is there some nice class of rings $R$ for which $|\operatorname{RBS}(M)|$ is simple for $M$ a finitely generated projective $R$-module. In this case, the homotopical stability alluded to above follows from homological stability with integral coefficients. For a geometric perspective on this, see the final question of this section on the reductive Borel--Serre compactification as a moduli stack.

\medskip

\textbf{Another kind of homological stability.} In \cite{Quillen73a}, Quillen proves a homological stability result for the rank filtration of the $Q$-construction. In \cite{Damia}, Banús proves an analogous results for the $\operatorname{RBS}$-categories. We already mentioned this in \Cref{rank filtration for commutative ring}, but let's spell out what it means for the $\operatorname{RBS}$-categories: for $R$ a Dedekind domain, $M$ a finitely generated projective $R$-module, and $n\geq 0$, let
\begin{align*}
\partial_n\operatorname{RBS}(M)\subseteq\operatorname{RBS}(M)
\end{align*}
denote the full subcategory spanned by lists of objects of rank at most $n$. The map
\begin{align*}
H_i(\partial_{n-1}\operatorname{RBS}(M);\Z) \rightarrow H_i(\partial_n\operatorname{RBS}(M);\Z)
\end{align*}
induced by the inclusion is surjective for $i\leq n-2$ and an isomorphism for $i\leq n-3$ (\cite[Theorem 4.2.1]{Damia}). We can interpret this statement as saying that the $i$'th homology group of $\operatorname{RBS}(M)$ only depends on modules of rank less than $i+2$.

An aspect of this that should be investigated is how this type of homological stability relates to the more classical homological stability explored in \S \ref{homological stability}. The stabilisation map $\operatorname{RBS}(R^{n-1})\rightarrow \operatorname{RBS}(R^n)$ factors through the boundary $\partial \operatorname{RBS}(R^n)$; hence, for any $i\leq n-2$, we have a commutative diagram
\begin{center}
\begin{tikzpicture}
\matrix (m) [matrix of math nodes,row sep=1em,column sep=1em,nodes={anchor=center}]
{
\operatorname{RBS}(R^{i+2}) & & \operatorname{RBS}(R^n) \\
& \partial_{i+2} \operatorname{RBS}(R^n) & \\
};
\path[-stealth]
(m-1-1) edge (m-1-3) edge (m-2-2)
(m-2-2) edge (m-1-3)
;
\end{tikzpicture}
\end{center}

Applying $H_j(-; \Z)$ for $j\leq i$ yields a diagram where the right hand diagonal map is an isomorphism by \cite[Theorem 4.2.1]{Damia}, and if $R$ is a Euclidean domain then so is the upper horizontal map (\Cref{Euclidean domain homological stability}). Hence, the inclusion $\operatorname{RBS}(R^{i+2})\hookrightarrow \partial_{i+2}\operatorname{RBS}(R^n)$ induces an isomorphism on low degree homology groups $H_j(-;\Z)$ for $j\leq i$.

A better understanding of the nature of these maps may reveal something about the underlying dynamics of homological stability of the $\operatorname{RBS}$-categories.


\medskip

\textbf{Homotopy invariance.} It is a classical result of Quillen, also known as The Fundamental Theorem, that for a regular Noetherian ring $R$, we have a homotopy equivalence $K(R) \rightarrow K(R[t])$; in particular, the $K$-groups of $R$ and the polynomial ring $R[t]$ coincide (\cite{Quillen73}). There are unstable versions of this result due to Soulé, Knudson and Wendt where the relevant statement is that the map $\operatorname{GL}_n(k) \rightarrow  \operatorname{GL}_n(k[t])$ induces an isomorphism on homology for any infinite field $k$ (\cite{Soule,Knudson97,Wendt}). In these cases the $\operatorname{RBS}$-categories identify with the plus-constructions of the corresponding general linear groups (\cite[Theorem 1.3]{ClausenOrsnesJansen}). Hence, the homology isomorphism can be transferred onto the $\operatorname{RBS}$-categories 
\begin{align*}
\operatorname{RBS}(k^n)\rightarrow \operatorname{RBS}(k[t]^n),
\end{align*}
(see \cite[Corollary 5.3.3]{Damia}). It seems reasonable to believe that one can generalise Soulé's strategy to prove this result more directly. An interesting question would then be whether this strategy extends to a larger class of rings.

\medskip

\textbf{Hopf algebra structure and Koszul duality.} In recent work, Brown--Chan--Galatius--Payne introduce a Hopf algebra structure on Quillen's spectral sequence
\begin{align*}
E^1_{s,t} = H_t(\operatorname{GL}_s(\Z), \operatorname{St}_s(\Z)\otimes \Q) \Rightarrow H_*(\operatorname{BK}(\Z);\Q)
\end{align*}
(\cite{BrownChanGalatiusPayne}). \Cref{E1-homology for general ring} implies that this spectral sequence arises as the $\mathbb{E}_1$-homology of
\begin{align*}
|\mathscr{M}_{\operatorname{RBS}}(\Z)|\simeq K^\partial(\Z).
\end{align*}
In other words, Quillen's spectral sequence arises as the homology of the bar construction of an $\mathbb{E}_\infty$-algebra (appealing to the fact that $\mathbb{E}_1$-indecomposables may be computed via the bar construction \cite[\S 13]{GalatiusKupersRandalWilliams}). It seems plausible that the coproduct on $E^1_{s,t}$ introduced in \cite{BrownChanGalatiusPayne} is related to the algebra structure on $|\mathscr{M}_{\operatorname{RBS}}(\Z)|\simeq K^\partial(\Z)$, possibly via Koszul duality. This would be interesting to explore further.

\medskip

\textbf{The reductive Borel--Serre compactification as a moduli stack.} On a slightly different note, we turn to the geometric origins of the $\operatorname{RBS}$-categories. As explained in \cite{ClausenOrsnesJansen}, the $\operatorname{RBS}$-categories arise as a generalisation of the $1$-category $\operatorname{RBS}_\Gamma$ capturing the stratified homotopy (or exit path $\infty$-category) of the reductive Borel--Serre compactification
\begin{align*}
\overline{\Gamma\backslash X}{}^{\operatorname{RBS}}
\end{align*}
of a locally symmetric space associated to a neat arithmetic group $\Gamma\subset \operatorname{GL}_n(\Z)$ (see \cite{OrsnesJansen,ClausenOrsnesJansen}). The condition that $\Gamma$ is neat ensures that $\overline{\Gamma\backslash X}{}^{\operatorname{RBS}}$ is a stratified topological \textit{space}. The definition of the category $\operatorname{RBS}_\Gamma$ generalises verbatim to the case $\Gamma=\operatorname{GL}_n(\Z)$ without the neatness condition, but it is not quite clear how to define the reductive Borel--Serre compactification as a stratified topological \textit{stack}. The correct definition should have stratified homotopy type given by $\operatorname{RBS}_\Gamma$ for (potentially non-neat) $\Gamma$ and it is of course preferable to have a moduli description of such a stack. Such a description combined with the stability results of this paper would connect the reductive Borel--Serre compactifications with algebraic K-theory in an extremely explicit way.

In \cite{Grayson84}, Grayson provides an intrinsic construction of the Borel--Serre compactification using the notion of ``semistability'' of lattices in Euclidean space. We believe that the ingredients of that construction may be the right framework with which to define the reductive Borel--Serre compactification as a moduli stack.

\medskip

\textbf{Other $\operatorname{RBS}$-type categories?} It would be interesting to study $\operatorname{RBS}$-type categories arising in different settings. As observed in \cite[\S 6]{OrsnesJansen}, one can define similar categories whenever we have a group acting on a poset and compatible choices of subgroups of the stabilisers. Maybe some of the results and strategies of the present paper and of the previous \cite{ClausenOrsnesJansen} can be extended to other $\operatorname{RBS}$-type categories. One concrete example is the category of stable curves introduced by Charney and Lee (\cite{CharneyLee84}); in \cite{OrsnesJansen23a}, we show that this category captures the stratified homotopy type of the moduli stack of stable nodal curves, also known as the Deligne--Mumford--Knudsen compactification. One can think of this category as a ``compactification'' of the mapping class group.

Other possibly interesting examples would be flag-interpretations of the category $\operatorname{RBS}_\Gamma$ capturing the stratified homotopy type of the reductive Borel--Serre compactification for other instances of reductive algebraic groups. The categories $\operatorname{RBS}(M)$ of this paper arise as a generalisation of $\operatorname{RBS}_\Gamma$ for $\Gamma\subset\operatorname{GL}_n(\Z)$ exploiting the fact that parabolic subgroups of $\operatorname{GL}_n$ correspond to flags. One could for instance plug in the symplectic group $\operatorname{Sp}_{2n}$, consider \textit{isotropic} flags, and investigate whether the resulting categories provide useful unstable models in the setting of Hermitian K-theory.

\bibliographystyle{alpha}

\end{document}